\newcommand{\bracedcdot}{ }
\newcommand{\N}{\mathbb{N}}
\newcommand{\calT}{\mathcal{T}}
\newcommand{\T}{\calT}
\newcommand{\M}{\mathcal{M}}
\newcommand{\sM}{\mathcal{M}}
\newcommand{\sL}{\mathcal{L}}
\newcommand{\sN}{\mathcal{N}}
\newcommand{\R}{\mathbb{R}}
\newcommand{\Z}{\mathbb{Z}}
\newcommand{\wep}{Weil-Petersson}
\newcommand{\sbs}{\subset}
\newcommand{\limg}{\lim_{g\rightarrow\infty}}
\newcommand{\sT}{\mathcal{T}}
\newcommand{\dvol}{d\mathrm{vol}}
\newcommand{\E}{\mathbb{E}_{\rm WP}^g}
\newcommand{\ve}{\boldsymbol}
\newcommand{\eg}{\textit{e.g.\@ }}
\newcommand{\cf}{\textit{c.f.\@ }}
\newcommand{\ie}{\textit{i.e.\@ }}
\def\sys{\mathop{\rm sys}}
\def\area{\mathop{\rm Area}}
\def\arcsinh{\mathop{\rm arcsinh}}
\def\Vol{\mathop{\rm Vol}}
\def\dist{\mathop{\rm dist}}
\def\Prob{\mathop{\rm Prob}\nolimits_{\rm WP}^g}
\def\Mod{\mathop{\rm Mod}}
\def\Sym{\mathop{\rm Sym}}
\newcommand{\ls}{\ell_{\sys}}
\newcommand{\lss}{\ell_{\sys}^{\rm sep}}
\newcommand{\lns}{\ell_{\sys}^{\rm ns}}
\newcommand{\exl}{\rm Ext}
\newcommand{\exlss}{\rm Ext_{\sys}^{\rm sep}}
\DeclareMathOperator{\Diff}{Diff}
\DeclareMathOperator{\Teich}{Teich}
\DeclareMathOperator{\WP}{WP}
\theoremstyle{plain}
\newtheorem{theorem}{Theorem}
\newtheorem{corollary}[theorem]{Corollary}
\newtheorem{proposition}[theorem]{Proposition}
\newtheorem{lemma}[theorem]{Lemma}
\newtheorem{question}[theorem]{Question}
\newtheorem{remark}[theorem]{Remark}
\newtheorem*{thm*}{Theorem}
\newcommand{\be}{\begin{equation}}
\newcommand{\ene}{\end{equation}}
\newcommand{\br}{\begin{remark}}
\newcommand{\er}{\end{remark}}
\newcommand{\bl}{\begin{lem}}
\newcommand{\el}{\end{lem}}
\newcommand{\bcor}{\begin{cor}}
\newcommand{\ecor}{\end{cor}}
\newcommand{\bpro}{\begin{pro}}
\newcommand{\epro}{\end{pro}}
\newcommand{\ben}{\begin{enumerate}}
\newcommand{\een}{\end{enumerate}}
\newcommand{\bp}{\begin{proof}}
\newcommand{\ep}{\end{proof}}
\newcommand{\bpo}{\begin{pro}}
\newcommand{\epo}{\end{pro}}
\newcommand{\beq}{\begin{equation*}}
\newcommand{\eeq}{\end{equation*}}
\newcommand{\bear}{\begin{eqnarray}}
\newcommand{\eear}{\end{eqnarray}}
\newcommand{\beqar}{\begin{eqnarray*}}
\newcommand{\eeqar}{\end{eqnarray*}}
\newcommand{\bt}{\begin{theorem}}
\newcommand{\et}{\end{theorem}}
\newcommand{\bex}{\begin{excer}}
\newcommand{\eex}{\end{excer}}
\theoremstyle{definition}
\newtheorem{definition}[theorem]{Definition}
\theoremstyle{remark}
\newtheorem*{rem*}{Remark}
\newtheorem*{def*}{Definition}
\newtheorem*{con*}{Construction}
\newtheorem*{definition*}{Definition}
\begin{document}

\title[Random surfaces]{Large genus asymptotics for lengths of separating closed geodesics on random surfaces}
\author{Xin Nie, Yunhui Wu, and Yuhao Xue}

\address{Shing-Tung Yau Center of Southeast University, Nanjing, China}
\email[(X.~N.)]{nie.hsin@gmail.com}

\address{Department of Mathematics and Yau Mathematical Sciences Center, Tsinghua University, Beijing, China}
\email[(Y.~W.)]{yunhui\_wu@tsinghua.edu.cn}
\email[(Y.~X.)]{xueyh18@mails.tsinghua.edu.cn}

\date{}
\maketitle

\begin{abstract}
 In this paper, we investigate basic geometric quantities of a random hyperbolic surface of genus $g$ with respect to the Weil-Petersson measure on the moduli space $\M_g$. We show that as $g$ goes to infinity, a generic surface $X\in \M_g$ satisfies asymptotically:
\begin{enumerate}
\item the separating systole of $X$ is approximately $2\log g$; 

\item there is a half-collar of width approximately $\frac{\log g}{2}$ around any separating systolic curve on $X$;

\item the length of the shortest separating closed multi-geodesics on $X$ is approximately $2\log g$.
\end{enumerate}
As applications, we also discuss the asymptotic behavior of the extremal separating systole, the non-simple systole and the expected length of the shortest separating closed multi-geodesics as $g$ goes to infinity.
\end{abstract}

\section{Introduction}

The overall behavior of geometric quantities such as systole, diameter, eigenvalues of Laplacian, Cheeger constant, \textit{etc}., for all closed hyperbolic surfaces of a given genus $g$, is a classical object of study.
While there are many results and conjectures about the maximal/minimal values of these quantities, as functions on the moduli space $\M_g$,
Mirzakhani initiated a new approach in \cite{Mirz13} to the subject: based on her celebrated thesis works \cite{Mirz07,Mirz07-int}, she obtained asymptotic results on certain statistical information about these quantities, viewed as random variables with respect to the probability measure $\Prob$ on $\M_g$ given by the Weil-Petersson metric. One may see the book \cite{Wolpert-book} of Wolpert and the recent survey \cite{Wright-tour} of Wright for more details.

\subsection{Separating systole} 
It was shown by Mirzakhani \cite[Corollary 4.3]{Mirz13} that if we consider the \emph{systole} 
$$
\ls(X):=\min\big\{\ell_\gamma(X)\,;\, \text{$\gamma\subset X$ is a simple closed geodesic}\big\}
$$
as a function on $\sM_g$, where the variable $X\in\sM_g$ is a closed hyperbolic surface of genus $g$ and
$\ell_\gamma(X)$ is the length of $\gamma$, then
the expected value of $\frac{1}{\ls\bracedcdot}$ is bounded from above and below by two positive constants independent of $g$. Meanwhile, \cite{Mirz13} also contains results on the \emph{separating systole}
$$
\lss(X):=\min\big\{\ell_\gamma(X)\,;\, \text{$\gamma\subset X$ is a separating simple closed geodesic}\big\},
$$
implying that $\lss$ behaves drastically differently from $\ls$. The function $\lss$ is unbounded on $\M_g$: if $X$ carries a pants decomposition consisting of arbitrarily short non-separating closed geodesics, the classical Collar Lemma (\eg see \cite{Kee74}) implies that the length of any separating closed geodesic is arbitrarily large because it has nonempty intersection with certain curves in the pants decomposition. In this paper we study the asymptotic behavior of the shortest simple separating closed geodesics as the genus $g$ goes to infinity. First we recall the following result. One may also see \cite[Theorem 4.2]{Mirz10} of Mirzakhani's 2010 ICM report for a weaker version.
\begin{thm*}[{\bf{Mirzakhani}, \cite[Theorem 4.4]{Mirz13}}]
Let $0<a<2$. Then
\[\Prob\left(X\in \M_g\,;\,  \lss(X)<a \log g \right)=O\left((\log g)^3 g^{\frac{a}{2}-1} \right).\]
\end{thm*}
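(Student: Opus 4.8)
The plan is to bound the probability by the expected number of short separating simple closed geodesics and to evaluate that expectation via Mirzakhani's integration formula. Write $N^{\rm sep}(X,L)$ for the number of separating simple closed geodesics on $X$ of length $\le L$, and set $L=a\log g$. Since $\lss(X)<L$ produces a separating simple closed geodesic of length $\le L$, Markov's inequality gives
\[
\Prob\big(\lss(X)<a\log g\big)\ \le\ \E\big[N^{\rm sep}(X,a\log g)\big].
\]
Up to the action of $\Mod(S_g)$, a separating simple closed curve on $S_g$ is classified by the genus $i$ of its smaller side, $1\le i\le\lfloor g/2\rfloor$, where it splits $S_g$ into a genus-$i$ and a genus-$(g-i)$ surface, each with one boundary circle. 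So Mirzakhani's integration formula \cite{Mirz07,Mirz07-int} gives, for constants $c_i\in\{\tfrac12,1\}$,
\[
\E\big[N^{\rm sep}(X,L)\big]=\frac{1}{V_g}\sum_{i=1}^{\lfloor g/2\rfloor}c_i\int_0^L \ell\, V_{i,1}(\ell)\,V_{g-i,1}(\ell)\,d\ell ,
\]
where $V_{h,1}(\ell)$ denotes the Weil--Petersson volume of the moduli space of genus-$h$ hyperbolic surfaces with one geodesic boundary of length $\ell$.

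The key point — where a naive estimate loses — is that I would \emph{not} bound both volume factors by the exponential bound $V_{h,n}(\ell)\le e^{\ell/2}V_{h,n}$ of \cite{Mirz13}: that produces $e^{\ell}=g^{a}$ and only yields the weaker $O\big((\log g)g^{a-1}\big)$. Instead I would treat the two sides asymmetrically. On the large side keep $V_{g-i,1}(\ell)\le e^{\ell/2}V_{g-i,1}$, so $e^{\ell/2}\le g^{a/2}$ over the range $0\le\ell\le L$. On the small side use that $V_{i,1}(\ell)$ is a polynomial in $\ell$ of degree $6i-4$ with coefficients bounded in terms of $i$ alone; for $i=1$, $V_{1,1}(\ell)=\tfrac{1}{24}(\ell^{2}+4\pi^{2})$. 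Together with the elementary estimate $\int_0^L\ell^{3}e^{\ell/2}\,d\ell\ll L^{3}e^{L/2}$ (with $L=a\log g$, so $e^{L/2}=g^{a/2}$), the $i=1$ term is $\ll (\log g)^{3}g^{a/2}\,V_{g-1,1}$. For the remaining $2\le i\le\lfloor g/2\rfloor$ I can afford the crude bound $V_{i,1}(\ell)V_{g-i,1}(\ell)\le e^{\ell}V_{i,1}V_{g-i,1}\le g^{a}V_{i,1}V_{g-i,1}$, which with $\int_0^L\ell\, e^{\ell}\,d\ell\ll (\log g)g^{a}$ bounds their total contribution by $\ll (\log g)g^{a}\sum_{i\ge 2}V_{i,1}V_{g-i,1}$.

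To finish I would feed in the Weil--Petersson volume asymptotics of Mirzakhani \cite{Mirz13} (and Mirzakhani--Zograf): $V_{g-1,1}/V_g\asymp 1/g$, while $\sum_{i=2}^{\lfloor g/2\rfloor}V_{i,1}V_{g-i,1}/V_g=O(1/g^{2})$ (this sum being dominated by its $i=2$ term). The two contributions then become $O\big((\log g)^{3}g^{a/2}/g\big)$ and $O\big((\log g)g^{a-2}\big)$, and since $a<2$ one has $g^{a-2}=o(g^{a/2-1})$, so the $i\ge 2$ part is absorbed; hence $\E[N^{\rm sep}(X,a\log g)]=O\big((\log g)^{3}g^{a/2}/g\big)$, as asserted. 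The heavy inputs — Mirzakhani's integration formula and the volume asymptotics — can be quoted off the shelf; the one conceptual step, and the main obstacle to getting the sharp exponent, is the asymmetric handling of the two sides of the curve, which is exactly what converts $a$ into $a/2$ and is what makes the hypothesis $a<2$ natural.
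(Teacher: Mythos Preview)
Your proposal is correct and follows essentially the same route as Mirzakhani's original argument (and as the paper's own proof of the closely related Proposition~\ref{prop lower bound} and Lemmas~\ref{E[N]}, \ref{sum chi=m E[N]}): bound the probability by the expected count via Markov, expand by Mirzakhani's integration formula, and---this is the key step you correctly identify---treat the two sides asymmetrically, using the polynomial bound $V_{i,1}(\ell)\ll_i 1+\ell^{6i-4}$ on the small side and the exponential bound $V_{g-i,1}(\ell)\le e^{\ell/2}V_{g-i,1}$ on the large side. Your handling of the tail $i\ge 2$ via the crude $e^{\ell}$ bound together with $\sum_{i\ge 2}V_{i,1}V_{g-i,1}/V_g=O(1/g^{2})$ (in fact $O(1/g^{3})$ by Lemma~\ref{Mirz vol lemma 2}) is exactly in the spirit of the paper's Lemma~\ref{sum chi geq m E[N]}, and the absorption of that term for $a<2$ is correct.
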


\noindent This result in particular implies that for any $\epsilon>0$,
$$
\lim \limits_{g\to \infty} \Prob\left(X\in \M_g\,;\,  \lss(X)> (2-\epsilon)\log g \right)=1.
$$ 

Let $\omega:\{2,3,\cdots\}\to\R^{>0}$ be any function satisfying 
\be \label{eq-omega}
\lim \limits_{g\to \infty}\omega(g)= +\infty \ \text{and} \ \lim \limits_{g\to \infty}\frac{\omega(g)}{\log\log g} = 0.
\ene

The main part of this article is to show

\begin{theorem}\label{main}
Let $\omega(g)$ be a function satisfying \eqref{eq-omega}. Consider the following two conditions defined for all $X\in\M_g$:
\begin{itemize}
\item[(a).]\label{item_main1} $|\ell_{\sys}^{\rm sep}(X)-(2\log g - 4\log \log g)| \leq \omega(g)$;

\item[(b).]\label{item_main2} $\ell_{\sys}^{\rm sep}(X)$ is achieved by a simple closed geodesic separating $X$ into $S_{1,1}\cup S_{g-1,1}$.
\end{itemize}
Then we have
$$
\lim \limits_{g\to \infty} \Prob\left(X\in \M_g\,;\,  \textit{$X$ satisfies $(a)$ and $(b)$} \right)=1.
$$ 
\end{theorem}

\noindent The result in particular implies that for any $\epsilon>0$,
$$
\lim \limits_{g\to \infty} \Prob\left(X\in \M_g\,;\, (2-\epsilon)\log g< \lss(X)<2\log g \right)=1.
$$ 

\noindent Although $\ell_{\sys}^{\rm sep}$ is unbounded on $\sM_g$ as introduced above, the very recent joint work \cite{PWX20} of  H.\ Parlier with the second and third named authors of this paper shows that the expected value $\E[\lss]\sim 2\log g$ as $g\to \infty$ (\cf Section \ref{section questions}).
\begin{rem*}
We remark that the seemingly cumbersome upper and lower bounds $2\log g-4\log \log g\pm\omega(g)$ of $\ell_{\sys}^{\rm sep}(X)$ in the theorem above is related to the expected number of multi-geodesics of length less than $L$ on $X\in\M_g$ bounding a one-holed torus or a three-holed sphere, which is roughly $\frac{L^2e^\frac{L}{2}}{g}$. One may see the remark following Lemma \ref{E[N]} for more details. 
\end{rem*}

In the subsequent subsections we discuss applications of Theorem \ref{main} or the proof of Theorem \ref{main}.

\subsection{Long half collar and extremal length} A \emph{collar} of a simple closed geodesic $\gamma$ in a hyperbolic surface $X$ is an embedded symmetric hyperbolic cylinder centered at $\gamma$, bounded by two equidistant curves from $\gamma$, whereas a \emph{half-collar} of $\gamma$ is an embedded hyperbolic cylinder bounded by one equidistant curve along with $\gamma$ itself. 
For fixed $g$, if $X\in \M_g$ has a very long separating systolic curve $\gamma$, then the width of the maximal collar of $\gamma$ is very small, because the area of $X$ is fixed. On the other hand,  as $g$ goes to infinity,
as an application of Theorem \ref{main}, we show that in an asymptotic sense, for a generic point $X\in \M_g$, there is an arbitrarily long half-collar around a separating systolic curve. More precisely,
\begin{theorem}\label{thm:half collar}
Given $\epsilon>0$, consider condition $(b)$ from Theorem \ref{main} and the following condition defined for all $X\in\sM_g$:
\begin{itemize}
\item[(c).] There is a half-collar around $\gamma$ in the $S_{g-1,1}$-part of $X$ with width $\frac{1}{2}\log g-\left(\frac{3}{2}+\epsilon\right)\log\log g$.
\end{itemize}
Then we have
$$
\lim \limits_{g\to \infty} \Prob\left(X\in \M_g\,;\, \textit{$X$ satisfies $(b)$ and $(c)$} \right)=1.
$$ 
\end{theorem}

\noindent Note that one cannot replace ``half-collar'' by ``collar'' in the above theorem. In fact, since a geodesic $\gamma\subset X$ realizing $\lss(X)$ is arbitrarily long and bounds a one-holed torus for a generic point $X\in \M_g$ by Theorem \ref{main}, the maximal embedded half-collar about $\gamma$ in the one-holed torus must be arbitrarily thin because the area of a one-holed torus is equal to $2\pi$.

The theory of \emph{extremal length} was developed by Ahlfors and Beurling (\eg see \cite[Chapter 4]{Ahlfors-ci}). One may also see \cite[Section 3]{Kerck80} of Kerckhoff for its deep connection to the geometry of Teichm\"uller space. Here we deduce from Theorem \ref{thm:half collar} a consequence about extremal lengths of separating curves. Let $\exl_\gamma(X)$ denote the extremal length of the family of curves homotopic to $\gamma$ (see Subsection \ref{subsec-el} for the precise definition) and $\exlss(X)$ denote the \emph{separating extremal length systole} of $X$, defined as the infimum of $\exl_\gamma(X)$ over all separating simple closed geodesics $\gamma$ on $X$. It is known by Maskit \cite{Maskit} that $\ell_\gamma(X)\leq\pi\exl_\gamma(X)$, hence 
$\lss(X)\leq\pi\exlss(X)$. Conversely, \cite{Maskit} also provided an upper bound for $\exl_\gamma(X)$ (see Lemma \ref{lemma:extremal}), implying that hyperbolic and extremal lengths are comparable for short curves, but not for long ones in general.
As an application of Theorem \ref{thm:half collar}, we show:
\begin{theorem}\label{cor:extremal}
Given any $\epsilon>0$, we have
$$
\lim \limits_{g\to \infty} \Prob\left(X\in \M_g\,;\, \frac{\exlss(X)}{\lss(X)}< \frac{2+\epsilon}{\pi} \right)=1,
$$
as a consequence of this result and Theorem \ref{main}, 
$$
\lim \limits_{g\to \infty} \Prob\left(X\in \M_g\,;\, \frac{(2-\epsilon)}{\pi}\log g< \exlss(X)< \frac{(4+\epsilon)}{\pi}\log {\textit{g}} \right)=1.
$$
\end{theorem}

\begin{rem*}
For $X\in \sM_g$, the \emph{extremal length systole} $\exl_{sys}(X)$ of $X$ is defined as the infimum of $\exl_\gamma(X)$ over all simple closed geodesics $\gamma$ on $X$. For any systolic curve $\gamma\subset X$, it is known that the maximal collar of $\gamma$ has width bounded from below by a uniform positive constant independent of $g$ (see \eg \cite[Lemma 4.6]{Wu19} if the systole length $\geq 1$ and use the classical Collar Lemma if the systole length $< 1$). Then by Maskit \cite{Maskit} (or see Lemma \ref{lemma:extremal}) it is not hard to see that $\exl_{sys}(X)$ is uniformly comparable to $\ell_{sys}(X)$. Thus, as the genus $g$ goes to infinity, the asymptotic behavior of $\exl_{sys}\bracedcdot$ on $\M_g$ is similar as the behavior of $\ell_{sys}\bracedcdot$ on $\M_g$, which has already been studied by Mirzakhani \cite{Mirz13} and Mirzakhani-Petri \cite{MP19}. 
\end{rem*}
\subsection{Shortest non-simple closed geodesic} A shortest non-simple closed geodesic on a closed hyperbolic surface is always a figure eight geodesic (\eg see \cite[Theorem 4.24]{Buser10}), and has length at least $4\arcsinh(1)$ (\eg see \cite[4.2.2]{Buser10}). The \emph{non-simple systole} $\lns(X)$ of a hyperbolic surface $X$ is defined as  
$$
\lns(X):=\min\big\{\ell_\gamma(X)\,;\, \text{$\gamma\subset X$ is a non-simple closed geodesic}\big\}.
$$
As another application of Theorem \ref{main}, we show that as $g$ goes to infinity, asymptotically on a generic point $X\in \sM_g$ the non-simple systole behaves roughly like $\log g$. More precisely,
\begin{theorem}\label{cor:ns systole}
Given any $\epsilon>0$, then we have
$$
\lim \limits_{g\to \infty} \Prob\left(X\in \M_g\,;\, (1-\epsilon)\log {\textit{g}}< \lns(X)< 2\log {\textit{g}} \right)=1.
$$
\end{theorem}

\subsection{Shortest separating closed multi-geodesics} The union of disjoint non-separating simple closed curves may also separate a closed surface. The following geometric quantity was used by Schoen-Wolpert-Yau \cite{SWY80} to study the eigenvalues of the Laplacian operator on hyperbolic surfaces. 
\begin{definition}
For any $X\in \sM_g$, we define
$$
\sL_1(X):=\min\left\{\ell_\gamma(X)\,;\ \parbox[l]{5.5cm}{
	$\gamma=\gamma_1+\cdots+\gamma_k$ is a simple closed multi-geodesics separating $X$
}\right\}.
$$ 
\end{definition}
\noindent As a byproduct of the proof of Theorem \ref{main}, we show a similar result on $\sL_1\bracedcdot$ as follows.

\begin{theorem}\label{cor L1}
Let $\omega(g)$ be a function satisfying \eqref{eq-omega}. Consider the following two conditions defined for all $X\in\M_g$:
\begin{itemize}
\item[(e).]  $|\sL_1(X)-(2\log g - 4\log \log g)| \leq \omega(g)$;

\item[(f).]  $\sL_1(X)$ is achieved by either a simple closed geodesic separating $X$ into $S_{1,1}\cup S_{g-1,1}$ or three simple closed geodesics separating $X$ into $S_{0,3}\cup S_{g-2,3}$.
\end{itemize}
Then we have
$$
\lim \limits_{g\to \infty} \Prob\left(X\in \M_g\,;\, \textit{$X$ satisfies $(e)$ and $(f)$} \right)=1.
$$ 
\end{theorem}

Now we consider the expected value of $\mathcal{L}_1\bracedcdot$ over $\sM_g$. Unlike the unboundness of $\ell_{\sys}^{\rm sep}\bracedcdot$ on $\M_g$ we first show that $\sup_{X\in \sM_g}\mathcal{L}_1(X)\leq C \log g$ for some universal constant $C>0$ independent of $g$ (see Proposition \ref{L1-upp}). And then we apply Theorem \ref{cor L1} to show that 
\begin{theorem}\label{cor E[L1]}
The expected value $\E[\sL_1]$ of $\sL_1\bracedcdot$ on $\M_g$ satisfies
\begin{equation*}
\limg\frac{\E[\sL_1]}{\log g} = 2.
\end{equation*}
\end{theorem}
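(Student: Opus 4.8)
The plan is to combine the concentration result of Theorem \ref{cor L1} with the deterministic upper bound $\mathcal{L}_1(X)\leq C\log g$ (Proposition \ref{L1-upp}), splitting the moduli space into a "good" set on which $\mathcal{L}_1$ is close to $2\log g-4\log\log g$ and its complement which has vanishing probability. Concretely, fix $\epsilon>0$ and let $A_g\subset\M_g$ be the set of $X$ satisfying condition $(e)$ of Theorem \ref{cor L1} with, say, $\omega(g)=\sqrt{\log\log g}$ (any admissible choice works). On $A_g$ we have $|\mathcal{L}_1(X)-2\log g|\leq 4\log\log g+\omega(g)$, so $\mathcal{L}_1(X)/\log g\to 2$ uniformly on $A_g$. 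Writing
\[
\E[\mathcal{L}_1]=\frac{1}{\Vol_{\rm WP}(\M_g)}\left(\int_{A_g}\mathcal{L}_1(X)\,d\mathit{vol}+\int_{\M_g\setminus A_g}\mathcal{L}_1(X)\,d\mathit{vol}\right),
\]
the first integral divided by $\log g\cdot\Vol_{\rm WP}(\M_g)$ tends to $2$ because $\Prob(A_g)\to 1$ and the integrand over $\log g$ is within $o(1)$ of $2\cdot\mathbf{1}_{A_g}$.

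For the second integral I would use Proposition \ref{L1-upp}: on all of $\M_g$ one has $0\leq\mathcal{L}_1(X)\leq C\log g$, hence
\[
\frac{1}{\log g}\cdot\frac{1}{\Vol_{\rm WP}(\M_g)}\int_{\M_g\setminus A_g}\mathcal{L}_1(X)\,d\mathit{vol}\ \leq\ C\cdot\Prob\big(\M_g\setminus A_g\big)\ \longrightarrow\ 0
\]
by Theorem \ref{cor L1}. Combining the two pieces gives $\limsup_{g\to\infty}\E[\mathcal{L}_1]/\log g\leq 2$ and $\liminf_{g\to\infty}\E[\mathcal{L}_1]/\log g\geq 2$, so the limit is exactly $2$. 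One should also note the trivial lower bound: $\mathcal{L}_1(X)\geq 0$ is not enough by itself, but combined with the fact that on $A_g$ (probability $\to 1$) we have $\mathcal{L}_1(X)\geq 2\log g-4\log\log g-\omega(g)$, the $\liminf$ bound follows from the same decomposition, dropping the (nonnegative) integral over $\M_g\setminus A_g$.

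The only genuine input beyond Theorem \ref{cor L1} is Proposition \ref{L1-upp}, the uniform bound $\sup_{X\in\M_g}\mathcal{L}_1(X)\leq C\log g$; this is what makes the tail contribution negligible rather than merely controlled in probability, and its proof is presumably a direct geometric construction (e.g.\ producing an explicit short separating multi-curve on any $X\in\M_g$ using a bounded-length pants decomposition or a Bers-type argument). Assuming that proposition, the argument above is essentially a routine "bounded random variable converging in probability converges in expectation" truncation, so I do not expect a real obstacle here; the substance of the theorem lives entirely in Theorem \ref{cor L1} and hence in the proof of Theorem \ref{main}.
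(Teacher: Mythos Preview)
Your proposal is correct and follows essentially the same route as the paper: define the ``good'' set $A_g$ via condition (e) of Theorem \ref{cor L1}, use the bounds $2\log g-4\log\log g\pm\omega(g)$ on $A_g$ together with $\Prob(A_g)\to1$, and on the complement invoke Proposition \ref{L1-upp} to get $\mathcal{L}_1\leq C\log g$ so that this piece contributes $O\big(\Prob(\M_g\setminus A_g)\big)\to0$. The paper's argument is identical up to notation (it writes the set as $\mathcal{B}(\omega(g))$ and leaves $\omega$ generic rather than picking $\omega(g)=\sqrt{\log\log g}$).
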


As another byproduct of the proof of Theorem \ref{main} we show the following useful property. First we make the following definition generalizing $\mathcal{L}_1\bracedcdot$.
\begin{definition}
For any integer $m\in [1,g-1]$ and $X\in \sM_g$, we define
\[\mathcal{L}_{1,m}(X):=\min_{\Gamma} \ell_{\Gamma}(X)\]
where the minimum runs over all simple closed multi-geodesics $\Gamma$ separating $X$ into $S_{g_1,k}\cup S_{g_2,k}$ with $|\chi(S_{g_1,k})|\geq |\chi(S_{g_2,k})|\geq m.$
\end{definition}

\noindent By definition we know that
\[\mathcal{L}_{1,1}(X)=\mathcal{L}_{1}(X)\]
and
\[\mathcal{L}_{1,m-1}(X)\leq \mathcal{L}_{1,m}(X), \quad \forall m \in [2,g-1].\]

\begin{proposition}\label{lower bound for chi geq 2}
Let $\omega(g)$ be a function satisfying \eqref{eq-omega}. Then we have that for any fixed $m\geq 1$ independent of $g$,
\begin{equation*}
\limg\Prob\left(X\in\M_g\,;\, \mathcal{L}_{1,m}(X) \geq  2m\log g - (6m-2)\log\log g -\omega(g)\right) = 1.
\end{equation*}
\end{proposition}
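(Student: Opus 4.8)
The plan is to run a first-moment (expected-count) argument. Fix $m\ge 1$ and set
\[
L=L(g):=2m\log g-(6m-2)\log\log g-\omega(g).
\]
For $X\in\M_g$ let $N_{<L}(X)$ be the number of simple closed multi-geodesics $\Gamma$ on $X$ that separate $X$ into $S_{g_1,k}\cup S_{g_2,k}$ with $|\chi(S_{g_1,k})|\ge|\chi(S_{g_2,k})|\ge m$ and $\ell_\Gamma(X)<L$. Since $\mathcal L_{1,m}(X)<L$ if and only if $N_{<L}(X)\ge 1$, Markov's inequality gives
\[
\Prob\bigl(X\in\M_g;\ \mathcal L_{1,m}(X)<L\bigr)\le\E\bigl[N_{<L}\bigr],
\]
so it suffices to prove $\E[N_{<L}]\to 0$ as $g\to\infty$.

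I would expand $N_{<L}$ as a sum over the topological types of such multicurves and evaluate each term by Mirzakhani's integration formula. A type is recorded by $(g_2,k)$ (with $g=g_1+g_2+k-1$) together with a gluing pattern, and its expected contribution equals
\[
\frac{1}{|\Sym(\Gamma)|\,V_g}\int_{\substack{x_i>0\\ x_1+\dots+x_k<L}}x_1\cdots x_k\,V_{g_1,k}(x_1,\dots,x_k)\,V_{g_2,k}(x_1,\dots,x_k)\,dx_1\cdots dx_k .
\]
Write $j:=|\chi(S_{g_2,k})|=2g_2-2+k\ (\ge m)$. For the small side, $V_{g_2,k}(\cdot)$ is a polynomial of degree $2(3g_2-3+k)$ with nonnegative coefficients; for the large side I would use $V_{g_1,k}(x_1,\dots,x_k)\le V_{g_1,k}\prod_i\frac{2\sinh(x_i/2)}{x_i}$ together with $V_{g_1,k}/V_g\asymp g^{-j}$ (which follows from $2g_1-3+k=2g-j-3$ and the Mirzakhani--Zograf volume asymptotics). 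Once the $x_i$'s cancel the denominators of the $\sinh$-factors, one is left integrating a degree-$2(3g_2-3+k)$ polynomial against $e^{(x_1+\dots+x_k)/2}$ over a $k$-simplex of size $L$, which has order $L^{3j-1}e^{L/2}$; hence the finitely many types with $|\chi(S_{g_2,k})|=j$ contribute $\asymp L^{3j-1}e^{L/2}/g^{j}$ in total. Since $3j-1$ increases in $j$, $g^{-j}$ decreases, and $L=\Theta(\log g)$, the dominant term is $j=m$, of order $L^{3m-1}e^{L/2}/g^{m}$ (for $m=1$ this is exactly Lemma \ref{E[N]}).

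Inserting the value of $L$ gives $e^{L/2}=g^{m}(\log g)^{-(3m-1)}e^{-\omega(g)/2}$ and $L^{3m-1}=(2m)^{3m-1}(\log g)^{3m-1}(1+o(1))$, so the dominant term is $\asymp e^{-\omega(g)/2}$, which tends to $0$ by \eqref{eq-omega}; this forces $\E[N_{<L}]\to 0$, hence $\Prob(\mathcal L_{1,m}(X)\ge L)\to 1$. (The coefficient $(6m-2)=2(3m-1)$ of $\log\log g$ in the threshold is exactly what makes $\tfrac L2+(3m-1)\log L-m\log g$ vanish to leading order.)

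The step I expect to be the real obstacle is \emph{uniformity over all topological types}, in particular those whose small side $S_{g_2,k}$ has $|\chi(S_{g_2,k})|$ growing with $g$, where the ``fixed $j$'' estimate above is no longer available. The remedy is the usual one: bound \emph{both} $V_{g_1,k}$ and $V_{g_2,k}$ by symmetric $\sinh$-type estimates, so that the summand is controlled by the volume ratio $V_{g_1,k}V_{g_2,k}/V_g$, which (since $2g_1-3+k=2g-j-3$ and $2g_2-3+k=j-1$) is $\asymp\frac{(2g-j-3)!\,(j-1)!}{(2g-3)!}$ up to harmless factors — monotonically decreasing in $j$ and super-exponentially small in $g$ once $g_2$ is a definite fraction of $g$, so it easily beats the $e^{L}=g^{O(1)}$ coming from the integral. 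Summed against the (polynomially-in-$j$-many) gluing patterns of each complexity, the whole tail $j>m$ is negligible. This is the same mechanism the paper uses for single separating curves in proving Theorem \ref{main} and Lemma \ref{E[N]}, now to be carried out for multicurves; it may help to first intersect with the high-probability event that the systole of $X$ exceeds a fixed constant, which caps $k$ by $O(\log g)$ and keeps $k=o(g)$.
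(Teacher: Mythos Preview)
Your proposal is correct and follows the paper's proof: a first-moment bound via Mirzakhani's integration formula, with the polynomial estimate giving $O(L^{3j-1}e^{L/2}/g^j)$ for each fixed complexity $j=|\chi(S_{g_2,k})|$ (Lemma~\ref{sum chi=m E[N]}) and a crude exponential bound on both volume factors for large $j$ (Lemma~\ref{sum chi geq m E[N]}). One point worth sharpening: bounding both $V_{g_i,k}(x)\le e^{|x|/2}V_{g_i,k}$ and then summing over $k$ produces a factor $e^{2L}\asymp g^{4m}$ rather than $e^L$, so the crude bound only controls $j$ beyond a fixed multiple of $m$; the paper takes the cutoff at $10m$, handling the finitely many $j\in[m,10m]$ with the polynomial estimate and $j>10m$ with the crude one (giving $e^{2L}/g^{10m+1}=O(g^{-6m-1})$), and your systole-conditioning idea is unnecessary.
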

\noindent If $m=1$, this is part of Theorem \ref{cor L1}.

\begin{rem*}
As in \cite{Mirz13}, for all $1\leq m\leq g-1$ the \emph{$m$-th geometric Cheeger constant} $H_m(X)$ of $X$ is defined as
\[H_m(X):= \inf \limits_{\gamma}\frac{\ell_{\gamma}(X)}{2\pi m}\]
where $\gamma$ is a simple closed multi-geodesics on $X$ with $X\setminus \gamma=X_1\cup X_2$, and $X_1$ and $X_2$ are connected subsurfaces of $X$ such that $|\chi(X_1)|=m\leq |\chi(X_2)|$.

The \emph{geometric Cheeger constant} $H(X)$ of $X$ is defined as
\[H(X):=\min \limits_{1\leq m\leq g-1}H_m(X).\]

\noindent Mirzakhani in \cite{Mirz13} showed that  
\[\lim \limits_{g\to \infty}\Prob \left(X\in \M_g\,;\, H(X)> \frac{\log 2}{2\pi} \right)=1.\]

\noindent As a direct consequence of Theorem \ref{cor L1}, we obtain the following result on the asymptotic behavior of the first geometric Cheeger constant $H_1\bracedcdot$ on $\M_g$.
\begin{corollary}
For any $\epsilon>0$, we have
\[\lim \limits_{g\to \infty}\Prob \left(X\in \M_g\,;\, (1-\epsilon)\cdot \frac{\log g}{\pi}< H_1(X)< \frac{\log g}{\pi}\right)=1.\]
\end{corollary}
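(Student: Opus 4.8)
The plan is to deduce the corollary directly from Theorem~\ref{cor L1} by identifying $2\pi H_1(\cdot)$ with $\mathcal{L}_1(\cdot)$ on a set of probability tending to $1$. Recall that $H_1(X)=\inf_\gamma\frac{\ell_\gamma(X)}{2\pi}$, the infimum being over simple closed multi-geodesics $\gamma$ separating $X$ into two connected pieces $X_1\cup X_2$ with $|\chi(X_1)|=1$; here the constraint $|\chi(X_1)|\le|\chi(X_2)|$ is automatic, since $|\chi(X_2)|=2g-3\ge1$ for $g\ge2$. Every such $\gamma$ is in particular a separating simple closed multi-geodesic, so the class over which $H_1(X)$ is infimized is contained in the class over which $\mathcal{L}_1(X)$ is minimized; taking an infimum over a smaller class cannot decrease the value, hence $2\pi H_1(X)\ge\mathcal{L}_1(X)$ for every $X\in\M_g$.

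Next I would fix any function $\omega(g)$ satisfying \eqref{eq-omega} and invoke Theorem~\ref{cor L1}. On the event $E_g$ that conditions (e) and (f) of that theorem both hold---an event with $\Prob(E_g)\to1$---the multi-geodesic realizing $\mathcal{L}_1(X)$ separates $X$ either into $S_{1,1}\cup S_{g-1,1}$ or into $S_{0,3}\cup S_{g-2,3}$, and in both cases the smaller piece has absolute Euler characteristic equal to $1$. Thus on $E_g$ this minimizer lies in the class defining $H_1$, which gives $2\pi H_1(X)\le\mathcal{L}_1(X)$ there; combined with the general inequality from the previous paragraph, we obtain the exact identity $2\pi H_1(X)=\mathcal{L}_1(X)$ on $E_g$. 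Condition (e) then yields, on the same event,
\[
H_1(X)=\frac{\mathcal{L}_1(X)}{2\pi}=\frac{\log g}{\pi}-\frac{2\log\log g}{\pi}\pm\frac{\omega(g)}{2\pi}.
\]

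Finally I would read off the two desired inequalities from this estimate. Since $\omega(g)/\log\log g\to0$, for all large enough $g$ one has $\frac{2\log\log g}{\pi}>\frac{\omega(g)}{2\pi}$, so the correction term above is negative and $H_1(X)<\frac{\log g}{\pi}$ on $E_g$. For the lower bound, fix $\epsilon>0$; because $\log\log g=o(\log g)$ and $\omega(g)=o(\log g)$, there is $g_0(\epsilon)$ such that $2\log\log g+\tfrac12\omega(g)<\epsilon\log g$ for all $g\ge g_0(\epsilon)$, whence $H_1(X)>\frac{\log g}{\pi}-\frac{\epsilon\log g}{\pi}=(1-\epsilon)\frac{\log g}{\pi}$ on $E_g$. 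Therefore, for $g\ge g_0(\epsilon)$ the event $E_g$ is contained in $\{(1-\epsilon)\frac{\log g}{\pi}<H_1(X)<\frac{\log g}{\pi}\}$, and since $\Prob(E_g)\to1$ the claimed probability also tends to $1$.

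There is essentially no genuine obstacle: the entire quantitative content is already supplied by Theorem~\ref{cor L1}, and the only point requiring brief care is that on the good event $E_g$ the $\mathcal{L}_1$-minimizer lies in the more restrictive class defining $H_1$---which is precisely what condition (f) guarantees---so that the a priori inequality $2\pi H_1(X)\ge\mathcal{L}_1(X)$ is upgraded to an equality there. Outside $E_g$ (a set of probability tending to $0$) one has no control on $H_1(X)$, but this does not affect either conclusion.
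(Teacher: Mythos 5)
Your argument is correct and is exactly the derivation the paper intends: the corollary is stated as a direct consequence of Theorem~\ref{cor L1}, and your two observations — that the class defining $H_1$ is contained in the class defining $\mathcal{L}_1$ (giving $2\pi H_1\geq\mathcal{L}_1$ unconditionally), and that condition (f) places the $\mathcal{L}_1$-minimizer in the $H_1$ class since $|\chi(S_{1,1})|=|\chi(S_{0,3})|=1$ (upgrading this to equality on the good event) — together with condition (e) and $\omega(g)=o(\log\log g)$ yield both inequalities. No gaps.
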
 
\noindent It would be also \emph{interesting} to study the asymptotic behavior of $H_m\bracedcdot$ on $\M_g$ when $2\leq m \leq (g-1)$ as $g$ goes to infinity. One may see the last section for more discussions.
\end{rem*}

\noindent \emph{\bf Strategy on the proof of Theorem \ref{main}.} We conclude this introduction by a brief outline of the proof of Theorem \ref{main}. We divide the statements into two parts. An easier part is that
$\sL_1(X)\geq 2\log g-4\log\log g-\omega(g)$ with high probability, or equivalently, 
$$
\limg\Prob\big(X\in\M_g\,;\,\sL_1(X) \leq 2\log g - 4\log \log g - \omega(g)  \big) = 0.
$$
The proof uses the techniques of Mirzakhani in \cite[Section 4.3]{Mirz13}, and is based on expectation estimates of functions on $\M_g$ of the form $N_{g_0,n_0}(X,L)$, which counts simple closed multi-geodesics on $X$ with length at most $L$ bounding subsurfaces of type $S_{g_0,n_0}$. 

The second part, where the main novelty of this paper lies, is the assertion that with high probability, a random $X\in\M_g$ contains a simple closed geodesic $\gamma$ with length at most $L(g):=2\log g-4\log\log g+\omega(g)$ bounding a one-holed torus, or equivalently,
$$
\lim_{g\to\infty}\Prob\big(X\in\M_g\,;\, N_{1,1}(X,L(g))=0\big)=0.
$$
We consider instead the function $N_{1,1}^*(X,L(g))$, which counts those $\gamma$'s whose intersection with any other $\gamma$ is ``simple'' in a certain sense, and show the stronger statement that the above limit holds if $N_{1,1}$ is replaced by $N^*_{1,1}$. To achieve this, first Chebyshev's Inequality (see Equation \eqref{prob(N*=0) leq 3 parts}) tells that one may bound it from above by the following terms where $Z^*(X,L)$ has certain intersections involved: 
\begin{eqnarray}
&&\Prob\big(N^*_{1,1}(X,L)=0 \big)\leq \frac{1}{\E[N^*_{1,1}(X,L)]} \nonumber\\
&& + \frac{\E[Y^*(X,L)] - \E[N^*_{1,1}(X,L)]^2}{\E[N^*_{1,1}(X,L)]^2}+ \frac{\E[Z^*(X,L)]}{\E[N^*_{1,1}(X,L)]^2}.\nonumber
\end{eqnarray}
As $g\to \infty$, we combine the ideas in the aforementioned works \cite{Mirz07, Mirz13, MZ15} to show that the first two terms of the \rm{RHS} above converge to $0$. The most delicate part of the proof, inspired by the work \cite{MP19}, is to show the third term of the \rm{RHS} above also converges to $0$ as $g\to \infty$: we estimate the expected number of intersecting pairs of $\gamma$'s by resolving the intersections, then apply a new usage of Mirzakhani's generalized McShane identity \cite{Mirz07} on $4$-holed spheres and $2$-holed tori to control the multiplicity occurring in the resolution procedure. 

\subsection*{Plan of the paper.} In Sections \ref{section preliminaries}, \ref{section union}, \ref{section wp volume} and \ref{section McShane identity}, we review the backgrounds, introduce some notations, and prove a few technical lemmas. We then prove the lower bound part and the upper bound part of Theorem \ref{main} and \ref{cor L1} in Sections \ref{section lower bound} and \ref{section upper bound}, respectively. In Section \ref{sec:half collar}, we prove Theorem \ref{thm:half collar} and \ref{cor:extremal}. Theorem \ref{cor:ns systole} and \ref{cor E[L1]} will be proved in Section \ref{section exp-L1}. We will pose several advanced questions in Section \ref{section questions}.

\subsection*{Acknowledgements.}
The authors would like to thank Jeffrey Brock, Curtis McMullen, and Michael Wolf for their interest and comments on this paper. The second  named author is partially supported by the NSFC grant No. $12171263$. The authors also would like to thank the anonymous referees for their careful reading and valuable comments which improve this article.

\setcounter{tocdepth}{1}
\tableofcontents


\section{Preliminaries}\label{section preliminaries}
In this section, we set our notations and review the relevant background material about moduli spaces of Riemann surfaces, \wep \ metric and Mirzakhani's Integration Formula.

\subsection{Weil-Petersson metric.} \label{sec:wp background}
We denote by $S_{g,n}$ an oriented surface of genus $g$ with $n$ punctures such that $2g+n\geq 3$, and let $\sT_{g,n}$ denote the Teichm\"uller space of $S_{g,n}$, formed by equivalence classes of complete hyperbolic surfaces of finite area marked by $S_{g,n}$. The tangent space $T_X\sT_{g,n}$ at a point $X\cong(S_{g,n},\sigma(z)|dz|^2)$ is identified with the space of finite area {\it harmonic Beltrami differentials} on $X$, i.e. forms on $X$ expressible as
$\mu=\overline{\psi}/\sigma$ where $\psi \in Q(X)$ is a holomorphic quadratic differential on $X$. Let $z=x+iy$ and $dA=\sigma(z)dxdy$ be the volume form. The \textit{Weil-Petersson metric} is the Hermitian
metric on $\sT_{g,n}$ arising from the \textit{Petersson scalar  product}
\begin{equation}
 \left<\varphi,\psi \right>= \int_X \frac{\varphi \cdot \overline{\psi}}{\sigma^2}dA\nonumber
\end{equation}
via duality. We will concern ourselves primarily with its Riemannian part $g_{\mathrm{WP}}$. Throughout this paper we denote by $\Teich(S_{g,n})$ the Teichm\"uller space endowed with the Weil-Petersson metric. By definition it is easy to see that the mapping class group $\Mod_{g,n}:=\Diff^+(S_{g,n})/\Diff^0(S_{g,n})$ acts on $\Teich(S_{g,n})$ as isometries. Thus, the \wep \ metric descends to a metric, also called the \wep \ metric, on the moduli space of Riemann surfaces $\sM_{g,n}$ which is defined as $\sT_{g,n}/\Mod_{g,n}$. Throughout this paper we also denote by $\sM_{g,n}$ the moduli space endowed with the Weil-Petersson metric and write $\sM_g = \sM_{g,0}$ for simplicity. Given $\ve{L}=(L_1,\cdots, L_n)\in\R_{\geq0}^n$, the weighted Teichm\"uller space $\T_{g,n}(\ve{L})$ parametrizes hyperbolic surfaces $X$ marked by $S_{g,n}$ such that for each $i=1,\cdots,n$,
\begin{itemize}
	\item if $L_i=0$, the $i^\text{th}$ puncture of  $X$ is a cusp;
	\item if $L_i>0$, one can attach a circle to the $i^\text{th}$ puncture of $X$ to form a geodesic boundary loop of length $L_i$. 
\end{itemize}
The weighted moduli space $\M_{g,n}(\ve{L}):=\T_{g,n}(\ve{L})/\Mod_{g,n}$ then parametrizes unmarked such surface.
The Weil-Petersson volume form is also well-defined on $\M_{g,n}(\ve{L})$ and its total volume, denoted by $V_{g,n}(\ve{L})$, is finite.

\subsection{The Fenchel-Nielsen coordinates} Recall that for any surface $S_{g,n}$, a \emph{pants decomposition} $\mathcal{P}$ of $S_{g,n}$ is a set of $(3g+n-3)$ disjoint simple closed curves $\{\alpha_i\}_{i=1}^{3g+n-3}$ such that the complement $S_{g,n}\setminus \cup_{i=1}^{3g+n-3}\alpha_i$ of $S_{g,n}$ consists of disjoint union of three-holed spheres. For each $\alpha_i \in \mathcal{P}$, there are two natural real positive functions on $\sT_{g,n}$: the geodesic length function $\ell_{\alpha_i}(X)$ and the twist function $\tau_{\alpha_i}(X)$ along $\alpha_i$. Associated to $\mathcal{P}$, the \emph{Fenchel-Nielsen coordinates}, given by $X \mapsto (\ell_{\alpha_i}(X),\tau_{\alpha_i}(X))_{i=1}^{3g+n-3}$, is a global coordinate for $\sT_{g,n}$. Wolpert in \cite{Wolpert82}
showed that the \wep \ sympletic structure has a simple form in Fenchel-Nielsen coordinates:
\bt[Wolpert]\label{wol-wp}
The \wep \ sympletic form $\omega_{\WP}$ on $\sT_{g,n}$ is given by
\[\omega_{\WP}=\sum_{i=1}^{3g+n-3}d\ell_{\alpha_i}\wedge d\tau_{\alpha_i}.\]
\et

In the sequel, we mainly work with the \emph{Weil-Petersson volume form}
$$
\dvol_{\WP}:=\tfrac{1}{(3g+n-3)!}\underbrace{\omega_{\WP}\wedge\cdots\wedge\omega_{\WP}}_{\text{$3g+n-3$ copies}}~.
$$
It is a $\Mod_{g,n}$-invariant measure on $\T_{g,n}$, hence is the lift of a measure on $\M_{g,n}$, which we still denote by $\dvol_{\WP}$. The total volume of $\M_{g,n}$ is known to be finite and is denoted by $V_{g,n}$.

Our main objects of study are geometric quantities on $\M_g$. Following \cite{Mirz13}, we view such a quantity $f:\M_g\to\R$ as a random variable on $\M_g$ with respect to the probability measure $\Prob$ defined by normalizing $\dvol_{\WP}$, and let $\E[f]$ denote the expectation. Namely,
$$
\Prob(\mathcal{A}):=\frac{1}{V_g}\int_{\M_g}\mathbf{1}_{\mathcal{A}}dX,\quad \E[f]:=\frac{1}{V_g}\int_{\M_g}f(X)dX,
$$
where $\mathcal{A}\subset\M_g$ is any Borel subset, $\mathbf{1}_\mathcal{A}:\M_g\to\{0,1\}$ is its characteristic function, and we always write $\dvol_{\WP}(X)$ as $dX$ for short. In this paper, we view certain geometric quantities as random variables on $\sM_g$, and study their asymptotic behaviors as $g\to \infty$. One may also see \cite{DGZZ22, GMST19, GPY11, MT20, MP19} for related interesting topics.

\subsection{Mirzakhani's Integration Formula}

In \cite{Mirz07}, Mirzakhani gave a formula to integrate geometric functions over moduli spaces, which is essential in the study of random surfaces with respect to Weil-Petersson metric. Then in the same paper she calculated the volume of moduli spaces together with her generalized McShane identity. In \cite{Mirz13}, applying this formula, she gave many estimations for some geometry variables in probability meaning. Here we give the version stated in \cite{Mirz13}, which is a little more general than the one in \cite{Mirz07}.

Given a homotopy class $\gamma$  of closed curves on a topological surface $S_{g,n}$ and $X\in\T_{g,n}$, we denote by $\ell_\gamma(X)$ the hyperbolic length of the unique closed geodesic in the homotopy class $\gamma$ on $X$. We also write $\ell(\gamma)$ for simplicity if the surface $X$ is clear from the context. Let $\Gamma=(\gamma_1,\cdots,\gamma_k)$ be an ordered k-tuple of disjoint homotopy classes of nontrivial, non-peripheral, simple closed curves on $S_{g,n}$. Denote the orbit of $\Gamma$ under the $\Mod_{g,n}$-action by
\begin{equation*}
\mathcal O_{\Gamma} = \{(h\cdot\gamma_1,\cdots,h\cdot\gamma_k) \,;\, h\in\Mod\nolimits_{g,n}\}.
\end{equation*}
Given a function $F:\R^k_{\geq0} \rightarrow \R$, we consider the function on $\M_{g,n}$ given by
\begin{eqnarray*}
F^\Gamma:\M_{g,n} &\rightarrow& \R \\
X &\mapsto& \sum_{(\alpha_1,\cdots,\alpha_k)\in \mathcal O_\Gamma} F(\ell_{\alpha_1}(X),\cdots,\ell_{\alpha_k}(X))
\end{eqnarray*}
provided that the sum converges.
\begin{rem*}
Although $\ell_\gamma\bracedcdot$ is only defined on $\T_{g,n}$, the function $F^\Gamma\bracedcdot$ is well-defined on $\M_{g,n}$.
\end{rem*}

Assume $S_{g,n}-\cup\gamma_j = \cup_{i=1}^s S_{g_i,n_i}$. For any given $\boldsymbol{x}=(x_1,\cdots,x_k)\in \R^k_{\geq0}$, we consider the moduli space $\M(S_{g,n}(\Gamma); \ell_{\Gamma}=\boldsymbol{x})$ of (possibly disconnected) hyperbolic Riemann surfaces homeomorphic to $S_{g,n}-\cup\gamma_j$ with $\ell_{\gamma_i^1} = \ell_{\gamma_i^2} =x_i$ for $i=1,\cdots,k$, where $\gamma_i^1$ and $\gamma_i^2$ are the two boundary components of $S_{g,n}-\cup\gamma_j$ given by $\gamma_i$. We consider the volume
\begin{equation*}
V_{g,n}(\Gamma,\boldsymbol{x}) = \Vol\nolimits_{\rm WP}\big(\M(S_{g,n}(\Gamma) \,;\, \ell_{\Gamma}=\boldsymbol{x})\big).
\end{equation*}
In general
\begin{equation*}
V_{g,n}(\Gamma,\boldsymbol{x}) = \prod_{i=1}^s V_{g_i,n_i}(\boldsymbol{x}^{(i)})
\end{equation*}
where $\boldsymbol{x}^{(i)}$ is the list of those coordinates $x_j$ of $\boldsymbol{x}$ such that $\gamma_j$ is a boundary component of $S_{g_i,n_i}$. And $V_{g_i,n_i}(\boldsymbol{x}^{(i)})$ is the Weil-Petersson volume of the moduli space $\M_{g_i,n_i}(\boldsymbol{x}^{(i)})$. Mirzakhani used Theorem \ref{wol-wp} of Wolpert to get the following integration formula. One may see \cite[Theorem 7.1]{Mirz07} or \cite[Theorem 2.2]{MP19} or \cite[Theorem 4.1]{Wright-tour}.

\begin{theorem}\label{Mirz int formula}
For any $\Gamma=(\gamma_1,\cdots,\gamma_k)$, the integral of $F^\Gamma$ over $\M_{g,n}$ with respect to Weil-Petersson metric is given by
\begin{equation*}
\int_{\M_{g,n}} F^\Gamma(X)dX =
C_\Gamma\int_{\R^k_{\geq0}} F(x_1,\cdots,x_k)V_{g,n}(\Gamma,\boldsymbol{x}) \boldsymbol{x}\cdot d\boldsymbol{x}
\end{equation*}
where $\boldsymbol{x}\cdot d\boldsymbol{x} = x_1\cdots x_k dx_1\wedge\cdots\wedge dx_k$ and the constant $C_\Gamma \in (0,1]$ only depends on $\Gamma$. Moreover, $C_\Gamma=2^{-k}$ if each $\gamma_i$ in $\Gamma$ separates off a one-holed torus. 
\end{theorem}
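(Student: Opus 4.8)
The plan is to combine an \emph{unfolding} of the sum defining $F^\Gamma$ with Wolpert's formula (Theorem \ref{wol-wp}) for $\omega_{\WP}$ in Fenchel--Nielsen coordinates. First, extend $\Gamma=(\gamma_1,\dots,\gamma_k)$ to a pants decomposition $\gamma_1,\dots,\gamma_{3g+n-3}$ of $S_{g,n}$ and work with the associated Fenchel--Nielsen coordinates $(\ell_{\gamma_i},\tau_{\gamma_i})$ on $\T_{g,n}$. By Theorem \ref{wol-wp} the Weil--Petersson volume form equals $\prod_{i=1}^{3g+n-3}d\ell_{\gamma_i}\wedge d\tau_{\gamma_i}$, which factors as $\bigl(\prod_{i=1}^{k}d\ell_{\gamma_i}\wedge d\tau_{\gamma_i}\bigr)\wedge\Omega$, where, once the lengths $x_i=\ell_{\gamma_i}$ are fixed, $\Omega$ is precisely the Weil--Petersson volume form in the remaining Fenchel--Nielsen coordinates of the cut surface $S_{g,n}(\Gamma):=S_{g,n}\setminus\bigcup_j\gamma_j$.

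Next, pass to the intermediate cover $\pi\colon\M_{g,n}^{\Gamma}\to\M_{g,n}$, where $\M_{g,n}^{\Gamma}:=\T_{g,n}/\mathrm{Stab}(\Gamma)$ and $\mathrm{Stab}(\Gamma)\leq\Mod_{g,n}$ is the stabilizer of the ordered tuple of isotopy classes $(\gamma_1,\dots,\gamma_k)$. A point of $\M_{g,n}^{\Gamma}$ is a hyperbolic surface together with a marked representative of $\Gamma$, so the fiber $\pi^{-1}(X)$ is exactly the orbit $\mathcal O_\Gamma$ (away from the orbifold locus, which is Weil--Petersson null), and each $\ell_{\gamma_i}$ descends to a well-defined function on $\M_{g,n}^{\Gamma}$. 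Tonelli's theorem then gives the unfolding identity
\[
\int_{\M_{g,n}}F^\Gamma(X)\,dX=\int_{\M_{g,n}^{\Gamma}}F\bigl(\ell_{\gamma_1}(Y),\dots,\ell_{\gamma_k}(Y)\bigr)\,dY.
\]

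Finally, evaluate the right-hand side by integrating iteratively in the coordinates of the first step. Over a fixed $\boldsymbol x=(x_1,\dots,x_k)\in\R^k_{\geq0}$, the fiber of $(\ell_{\gamma_1},\dots,\ell_{\gamma_k})$ on $\M_{g,n}^{\Gamma}$ is identified with
\[
\M\bigl(S_{g,n}(\Gamma);\ell_\Gamma=\boldsymbol x\bigr)\times\prod_{i=1}^{k}\bigl(\R/c_i\Z\bigr),
\]
the product of the moduli space of the cut surface with a torus of twist parameters, where $c_i=x_i$ in general but $c_i=x_i/2$ precisely when $\gamma_i$ separates off a one-handle: in that case the elliptic involution of the $S_{1,1}$-piece fixes $\gamma_i$ and rotates it by a half period, so $\tau_{\gamma_i}$ and $\tau_{\gamma_i}+x_i/2$ represent the same point of $\M_{g,n}^{\Gamma}$. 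Integrating $\Omega$ over $\M(S_{g,n}(\Gamma);\ell_\Gamma=\boldsymbol x)$ produces $V_{g,n}(\Gamma,\boldsymbol x)$, integrating the twist directions produces $\prod_{i=1}^{k}c_i=2^{-M(\Gamma)}x_1\cdots x_k$, and integrating the remaining $dx_i$ yields exactly the claimed formula.

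The step I expect to be the main obstacle is the identification of $\pi^{-1}(X)$ and, more precisely, of the fiber of the length map with the stated product: one must verify that $\mathrm{Stab}(\Gamma)$ is generated, up to the half-twist symmetries just described, by the Dehn twists $T_{\gamma_1},\dots,T_{\gamma_k}$ together with the mapping class group of $S_{g,n}(\Gamma)$, and compute the twist periods $c_i$ accordingly. This is the source of the factor $2^{-M(\Gamma)}$; the rest (Wolpert's formula, the factorization of the volume form, and the measure-theoretic unfolding) is routine.
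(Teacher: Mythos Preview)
The paper does not prove this theorem; it is quoted from \cite{Mirz07,Mirz13} as background, with only the one-line remark that ``Mirzakhani used Theorem \ref{wol-wp} of Wolpert to get the following integration formula.'' So there is no proof in the paper to compare against.

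That said, your outline is the standard one and is essentially Mirzakhani's original argument: unfold over the intermediate cover $\T_{g,n}/\mathrm{Stab}(\Gamma)$, use Wolpert's form to factor the volume element, and integrate out the twist directions fiberwise. Your identification of the source of $2^{-M(\Gamma)}$ via the elliptic involution on the $S_{1,1}$ piece (which restricts to a half-rotation of the boundary and hence halves the twist period) is correct. The one place to be careful, which you flag yourself, is the precise structure of $\mathrm{Stab}(\Gamma)$: one needs the short exact sequence
\[
1\longrightarrow \bigoplus_{i=1}^{k}\Z\cdot T_{\gamma_i}\longrightarrow \mathrm{Stab}(\Gamma)\longrightarrow G_\Gamma\longrightarrow 1,
\]
and an identification of $G_\Gamma$ with the mapping class group of the cut surface with \emph{labeled} boundary (so that no extra symmetries swap boundary components, since $\Gamma$ is ordered), except that for each $\gamma_i$ bounding a one-handle the elliptic involution descends to an element of $G_\Gamma$ acting trivially on the interior moduli but contributing the half-twist. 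Verifying that these are the \emph{only} extra identifications (and in particular that no further finite symmetries arise from pieces other than $S_{1,1}$) is exactly the content of Mirzakhani's Lemma/Theorem 7.1 in \cite{Mirz07}; your proposal correctly isolates this as the nontrivial step but does not carry it out.
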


\begin{rem*}
An explicit expression for $C_\Gamma$ is provided by Wright in \cite[Section 4]{Wright-tour}. 
\end{rem*}

\begin{rem*}
Given an unordered multi-curve $\gamma=\sum_{i=1}^k c_i \gamma_i$ where $\gamma_i's$ are distinct disjoint homotopy classes of nontrivial, non-peripheral, simple closed curves on $S_{g,n}$, when $F$ is a symmetric function, we can define
\begin{eqnarray*}
F_\gamma:\M_{g,n} &\rightarrow& \R \\
X &\mapsto& \sum_{\sum_{i=1}^k c_i\alpha_i \in \Mod_{g,n}\cdot \gamma} F(c_1\ell_{\alpha_1}(X),\cdots,c_k\ell_{\alpha_k}(X)).
\end{eqnarray*}

\noindent It is easy to check that
\begin{equation*}
F^\Gamma(X) = |\Sym(\gamma)| \cdot F_\gamma(X)
\end{equation*}
where $\Gamma=(c_1\gamma_1,\cdots,c_k\gamma_k)$ and $\Sym(\gamma)$ is the symmetry group of $\gamma$ defined by
\begin{equation*}
\Sym(\gamma) = \mathop{\rm Stab}(\gamma) / \cap_{i=1}^k \mathop{\rm Stab}(\gamma_i).
\end{equation*}

\noindent Actually we consider the integration of $F_\gamma$ for most times in this paper.
\end{rem*}

\subsection{Counting functions}
In this subsection we introduce some notations that will be used in the whole paper here.

On a topological surface $S_{g,n}$ with $\chi(S_{g,n})=2-2g-n<0$, let $\gamma=\sum_{i=1}^k  \gamma_i$ be a simple closed multi-curves where $\gamma_i's$ are disjoint homotopy classes of nontrivial, non-peripheral, simple closed curves on $S_{g,n}$. For any $X\in \T_{g,n}$, we put
\begin{equation*}
\ell_\gamma(X) := \sum_{i=1}^k  \ell_{\gamma_i}(X).
\end{equation*}
We sometimes write $\ell_\gamma(X)$ as $\ell(\gamma)$ if the surface $X$ is clear from the context.

Consider the orbit of $\gamma$ under the mapping class group $\Mod_{g,n}$ action, which we denote by
\begin{equation*}
\mathcal O_{\gamma} = \{h\cdot\gamma \,;\,  h\in\Mod\nolimits_{g,n}\}
\end{equation*}
where $h\cdot \gamma = h\cdot \sum_{i=1}^k\gamma_i = \sum_{i=1}^k h\cdot\gamma_i$.

For any $X\in\T_{g,n}$ and $L>0$, we can define the counting function
\begin{equation*}
N_\gamma(X,L) := \# \{\alpha\in \mathcal O_{\gamma}\,;\,  \ell_{\alpha}(X)\leq L \}.
\end{equation*}

Moreover, although $\ell_\gamma\bracedcdot$ is only defined for $\T_{g,n}$, the counting function $N_\gamma(\cdot,L)$ is well-defined on $\M_{g,n}$.

Note that the orbit $\mathcal{O}_\gamma$ of a simple closed multi-curve $\gamma$ is determined by the topology of $S_{g,n}-\gamma$. We also use the following notations for some special types of multi-curves $\gamma$.

When $\alpha$ consists of $n_0$ simple closed curves separating $S_g$ into $S_{g_0,n_0} \cup S_{g-g_0-n_0+1,n_0}$ (\eg see Figure \ref{figure:counting} for the case that $n_0=1$ and $g_0=1$), we write
\begin{equation*}
N_{g_0,n_0}(X,L) := N_\alpha(X,L).
\end{equation*}

When $\gamma$ consists of $n_0$ simple closed curves separating $S_g$ into $q+1$ pieces $S_{g_0,n_0} \cup S_{g_1,n_1} \cup S_{g_2,n_2} \cup \cdots \cup S_{g_q,n_q}$ with $n_1+\cdots+n_q=n_0$ and $g_0+g_1+\cdots+g_q+n_0-q=g$ (\eg see Figure \ref{figure:counting}), we write
\begin{equation*}
N_{g_0,n_0}^{(g_1,n_1),\cdots,(g_q,n_q)}(X,L) := N_\gamma(X,L).
\end{equation*}
In particularly, $N_{g_0,n_0}(X,L)= N_{g_0,n_0}^{(g-g_0-n_0+1,n_0)}(X,L)$.

\begin{figure}[h]
	\centering	
	\includegraphics[width=8.2cm]{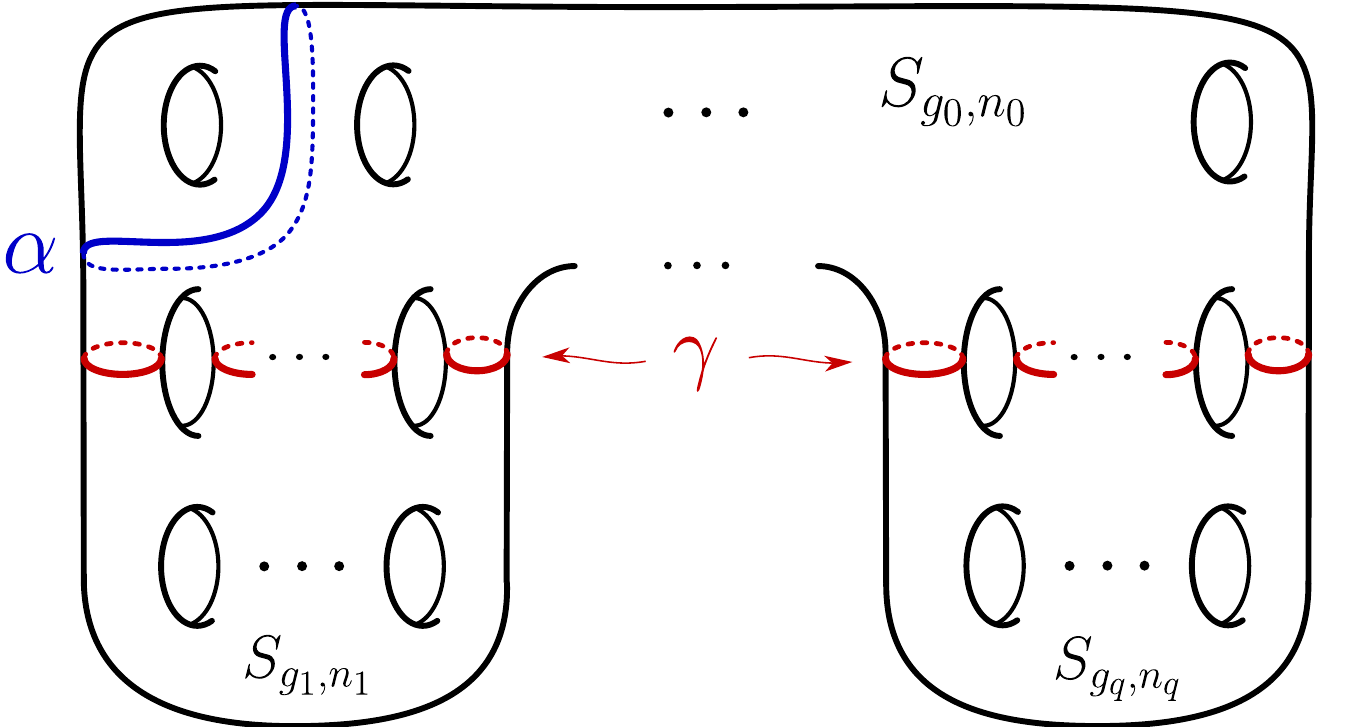}
	\caption{}
	\label{figure:counting}
\end{figure}

We will also use some other less common counting functions in this paper and will introduce them when required.

\section{Union of two subsurfaces with geodesic boundaries}\label{section union}

In this section, we present some hyperbolic-geometric constructions and lemmas used in Section \ref{section upper bound} below.

\begin{con*}
Fix a closed hyperbolic surface $X\in\M_g$ and let $X_1, X_2$ be two distinct connected, compact subsurfaces of $X$ with geodesic boundaries, such that $X_1\cap X_2\neq\emptyset$ and neither of them contains the other. Then the union $X_1\cup X_2$ is a subsurface whose boundary is only piecewise geodesic. We can construct from it a new subsurface, with geodesic boundary, by deforming each of its boundary components $\xi\subset\partial(X_1\cup X_2)$ as follows: 
\begin{itemize}
	\item if $\xi$ is homotopically both nontrivial and distinct from any other component of $\partial(X_1\cup X_2)$, we deform $X_1\cup X_2$ by shrinking $\xi$ to the unique simple closed geodesic homotopic to it;
	\item if $\xi$ is homotopically trivial, we fill into $X_1\cup X_2$ the disc bounded by $\xi$;
	\item if $\xi$ is homotopic to some other component $\xi'$ of $\partial(X_1\cup X_2)$, we fill into $X_1\cup X_2$ the annulus bounded by $\xi$ and $\xi'$.
\end{itemize}
Denoted the resulting compact subsurface with geodesic boundary by $X_3$.
\end{con*}

By construction, it is clear that
$$
\ell(\partial X_3) \leq \ell(\partial X_1) + \ell(\partial X_2).
$$

We will mainly apply this construction to the situation where $X_1$ and $X_2$ are both one-holed torus (that is, of type $S_{1,1}$). We introduce the following notation for this case:
\begin{definition}\label{def U}
	Suppose $X\in\M_g$. For a simple closed geodesic $\alpha\subset X$ bounding a one-holed torus, let $X_\alpha$ denote the one-holed torus bounded by $\alpha$. For two such geodesics $\alpha,\beta$ with $\alpha\neq\beta$, $\alpha\cap\beta\neq\emptyset$, let $X_{\alpha\beta}$ denote the subsurface $X_3$ of $X$ constructed above for $X_1=X_\alpha$ and $X_2=X_\beta$. See Figure \ref{figure_examples}.
\end{definition}
\begin{figure}[ht]
	\centering
	\includegraphics[width=11cm]{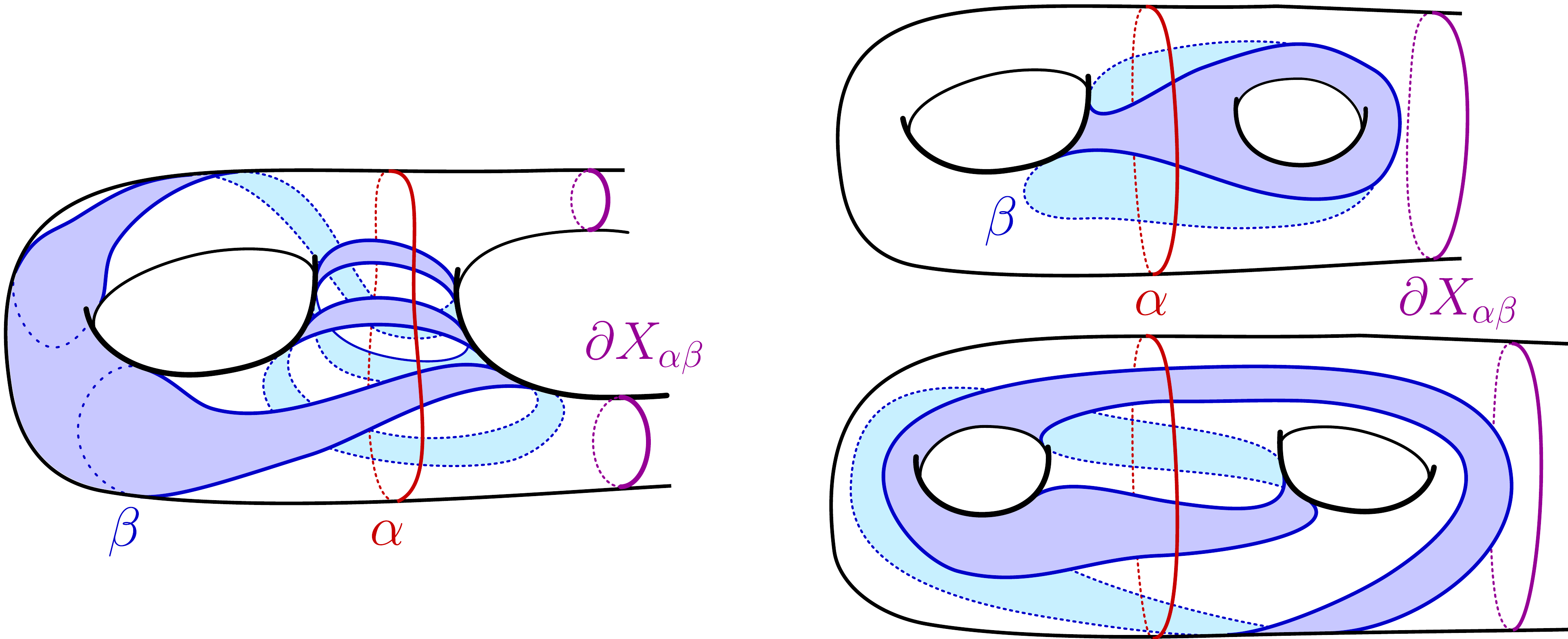}
	\caption{Examples of $(\alpha,\beta)$ and $X_{\alpha\beta}$. The one-holed torus $X_\beta$ is colored. $X_{\alpha\beta}$ is of type $S_{1,2}$ in the first example and of type $S_{2,1}$ in both examples on the right.}
	\label{figure_examples}
\end{figure}
\begin{rem*}
The first example in Figure \ref{figure_examples} illustrates the case where $\beta$ is obtained from $\alpha$ by $n$-times Dehn twist along another simple closed curve. In this case, $X_{\alpha\beta}$ is always of type $S_{1,2}$. Note that $X_\beta\setminus X_\alpha$ is a disjoint union of strips homotopic to each other in this case. So one can construct a pair $(\alpha,\beta)$ with $|\chi(X_{\alpha\beta})|$ arbitrarily large by modifying these strips, making them not homotopic.
\end{rem*}

We now return to the general case and establish a basic property for $X_3$:
\begin{lemma}\label{alpha sbs U}
	Let $X_1$, $X_2$ and $X_3$ be as above. Then we have
	\begin{equation*}
	X_1 \cup X_2 \sbs X_3,
	\end{equation*}
	and the complement $X_3 \setminus (X_1 \cup X_2)$ is a disjoint union of topological discs and cylinders.
\end{lemma}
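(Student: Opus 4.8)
My plan is to describe $X_{12}\setminus(X_1\cup X_2)$ explicitly. Write $Y=X_1\cup X_2$ and $Z=\overline{X\setminus Y}$, so that $\partial Z=\partial Y$. The passage from $Y$ to $\tilde X_{12}$ (and then to $X_{12}$) alters $Y$ only across the components of $Z$: a disc component of $Z$ is filled in, and for any other component $C$ its boundary curves $\xi$ — which are then non-null-homotopic — are pushed to the geodesics $\gamma_\xi\simeq\xi$, after which the final gluing identifies collars that reach a common geodesic from opposite sides. So it suffices to prove: (1) each component $C$ of $Z$ is a topological disc, a cylinder, or else admits a convex core $C^{\circ}\subsetneq C$ with geodesic boundary for which $C\setminus C^{\circ}$ is a disjoint union of half-open cylinders; and (2) $X_{12}$ is obtained from $Y$ by attaching exactly the disc components of $Z$, the cylinder components of $Z$, and the cylinders $C\setminus C^{\circ}$. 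Since all attached pieces lie in $Z$, (2) gives $X_1\cup X_2\subseteq X_{12}$, and $X_{12}\setminus(X_1\cup X_2)$ is a disjoint union of discs and cylinders.

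The geometric input behind (1) is that $Z$ is \emph{locally convex}. Two distinct closed geodesics of a hyperbolic surface are automatically transverse and meet in finitely many points; hence, away from finitely many ``corners'', $\partial Z$ runs along $\partial X_1\cup\partial X_2$ geodesically, and at each corner — a transverse crossing of a component of $\partial X_1$ with one of $\partial X_2$ — the surface $Z$ is locally the intersection of the two geodesic half-planes complementary to $X_1$ and $X_2$, hence a sector of angle $<\pi$. (A boundary component shared by $X_1$ and $X_2$ is a common geodesic of $X$ and creates no corner; if $Z$ has empty interior then $Y=X$ and the lemma is vacuous.) In particular each boundary curve $\xi$ of $Z$ is a piecewise-geodesic simple closed curve convex towards $Z$ at every corner. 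When $\xi$ is null-homotopic, the disc $D_\xi\subset X$ it bounds lies on its convex ($Z$-)side near $\xi$; as $\mathrm{int}\,Y$ is connected, disjoint from $\xi$, and not contained in an embedded disc (it contains $X_1$, which has nonempty geodesic boundary), $\mathrm{int}\,Y$ is disjoint from $D_\xi$, so $D_\xi\subset Z$: these are the disc components of $Z$.

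For the other components I would use the standard fact that a locally convex compact subsurface $C$ of a hyperbolic surface is $\pi_1$-injective and contains its convex core. Indeed, a component $\tilde C$ of the preimage of $C$ in $\H^2$ is locally convex and simply connected, hence convex, and is $\pi_1(C)$-invariant; so the convex hull of the limit set $\Lambda(\pi_1 C)$ lies in $\tilde C$ (a closed convex invariant set contains the axis of each hyperbolic element, and these axes have endpoints dense in $\Lambda(\pi_1 C)$ when the group is non-elementary). If $\pi_1(C)$ is trivial or cyclic, $C$ is a disc or cylinder; otherwise this convex hull descends to $C^{\circ}\subsetneq C$, a subsurface with geodesic boundary, and $C\setminus C^{\circ}$ is a disjoint union of half-open cylinders, one ``collar'' per boundary curve of $C$. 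The boundary geodesics of $C^{\circ}$ are precisely the $\gamma_\xi$ of the boundary curves $\xi$ of $C$, two of them coinciding exactly in the ``doubled geodesic'' case of the construction. Reading this component-by-component against the definition of $\tilde X_{12}$ and $X_{12}$ gives (1) and (2).

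The crux, I expect, is the input used in the third paragraph: turning the \emph{local} convexity of $Z$ along $\partial Z$ into the \emph{global} fact that each $\gamma_\xi$ and each swept collar genuinely lies in $Z$, with the collars embedded and disjoint — the convex region a single lift of $\xi$ bounds could a priori be large and contain lifts of other boundary curves of $Z$. One controls this by noting that $\gamma_\xi$ has zero geometric intersection number with every component of $\partial X_1$ and of $\partial X_2$ (since $\xi$ lies in the subsurface $\overline{X\setminus\mathrm{int}\,X_1}$, resp. $\overline{X\setminus\mathrm{int}\,X_2}$, which has geodesic boundary $\partial X_1$, resp. $\partial X_2$, and geodesics minimize intersection), so $\gamma_\xi$ lies in a single complementary region of $\partial X_1\cup\partial X_2$ — or is one of those geodesics — and local convexity then pins this region to the $Z$-side. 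The remaining verifications are routine surface topology together with the bookkeeping of the doubled-geodesic gluing.
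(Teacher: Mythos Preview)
Your proof is correct and takes a genuinely different route from the paper's. Both arguments start from the same geometric observation --- which the paper phrases as the concavity of $\partial(X_1\cup X_2)$ (inner angles $>\pi$) and you phrase dually as the local convexity of the complement $Z$ --- but they cash it out differently. The paper does not invoke convex cores or lifts to $\H^2$; instead, for each homotopically nontrivial piecewise-geodesic boundary component $\xi$ of $X_0=X_1\cup X_2$, it rules out by a direct Gauss--Bonnet contradiction that the geodesic $\xi'\simeq\xi$ could meet $X_0\setminus\xi$: if $\xi'\subset X_0\setminus\xi$, Gauss--Bonnet on the annulus between $\xi$ and $\xi'$ equates a strictly negative sum (area plus corner terms $\pi-\angle_p X_0<0$) to zero; if $\xi'\cap\xi\neq\emptyset$, Gauss--Bonnet on a disc bounded by sub-arcs of $\xi$ and $\xi'$ gives the analogous contradiction. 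This is shorter and entirely self-contained. Your approach, by contrast, packages the same content as the structural statement that a locally convex compact subsurface is $\pi_1$-injective and contains its convex core, and then reads off the disc/cylinder decomposition of $X_{12}\setminus(X_1\cup X_2)$ component-by-component. This is conceptually cleaner and generalizes immediately (to unions of several geodesic-bounded subsurfaces, say), at the cost of importing a ``standard fact'' whose proof is essentially the paper's Gauss--Bonnet argument anyway.
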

\begin{proof}
	We begin with the observation that $X_0:=X_1\cup X_2$ is a subsurface of $X$ with \emph{concave} piecewise geodesic boundary, where the concavity means that for each junction point $p\in\partial X_0$ of two geodesic pieces of $\partial X_0$, the inner angle $\angle_pX_0$ of $X_0$ at $p$ is greater than $\pi$ (see Figure \ref{figure:crown1}). This is because $\angle_pX_0$ is formed by overlapping the two $\pi$-angles given by $X_1$ and $X_2$ at $p$.
	
	\begin{figure}[ht]
		\centering
		\includegraphics[width=4cm]{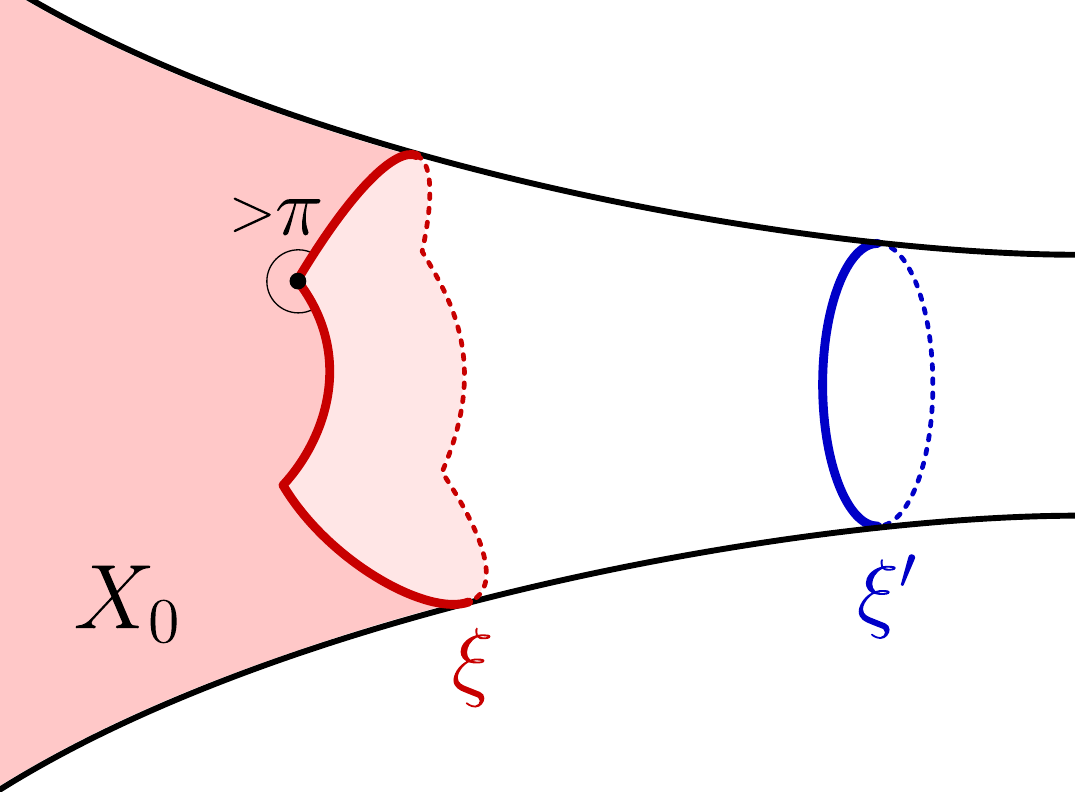}
		\caption{}
		\label{figure:crown1}
	\end{figure}
	
	By the construction of $X_3$, in order to prove the required statements, we only need to show that if $\xi$ is a component of $\partial X_0$ which is homotopically nontrivial and consists of at least two geodesic pieces, then $\xi$ and the simple closed geodesic $\xi'$ homotopic to $\xi$ together bound an annulus outside of $X_0$, as Figure \ref{figure:crown1} shows. 
	
	Suppose by contradiction that $\xi$ violates this property. Then we are in one of the following cases: 
	
	\textbf{Case 1. $\xi'$ is contained in $X_0\setminus\xi$} (see Figure \ref{figure:crown2}).
	\begin{figure}[ht]
		\centering
		\includegraphics[width=4cm]{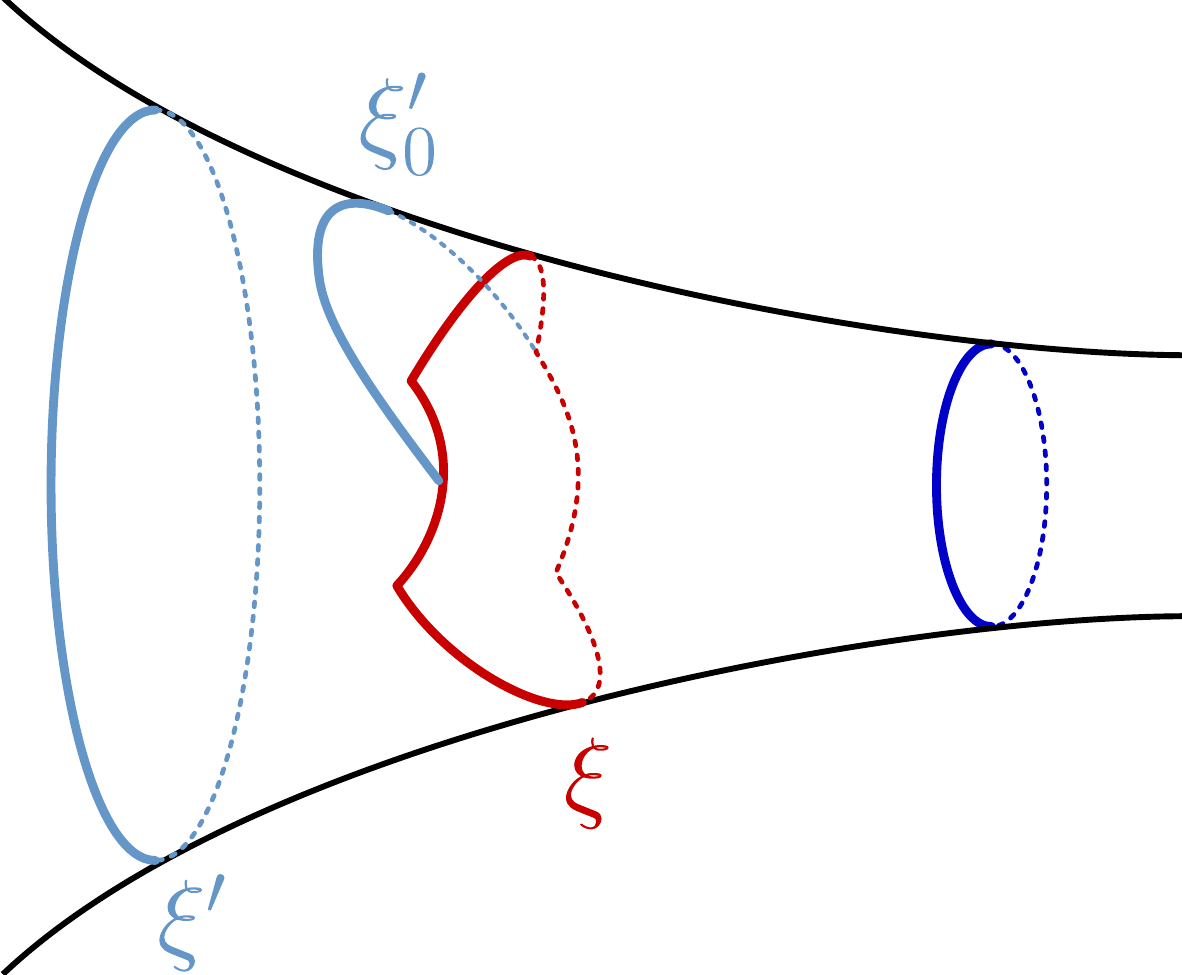}
		\caption{}
		\label{figure:crown2}
	\end{figure}
	Applying the Gauss-Bonnet formula to the annulus $A$ bounded by $\xi$ and $\xi'$ in this case, we get
	$$
	-\area(A)+\sum_{p\in J(\xi)}(\pi-\angle_pX_0)=2\pi\chi(A)=0,
	$$
	where $J(\xi)$ denote the set of junction points of the geodesic pieces of $\xi$. This is a contradiction because the LHS is negative.
	
	\textbf{Case 2.} Otherwise, we have $\xi'\cap\xi\neq\emptyset$. In this case, $\xi'$ contains an arc $\xi'_0$ in $X_0\setminus\xi$ joining two points $q_1,q_2$ of $\xi$ (see Figure \ref{figure:crown2}). These two points separate $\xi$ into two arcs and one of them, denoted by $\xi_0$, bounds a disc $D$ together with $\xi_0'$ because $\xi'$ is homotopic to $\xi$. Applying Gauss-Bonnet to $D$, we get
	$$
	-\area(D)+(\pi-\angle_{q_1}D)+(\pi-\angle_{q_2}D)+\sum_{p\in J(\xi_0)}(\pi-\angle_pX_0)=2\pi\chi(D)=2\pi,
	$$
	which also leads to a contradiction. 
\end{proof}

We proceed to give bounds on the Euler characteristic of $X_3$: 
\begin{lemma}\label{lemma chiU12}
	Let $X_1$, $X_2$ and $X_3$ be as above. Then we have
	\begin{equation*}
	|\chi(X_3)| \geq 1+\max\{|\chi(X_1)|,|\chi(X_2)|\}
	\end{equation*}
and
	\begin{equation*}
	|\chi(X_3)| \leq |\chi(X_1)| + |\chi(X_2)|+\frac{\ell(\partial X_1)+\ell(\partial X_2)}{2\pi}.
	\end{equation*}
\end{lemma}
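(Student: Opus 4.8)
The plan is to deduce both estimates from the Gauss--Bonnet theorem together with the structural description of $X_{12}$ furnished by Lemma~\ref{alpha sbs U}. Throughout I use that a compact hyperbolic surface $S$ with geodesic boundary satisfies $\area(S)=2\pi\abs{\chi(S)}$ (there is no boundary term, the boundary being geodesic), and that $X_1$, $X_2$, $X_{12}$ are all compact subsurfaces of the closed hyperbolic surface $X$ with geodesic boundary, hence have $\chi\le -1$ and have all their boundary curves essential in $X$.

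For the lower bound, assume $\abs{\chi(X_1)}\ge\abs{\chi(X_2)}$ without loss of generality; the idea is that $X_1$ sits inside $X_{12}$ with extra topology. By Lemma~\ref{alpha sbs U} we have $X_1\subseteq X_1\cup X_2\subseteq X_{12}$, and since $X_2\not\subseteq X_1$ this inclusion is proper, so $Z:=\overline{X_{12}\setminus X_1}$ is a nonempty compact surface with $\chi(X_{12})=\chi(X_1)+\chi(Z)$ (the gluing locus is a disjoint union of circles). Each boundary component of $Z$ is a whole component of $\partial X_1$ or of $\partial X_{12}$, hence a simple closed geodesic of $X$. Therefore $Z$ has no disc component (its bounding curve would be essential in $X$) and no annulus component (the two bounding curves would be distinct simple closed geodesics of $X$ that are isotopic in $X$, which is impossible). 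So every component of $Z$ has negative Euler characteristic, $\chi(Z)\le -1$, and $\abs{\chi(X_{12})}\ge\abs{\chi(X_1)}+1=1+\max\{\abs{\chi(X_1)},\abs{\chi(X_2)}\}$.

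For the upper bound, I would estimate $\area(X_{12})$. Put $X_0:=X_1\cup X_2$. By Lemma~\ref{alpha sbs U}, $X_{12}$ is $X_0$ together with a finite family of ``crown'' regions, one per boundary component $\xi$ of $X_0$: either the disc bounded by $\xi$ (if $\xi$ is homotopically trivial), or the hyperbolic annulus $A_\xi$ cobounded by $\xi$ and the simple closed geodesic $\xi'$ homotopic to $\xi$, lying outside $X_0$. Since $\area(X_0)\le\area(X_1)+\area(X_2)$ and $\sum_\xi\ell(\xi)=\ell(\partial X_0)\le\ell(\partial X_1)+\ell(\partial X_2)$ (the geodesic pieces of the curves $\xi$ are disjoint sub-arcs of $\partial X_1\cup\partial X_2$), it suffices to prove the pointwise bound $\area(R)\le\ell(\xi)$ for the crown region $R$ at $\xi$. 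For a disc region this is the hyperbolic isoperimetric inequality. For an annulus $A_\xi$: by Lemma~\ref{alpha sbs U} the curve $\xi$ is locally convex as seen from $A_\xi$, and being simple and translation-invariant it bounds, in the universal cover of $A_\xi$, a convex region containing the lift of $\xi'$; hence $\xi$ is a graph $t=h(s)$ over $\xi'$ in the Fermi coordinates $(s,t)$ about $\xi'$, in which the metric is $dt^2+\cosh^2 t\,ds^2$. Then
\[
\area(A_\xi)=\int_0^{\ell(\xi')}\sinh h(s)\,ds\le\int_0^{\ell(\xi')}\cosh h(s)\,ds\le\ell(\xi).
\]
Summing and dividing by $2\pi$ gives $\abs{\chi(X_{12})}\le\abs{\chi(X_1)}+\abs{\chi(X_2)}+\frac{\ell(\partial X_1)+\ell(\partial X_2)}{2\pi}$.

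The step I expect to be the main obstacle is the area bound for the annular crown regions: one must carefully exploit the concavity of $\partial X_0$ (Lemma~\ref{alpha sbs U}) to show that $\xi$ is genuinely a graph over its geodesic representative, so that $A_\xi$ is swept out monotonically by the Fermi coordinate and the elementary length/area comparison applies. A minor side issue is the degenerate case where $\partial X_1$ and $\partial X_2$ share a common simple closed geodesic, so $\partial X_0$ has fewer corners; this does not occur in the intended applications ($X_1=X_\alpha$, $X_2=X_\beta$ with $\alpha\ne\beta$ and $\alpha\cap\beta\ne\emptyset$) and in general only strengthens the inequalities.
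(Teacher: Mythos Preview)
Your proof is correct, but it takes a somewhat different route from the paper in both halves.

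For the lower bound, the paper argues in one line: since neither $X_i$ contains the other, Lemma~\ref{alpha sbs U} and Gauss--Bonnet give $\area(X_{12})>\max\{\area(X_1),\area(X_2)\}$, and integrality of Euler characteristic finishes it. Your argument via the complement $Z=\overline{X_{12}\setminus X_1}$ is also valid (the key point that $\partial Z$ consists of whole simple closed geodesics, hence $Z$ has no disc or annulus components, is fine), but it is noticeably heavier than the paper's two-line area comparison.

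For the upper bound, the overall scheme is the same---bound the area of the crown regions $X_{12}\setminus X_0$ by $\ell(\partial X_0)$---but the implementations differ. The paper invokes the isoperimetric inequality for hyperbolic discs and cylinders directly; for an annulus $C_j$ with one geodesic boundary it uses the doubling trick $\area(2C_j)\le\ell(\partial(2C_j))=2\ell(\xi_j)$. You instead re-derive the annulus bound via Fermi coordinates over $\xi'$, which is a pleasant elementary computation but requires the ``graph'' claim you flagged (it does follow from convexity of the lifted strip, as you suggest). Note also that the construction of $X_{12}$ allows a third type of crown piece $C'_p$, where two boundary components $\xi_{k^1_p},\xi_{k^2_p}$ of $X_0$ shrink to the \emph{same} geodesic from opposite sides and get glued; you do not mention this case, but your Fermi estimate applied to each half still gives $\area(C'_p)\le\ell(\xi_{k^1_p})+\ell(\xi_{k^2_p})$, so the final bound is unaffected. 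The paper's version is cleaner because it outsources the annulus estimate to a citation.
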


\begin{proof}
	By Gauss-Bonnet formula and the assumption that neither $X_1$ nor $X_2$ contains the other, we have
	\begin{align*}
	|\chi(X_3)|&=\frac{1}{2\pi}\area(X_3) \\
	&>\frac{1}{2\pi}\max\{\area(X_1), \area(X_2) \} \\
	&=\max\{|\chi(X_1)|,|\chi(X_2)|\},
	\end{align*}
	which is equivalent to the required lower bound of $|\chi(X_3)|$ because Euler characteristics are integers.
	
	To prove the upper bound, let $\xi_1,\cdots,\xi_r$ be the boundary components of $X_1\cup X_2$ which are piecewise geodesics with at least two pieces. Let $I$ denote the set of indices $i\in\{1,\cdots,r\}$ such that $\xi_i$ is homotopically trivial, $J$ denote the set of indices $j\in\{1,\cdots,r\}$ such that $\xi_j$ is homotopic to a component of $\partial X_3$, and $K$ denote the set of indices $k\in\{1,\cdots,r\}$ such that $\xi_k$ is homotopic to a geodesic in the interior of $X_3$.
	
	By Lemma \ref{alpha sbs U},  $X_3 \setminus (X_1 \cup X_2)$ is a disjoint union of topological discs $\{D_i\}_{i\in I}$, cylinders $\{C_j\}_{j\in J}$ and cylinders $\{C'_{p}\}_{p\in P}$, where $\partial D_i$ is exactly $\xi_i$, $\partial C_j$ is the union of $\xi_j$ and some boundary component of $X_3$, and $\partial C'_{p}$ is the union of two elements $\xi_{k_p^1}$ and $\xi_{k_p^2}$ of $\{\xi_k\}_{k\in K}$. Each element of $\{\xi_1,\cdots,\xi_r\}$ appears in $\{\partial D_i\}_{i\in I}$ or $\{\partial C_j\}_{j\in J}$ or $\{\partial C'_{p}\}_{p\in P}$ exactly once.
	
	By Isoperimetric Inequality for topological discs and cylinders on hyperbolic surfaces (\eg see \cite{Buser10} or \cite{WX18}), we have
	\begin{equation*}
	\area(D_i) \leq \ell(\partial D_i)=\ell(\xi_i),
	\end{equation*}
	\begin{equation*}
	2\area(C_j) = \area(2C_j)\leq \ell(\partial (2C_j)) = 2\ell(\xi_j),
	\end{equation*}
	\begin{equation*}
	\area(C'_p) \leq \ell(\partial C'_p)=\ell(\xi_{k_p^1})+\ell(\xi_{k_p^2}),
	\end{equation*}
	where $2C_i$ denote the double of $C_i$ along its geodesic boundary component in $\partial X_3$.
	Therefore,
	\begin{align*}
	\area(X_3)
	&= \area(X_1 \cup X_2) +\sum_{i\in I}\area(D_i)+\sum_{j\in J}\area(C_j)+\sum_{p\in P}\area(C'_p) \\
	&\leq  \area(X_1 \cup X_2) +\ell(\xi_1)+\cdots+\ell(\xi_r) \\
	&\leq \area(X_1) + \area(X_2) + \ell(\partial (X_1 \cup X_2))\\
	&\leq \area(X_1) + \area(X_2) + \ell(\partial X_1)+\ell(\partial X_2).
	\end{align*}
	This gives the required upper bound of $|\chi(X_3)|$ again by Gauss-Bonnet.
\end{proof}

In the case where $X_1$ and $X_2$ are one-holed torus, Lemma \ref{lemma chiU12} implies:
\begin{lemma}\label{area U small}
	On $X\in\M_g$, let $\alpha,\beta$ be two simple closed geodesics bounding one-holed torus with $\ell(\alpha)\leq L, \ell(\beta)\leq L$ and $\alpha\neq\beta$, $\alpha\cap\beta\neq\emptyset$. Then we have
	\begin{enumerate}[label=(\arabic*)]
		\item\label{item:U1} The genus of $X_{\alpha\beta}$ is at least $1$, and the Euler characteristic $\chi(X_{\alpha\beta})$ satisfies 
		\[2 \leq |\chi(X_{\alpha\beta})| \leq \frac{1}{\pi}L+2.\]
		\item\label{item:U2}
		If $|\chi(X_{\alpha\beta})|=2$ and $g\geq3$, then $X_{\alpha\beta}$ is of type $S_{1,2}$.
	\end{enumerate}
\end{lemma}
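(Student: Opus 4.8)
The plan is to deduce everything from Lemma \ref{lemma chiU12} applied to the one-handles $X_1 = X_\alpha$, $X_2 = X_\beta$. Since a one-handle $S_{1,1}$ has $|\chi| = 1$ and a single boundary component, the hypotheses $\ell(\alpha)\le L$, $\ell(\beta)\le L$ give $\ell(\partial X_1) + \ell(\partial X_2) \le 2L$, so the upper bound in Lemma \ref{lemma chiU12} reads $|\chi(X_{\alpha\beta})| \le 1 + 1 + \frac{2L}{2\pi} = \frac{1}{\pi}L + 2$, and the lower bound reads $|\chi(X_{\alpha\beta})| \ge 1 + \max\{1,1\} = 2$. This already gives the displayed inequality in \ref{item:U1}.

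For the genus statement in \ref{item:U1}, I would argue that $X_{\alpha\beta}$ must contain a non-separating-in-itself simple closed curve, so its genus is positive. The cleanest route: by Lemma \ref{alpha sbs U} we have $X_\alpha \subset X_{\alpha\beta}$, and the core curve of the one-handle $X_\alpha$ (a non-separating simple closed curve inside $X_\alpha$, hence inside $X_{\alpha\beta}$) does not disconnect $X_{\alpha\beta}$ — indeed it does not even disconnect $X_\alpha$, and $X_{\alpha\beta}\setminus X_\alpha$ is attached along $\alpha = \partial X_\alpha$ which stays connected. A compact surface with boundary admitting a non-separating simple closed curve has positive genus, so $X_{\alpha\beta}$ has genus $\ge 1$.

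For \ref{item:U2}: assume $|\chi(X_{\alpha\beta})| = 2$ and $g \ge 3$. A compact orientable surface with $|\chi| = 2$ and at least one boundary component is one of $S_{0,4}$, $S_{1,2}$, or $S_{2,1}$ (using $|\chi| = 2g' + n' - 2$ with $n' \ge 1$). The case $S_{0,4}$ is excluded because we just showed the genus is $\ge 1$. It remains to rule out $S_{2,1}$. Here I would use the hypothesis that $X_\alpha$ and $X_\beta$ are \emph{distinct} one-handles with $\alpha \cap \beta \neq \emptyset$: the surface $\tilde X_{12}$ before any self-gluing has boundary $\partial \tilde X_{12}$ coming from $\partial(X_1 \cup X_2)$, and the type $S_{2,1}$ would arise only if $\tilde X_{12}$ already has a single boundary circle or a pair of circles glued to one — but $X_1 \cup X_2$, being the union of two one-handles that genuinely overlap (neither contains the other), has at least two essential boundary arcs-systems, forcing $\tilde X_{12}$ to have at least two boundary components. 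After at most one self-gluing (two of them becoming one interior geodesic) the count drops by at most one in pairs, so $X_{12}$ inherits a boundary component count that, combined with $|\chi| = 2$ and genus $\ge 1$, pins down $S_{1,2}$; the condition $g \ge 3$ guarantees $X_{\alpha\beta} \neq X$ so that this internal analysis is not obstructed by $X_{\alpha\beta}$ being the whole surface.

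The main obstacle I anticipate is the topological bookkeeping in \ref{item:U2}: carefully controlling the number of boundary components of $\tilde X_{12}$ and $X_{12}$ from the structure of $\partial(X_\alpha \cup X_\beta)$, and in particular showing this number is at least $2$. This requires a clean description of how two overlapping one-handles meet — essentially that $X_\alpha \cap X_\beta$ and the complementary pieces force the boundary to break into enough arcs — and is where I would spend the most care, possibly by a direct case analysis on the intersection pattern of $\alpha$ and $\beta$ as in Figure \ref{figure_examples}.
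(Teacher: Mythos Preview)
Your treatment of part \ref{item:U1} is correct and matches the paper: the Euler characteristic bounds are exactly Lemma \ref{lemma chiU12} specialized to $|\chi(X_\alpha)|=|\chi(X_\beta)|=1$ and $\ell(\partial X_\alpha)+\ell(\partial X_\beta)\le 2L$, and your genus argument via the core curve of $X_\alpha$ (using $X_\alpha\subset X_{\alpha\beta}$ from Lemma \ref{alpha sbs U}) is a clean way to justify what the paper leaves implicit.

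Part \ref{item:U2} contains an arithmetic slip that sends you down an unnecessary and ultimately unjustified path. You list the surfaces with $|\chi|=2$ and $n'\ge 1$ as $S_{0,4}$, $S_{1,2}$, $S_{2,1}$, but $|\chi(S_{2,1})|=2\cdot 2+1-2=3$, not $2$. The correct solutions of $2g'+n'-2=2$ with $n'\ge 1$ are only $(g',n')\in\{(0,4),(1,2)\}$. Since you have already shown genus $\ge 1$, the case $S_{0,4}$ is excluded and you are immediately done. The paper's argument is essentially the same, phrased without first imposing $n'\ge 1$: it lists the solutions with $g'\ge 1$ as $S_{1,2}$ and $S_{2,0}$, and rules out $S_{2,0}$ because a closed connected surface of genus $g\ge 3$ has no proper closed subsurface. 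Your observation that $g\ge 3$ forces $X_{\alpha\beta}\neq X$ (hence $n'\ge 1$) is exactly the same fact used from the other side.

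Your attempted argument to exclude ``$S_{2,1}$'' by counting boundary components of $\tilde X_{12}$ is therefore unnecessary, and as written it is also not correct in general: Figure \ref{figure_examples} already exhibits pairs $(\alpha,\beta)$ for which $X_{\alpha\beta}$ has a \emph{single} boundary component (there of type $S_{2,1}$, with $|\chi|=3$), so the claim that two overlapping one-handles must produce at least two boundary components fails. Drop that paragraph entirely.
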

\begin{proof}
	Statement \ref{item:U1} follows from Lemma \ref{lemma chiU12}. Statement \ref{item:U2} is because the only surfaces $S_{g_0,n_0}$ such that $|\chi(S_{g_0,n_0})|=2$ and $g_0\geq1$ are $S_{1,2}$ and $S_{2,0}$, whereas $X$ cannot have a subsurface of type $S_{2,0}$ if $g\geq3$.
\end{proof}

Finally, we show that in the case where $X_{\alpha\beta}$ is of type $S_{1,2}$, under some additional assumptions, one can reduce $X_{\alpha\beta}$ to a $4$-holed sphere:
\begin{lemma}\label{lemma:4holded}
	On $X\in\M_g$, let $\alpha,\beta$ be two simple closed geodesics bounding one-holed torus with the following properties for some $L>0$:
	\begin{itemize}
		\item $\alpha\neq\beta$, $\alpha\cap\beta\neq\emptyset$;
		\item $X_{\alpha\beta}$ is of type $S_{1,2}$;
		\item $\ell(\alpha)\leq L$, $\ell(\beta)\leq L$;
		\item $\ell(\partial X_{\alpha\beta})\geq\frac{5}{3}L$.
	\end{itemize}
	Then $\alpha$ and $\beta$ have exactly $4$ intersection points, and the intersection $\mathring{X_\alpha}\cap \mathring{X_\beta}$ (where `$\circ$' denotes the interior) contains a  unique simple closed geodesic $\delta$ (see Figure \ref{a new geodesic in S12}).
\end{lemma}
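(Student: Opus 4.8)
The plan is to analyze the combinatorial/topological structure of the piecewise-geodesic boundary of $X_\alpha\cup X_\beta$, pinning down exactly how $\alpha$ and $\beta$ cross, and then to locate $\delta$ as the geodesic representative of a specific curve living in the overlap region.

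\textbf{Step 1: intersection number is exactly $4$.} Since $X_{\alpha\beta}$ is of type $S_{1,2}$, we have $|\chi(X_{\alpha\beta})|=2$. I would first observe that $\alpha\cap\beta$ consists of an even number $2m$ of transverse intersection points ($\alpha,\beta$ being simple closed geodesics that are distinct, so they meet transversally in finitely many points, and each component of $X_\alpha\cap X_\beta$ is a topological polygon/annulus). The piecewise-geodesic boundary $\partial(X_\alpha\cup X_\beta)$ then has $2m$ concave corners (by the concavity observation in the proof of Lemma \ref{alpha sbs U}). Using Gauss–Bonnet on $X_{\alpha\beta}$ together with the bound $\ell(\partial X_{\alpha\beta})\ge\frac53 L$ and $\ell(\alpha),\ell(\beta)\le L$, I expect to squeeze $m$: roughly, the area of $X_{\alpha\beta}$ equals $2\pi|\chi(X_{\alpha\beta})| = 4\pi$, while the ``extra'' area gained by filling in the discs/cylinders of $X_{\alpha\beta}\setminus(X_\alpha\cup X_\beta)$ is controlled by $\ell(\partial(X_\alpha\cup X_\beta))\le \ell(\alpha)+\ell(\beta)-(\text{length absorbed})$. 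The hypothesis $\ell(\partial X_{\alpha\beta})\ge\frac53L$ forces a definite amount of the boundary of $X_\alpha\cup X_\beta$ to survive to $\partial X_{\alpha\beta}$, which (together with $\area(X_\alpha)=\area(X_\beta)=2\pi$) bounds the number of corners from above, giving $m\le 2$; and $m\ge 2$ because $m=1$ would make $X_{\alpha\beta}$ of too-small complexity (one checks $m=1$ yields $|\chi(X_{\alpha\beta})|\le 1$, contradicting $S_{1,2}$, or forces the wrong topological type). Hence $m=2$, i.e.\ $4$ intersection points. I expect this counting/Gauss–Bonnet bookkeeping to be the main obstacle: one must carefully account for how much of $\ell(\alpha)+\ell(\beta)$ can be ``lost'' when passing from $\partial(X_\alpha\cup X_\beta)$ to $\partial X_{\alpha\beta}$, and relate the number of corners to this loss.

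\textbf{Step 2: structure of the overlap and existence of $\delta$.} With $4$ intersection points, $X_\alpha\cap X_\beta$ has a constrained topology: each of $X_\alpha,X_\beta$ is a one-holed torus, and I would determine that $\mathring{X_\alpha}\cap\mathring{X_\beta}$ (the open overlap) contains an essential simple closed curve. Concretely, since $\beta$ cannot be boundary-parallel or trivial inside $X_\alpha$ (it meets $\alpha$), the arcs of $\beta\cap X_\alpha$ cut $X_\alpha$ into pieces, and an innermost analysis — combined with the fact that $X_{\alpha\beta}$ has genus $\ge1$ — produces a curve $\delta_0\subset \mathring{X_\alpha}\cap\mathring{X_\beta}$ that is homotopically essential (it carries the genus of $X_{\alpha\beta}$ together with the two handles). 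Then $\delta$ is defined to be the unique simple closed geodesic homotopic to $\delta_0$; I would argue it actually lies in the open set $\mathring{X_\alpha}\cap\mathring{X_\beta}$ using a Collar-Lemma / convexity argument: the overlap is a ``large'' region (its filled-in version $X_{\alpha\beta}$ has area $4\pi$) and the geodesic representative, being the shortest curve in its class, stays inside the convex-ish region swept out by the overlap, or at worst one passes to $X_{\alpha\beta}$ where $\delta$ is an interior geodesic and then notes $\delta$ is disjoint from $\partial X_{\alpha\beta}$ hence still inside the original overlap.

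\textbf{Step 3: uniqueness.} Finally I would show $\delta$ is the \emph{only} simple closed geodesic contained in $\mathring{X_\alpha}\cap\mathring{X_\beta}$. Here the key point is that the overlap region, once we know there are exactly $4$ intersection points, is topologically simple — it deformation retracts onto a single essential curve (plus, possibly, peripheral junk) — so any essential simple closed curve inside it is homotopic to $\delta_0$, and any non-essential one has no geodesic representative inside. A short case check on the possible homeomorphism types of a subsurface of $S_{1,2}$ that (a) contains parts of both handles and (b) is bounded by arcs of $\alpha$ and $\beta$ meeting in $4$ points should leave only one isotopy class of essential curve. I'd present Figure \ref{a new geodesic in S12} as the reference model and verify the general case reduces to it. I expect Steps 2 and 3 to be comparatively routine given Step 1, the one subtlety being the ``$\delta$ lies in the \emph{open} overlap'' claim, which I would handle by the $X_{\alpha\beta}$-interior-geodesic argument rather than a direct convexity estimate.
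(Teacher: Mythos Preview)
Your proposal has genuine gaps, principally in Step~1. The Gauss--Bonnet bookkeeping you sketch for the upper bound on $m$ does not work as stated: Gauss--Bonnet on $X_{\alpha\beta}$ sees no corners (its boundary is smooth geodesic), and applying it to $X_\alpha\cup X_\beta$ or its complement relates corner angles to areas, not to the length inequality $\ell(\partial X_{\alpha\beta})\ge\frac53L$. The paper's argument is of a different nature and is precisely where $\frac53$ comes from. Because $X_{\alpha\beta}\cong S_{1,2}$ and $X_\alpha\cong S_{1,1}$, the region $X_{\alpha\beta}\setminus\mathring X_\alpha$ is a pair of pants, so any two disjoint arcs of $\beta\setminus\mathring X_\alpha$ with endpoints on $\alpha$ are homotopic rel $\alpha$. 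If $\#(\alpha\cap\beta)\ge 6$ there are at least three such arcs; the two shortest have total length $\le\frac23\ell(\beta)\le\frac23L$, and together with the two complementary arcs of $\alpha$ they form closed piecewise geodesics homotopic to the two components of $\partial X_{\alpha\beta}$, giving $\ell(\partial X_{\alpha\beta})<\ell(\alpha)+\frac23L\le\frac53L$, a contradiction. Your lower bound argument is also not correct: one cannot infer $|\chi(X_{\alpha\beta})|\le 1$ from $\#(\alpha\cap\beta)=2$. The paper instead observes that with two intersection points $\beta\cap X_\alpha$ is a single arc which would have to separate $X_\alpha$ (into $X_\alpha\cap\mathring X_\beta$ and $X_\alpha\setminus X_\beta$, both nonempty), whereas every simple arc in a one-handle is non-separating.

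In Step~2 your fallback for placing $\delta$ in the open overlap (``$\delta$ is interior to $X_{\alpha\beta}$ and disjoint from $\partial X_{\alpha\beta}$, hence still inside the overlap'') fails, since $X_{\alpha\beta}\supsetneq X_\alpha\cup X_\beta\supsetneq X_\alpha\cap X_\beta$. The paper avoids this entirely: once $\#(\alpha\cap\beta)=4$ is known, the two arcs of $\beta$ inside $X_\alpha$ must be homotopic rel $\alpha$ (by the dichotomy for pairs of disjoint arcs in a one-handle, using that they separate $X_\alpha$), so $X_\alpha\cap X_\beta$ is a cylinder. Its core is then the unique interior simple closed curve of $X_\alpha$ and of $X_\beta$ simultaneously, so the geodesic representative $\delta$ lies in $\mathring X_\alpha\cap\mathring X_\beta$, and uniqueness follows immediately from the cylinder having a single essential isotopy class.
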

Note that since a one-holed torus with geodesic boundary is cut by any simple closed geodesic in its interior into a pair-of-pants, the geodesic $\delta$ given by the lemma cuts $X_{\alpha\beta}$ into a $4$-holed sphere containing both $\alpha$ and $\beta$.
\begin{figure}[h]
	\centering
	\includegraphics[width=5cm]{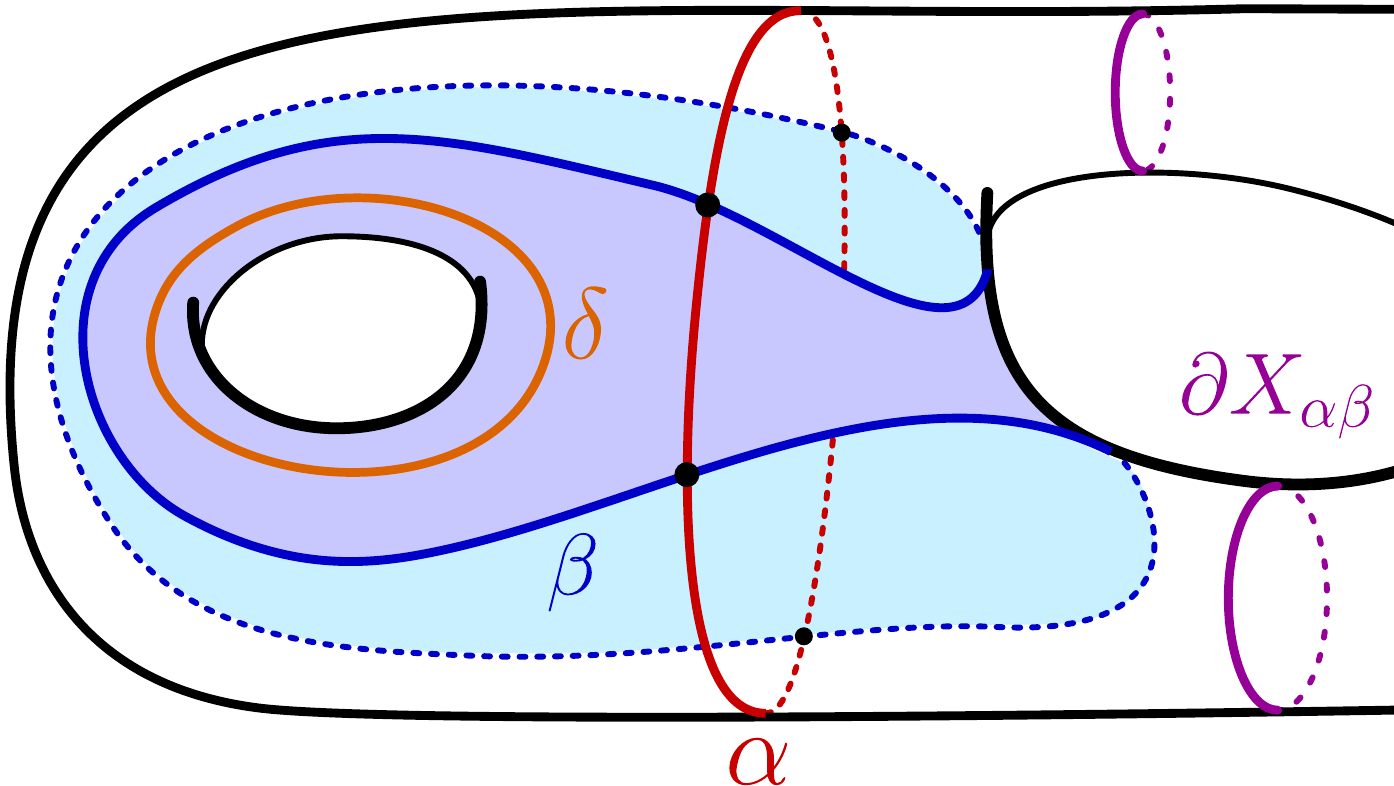}
	\caption{Simple closed geodesic $\delta\subset \mathring{X_\alpha}\cap \mathring{X_\beta}$.}
	\label{a new geodesic in S12}
\end{figure} 
\begin{proof}
    We first show $$\#(\alpha\cap\beta)=4.$$ 
    
	Since $\alpha$ and $\beta$ are both separating and hence represent the zero homology class, $\#(\alpha\cap\beta)$ is a positive even number. Moreover, if $\#(\alpha\cap\beta)=2$, then $\beta\cap X_\alpha$ is a single simple geodesic arc splitting $X_\alpha$ into at least two pieces (namely, the connected components of $X_\alpha\cap \mathring{X_\beta}$ and $X_\alpha\setminus X_\beta$), which is a contradiction because a simple geodesic arc in a one-holed torus joining boundary points cannot separate the one-holed torus. 
	
	Thus, if $\#(\alpha\cap\beta)$ is not $4$, then it is at least $6$. In this case, $\beta\setminus \mathring{X_\alpha}$ consists of at least $3$ segment. We denote the shortest two among these segments by $\beta_1$ and $\beta_2$, whose total length satisfy
	$$
	\ell(\beta_1)+\ell(\beta_2)\leq\tfrac{2}{3}\ell(\beta)\leq \tfrac{2}{3}L.
	$$	
	Since $\beta_1$ and $\beta_2$ are disjoint geodesic arcs in the pair of pants $X_{\alpha\beta}\setminus \mathring{X_\alpha}$ with endpoints in the same boundary component $\alpha$, they are homotopic to each other relative to $\alpha$. Therefore, $\partial X_{\alpha\beta}$ is homotopic to the two closed piecewise geodesics formed by $\beta_1$, $\beta_2$ along with two disjoint segments of $\alpha$, and hence
	$$
	\ell(\partial X_{\alpha\beta})<\ell(\alpha)+\ell(\beta_1)+\ell(\beta_2)\leq L+\tfrac{2}{3}L=\tfrac{5}{3}L,
	$$
	contradicting the assumption $\ell(X_{\alpha\beta})\geq\frac{5}{3}L$. This proves $\#(\alpha\cap\beta)=4$.
	
	As a consequence, $\beta$ is split by $\alpha$ into $4$ segments. Since the two segments $\beta_1,\beta_2$ outside of $\mathring{X_\alpha}$ are homotopic relative to $\alpha$ as above, $X_\beta\setminus \mathring{X_\alpha}$ is homeomorphic to a disk. We now consider the other two segments, which are in $X_\alpha$, and denote them by $\beta_1',\beta_2'$.
	
	It is a basic fact that given a one-holed torus $Y$ with geodesic boundary, any two disjoint simple geodesic arcs $a_1,a_2\subset Y$ with endpoints in $\partial Y$ belong to one of the following cases (see Figure \ref{figure:arcs in one handle}):
	\begin{itemize}
		\item[(1)] If $a_1$ and $a_2$ are homotopic relative to $\partial Y$, then they split $Y$ into two pieces, namely a topological cylinder and a topological disk; 
		\item[(2)] Otherwise, $a_1$ and $a_2$ split $Y$ into a single topological disk.
	\end{itemize} 
\begin{figure}[h]
	\centering
	\includegraphics[width=6.5cm]{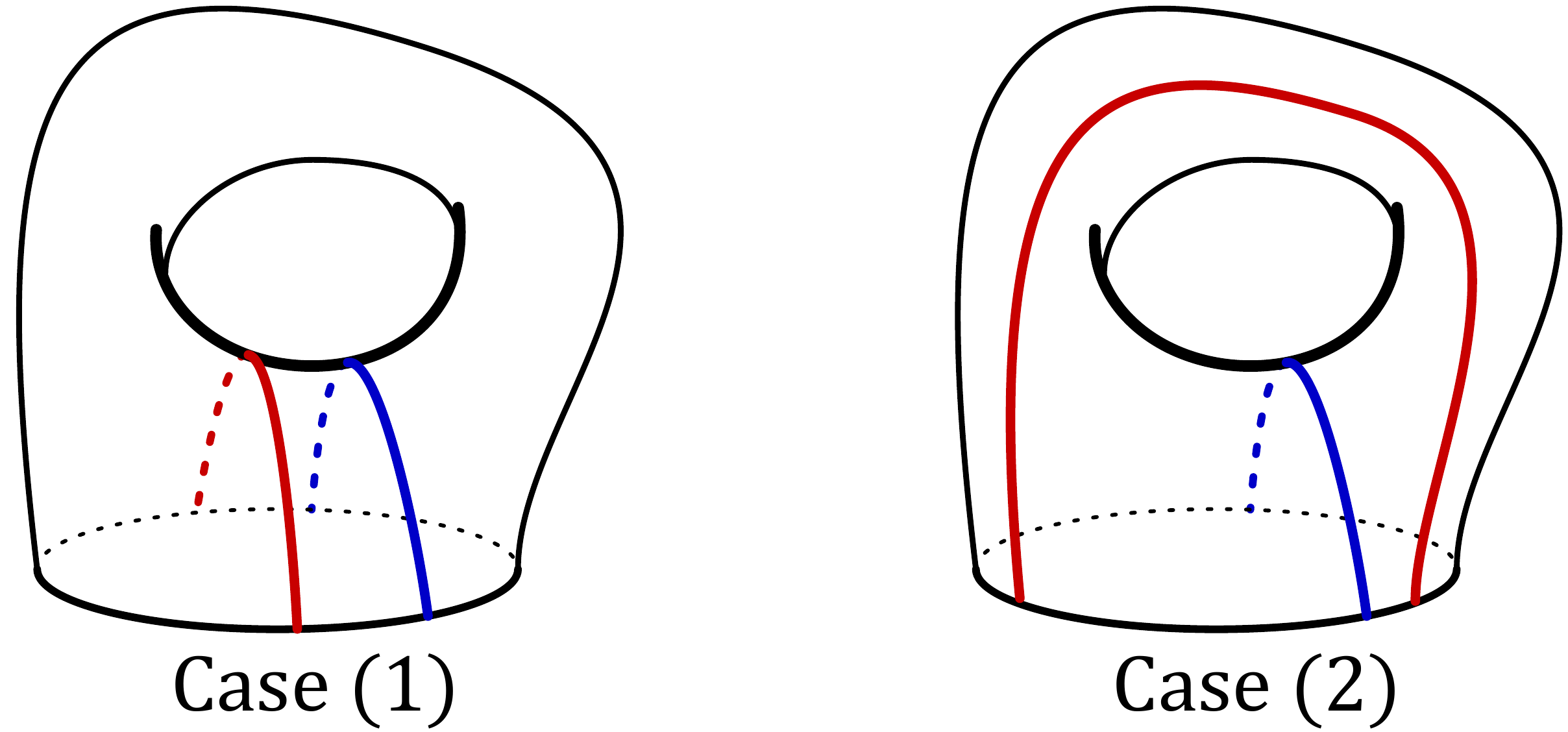}
	\caption{Two arcs in a one-holed torus.}\label{figure:arcs in one handle}
\end{figure} 
	
Since $\beta_1'$ and $\beta_2'$ separate $X_\alpha$, they are in Case (1). Thus, among the two pieces of $X_\alpha$ split out by $\beta$, namely $X_\alpha\cap X_\beta$ and $X_\alpha\setminus \mathring{X_\beta}$, one is a cylinder and the other is a disk. But we have shown above that $X_\beta\setminus \mathring{X_\alpha}$ is a disk, and the argument implies that $X_\alpha\setminus \mathring{X_\beta}$ is a disk as well if we switch the roles of $\alpha$ and $\beta$. Therefore, we conclude that $X_\alpha\cap X_\beta$ is a cylinder as shown in Figure \ref{a new geodesic in S12}. This cylinder contains a unique simple closed geodesic $\delta$, namely the one homotopic to its boundary loops. And $\delta$ is in the interior of the cylinder since it is contained in both $X_\alpha$ and $X_\beta$, as required.
\end{proof}

\begin{rem*}
By construction, $\alpha\cup\beta$ is homotopic to $\partial X_{\alpha \beta}\cup2\delta$, where $2\delta$ means two copies of $\delta$'s. We will use this observation later in Subsection \ref{proof of third tend to 0}.
\end{rem*}

\begin{rem*}\label{remark:topology}
	The second statement of Lemma \ref{lemma:4holded} actually holds true for any intersecting pair $(\alpha,\beta)$ of simple closed geodesics bounding one-holed torus such that $X_{\alpha\beta}$ is of type $S_{1,2}$ (\cf the first example in Figure \ref{figure_examples}). The proof is more complicated and not necessary for our purpose.
\end{rem*}


\section{Weil-Petersson volume}\label{section wp volume}

In this section we give some results on the \wep \ volumes of moduli spaces. All of these are already known or generalizations of known results. We denote $V_{g,n}(x_1,\cdots,x_n)$ to be the \wep \ volume of $\M_{g,n}(x_1,\cdots,x_n)$ and $V_{g,n}= V_{g,n}(0,\cdots,0)$. One may also see \cite{Agg21, Grus01, LX14, Mirz07, Mirz07-int, Mirz13, MZ15, Penner92, ST01, Zograf08, AM22} for the asymptotic behavior of $V_{g,n}$ and its deep connection to the intersection theory of $\M_{g,n}$.

First we recall several results of Mirzakhani and her coauthors.

\begin{theorem}\label{Mirz vol lemma 0}
\begin{enumerate}
  \item \cite[Theorem 1.1]{Mirz07}
The volume $V_{g,n}(x_1,\cdots,x_n)$ is a polynomial in $x_1^2,\cdots,x_n^2$ with degree $3g-3+n$. Namely we have
\begin{equation*}
V_{g,n}(x_1,\cdots,x_n) = \sum_{\alpha;\,|\alpha|\leq 3g-3+n} C_\alpha \cdot x^{2\alpha}
\end{equation*}
where $C_\alpha>0$ lies in $\pi^{6g-6+2n-|2\alpha|} \cdot \mathbb Q$. Here $\alpha=(\alpha_1,\cdots,\alpha_n)$ is a multi-index and $|\alpha|=\alpha_1+\cdots+\alpha_n$, $x^{2\alpha}= x_1^{2\alpha_1}\cdots x_n^{2\alpha_n}$.

  \item \cite[Table 1]{Mirz07}
\begin{equation*}
V_{0,3}(x,y,z) = 1,
\end{equation*}
\begin{equation*}
V_{1,1}(x) = \frac{1}{48}(x^2+4\pi^2).
\end{equation*}
\end{enumerate}
\begin{rem*}
We remark here that $V_{1,1}(x)$ is only one-half of the quantity originally given in \cite{Mirz07} or \cite{Mirz08}. The reason is that there exists an involution for $S_{1,1}$. One may also see \cite[Remark 3.3]{DGZZ21} for more details.
\end{rem*}
\end{theorem}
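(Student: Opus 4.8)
The plan is to reduce the statement to Mirzakhani's topological recursion for the volumes and then prove all of the assertions simultaneously by induction on the complexity $d=d(g,n):=3g-3+n=\dim_{\mathbb C}\M_{g,n}$. The two base cases already exhibit the claimed pattern: $\M_{0,3}(\ve{x})$ is a single point for every $\ve{x}\in\R_{\ge0}^3$ (a hyperbolic pair of pants with prescribed boundary lengths is unique), so $V_{0,3}\equiv1$, the constant polynomial of degree $0$ with coefficient $1\in\pi^0\mathbb Q_{>0}$; and $V_{1,1}(x)=\tfrac1{24}(x^2+4\pi^2)=\tfrac1{24}x^2+\tfrac{\pi^2}{6}$ is a one-dimensional computation, obtained either by integrating the McShane identity for a one-holed torus or by an explicit Fenchel--Nielsen computation on $\M_{1,1}(x)$, where one keeps track of the elliptic involution carried by every point of $\M_{1,1}$ --- the source of the extra factor $2$ in the denominator. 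Here the coefficient $\tfrac1{24}$ of $x^2$ lies in $\pi^{0}\mathbb Q_{>0}$ and the constant term $\tfrac{\pi^2}{6}$ lies in $\pi^2\mathbb Q_{>0}$, as required.

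For the inductive step, one first establishes --- from Mirzakhani's generalized McShane identity together with the Integration Formula (Theorem~\ref{Mirz int formula}) --- a recursion that expresses $\partial_{x_1}\!\big(x_1 V_{g,n}(\ve{x})\big)$ as a finite sum of integrals against explicit kernels $\mathcal H$ built from the functions $\tfrac{1}{1+e^{(t\pm x_1)/2}}$ appearing in the McShane identity: double integrals $\int_0^\infty\!\!\int_0^\infty \mathcal H\,t\,t'\,V_{g-1,n+1}(t,t',x_2,\dots,x_n)\,dt\,dt'$ and analogous products $V_{g_1,n_1+1}(t,\dots)\,V_{g_2,n_2+1}(t',\dots)$ over the stable splittings of $S_{g,n}$ obtained by cutting along $\gamma_1$, plus single integrals $\int_0^\infty \mathcal H\,t\,V_{g,n-1}(t,x_2,\dots,\widehat{x_j},\dots,x_n)\,dt$ for $j=2,\dots,n$. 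In every one of these terms the $V$-factors have total complexity $d-1$, so the inductive hypothesis applies to each of them.

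The computational heart is the elementary lemma that for each integer $k\ge 0$ the partial integrals $\int_0^\infty t^{2k+1}\,\mathcal H(t,x)\,dt$ of a monomial against these kernels are polynomials in $x$ (even ones, in fact) of degree exactly $2k+2$, whose coefficients are positive rationals times the matching powers of $\pi$; this is proved by reducing them to the standard integrals $\int_0^\infty t^{m}/(1+e^{t})\,dt$, which for odd $m$ equal positive rational multiples of $\pi^{m+1}$ (elementary constants times $\zeta(m+1)$). Feeding the inductive form of the $V$-factors into the recursion and integrating term by term using this lemma, one verifies: that the outcome is a polynomial in $x_1^2,\dots,x_n^2$; that each kernel-integration raises the total degree by exactly $1$ while consuming one unit of complexity, so that the total degree is $d$ and the top-degree coefficient is nonzero; and that all coefficients are positive and lie in $\pi^{6g-6+2n-2|\alpha|}\mathbb Q$, since every step of the recursion manifestly preserves positivity and this $\pi$-homogeneity. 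One then integrates once in $x_1$, fixing the constant by the value at $x_1=0$ (which is a volume of complexity $d$ restricted to a coordinate hyperplane, handled by the same induction, or obtained by symmetry in the variables), completing the induction.

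I expect the main obstacle to be the first half of the inductive step, namely establishing the recursion itself: this depends on the generalized McShane identity --- the unfolding of an identity valid on a fixed hyperbolic surface with boundary into a sum over the mapping-class-group orbit of embedded pairs of pants having $\partial_1$ as a cuff --- and on a careful bookkeeping in the Integration Formula of the symmetry groups and the factor $2^{-M(\Gamma)}$. Once the recursion is in hand, the induction (including the kernel-integral lemma) is essentially mechanical. An alternative for the polynomiality part alone is the symplectic/intersection-theoretic route of \cite{Mirz07-int}: one shows $V_{g,n}(\ve{x})=\int_{\overline{\M}_{g,n}}\exp\!\big(\omega_{\WP}+\tfrac12\textstyle\sum_i x_i^2\psi_i\big)$ with $[\omega_{\WP}]=2\pi^2\kappa_1$, which makes polynomiality, the degree $d$, and the powers of $\pi$ transparent; but proving the positivity of the resulting intersection numbers $\int\psi^\alpha\kappa_1^{\,d-|\alpha|}$ directly is less immediate than reading positivity off the recursion, so I would still take the recursive argument as the primary proof of part~(1).
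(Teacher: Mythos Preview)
The paper does not prove this theorem at all: it is quoted verbatim from \cite{Mirz07} (Theorem~1.1 and Table~1 there) as background, with no argument given. Your sketch is essentially an outline of Mirzakhani's own proof in \cite{Mirz07}, so in that sense your approach agrees with the source the paper cites; but there is nothing in the present paper to compare against.
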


\begin{lemma}\label{Mirz vol lemma 1}
\begin{enumerate}

  \item \cite[Lemma 3.2]{Mirz13}
\begin{equation*}
V_{g,n} \leq V_{g,n}(x_1,\cdots,x_n) \leq e^{\frac{x_1+\cdots+x_n}{2}} V_{g,n}.
\end{equation*}

  \item \cite[Lemma 3.2]{Mirz13}
For any $g,n\geq 0$
\begin{equation*}
V_{g-1,n+4} \leq V_{g,n+2}
\end{equation*}
and
\begin{equation*}
b_0\leq \frac{V_{g,n+1}}{(2g-2+n)V_{g,n}} \leq b_1
\end{equation*}
for some universal constants $b_0,b_1>0$ independent of $g,n$.

  \item \cite[Theorem 3.5]{Mirz13}
For fixed $n\geq 0$, as $g\rightarrow \infty$ we have
\begin{equation*}
\frac{V_{g,n+1}}{2g V_{g,n}} = 4\pi^2 + O\left(\frac{1}{g}\right),
\end{equation*}
\begin{equation*}
\frac{V_{g,n}}{V_{g-1,n+2}} = 1 + O\left(\frac{1}{g}\right).
\end{equation*}
Where the implied constants depend on $n$ but not on $g$.
\end{enumerate}

\end{lemma}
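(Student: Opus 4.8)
The plan is essentially to quote the literature: parts (1) and (2) are \cite[Lemma 3.2]{Mirz13} and part (3) is \cite[Theorem 3.5]{Mirz13}, so the ``proof'' is a reference. For completeness I would also recall the mechanisms behind them. Everything rests on Theorem \ref{Mirz vol lemma 0} --- polynomiality of $V_{g,n}(x_1,\dots,x_n)$ in the $x_i^2$ with positive coefficients and constant term $V_{g,n}$ --- together with Mirzakhani's topological recursion for these volumes. The lower bound in (1) is then immediate: since every coefficient is positive and every $x_i\ge 0$, discarding all but the constant term of the polynomial gives $V_{g,n}(x_1,\dots,x_n)\ge V_{g,n}$.

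For the upper bound in (1) I would rewrite the volume polynomial in Mirzakhani's normalized form $V_{g,n}(x_1,\dots,x_n)=\sum_{|\alpha|\le 3g-3+n}c_\alpha\prod_{i=1}^n\frac{x_i^{2\alpha_i}}{2^{2\alpha_i}(2\alpha_i+1)!}$, with constant term $c_0=V_{g,n}$, and then invoke the monotonicity of the normalized intersection-number coefficients, $0\le c_\alpha\le c_0$. Replacing each $c_\alpha$ by $c_0$ and summing the resulting series bounds $V_{g,n}(x_1,\dots,x_n)/V_{g,n}$ by $\prod_{i=1}^n\frac{\sinh(x_i/2)}{x_i/2}$, which in turn is at most $\prod_{i=1}^n e^{x_i/2}=e^{(x_1+\cdots+x_n)/2}$ by the elementary inequality $2\sinh(t/2)\le t\,e^{t/2}$, i.e. $1-e^{-t}\le t$ for $t\ge 0$.

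For (2), I would read off the inequality $V_{g-1,n+4}\le V_{g,n+2}$ and the two-sided bound on $\frac{V_{g,n+1}}{(2g-2+n)V_{g,n}}$ from Mirzakhani's recursion: it expresses (a derivative of) $V_{g,n+1}$ as a sum, with positive coefficients, of products of volumes of the surfaces obtained by cutting off a pair of pants at a distinguished boundary --- in particular a $V_{g-1,n+2}$ term and a $V_{g,n}$ term --- and isolating an individual term, respectively comparing with the whole sum, yields the comparison, respectively the universal constants $b_0,b_1$; this is exactly \cite[Lemma 3.2]{Mirz13}. For (3) the plan is to feed in the large-genus asymptotics $V_{g,n}=C\,(2g-3+n)!\,(4\pi^2)^{2g-3+n}\,g^{-1/2}\,(1+O(1/g))$ for fixed $n$ (with $C>0$ universal) and compute: $\frac{V_{g,n+1}}{2gV_{g,n}}=\frac{2g-2+n}{2g}\cdot 4\pi^2\cdot(1+O(1/g))=4\pi^2+O(1/g)$, while $g-1$ with $n+2$ boundaries and $g$ with $n$ boundaries share the value $2g-3+n$, so $\frac{V_{g,n}}{V_{g-1,n+2}}=\sqrt{(g-1)/g}\,(1+O(1/g))=1+O(1/g)$; this is \cite[Theorem 3.5]{Mirz13}.

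The hard parts are not the manipulations above but the two non-elementary inputs they depend on: the monotonicity of the intersection-number coefficients feeding the upper bound in (1), and the precise large-genus asymptotics of $V_{g,n}$ feeding (3). Both are theorems of Mirzakhani (and Mirzakhani--Zograf), so rather than reproving them I would invoke them as black boxes and simply cite \cite{Mirz13}.
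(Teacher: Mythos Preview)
Your proposal is correct and matches the paper's approach: the paper gives no proof for this lemma at all, simply citing \cite[Lemma 3.2]{Mirz13} and \cite[Theorem 3.5]{Mirz13}, exactly as you do. Your supplementary sketches of the mechanisms (positivity and monotonicity of the $[\tau_{d_1},\dots,\tau_{d_n}]_{g,n}$ coefficients for (1), the topological recursion for (2), and large-genus asymptotics for (3)) are accurate and consistent with how these facts are used later in the paper, e.g.\ in Lemma~\ref{MP vol lemma}.

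One small remark: for part (3) you derive the asymptotics by plugging in the Mirzakhani--Zograf formula (Theorem~\ref{MZ vol thm}), which is logically fine but slightly anachronistic, since \cite[Theorem 3.5]{Mirz13} predates \cite{MZ15} and was proved there by a direct analysis of the recursion rather than via the full asymptotic expansion. The paper itself flags this alternative route in the remark immediately following the lemma, so your approach is entirely acceptable.
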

\begin{rem*}
For Part $(3)$, one may also see the following Theorem \ref{MZ vol thm} of Mirzakhani-Zograf.
\end{rem*}

\begin{lemma}\cite[Corollary 3.7]{Mirz13} \label{Mirz vol lemma 2}
For fixed $b,k,r\geq 0$ and $C<C_0= 2\log 2$,
\begin{equation*}
\sum_{\begin{array}{c}
        g_1+g_2=g+1-k  \\
        r+1\leq g_1\leq g_2
      \end{array}}
e^{Cg_1} \cdot g_1^b \cdot V_{g_1,k} \cdot V_{g_2,k} \asymp \frac{V_g}{g^{2r+k}}
\end{equation*}
as $g\rightarrow\infty$. The implied constants depend on $b,k,r,C$ but not on $g$. Here $A\asymp B$ means $c_1 A\leq B\leq c_2 A$ for two constants $c_1,c_2>0$ independent of $g$.

\end{lemma}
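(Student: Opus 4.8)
The plan is to show that the entire sum is comparable to its ``boundary'' term, the one indexed by $g_1=r+1$ (hence $g_2=g-k-r$), and then to identify that term with $V_g/g^{2r+k}$. Before doing so I would record two consequences of Lemma~\ref{Mirz vol lemma 1} (one may also invoke the Mirzakhani-Zograf asymptotics if one wants the constants). \emph{(i)} Iterating parts (2) and (3) of Lemma~\ref{Mirz vol lemma 1} a bounded number of times --- bounded because $r$ and $k$ are fixed --- gives, with implied constants depending only on $r,k$, that $V_{r+1,k}\,V_{g-k-r,k}\asymp V_{g-k-r,k}\asymp V_g/g^{2r+k}$. \emph{(ii)} For comparing a general term with the boundary term, the crucial point is that the total genus $g_1+g_2=g+1-k$ is held fixed, so all powers of $4\pi^2$ cancel and one is left, up to factors bounded independently of $g_1$, with a ratio of factorials:
\[
\frac{V_{g_1,k}\,V_{g_2,k}}{V_{r+1,k}\,V_{g-k-r,k}}\ \asymp\ \frac{(2g_1-3+k)!\,(2g_2-3+k)!}{(2r-1+k)!\,(2g-k-2r-3)!}\ =\ \binom{2g-4}{\,2r-1+k\,}\Big/\binom{2g-4}{\,2g_1-3+k\,}.
\]

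Write $T(g_1):=e^{Cg_1}g_1^bV_{g_1,k}V_{g_2,k}$ for the summand, so $S_g=\sum_{g_1}T(g_1)$. The lower bound is then immediate: $S_g\ge T(r+1)\asymp V_g/g^{2r+k}$ by input \emph{(i)}. For the upper bound, input \emph{(ii)} gives, up to bounded factors,
\[
T(g_1)\ \asymp\ \frac{V_g}{g^{2r+k}}\cdot e^{C(g_1-r-1)}\,g_1^b\,\binom{2g-4}{2r-1+k}\Big/\binom{2g-4}{2g_1-3+k},
\]
from which $T(g_1+1)/T(g_1)\asymp e^{C}g_1^2/g_2^2$. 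This ratio is increasing in $g_1$, lies well below $1$ near $g_1=r+1$ (it is $O(g_1^2/g^2)$ there), and crosses $1$ at an index of order $g/(1+e^{C/2})$, which is $<g/2$; hence $T$ first decreases and then increases over the summation range $r+1\le g_1\le\lfloor(g+1-k)/2\rfloor$, and its maximum is attained at $g_1=r+1$ or at the balanced index $g_1=g_2$. I would estimate $S_g$ by splitting the range at the turning point: on the decreasing part the consecutive ratios are geometric-type (in fact $O(g_1^2/g^2)$ near the left end), so the terms decay fast and sum to $O(T(r+1))$; on the increasing part there are at most $g$ terms, each bounded by the balanced term, which by input \emph{(ii)} is $\asymp V_g\,g^{b}\,e^{Cg/2}\big/\binom{2g-4}{g-2}\asymp V_g\,g^{b}\,(e^{C/2}/4)^{g}$. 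The hypothesis $C<C_0=2\log2$ is what is used to make this $o\!\big(g^{-1}T(r+1)\big)$, so the whole increasing part contributes $o(T(r+1))$. Altogether $S_g\asymp T(r+1)\asymp V_g/g^{2r+k}$, with implied constants depending only on $b,k,r,C$.

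The step I expect to be the main obstacle is this last upper bound on the ``balanced'' block: one has to verify that the $O(g)$ terms there, each of which is already exponentially smaller than the boundary term, do not add up to something of the same order --- and this is exactly where the threshold on $C$ enters, through the size of the central binomial coefficient $\binom{2g-4}{g-2}$ (equivalently, through the fact that $V_{g/2,k}^2$ is smaller than $V_g$ by a factor of order $4^{-g}$). The remaining ingredients --- the geometric decay of $T(g_1)$ near $g_1=r+1$ and the two volume estimates \emph{(i)}, \emph{(ii)} derived from Lemma~\ref{Mirz vol lemma 1} --- are routine.
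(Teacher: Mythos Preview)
The paper does not prove this lemma; it is quoted as \cite[Corollary 3.7]{Mirz13} and used as a black box throughout (see, e.g., the proofs of Lemma \ref{sum chi geq m E[N]}, Lemma \ref{Wr-prop}, and Lemmas \ref{E[Z1*]}--\ref{E[Z2*]}). So there is no argument in the present paper to compare yours against.

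That said, your sketch is essentially correct and follows the natural route via the Mirzakhani--Zograf asymptotics (Theorem \ref{MZ vol thm}): the boundary term $g_1=r+1$ already has the claimed order, the consecutive ratios $T(g_1+1)/T(g_1)\asymp e^{C}g_1^{2}/g_2^{2}$ make the sequence unimodal on the summation range, and the hypothesis $C<2\log 2$ is exactly what forces the balanced block to be negligible, since $V_{\lfloor g/2\rfloor,k}^{2}/V_g$ is of order $4^{-g}$ up to polynomial factors. Two small points worth tightening. First, your display (ii) drops the $g^{-1/2}$ prefactor from Theorem \ref{MZ vol thm}; the correct ratio carries an extra factor $\asymp\sqrt{g/(g_1g_2)}$, which is polynomially bounded and absorbed into the exponential gap, so the conclusion is unaffected. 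Second, Theorem \ref{MZ vol thm} is asymptotic in the genus, so for bounded $g_1$ (in particular $g_1=r+1$) you should apply it only to the $V_{g_2,k}$ factor and treat $V_{g_1,k}$ as a fixed positive constant depending on $g_1,k$; since $r,k$ are fixed this only affects the implied constants.
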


\begin{theorem}\cite[Theorem 1.2]{MZ15} \label{MZ vol thm}
There exists a universal constant $\alpha>0$ such that for any given $n\geq0$,
\begin{equation*}
V_{g,n} = \alpha \frac{1}{\sqrt{g}} (2g-3+n)! (4\pi^2)^{2g-3+n} \left(1+O\left(\frac{1}{g}\right) \right)
\end{equation*}
as $g\rightarrow\infty$. The implied constant depend on $n$ but not on $g$.
\end{theorem}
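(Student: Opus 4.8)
The plan is to follow Mirzakhani--Zograf \cite{MZ15} and push Mirzakhani's topological recursion for the volume polynomials $V_{g,n}(x_1,\dots,x_n)$ one order beyond the crude estimates recorded in Lemma~\ref{Mirz vol lemma 1}. Set
\[
a_{g,n}:=\frac{V_{g,n}}{(2g-3+n)!\,(4\pi^2)^{2g-3+n}},
\]
so that the assertion is exactly that $\sqrt{g}\,a_{g,n}$ converges to a constant $\alpha>0$, with error $O(1/g)$ and with $\alpha$ independent of $n$. First I would reduce to a single value of $n$, say $n=0$: iterating $\frac{V_{g,j+1}}{2gV_{g,j}}=4\pi^2+O(1/g)$ for $j=0,\dots,n-1$ and using $\frac{(2g)^n}{(2g-2)(2g-1)\cdots(2g-3+n)}=1+O(1/g)$ gives $a_{g,n}=a_{g,0}\,(1+O(1/g))$, the implied constant depending on $n$. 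So it suffices to prove $a_{g,0}=\alpha\,g^{-1/2}(1+O(1/g))$, and the value of $\alpha$ produced this way is automatically the same for all $n$.

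The core is then a \emph{refined} two-term recursion for the sequence $\{a_{g,0}\}$. Mirzakhani's recursion \cite{Mirz07}, together with the string and dilaton equations, writes $V_{g,n}$ as a sum of a genus-lowering term built from $V_{g-1,n+1}$, ``forgetful'' terms built from $V_{g,n-1}$, and separating terms $\sum_{g_1+g_2=g}V_{g_1,n_1+1}V_{g_2,n_2+1}$, all weighted by explicit polynomial kernels (Mirzakhani's functions $R$ and $D$) whose relevant moments are computable rational multiples of powers of $\pi^2$. By Lemma~\ref{Mirz vol lemma 2} and its bilinear refinements, the separating terms are $O(V_{g,n}/g^2)$, hence negligible to the order needed; discarding them and normalizing turns the recursion into sharpened versions of the two ratios in Lemma~\ref{Mirz vol lemma 1}, namely $\frac{V_{g,n+1}}{(2g-2+n)V_{g,n}}$ and $\frac{V_{g,n}}{V_{g-1,n+2}}$, each now controlled to relative error $O(1/g^2)$. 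Combining these and tracking the elementary factorial ratio $\frac{(2g-2)^2}{(2g-3)(2g-4)}=1+\frac{3}{2g}+O(1/g^2)$ coming from the normalization produces a relation of the shape
\[
\frac{a_{g,0}}{a_{g-1,0}}=1-\frac{1/2}{g}+O\!\left(\frac{1}{g^{2}}\right),
\]
the coefficient $-\tfrac12$ being forced by the leading moments of the kernels. Summing $\log(a_{g,0}/a_{g-1,0})$ over $g$ then yields $\log a_{g,0}=-\tfrac12\log g+\text{const}+o(1)$, i.e.\ $a_{g,0}=\alpha\,g^{-1/2}(1+O(1/g))$, and the reduction above carries this back to every fixed $n$.

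The hard part is not this final bookkeeping but making the ``degeneration of the recursion'' rigorous with a genuine $O(1/g^2)$ error. This requires, first, a simultaneous induction on the complexity $3g-3+n$ that controls the entire family of normalized coefficients of the polynomials $V_{g,n}(x)$ --- equivalently the $\psi$-class intersection numbers $\langle\tau_{d_1}\cdots\tau_{d_n}\rangle_g$ --- uniformly in $g$, $n$ and the weights $d_i$; second, precise asymptotics for the moment integrals of the recursion kernels, which is where every numerical constant (ultimately the $-\tfrac12$) is generated; and third, careful accounting showing that the aggregated error --- the separating terms, the tails of high-$d_i$ contributions, and the gap between $V_{g-1,n+1}$ and its leading approximation --- is $O(1/g^2)$ relative to the main term rather than merely $O(1/g)$. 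I expect this last point, propagating sharp uniform error bounds through the whole recursion instead of just the $O(1/g)$ bounds of Lemma~\ref{Mirz vol lemma 1}, to be the main obstacle; it is essentially what makes \cite{MZ15} a long and delicate argument, and the same machinery in fact yields a full asymptotic expansion of $V_{g,n}$ in powers of $1/g$, of which the statement above is the leading term.
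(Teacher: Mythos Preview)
The paper does not prove this theorem: it is quoted verbatim from \cite{MZ15} and used as a black box (the only comment after the statement is the remark that Zograf conjectured $\alpha=1/\sqrt{\pi}$). So there is no ``paper's own proof'' to compare against.

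That said, your outline is an accurate high-level description of what \cite{MZ15} actually does: reduce to a single $n$ via the known $O(1/g)$ ratio in Lemma~\ref{Mirz vol lemma 1}(3), use Mirzakhani's topological recursion for the intersection numbers $[\tau_{d_1}\cdots\tau_{d_n}]_{g,n}$ with the separating terms shown to be $O(1/g^2)$, extract a one-step ratio $a_g/a_{g-1}=1-\tfrac{1}{2g}+O(1/g^2)$, and telescope. Your reduction step is not circular, since Lemma~\ref{Mirz vol lemma 1}(3) comes from \cite{Mirz13} and predates \cite{MZ15}; and your telescoping does give the claimed $O(1/g)$ error because $\sum_{k>g}k^{-2}=O(1/g)$. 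You are also right that the genuine work --- and the reason this is a paper rather than a paragraph --- is the uniform-in-$d$ control of the normalized coefficients needed to push the error from $O(1/g)$ to $O(1/g^2)$ inside the recursion; what you have written is a correct roadmap, not a proof.
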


\begin{rem*}
It is conjectured by Zograf in \cite{Zograf08} that $\alpha=\frac{1}{\sqrt\pi}$, which is still open.
\end{rem*}

The following result is motivated by \cite[Proposition 3.1]{MP19} whose error term in the form of multiplication is replaced by the following summation. One may also see the very recent work of Anantharaman and Monk \cite{AM22} for sharper results.
\begin{lemma} \label{MP vol lemma}
Let $g,n\geq 1$ and $x_1,\cdots,x_n\geq 0$, then there exists a constant $c=c(n)>0$ independent of $g,x_1,\cdots,x_n$ such that
\begin{equation*}
\prod_{i=1}^n \frac{\sinh(x_i/2)}{x_i/2} \big(1- c(n)\frac{\sum_{i=1}^n x_i^2}{g}\big)
\leq \frac{V_{g,n} (x_1,\cdots,x_n)}{ V_{g,n}}
\leq \prod_{i=1}^n \frac{\sinh(x_i/2)}{x_i/2}.
\end{equation*}
\end{lemma}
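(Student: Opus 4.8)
The plan is to adapt the argument of \cite[Proposition 3.1]{MP19}, working with Mirzakhani's expansion of the volume polynomial in the basis adapted to the hyperbolic sine. By Theorem~\ref{Mirz vol lemma 0}(1) one may write
\[
V_{g,n}(x_1,\dots,x_n)=\sum_{\alpha\in\Z_{\geq0}^n,\ |\alpha|\leq 3g-3+n} c_g(\alpha)\prod_{i=1}^n\frac{x_i^{2\alpha_i}}{2^{2\alpha_i}(2\alpha_i+1)!},
\]
with $c_g(\alpha)>0$ and $c_g(0,\dots,0)=V_{g,n}(0,\dots,0)=V_{g,n}$; this is the normalization for which $\sum_{k\geq0}\frac{t^{2k}}{(2k+1)!}=\frac{\sinh t}{t}$, so that $\prod_{i=1}^n\frac{\sinh(x_i/2)}{x_i/2}=\sum_{\alpha\in\Z_{\geq0}^n}\prod_{i=1}^n\frac{x_i^{2\alpha_i}}{2^{2\alpha_i}(2\alpha_i+1)!}$. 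The two inputs about the coefficients $c_g(\alpha)$ that I would use, both consequences of the large genus estimates for normalized intersection numbers of Mirzakhani and Mirzakhani--Zograf (\cite{Mirz13,MZ15}, building on the recursion of \cite{Mirz07}), are: \textbf{(i)} $c_g(\alpha)\leq V_{g,n}$ for every $\alpha$; and \textbf{(ii)} there is a constant $c_0(n)>0$, depending only on $n$, with $c_g(\alpha)\geq\bigl(1-c_0(n)|\alpha|^2/g\bigr)V_{g,n}$ for every $\alpha$ with $|\alpha|\leq 3g-3+n$, the inequality being vacuous whenever its right-hand side is $\leq0$.

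Granting \textbf{(i)}, the upper bound is immediate: replacing each $c_g(\alpha)$ by $V_{g,n}$ gives $V_{g,n}(x)\leq V_{g,n}\sum_{\alpha}\prod_i\frac{x_i^{2\alpha_i}}{2^{2\alpha_i}(2\alpha_i+1)!}=V_{g,n}\prod_i\frac{\sinh(x_i/2)}{x_i/2}$. For the lower bound I would use \textbf{(ii)}: since each monomial $\prod_i\frac{x_i^{2\alpha_i}}{2^{2\alpha_i}(2\alpha_i+1)!}$ is nonnegative, replacing $c_g(\alpha)$ by $\bigl(1-c_0(n)|\alpha|^2/g\bigr)V_{g,n}$ only decreases the right-hand side, and extending the summation to all $\alpha\in\Z_{\geq0}^n$ decreases it further, since the added terms (those with $|\alpha|>3g-3+n$) carry a negative coefficient; this gives
\[
\frac{V_{g,n}(x)}{V_{g,n}}\geq\prod_{i=1}^n\frac{\sinh(x_i/2)}{x_i/2}-\frac{c_0(n)}{g}\sum_{\alpha\in\Z_{\geq0}^n}|\alpha|^2\prod_{i=1}^n\frac{x_i^{2\alpha_i}}{2^{2\alpha_i}(2\alpha_i+1)!}.
\]

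It then remains to estimate the last sum by a constant multiple of $\tfrac{1}{g}\bigl(\sum_j x_j^2\bigr)\prod_i\frac{\sinh(x_i/2)}{x_i/2}$. Using $|\alpha|^2\leq n\sum_i\alpha_i^2$ and factoring over the coordinates, it suffices to record the one-variable identity $\sum_{m\geq0}m^2\frac{t^{2m}}{(2m+1)!}=\tfrac14\bigl(t\sinh t-\cosh t+\tfrac{\sinh t}{t}\bigr)\leq\tfrac14\,t\sinh t$, obtained by applying $t\frac{d}{dt}$ twice to $\frac{\sinh t}{t}$ and then using $\frac{\sinh t}{t}\leq\cosh t$ (a termwise comparison of power series). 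Taking $t=x_i/2$ yields $\sum_{\alpha}|\alpha|^2\prod_i\frac{x_i^{2\alpha_i}}{2^{2\alpha_i}(2\alpha_i+1)!}\leq\tfrac{n}{16}\bigl(\sum_j x_j^2\bigr)\prod_i\frac{\sinh(x_i/2)}{x_i/2}$, and substituting this into the display above proves the lemma with $c(n)=\tfrac{n}{16}\,c_0(n)$.

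The main obstacle is ingredient \textbf{(ii)}: controlling the deviation of the normalized coefficients $c_g(\alpha)/V_{g,n}$ from $1$, uniformly in $\alpha$, by $O_n(|\alpha|^2/g)$ with an implied constant independent of $g$ and of $x$. This is the real content of the statement, and it is where the estimates of \cite{MZ15} (and the analysis of Mirzakhani's recursion) enter; some care is needed to ensure the bound is valid over the whole range $|\alpha|\leq 3g-3+n$ — for large $|\alpha|$ it holds trivially, the right-hand side of \textbf{(ii)} then being negative — and that the implied constant depends only on $n$. By contrast, the upper bound and all the $\sinh$-series manipulations are elementary. One could equally well run the argument with the variant estimate $c_g(\alpha)\geq\bigl(1-c_0(n)|\alpha|/g\bigr)V_{g,n}$, using $\sum_{m\geq0}m\frac{t^{2m}}{(2m+1)!}=\tfrac12\bigl(\cosh t-\tfrac{\sinh t}{t}\bigr)\leq\tfrac12\,t\sinh t$ in place of the identity above; this leads to the same conclusion.
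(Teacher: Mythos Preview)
Your proof is correct and follows essentially the same route as the paper's: both expand $V_{g,n}(x)$ in the $\sinh$-adapted basis and invoke Mirzakhani's coefficient estimate $0\leq 1-[\tau_{\boldsymbol d}]_{g,n}/V_{g,n}\leq c_0(n)|\boldsymbol d|^2/g$ (this is exactly your inputs \textbf{(i)} and \textbf{(ii)}, stated in \cite[p.~286]{Mirz13}). The one difference is in how the tail $|\alpha|>3g-3+n$ is handled: the paper bounds $\sum_{|\boldsymbol d|>3g-3+n}\prod_i\frac{x_i^{2d_i}}{(2d_i+1)!}$ separately via Stirling's formula and a case split on the size of $\sum x_i^2/g$, whereas you absorb the tail directly by noting that the coefficient $1-c_0(n)|\alpha|^2/g$ is already negative there. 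This last step needs $c_0(n)(3g-2+n)^2\geq g$ for all $g\geq1$, which holds once $c_0(n)\geq\tfrac14$; since one may always enlarge $c_0(n)$, this is harmless, but it is worth making explicit. With that caveat your treatment is slightly cleaner than the paper's.
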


\begin{proof}
By Theorem \ref{Mirz vol lemma 0} we know that $V_{g,n} (2x_1,\cdots,2x_n)$ is a polynomial of $x_1^2,\cdots,x_n^2$ with degree $3g-3+n$. As in \cite[(3.1)]{Mirz13} we write
\begin{equation*}
V_{g,n} (2x_1,\cdots,2x_n) = \sum_{|\boldsymbol{d}| \leq 3g-3+n} [\tau_{d_1},\cdots,\tau_{d_n}]_{g,n} \frac{x_1^{2d_1}}{(2d_1+1)!}\cdots\frac{x_n^{2d_n}}{(2d_n+1)!}
\end{equation*}
where $\boldsymbol{d} = (d_1,\cdots,d_n)$ with $d_i\geq 0$ and $|\boldsymbol{d}| = d_1+\cdots+d_n$. In \cite[page 286]{Mirz13},
Mirzakhani gave the following bound for $[\tau_{d_1},\cdots,\tau_{d_n}]_{g,n}$. Given $n\geq1$, we have
\begin{equation*}
0\leq 1- \frac{[\tau_{d_1},\cdots,\tau_{d_n}]_{g,n} }{V_{g,n}} \leq c_0 \frac{|\boldsymbol{d}|^2}{g}
\end{equation*}
where $c_0$ is independent of $g$ and $\boldsymbol{d}$ (but may depend on $n$). So we have
\begin{equation*}
\frac{V_{g,n}(2x_1,\cdots,2x_n)}{V_{g,n}} \leq \sum_{|\boldsymbol{d}| \leq 3g-3+n} \frac{x_1^{2d_1}}{(2d_1+1)!}\cdots\frac{x_n^{2d_n}}{(2d_n+1)!}
\end{equation*}
and
\begin{eqnarray*}
\frac{V_{g,n}(2x_1,\cdots,2x_n)}{V_{g,n}}
&&\geq \sum_{|\boldsymbol{d}| \leq 3g-3+n} \frac{x_1^{2d_1}}{(2d_1+1)!}\cdots\frac{x_n^{2d_n}}{(2d_n+1)!} \\
& & - \frac{c_0}{g} \sum_{|\boldsymbol{d}| \leq 3g-3+n} |\boldsymbol{d}|^2\frac{x_1^{2d_1}}{(2d_1+1)!}\cdots\frac{x_n^{2d_n}}{(2d_n+1)!}.
\end{eqnarray*}
Recall that
\begin{equation*}
\prod_{i=1}^n \frac{\sinh(x_i)}{x_i} =
\sum_{d_1,\cdots,d_n =0}^\infty \frac{x_1^{2d_1}}{(2d_1+1)!}\cdots\frac{x_n^{2d_n}}{(2d_n+1)!}.
\end{equation*}
So we get the upper bound
\begin{equation*}
\frac{V_{g,n}(2x_1,\cdots,2x_n)}{V_{g,n}} \leq \prod_{i=1}^n \frac{\sinh(x_i)}{x_i}.
\end{equation*}

For the lower bound, first we have
\begin{eqnarray*}
x_1^2\prod_{i=1}^n \frac{\sinh(x_i)}{x_i} &=& 
\left(\sum_{d_1=1}^\infty  \frac{x_1^{2d_1}}{(2d_1-1)!}\right) \left(\sum_{d_2,\cdots,d_n =0}^\infty \frac{x_2^{2d_2}}{(2d_2+1)!} \cdots \frac{x_n^{2d_n}}{(2d_n+1)!}\right) \\
&=& \sum_{d_1,\cdots,d_n =0}^\infty \frac{x_1^{2d_1}}{(2d_1+1)!} \cdots \frac{x_n^{2d_n}}{(2d_n+1)!} 2d_1(2d_1+1).
\end{eqnarray*}
So
\begin{equation*}
\left(\sum_{i=1}^n x_i^2\right) \prod_{i=1}^n \frac{\sinh(x_i)}{x_i} =
\sum_{d_1,\cdots,d_n =0}^\infty \left(\frac{x_1^{2d_1}}{(2d_1+1)!}\cdots\frac{x_n^{2d_n}}{(2d_n+1)!} \sum_{i=1}^n 2d_i(2d_i+1)\right).
\end{equation*}
Then by Cauchy-Schwarz inequality we have
\begin{equation*}
\sum_{|\boldsymbol{d}| \leq 3g-3+n} |\boldsymbol{d}|^2\frac{x_1^{2d_1}}{(2d_1+1)!}\cdots\frac{x_n^{2d_n}}{(2d_n+1)!}
\leq \frac{n}{4}(x_1^2+\cdots+x_n^2)\prod_{i=1}^n \frac{\sinh(x_i)}{x_i}.
\end{equation*}
Recall that the Stirling formula says that
$$k!\sim \sqrt{2\pi k} (\frac{k}{e})^k$$
which implies that for large $k>0$, $$k!\geq (\frac{k}{e})^k.$$ Hence, we have
\begin{eqnarray*}
&& \sum_{|\boldsymbol{d}| > 3g-3+n} \frac{x_1^{2d_1}}{(2d_1+1)!}\cdots\frac{x_n^{2d_n}}{(2d_n+1)!} \\
&\leq& \sum_{k>3g-3+n} \frac{1}{k!} \sum_{|\boldsymbol{d}| =k}
\frac{k!}{d_1!\cdots d_n!}(x_1^2)^{d_1}\cdots(x_n^2)^{d_n} \\
&=& \sum_{k>3g-3+n} \frac{1}{k!} (x_1^2+\cdots+x_n^2)^k \\
&\leq& \sum_{k>3g-3+n} \big( \frac{e\cdot (x_1^2+\cdots+x_n^2)}{k} \big)^k.
\end{eqnarray*}
If $\frac{e\cdot (x_1^2+\cdots+x_n^2)}{3g-2+n} \leq 0.5$, we have
\begin{eqnarray*}
 \sum_{|\boldsymbol{d}| > 3g-3+n} \frac{x_1^{2d_1}}{(2d_1+1)!}\cdots\frac{x_n^{2d_n}}{(2d_n+1)!}
&\leq& 2 \big( \frac{e\cdot (x_1^2+\cdots+x_n^2)}{3g-2+n} \big)^{3g-2+n} \\
&\leq& 4 \frac{x_1^2+\cdots+x_n^2}{g} \prod_{i=1}^n \frac{\sinh(x_i)}{x_i}.
\end{eqnarray*}

Then we get when $\frac{e\cdot (x_1^2+\cdots+x_n^2)}{3g-2+n} \leq 0.5$,
\begin{eqnarray*}
\frac{V_{g,n}(2x_1,\cdots,2x_n)}{V_{g,n}}
&\geq& \prod_{i=1}^n \frac{\sinh(x_i)}{x_i} - \sum_{|\boldsymbol{d}| > 3g-3+n} \frac{x_1^{2d_1}}{(2d_1+1)!}\cdots\frac{x_n^{2d_n}}{(2d_n+1)!} \\
& & - \frac{c_0}{g} \sum_{|\boldsymbol{d}| \leq 3g-3+n} |\boldsymbol{d}|^2\frac{x_1^{2d_1}}{(2d_1+1)!}\cdots\frac{x_n^{2d_n}}{(2d_n+1)!} \\
&\geq& \prod_{i=1}^n \frac{\sinh(x_i)}{x_i} \big( 1-(\frac{n}{4}c_0 + 4)\frac{x_1^2+\cdots+x_n^2}{g} \big).
\end{eqnarray*}

\noindent If $\frac{e(x_1^2+\cdots+x_n^2)}{3g-2+n} > 0.5$, then $e \cdot \frac{x_1^2+\cdots+x_n^2}{g}>1$ and the lower bound is trivial in this case.
\end{proof}

\begin{rem*} In the proof above,
\begin{enumerate}
\item for the lower bound, the $x_i's$ may depend on $g$ but $n$ is fixed;
\item for the upper bound, both the $x_i's$ and $n$ may depend on $g$ as $g\rightarrow\infty$.
\end{enumerate}
\end{rem*}

One may observe from Lemma \ref{Mirz vol lemma 1}, \ref{Mirz vol lemma 2} and Theorem \ref{MZ vol thm} that the asymptotic behavior of $V_{g,n}$ is related to the Euler characteristic $\chi(S_{g,n})=-2g+2-n$. We use the following quantity $W_r$ to approximate any $V_{g,n}$ with $2g-2+n=r$: 
$$
W_{r}:=
\begin{cases}
V_{\frac{r}{2}+1,0}&\text{if $r$ is even},\\[5pt]
V_{\frac{r+1}{2},1}&\text{if $r$ is odd}.
\end{cases}
$$

Now we provide the following properties for $W_r$ which will be applied later.
\begin{lemma}\label{Wr-prop}
	\begin{enumerate}
		\item For any $g,n\geq 0$, we have
		$$V_{g,n}  \leq c \cdot W_{2g-2+n}$$
		for some universal constant $c>0$.
		\item For any $r\geq1$ and $m_0\leq \frac{1}{2}r$, we have
		$$\sum_{m=m_0}^{[\frac{r}{2}]} W_m W_{r-m} \leq c(m_0) \frac{1}{r^{m_0}}W_r$$
		for some constant $c(m_0)>0$ only depending on $m_0$.
	\end{enumerate}
\end{lemma}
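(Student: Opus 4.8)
The plan is to reduce both statements to the Mirzakhani–Zograf asymptotic formula (Theorem \ref{MZ vol thm}) together with the elementary monotonicity and ratio bounds in Lemma \ref{Mirz vol lemma 1}. For part (1), recall that $W_{2g-2+n}$ equals $V_{g'}$ or $V_{g',1}$ with $2g'-2$ or $2g'-1$ equal to $2g-2+n$; in either case $|\chi(S_{g,n})| = |\chi|$ of the surface defining $W_r$. The idea is that $V_{g,n}$ is (up to a bounded factor) determined by $|\chi(S_{g,n})|$ alone. First I would use the second inequality of Lemma \ref{Mirz vol lemma 1}(2), namely $V_{g-1,n+2}\leq V_{g,n}$ combined with $b_0\leq \frac{V_{g,n+1}}{(2g-2+n)V_{g,n}}\leq b_1$, to compare $V_{g,n}$ with $V_{g'',n''}$ whenever $2g''-2+n'' = 2g-2+n$: repeatedly trading a unit of genus for two boundary components changes the volume only by bounded multiplicative factors, and each such trade is controlled because $2g-2+n$ is fixed. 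Iterating at most $O(1)$… wait, this is not $O(1)$ many steps in general — so instead I would invoke Theorem \ref{MZ vol thm} directly: for fixed $n$, $V_{g,n} = \alpha g^{-1/2}(2g-3+n)!(4\pi^2)^{2g-3+n}(1+O(1/g))$, and the point is that $(2g-3+n)! = (2g-2+n-1)!$ and $(4\pi^2)^{2g-3+n} = (4\pi^2)^{(2g-2+n)-1}$ depend on $g,n$ only through $r := 2g-2+n$. Since $g^{-1/2} \geq (r/2+1)^{-1/2}$ and the implied constant in $O(1/g)$ can be taken uniform over $n$ in any bounded range, and for large $n$ relative to $g$ one has a genuinely different regime — so the honest route is: bound $V_{g,n}$ above by a universal constant times $W_r$ by first handling small $g$ (say $g\leq 2$) using only boundary-stabilization inequalities from Lemma \ref{Mirz vol lemma 1}(2), and handling $g\geq 3$ by comparing to $W_r$ via $V_{g,n}\leq b_1^n (2g-2)(2g-1)\cdots V_{g,0}$-type telescoping together with $V_{g-1,n+2}\le V_{g,n}$ to push all boundary into genus, landing at $V_{g',0}$ or $V_{g',1}$ with $2g'-2$ or $2g'-1 = r$, at the cost of a bounded factor at each of the $O(1)$ parity-fixing steps after the bulk transfer. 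I expect a careful but routine bookkeeping argument suffices.

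For part (2), the strategy is a direct application of Lemma \ref{Mirz vol lemma 2}. By definition of $W_m$ for $m$ even and odd, the sum $\sum_{m=m_0}^{[r/2]} W_m W_{r-m}$ is, up to splitting into parity classes of $m$, a sum of the shape $\sum_{g_1+g_2 = \text{const}} V_{g_1,k}V_{g_2,k}$ with $k\in\{0,1,2\}$ and $g_1$ ranging from roughly $m_0/2$ upward. Concretely: if $m$ and $r$ are both even, $W_m W_{r-m} = V_{m/2+1}V_{(r-m)/2+1}$, a term in $\sum_{g_1+g_2 = r/2+2} V_{g_1,0}V_{g_2,0}$; the other three parity combinations give $V_{g_1,1}V_{g_2,0}$, $V_{g_1,0}V_{g_2,1}$, or $V_{g_1,1}V_{g_2,1}$, and by the symmetry of the summand and Lemma \ref{Mirz vol lemma 1}(2) each of these is bounded by $V_{g_1,0}V_{g_2,0}$ or $V_{g_1,0}V_{g_2,1}$ with shifted indices, so all four collapse to sums handled by Lemma \ref{Mirz vol lemma 2} with $k\in\{0,1\}$, $b=0$, $C=0$, and $r$ (the parameter in that lemma) equal to $\lceil m_0/2\rceil$ or so. Applying Lemma \ref{Mirz vol lemma 2} then gives $\sum W_m W_{r-m} \asymp \frac{V_{g,0}}{g^{2\lceil m_0/2\rceil + k}}$ where $2g \approx r$; translating back, $V_{g,0}\asymp W_r$ and $g\asymp r$, so the right side is $\asymp r^{-2\lceil m_0/2\rceil - k}W_r \leq c(m_0) r^{-m_0}W_r$ since $2\lceil m_0/2\rceil \geq m_0$.

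The main obstacle is bookkeeping rather than conceptual: in part (2) one must be careful that the lower index $m_0$ in the $W$-sum corresponds to the correct value of the parameter ``$r$'' in Lemma \ref{Mirz vol lemma 2} (which requires $r+1 \leq g_1$), and that the various parity cases all reduce to that lemma with the right $k$; the factor-of-two slack between $m_0$ and $2\lceil m_0/2\rceil$ is exactly what absorbs the off-by-one issues. In part (1), the subtlety is that $n$ need not be bounded, so one cannot quote Theorem \ref{MZ vol thm} verbatim with its $n$-dependent implied constant; the fix is to transfer boundary components into genus using $V_{g-1,n+2}\leq V_{g,n}$ and the uniform two-sided bound on $\frac{V_{g,n+1}}{(2g-2+n)V_{g,n}}$ from Lemma \ref{Mirz vol lemma 1}(2) before comparing with $W_r$, so that only a universally bounded number of genuinely ``$n$-dependent'' comparison steps remain. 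I would write part (1) first since part (2) uses $V_{g,0}\asymp W_{2g-2}$, which is the content of part (1).
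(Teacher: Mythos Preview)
Your approach is correct and aligns with the paper's proof, though your exposition of part (1) meanders more than necessary. The paper's route is exactly your ``honest route'': iterate the inequality $V_{g-1,n+4}\leq V_{g,n+2}$ from Lemma \ref{Mirz vol lemma 1}(2) (which requires $n\geq 4$ on the left, hence lands you at $n'\in\{0,1,2,3\}$ rather than $\{0,1\}$), then handle $n'=2,3$ in one more step via $V_{g',2}\leq c\,V_{g'+1}$ and $V_{g',3}\leq c\,V_{g'+1,1}$ from Lemma \ref{Mirz vol lemma 1}(3). The ``$b_1^n(2g-2)(2g-1)\cdots$''-type telescoping you mention is a red herring: that move changes $2g-2+n$, so it cannot be the bulk transfer; only the genus-for-two-boundaries trade preserves $r$.

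For part (2), your plan is exactly the paper's: split by parity of $m$ and reduce each parity class to Lemma \ref{Mirz vol lemma 2}. The one wrinkle you gloss over is that when $m$ and $r-m$ have different parities you get mixed products $V_{g_1,1}V_{g_2,0}$, which do not match the format of Lemma \ref{Mirz vol lemma 2} (where both factors carry the same $k$). The paper fixes this by first converting $W_m=V_{(m+1)/2,1}\leq \frac{c}{m}V_{(m+3)/2}$ via Lemma \ref{Mirz vol lemma 1}(3), then applying Lemma \ref{Mirz vol lemma 2} with $k=0$; the extra $\frac{1}{m}\leq\frac{1}{m_0}$ is harmless. With that conversion in place, your parameter count (the ``$r$'' of Lemma \ref{Mirz vol lemma 2} is roughly $m_0/2$, giving exponent $2\lceil m_0/2\rceil\geq m_0$) is exactly right.
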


\begin{proof}
	For $(1)$, first by Part $(2)$ of Lemma \ref{Mirz vol lemma 1} we know that there exists a pair $(g',n')$  with $0\leq n'\leq 3$ and $2g'-2+n'=2g-2+n$ such that
	$$V_{g,n} \leq V_{g',n'}.$$
	Again by Part $(3)$ of Lemma \ref{Mirz vol lemma 1} or Theorem \ref{MZ vol thm} we know that there is a universal constant $c>0$ such that 
	$$V_{g',2}\leq c V_{g'+1} \quad \text{and} \quad V_{g',3}\leq cV_{g'+1,1}.$$
	So for odd $n>0$ we have 
	$$V_{g,n}\leq V_{g',n'}\leq c V_{g+\frac{n-1}{2},1} = c W_{2g-2+n},$$
	and for even $n\geq 0$ we also have
	$$V_{g,n}\leq V_{g',n'}\leq c V_{g+\frac{n}{2}} = c W_{2g-2+n}$$
which completes the proof of $(1)$.\\

	For $(2)$, we only show it for the case that both $m_0$ and $r$ are odd. The proofs of other cases are similar. We leave them as an exercise to the readers. First by Part $(3)$ of Lemma \ref{Mirz vol lemma 1}, there is a universal constant $c>0$ such that for odd $m$,
	$$W_{m}\leq c\frac{1}{m} V_{\frac{m+3}{2}}.$$
Recall that Part $(3)$ of Lemma \ref{Mirz vol lemma 1} implies that for some universal constant $c'>0$,
\[\frac{V_{g+1}}{V_{g,1}}\leq c'\cdot g.\]	
Then it follows by Lemma \ref{Mirz vol lemma 2} that there exist two constants $c'(m_0),c(m_0)>0$ only depending on $m_0$ such that
	\begin{eqnarray*}
	\sum_{m=m_0}^{[\frac{r}{2}]} W_{m} W_{r-m}
	&\leq& \sum_{\tiny\begin{array}{c}m=m_0+1 \\ m\ \text{even}\end{array}}^{[\frac{r}{2}]} \frac{c}{r-m} V_{\frac{m}{2}+1} V_{\frac{r-m+3}{2}} + \sum_{\tiny\begin{array}{c}m=m_0 \\ m\ \text{odd}\end{array}}^{[\frac{r}{2}]}  \frac{c}{m} V_{\frac{m+3}{2}} V_{\frac{r-m}{2}+1} \\
	&\leq& \frac{c}{r} \sum_{k=\frac{m_0+3}{2}}^{[\frac{r}{4}]+1} V_{k} V_{\frac{r+5}{2}-k} + \frac{c}{m_0} \sum_{k=\frac{m_0+3}{2}}^{[\frac{r}{4}]+1} V_{k} V_{\frac{r+5}{2}-k} \\
	&\leq& c'(m_0)\frac{1}{r^{m_0+1}} V_{\frac{r+3}{2}} \\
	&\leq& c(m_0)\frac{1}{r^{m_0}} V_{\frac{r+1}{2},1} \\
	&=& c(m_0)\frac{1}{r^{m_0}} W_r,
	\end{eqnarray*} 	
as required.
\end{proof}

The following lemma is a generalization of \cite[lemma 3.2]{MP19} and \cite[lemma 6.3]{GMST19}. Here we allow the $n_i$'s and $q$ to depend on $g$ as $g\to \infty$.

\begin{lemma}\label{sum vol lemma}
Assume $q\geq 1$, $n_1,\cdots,n_q\geq 0$, $r\geq2$. Then there exists two universal constants $c,D>0$ such that
\begin{equation*}
\sum_{\{g_i\}} V_{g_1,n_1}\cdots V_{g_q,n_q} \leq c \big(\frac{D}{r}\big)^{q-1} W_r
\end{equation*}
where the sum is taken over all $\{g_i\}_{i=1}^q \sbs \N$ such that $2g_i-2+n_i \geq 1$ for all $i=1,\cdots,q$, and $\sum_{i=1}^q (2g_i-2+n_i) = r$. 

\end{lemma}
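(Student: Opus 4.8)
The plan is to first reduce the estimate to a purely numerical statement about the sequence $W_r$, and then prove that statement using the sharp large-genus volume asymptotics of Theorem~\ref{MZ vol thm} (the softer estimates of Lemma~\ref{Mirz vol lemma 1}, which suffice for the bounded-$q$ versions in \cite{MP19,GMST19}, are not enough here).

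\emph{Reduction to $W_r$.} For fixed $n_i$, the integer $r_i:=2g_i-2+n_i$ is strictly increasing in $g_i$, so $(g_1,\dots,g_q)\mapsto(r_1,\dots,r_q)$ is injective; by Lemma~\ref{Wr-prop}(1), $V_{g_i,n_i}\le c\,W_{r_i}$ with $c$ universal. Hence $\sum_{\{g_i\}}\prod_i V_{g_i,n_i}\le c^{q}\,T_q(r)$, where $T_q(r):=\sum_{r_i\ge1,\ \sum_i r_i=r}\prod_i W_{r_i}$, and since $c^{q}$ has the shape $(\text{const})\cdot(\text{const})^{q-1}$ it suffices to prove $T_q(r)\le c_0(D_0/r)^{q-1}W_r$ for universal $c_0,D_0$.

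\emph{The numerical estimate.} By Theorem~\ref{MZ vol thm} (using $2g-3+n=r-1$ both for $W_r=V_{r/2+1}$ and for $W_r=V_{(r+1)/2,1}$) there are universal $A_1,A_2>0$ with $A_1\,r^{-1/2}(r-1)!\,(4\pi^2)^{r-1}\le W_r\le A_2\,r^{-1/2}(r-1)!\,(4\pi^2)^{r-1}$ for all $r\ge1$ (the finitely many small $r$ are absorbed into the constants). Feeding the upper bound into each $W_{r_i}$ and the lower bound into $W_r$, and using $\sum_i(r_i-1)=r-q$ together with the multinomial identity $\tfrac{\prod_i(r_i-1)!}{(r-1)!}=\tfrac{1}{(r-1)\cdots(r-q+1)}\binom{r-q}{r_1-1,\dots,r_q-1}^{-1}$ and $\tfrac{r^{1/2}}{\prod_i r_i^{1/2}}\le q^{1/2}$ (since $\max_i r_i\ge r/q$), I get for every composition
\[
\frac{\prod_i W_{r_i}}{W_r}\ \le\ (\text{const})\Big(\frac{A_2}{4\pi^2}\Big)^{q-1}\frac{q^{1/2}}{(r-1)\cdots(r-q+1)}\binom{r-q}{r_1-1,\dots,r_q-1}^{-1}.
\]
Summing over the compositions of $r$ into $q$ positive parts reduces the problem to estimating $S(N,q):=\sum\binom{N}{a_1,\dots,a_q}^{-1}$ over compositions of $N:=r-q$ into $q$ nonnegative parts. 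Grouping by the number $k$ of nonzero parts and using that a composition of $N$ into $k$ positive parts has multinomial coefficient $\ge N!/(N-k+1)!$, with $\binom{N-1}{k-1}$ such compositions, gives $S(N,q)\le\sum_{k=1}^q\binom qk\tfrac1{(k-1)!}\le 2^{q}$, uniformly in $N$. Plugging this back, $T_q(r)\le(\text{const})\big(\tfrac{A_2}{4\pi^2}\big)^{q-1}q^{1/2}2^{q}\big((r-1)\cdots(r-q+1)\big)^{-1}W_r$. If $q\le r/2$ the denominator is $\ge(r/2)^{q-1}$, so the bound is $\le(\text{const})\,q^{1/2}2^{-(q-1)}(D_0/r)^{q-1}W_r$ once $D_0\ge 2A_2/\pi^2$, and $q^{1/2}2^{-(q-1)}$ is bounded. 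If $q>r/2$ (so $r<2q$) I instead bound $T_q(r)$ crudely by (number of compositions) $\times$ (largest summand $\asymp W_{r-q+1}$, log-convexity of the leading asymptotic factor forcing $\prod_i W_{r_i}$ to be maximal at the extreme composition), and use $\binom{r-1}{q-1}\tfrac{(r-q)!}{(r-1)!}=\tfrac{1}{(q-1)!}$ and Stirling (trading the harmless power $r^{\,q-1/2}<(2q)^{q}$ against $(q-1)!$) to reach again a bound of the form $c_0(D_0/r)^{q-1}W_r$. Enlarging $D_0$ past both thresholds completes the proof.

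\emph{Main obstacle.} The argument is elementary once Theorem~\ref{MZ vol thm} is available; the one genuinely delicate point is uniformity in $q$. Since $q$ is allowed to grow with $g$, one must check that \emph{every} multiplicative loss accumulated along the way — the $c^{q}$ of the reduction, the $A_2^{q}$ and $q^{1/2}$ of the volume estimate, the $2^{q}$ from $S(N,q)$, the $2^{q-1}$ from the denominator, and the Stirling factor in the large-$q$ regime — is at worst exponential in $q$, hence absorbable into $(D/r)^{q-1}$ rather than degrading it. Keeping track of this is the main bookkeeping burden, and is precisely why the sharp asymptotics of Theorem~\ref{MZ vol thm} are needed.
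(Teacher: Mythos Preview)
Your proof is correct and follows essentially the same route as the paper's: reduce each $V_{g_i,n_i}$ to $W_{r_i}$ (the paper via Lemma~\ref{Mirz vol lemma 1}(2), you via the packaged Lemma~\ref{Wr-prop}(1)), invoke the sharp Mirzakhani--Zograf asymptotics of Theorem~\ref{MZ vol thm} to replace each $W_{r_i}$ by $r_i^{-1/2}(r_i-1)!(4\pi^2)^{r_i-1}$ up to universal constants, and then perform a combinatorial estimate uniform in $q$. The only substantive difference is in that last step. The paper applies Stirling immediately to convert $(r_i-1)!/\sqrt{r_i}$ into $((r_i-1)/e)^{r_i-1}$, uses the elementary fact that $\prod x_i^{x_i}$ with $\sum x_i$ fixed and $x_i\ge1$ is maximal at an extreme composition, and obtains the single clean bound $\sum_{\{g_i\}}\prod_i (2g_i-3+n_i)^{2g_i-3+n_i}/(r-1)^{r-1}\le 2^q/(r-1)^{q-1}$ with no case split. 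Your multinomial-coefficient reformulation and the resulting dichotomy $q\le r/2$ versus $q>r/2$ reach the same conclusion, but the second case is handled sketchily: the ``largest summand $\asymp W_{r-q+1}$'' is really $W_1^{q-1}W_{r-q+1}$, so there is an extra $(\pi^2/6)^{q-1}$ that must be (and can be) absorbed into $D_0$; once that is acknowledged the argument goes through.
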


\begin{proof}
Given a $\{g_i\}$ in the summation, let $g_i'\geq0$ and $0\leq n_i'\leq 3$ be such that $2g_i'-2+n_i'=2g_i-2+n_i$ for each $i$. By Lemma \ref{Mirz vol lemma 1} we know that 
$$V_{g_i,n_i} \leq V_{g'_i,n'_i}.$$
And by Theorem \ref{MZ vol thm}, we have
\begin{align*}
V_{g'_i,n'_i}&\leq \alpha_0 \frac{\sqrt2}{\sqrt{2g'_i-3+n'_i}} (2g'_i-3+n'_i)! (4\pi^2)^{2g'_i-3+n'_i}\\
&=\alpha_0 \frac{\sqrt2}{\sqrt{2g_i-3+n_i}} (2g_i-3+n_i)! (4\pi^2)^{2g_i-3+n_i}
\end{align*}
and 
$$
W_r\geq \alpha_1 \frac{\sqrt2}{\sqrt{r-1}} (r-1)! (4\pi^2)^{r-1}
$$
for universal constants $\alpha_0 > \alpha_1 >0$. 

Recall that the Stirling's formula says that as $k\rightarrow\infty$, $$k!\sim \sqrt{2\pi k} (\frac{k}{e})^k.$$  So there exist two universal constants $a_0>a_1>0$ such that
\begin{equation*}
a_1 \sqrt{2\pi } (\frac{k}{e})^k\leq \frac{k!}{\sqrt{k}} \leq a_0 \sqrt{2\pi } (\frac{k}{e})^k.
\end{equation*}

\noindent Now we have
\begin{eqnarray}\label{mp-eq-1}
& & \frac{\sum_{\{g_i\}} V_{g_1,n_1}\cdots V_{g_q,n_q}}{W_r} \\
&\leq& \frac{\sum_{\{g_i\}} \prod_{i=1}^q 2\sqrt{\pi}a_0\alpha_0 (\frac{2g_i-3+n_i}{e})^{2g_i-3+n_i} (4\pi^2)^{2g_i-3+n_i}}
{2\sqrt{\pi}a_1\alpha_1 (\frac{r-1}{e})^{r-1} (4\pi^2)^{r-1}} \nonumber\\
&=& \frac{1}{2\sqrt{\pi}a_1\alpha_1\frac{e}{4\pi^2}} (2\sqrt{\pi}a_0\alpha_0\frac{e}{4\pi^2})^q
\frac{\sum_{\{g_i\}} \prod_{i=1}^q (2g_i-3+n_i)^{2g_i-3+n_i}} {(r-1)^{r-1}}.\nonumber
\end{eqnarray}
For each $i=1,\cdots,q$, we have $2g_i-3+n_i\geq 0$. Now assume exactly $j$ of numbers $(2g_i-3+n_i)$ are non-zero. The number of such $\{g_i\}$ (such that $\sum_{i=1}^q (2g_i-3+n_i) = r-q$) is bounded from above by
\begin{equation*}
\binom{q}{j}
\binom{r-q-1}{j-1}
\end{equation*}
where $\binom{q}{j} = \frac{q!}{j!(q-j)!}$ is the binomial coefficient.

Recall the following elementary fact: if $\sum_{i=1}^j x_i = S$ and $x_i\geq 1$ for all $i$, then $\prod_{i=1}^j x_i^{x_i}$ reaches the maximum value when $j-1$ of the $x_i's$ are $1$. As a result, we have
\begin{equation*}
\prod_{i=1}^j x_i^{x_i} \leq (S-j+1)^{S-j+1}.
\end{equation*}

\noindent Thus for each such $\{g_i\}$ we have
\begin{equation*}
\prod_{i=1}^q (2g_i-3+n_i)^{2g_i-3+n_i} \leq 1^1\cdots 1^1\cdot (r-q-j+1)^{r-q-j+1}.
\end{equation*}
So we have
\begin{eqnarray*}
& & \frac{\sum_{\{g_i\}} \prod_{i=1}^q (2g_i-3+n_i)^{2g_i-3+n_i}} {(r-1)^{r-1}} \\
& \leq & \frac{1} {(r-1)^{r-1}} \sum_{j=0}^q
\binom{q}{j}
\binom{r-q -1}{j-1}
(r-q-j+1)^{r-q-j+1} \\
& \leq & \sum_{j=0}^q \binom{q}{j}
\frac{(r-q-1)^{j-1} (r-q-j+1)^{r-q-j+1}} {(r-1)^{r-1}} \\
& \leq & \sum_{j=0}^q\binom{q}{j}
\frac{1} {(r-1)^{q-1}} \\
& = & \frac{2^q} {(r-1)^{q-1}}.
\end{eqnarray*}

Then combining \eqref{mp-eq-1} we get
\begin{equation*}
\frac{\sum_{\{g_i\}} V_{g_1,n_1}\cdots V_{g_q,n_q}}{W_r}
\leq 2\frac{a_0\alpha_0}{a_1\alpha_1} \big(\frac{a_0\alpha_0\frac{e}{\pi^{3/2}}}{r-1}\big)^{q-1},
\end{equation*}
as required.
\end{proof}

We finish this section by the following useful property.
\begin{proposition}\label{1 over gm} 
	Given $m\geq 1$, there exists a constant $c(m)>0$ only depending on $m$ such that for any $g\geq m+1$, $q\geq 1$ and $n_1,...,n_q\geq 1$, we have
	\begin{equation*}
	\sum_{\{g_i\}} V_{g_1,n_1}\cdots V_{g_q,n_q} \leq c(m)\frac{1}{g^m} V_g,
	\end{equation*}
    where the sum is taken over all $\{g_i\}_{i=1}^q \sbs \N$ such that $2g_i-2+n_i \geq 1$ for all $i=1,\cdots,q$, and $\sum_{i=1}^q (2g_i-2+n_i) = 2g-2-m$.
\end{proposition}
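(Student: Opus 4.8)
The plan is to combine Lemma~\ref{sum vol lemma} with an elementary comparison between $W_{2g-2-m}$ and $g^{-m}V_g$. Set $r:=2g-2-m$; since $g\geq m+1$ we have $r\geq m\geq 1$, and the constraint $\sum_{i=1}^q(2g_i-2+n_i)=2g-2-m$ is exactly $\sum_{i=1}^q r_i=r$ with $r_i:=2g_i-2+n_i\geq 1$, so the sum in the proposition is precisely the one handled by Lemma~\ref{sum vol lemma}.

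The first ingredient is the bound $W_{2g-2-m}\leq c(m)\,g^{-m}\,V_g$, valid for all $g\geq m+1$ with $c(m)$ depending only on $m$. For large $g$ this is a direct computation with Theorem~\ref{MZ vol thm}: according to the parity of $m$, $W_{2g-2-m}$ equals $V_{g-m/2}$ or $V_{g-(m+1)/2,\,1}$, and in either case Theorem~\ref{MZ vol thm} expresses both $W_{2g-2-m}$ and $V_g$ in the form $\alpha\,(1+O(1/g))\,g^{-1/2}\,(\ast-1)!\,(4\pi^2)^{\ast-1}$ with $\ast=2g-2-m$ and $\ast=2g-2$ respectively; hence their ratio is $\asymp(4\pi^2)^{-m}\,(2g-3-m)!/(2g-3)!\asymp g^{-m}$, with implied constants depending only on $m$. (Alternatively, one may iterate Parts~(2) and (3) of Lemma~\ref{Mirz vol lemma 1} a total of $m$ times, each increment of the Euler-characteristic index contributing a factor $\asymp g$ to $V_g/W_{2g-2-m}$.) Since this inequality involves only positive quantities, the finitely many small $g$ for which the asymptotics are not yet effective can be absorbed by enlarging $c(m)$.

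Next I would apply Lemma~\ref{sum vol lemma}. Let $D$ be its universal constant. If $r\geq\max(2,D)$, then
\[
\sum_{\{g_i\}} V_{g_1,n_1}\cdots V_{g_q,n_q}\ \leq\ c\Big(\tfrac{D}{r}\Big)^{q-1}W_r\ \leq\ c\,W_r\ \leq\ c\,c(m)\,\tfrac{1}{g^m}\,V_g,
\]
where the middle inequality uses $q\geq 1$ and $D/r\leq 1$. If instead $r<\max(2,D)$, then $g=(r+m+2)/2$ lies in a bounded set depending only on $m$, and moreover $q\leq r$ and each $r_i\leq r$ are bounded by a universal constant; so Lemma~\ref{Wr-prop}(1) bounds each $V_{g_i,n_i}\leq c\,W_{r_i}$ by a universal constant, the number of admissible tuples $\{g_i\}$ is at most $\binom{r-1}{q-1}$, and thus the whole sum is at most a universal constant, while $g^{-m}V_g$ is at least a positive constant depending only on $m$. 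In both cases the asserted inequality holds after adjusting $c(m)$.

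The only step needing genuine care is the comparison $W_{2g-2-m}\asymp g^{-m}V_g$ and the uniformity of its constants in $g$; the remainder is a direct application of Lemma~\ref{sum vol lemma} plus a trivial estimate on a bounded range of genera, so I expect no serious obstacle.
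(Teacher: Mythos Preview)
Your proof is correct and follows essentially the same approach as the paper: both apply Lemma~\ref{sum vol lemma} to bound the sum by $c(D/r)^{q-1}W_{2g-2-m}$, then use the asymptotics from Theorem~\ref{MZ vol thm} (or Lemma~\ref{Mirz vol lemma 1}) to show $W_{2g-2-m}\leq c(m)\,g^{-m}V_g$, absorbing the bounded-$g$ range into the constant. Your handling of the threshold $r\geq\max(2,D)$ versus small $r$ is slightly more explicit than the paper's, but the argument is the same.
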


\begin{proof}
	If $g$ is bounded from above, then the nonnegative integers $m,q,n_1,\cdots n_q$ are all bounded from above, and hence the inequality is trivial. It suffices to show it for large enough $g$. First by Lemma \ref{sum vol lemma} we know that 
	\begin{equation*}
		\sum_{\{g_i\}} V_{g_1,n_1}\cdots V_{g_q,n_q} \leq c \big(\frac{D}{2g-2-m}\big)^{q-1} W_{2g-2-m}.
	\end{equation*}
	By Part $(3)$ of Lemma \ref{Mirz vol lemma 1} or Theorem \ref{MZ vol thm} we know that 
 \[\frac{V_g}{V_{g-1}}\asymp g^2 \quad \textit{and} \quad \frac{V_{g,1}}{V_g}\asymp g \]
where we say $f\asymp h$ if there exists a uniform constant $C\geq 1$ such that $\frac{1}{C}\leq \frac{f}{h}\leq C$. This in particular implies that there exists a constant $c'(m)>0$ only depending on $m$ such that
\begin{equation*}
		W_{2g-2-m} \leq c'(m) \frac{1}{g^m} V_g.
	\end{equation*}
Therefore, we have that for large enough $g>0$,
    \begin{eqnarray*}
    	\sum_{\{g_i\}} V_{g_1,n_1}\cdots V_{g_q,n_q} &\leq& c'(m) (\frac{D}{g})^{q-1} \frac{1}{g^m} V_g\\
& \leq& c(m) \frac{1}{g^m} V_g
    \end{eqnarray*}
for some constant $c(m)>0$ only depending on $m$, as required.
\end{proof}

\section{Mirzakhani's generalized McShane identity}\label{section McShane identity}

In \cite{Mirz07} Mirzakhani generalized McShane's identity \cite{McS98} as follows, and then calculated the Weil-Petersson volume of moduli spaces by applying her integration formula (see Theorem \ref{Mirz int formula}).

\begin{theorem}\cite[Theorem 1.3]{Mirz07} \label{McShane id}
For $X\in\M_{g,n}(L_1,\cdots,L_n)$ with $n$ geodesic boundaries $\beta_1,\cdots,\beta_n$ of length $L_1,\cdots,L_n$, we have
\begin{equation*}
\sum_{\{\gamma_1,\gamma_2\}} \mathcal D(L_1, \ell(\gamma_1), \ell(\gamma_2)) +
\sum_{i=2}^n \sum_\gamma \mathcal R(L_1,L_i,\ell(\gamma)) = L_1
\end{equation*}
where the first sum is over all unordered pairs of simple closed geodesics $\{\gamma_1, \gamma_2\}$ bounding a pair of pants with $\beta_1$, and the second sum is over all simple closed geodesics $\gamma$ bounding a pair of pants with $\beta_1$ and $\beta_i$. Here $\mathcal D$ and $\mathcal R$ are given by
\begin{equation*}
\mathcal D(x,y,z) = 2\log\big( \frac{e^{\frac{x}{2}}+e^{\frac{y+z}{2}}}{e^{\frac{-x}{2}}+e^{\frac{y+z}{2}}} \big),
\end{equation*}
\begin{equation*}
\mathcal R(x,y,z) = x - \log\big( \frac{\cosh(\frac{y}{2})+\cosh(\frac{x+z}{2})}{\cosh(\frac{y}{2})+\cosh(\frac{x-z}{2})} \big).
\end{equation*}

\end{theorem}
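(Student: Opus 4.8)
The plan is to adapt McShane's original argument for the once-punctured torus to a bordered hyperbolic surface. Fix the boundary geodesic $\beta_1$. For each $x \in \beta_1$ let $\gamma_x \colon [0,\infty) \to X$ be the unit-speed geodesic ray that leaves $x$ orthogonally to $\beta_1$ and enters the interior of $X$. I would construct a decomposition of $\beta_1$, up to a set of Lebesgue measure zero, indexed by the embedded pairs of pants in $X$ having $\beta_1$ as a boundary component, and identify the length of each piece with the appropriate value of $\mathcal{D}$ or $\mathcal{R}$; summing the pieces then yields $\ell(\beta_1) = L_1$.

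First I would prove the underlying dichotomy. Let $\mathcal{A} \subseteq \beta_1$ be the set of $x$ for which $\gamma_x$ is a simple ray that never meets $\partial X$. One shows that $\mathcal{A}$ is closed, that $\beta_1 \setminus \mathcal{A}$ is open and dense, and --- the crucial point --- that $\mathcal{A}$ has Lebesgue measure zero. As in McShane's work, this follows from the fact that for $x \in \mathcal{A}$ the ray $\gamma_x$ accumulates onto a geodesic lamination whose complementary regions impose a self-similar, Cantor-type structure on $\mathcal{A}$; concretely one lifts $\beta_1$ and $\partial X$ to $\mathbb{H}$ and controls the positions of their endpoints on the circle at infinity. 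I expect this measure-zero statement to be the main obstacle of the proof: one must rule out a positive-measure set of ``well-behaved'' orthogonal directions.

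Next, for $x \in \beta_1 \setminus \mathcal{A}$ let $T(x)$ be the first time $t$ at which $\gamma_x|_{[0,t]}$ either has a self-intersection or meets $\partial X$. Taking a regular neighborhood of $\beta_1 \cup \gamma_x|_{[0,T(x)]}$ (together with an arc of the boundary circle that $\gamma_x$ hits, in the relevant case) and replacing each non-peripheral boundary curve by its geodesic representative, one obtains a canonical embedded pair of pants $P(x) \subseteq X$ with $\beta_1 \subseteq \partial P(x)$. There are exactly two cases: if $\gamma_x$ first returns to $\partial X$ at a point of some $\beta_i$ with $i \geq 2$, then $P(x)$ has boundary $\beta_1 \cup \beta_i \cup \gamma$ with $\gamma$ a simple closed geodesic in the interior of $X$; if instead $\gamma_x$ self-intersects or returns to $\beta_1$ first, then $P(x)$ has boundary $\beta_1 \cup \gamma_1 \cup \gamma_2$ with $\gamma_1, \gamma_2$ simple closed geodesics in the interior. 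One checks that $x \mapsto P(x)$ is locally constant on $\beta_1 \setminus \mathcal{A}$, so each fiber is a union of open arcs, and that conversely every embedded pair of pants with $\beta_1$ in its boundary occurs as some $P(x)$, with distinct pants giving disjoint fibers.

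Finally I would compute the total length $m(P)$ of the fiber over a fixed pair of pants $P$. Since this fiber depends only on the isometry type of $P$ and the way $\beta_1$ sits in it, one may work inside the hyperbolic pair of pants itself: lifting to $\mathbb{H}$ and using the right-angled hexagon identities that govern pair-of-pants geometry, the points $x \in \beta_1$ whose orthogonal ray stays inside $P$ up to its first self-intersection form an interval, and a direct trigonometric computation gives $m(P) = \mathcal{D}(L_1, \ell(\gamma_1), \ell(\gamma_2))$ in the first type of pants and $m(P) = \mathcal{R}(L_1, L_i, \ell(\gamma))$ in the second; this is precisely where the closed forms of $\mathcal{D}$ and $\mathcal{R}$ come from. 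Summing $m(P)$ over all such pants and using that $\mathcal{A}$ is null gives
\[
\sum_{\{\gamma_1,\gamma_2\}} \mathcal{D}\big(L_1, \ell(\gamma_1), \ell(\gamma_2)\big) + \sum_{i=2}^{n}\sum_{\gamma} \mathcal{R}\big(L_1, L_i, \ell(\gamma)\big) = \ell(\beta_1) = L_1,
\]
which is the asserted identity.
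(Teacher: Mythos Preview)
The paper does not supply its own proof of this statement: it is quoted verbatim as \cite[Theorem 1.3]{Mirz07} and used as a black box, so there is nothing in the paper to compare your argument against. Your outline is, at the level of a sketch, the standard route taken in Mirzakhani's original paper --- the Birman--Series measure-zero result for the ``exceptional'' set $\mathcal{A}$, the assignment of a pair of pants $P(x)$ to each non-exceptional orthogeodesic, and the hexagon/pentagon computations yielding $\mathcal{D}$ and $\mathcal{R}$ --- so as a plan it is correct in spirit.

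That said, two points in your description would need tightening before it becomes an actual proof. First, the measure-zero claim for $\mathcal{A}$ is not specific to McShane's argument; it is the Birman--Series theorem that simple geodesics on a hyperbolic surface have Hausdorff dimension one, and you should invoke (or reprove) that rather than gesture at a ``Cantor-type structure''. Second, your case analysis for $P(x)$ is slightly off: when $\gamma_x$ first hits $\beta_1$ itself (as opposed to self-intersecting or hitting some $\beta_i$ with $i\geq 2$), the resulting pair of pants still has $\beta_1$ and two interior geodesics as boundary, but the contribution to the $\mathcal{D}$-sum must be handled with care to avoid double counting, and the interval of $x$'s mapping to a given $P$ is in general a union of \emph{two} arcs on $\beta_1$ (one on each side of the orthogonal seams of $P$), not one. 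These are the places where Mirzakhani's computation is genuinely delicate, and your sketch glosses over them.
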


We will use this identity in subsection \ref{proof of third tend to 0} to control the number of certain types of closed geodesics in a surface. Here we provide the following elementary properties for $\mathcal D(x,y,z)$ and $\mathcal R(x,y,z)$.

\begin{lemma} \label{estimation R,D}
Assume that $x,y,z> 0$, then the following properties hold.

\begin{enumerate}
  \item $\mathcal R(x,y,z)\geq 0$ and $\mathcal D(x,y,z)\geq0$.
  \item $\mathcal R(x,y,z)$ is decreasing with respect to $z$ and increasing with respect to $y$. $\mathcal D(x,y,z)$ is decreasing with respect to $y$ and $z$ and increasing with respect to $x$.
  \item We have
\begin{equation*}
\frac{x}{\mathcal R(x,y,z)} \leq 100(1+x)(1+e^{\frac{z}{2}}e^{-\frac{x+y}{2}}),
\end{equation*}
and
\begin{equation*}
\frac{x}{\mathcal D(x,y,z)} \leq 100(1+x)(1+e^{\frac{y+z}{2}}e^{-\frac{x}{2}}).
\end{equation*}
Moreover, if $x+y>z$, we have
\begin{equation*}
\frac{x}{\mathcal R(x,y,z)} \leq 500+ 500\frac{x}{x+y-z}.
\end{equation*}

\end{enumerate}
\end{lemma}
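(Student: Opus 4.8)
The plan is to prove each of the three displayed inequalities by direct computation, using the explicit formulas for $\mathcal{D}$ and $\mathcal{R}$ together with elementary calculus and the monotonicity facts from parts (1) and (2).

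First I would handle the bound on $\frac{x}{\mathcal{R}(x,y,z)}$. Writing $u=\cosh(y/2)$, $v=\cosh\big((x+z)/2\big)$, $w=\cosh\big((x-z)/2\big)$, one has $\mathcal{R}(x,y,z)=x-\log\frac{u+v}{u+w}=x-\log\big(1+\frac{v-w}{u+w}\big)$. Since $\log(1+t)\le t$, this gives $\mathcal{R}(x,y,z)\ge x-\frac{v-w}{u+w}$. The product-to-sum identity $\cosh\frac{x+z}{2}-\cosh\frac{x-z}{2}=2\sinh\frac{x}{2}\sinh\frac{z}{2}$ lets me estimate $v-w$, and bounding $u+w\ge 1+\cosh\frac{x-z}{2}\ge \ldots$ from below in terms of $e^{(x+y)/2}$ and $e^{(x-z)/2}$ reduces the claim to showing $\mathcal{R}(x,y,z)\ge \frac{x}{100(1+x)}\cdot\frac{1}{1+e^{z/2}e^{-(x+y)/2}}$. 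I would split into the regime where $\mathcal{R}\ge x/2$ (say, when $e^{z/2}e^{-(x+y)/2}$ is bounded), where the bound is immediate, and the complementary regime, where the $\log$ term is close to $x$ and a first-order Taylor expansion of $t-\log(1+\ldots)$ together with the inequality $1-e^{-s}\ge \frac{s}{1+s}$ closes it. For the refined bound assuming $x+y>z$: here $\mathcal{D}$-type reasoning does not enter; instead I would note that $\mathcal{R}(x,y,z)$ is increasing in $y$ by part (2), so it suffices to treat $y$ near its smallest allowed value, i.e. the critical case $x+y-z$ small, and a careful lower bound on $\mathcal{R}$ linear in $\min(x, x+y-z)$ gives the $500+500\frac{x}{x+y-z}$ form.

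For the bound on $\frac{x}{\mathcal{D}(x,y,z)}$, I would use $\mathcal{D}(x,y,z)=2\log\frac{e^{x/2}+e^{(y+z)/2}}{e^{-x/2}+e^{(y+z)/2}}=2\log\Big(1+\frac{e^{x/2}-e^{-x/2}}{e^{-x/2}+e^{(y+z)/2}}\Big)$. Again $\log(1+t)\ge \frac{t}{1+t}$ gives $\mathcal{D}\ge \frac{2(e^{x/2}-e^{-x/2})}{e^{x/2}+e^{(y+z)/2}}$. When $e^{(y+z)/2}e^{-x/2}\le 1$ this is $\gtrsim 1-e^{-x}\ge \frac{x}{1+x}$ up to constants, which handles that regime. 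When $e^{(y+z)/2}e^{-x/2}> 1$, the denominator is dominated by $e^{(y+z)/2}$, so $\mathcal{D}\gtrsim e^{x/2}e^{-(y+z)/2}(1-e^{-x})\gtrsim \frac{x}{1+x}\cdot\frac{1}{e^{(y+z)/2}e^{-x/2}}$, which is exactly the claimed form. Combining the two regimes with a single constant $100$ absorbing all the slack finishes it.

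The main obstacle I anticipate is bookkeeping the constants so that a single universal constant ($100$, resp. $500$) works uniformly across both the large-$x$ and small-$x$ ranges and across the two regimes of the exponential ratio; the inequalities themselves are soft (they are all of the form "$\mathcal{R}$ or $\mathcal{D}$ is not too small"), but one must be slightly careful near $x\to 0$, where both $\mathcal{R}$ and $\mathcal{D}$ vanish linearly in $x$, so that $\frac{x}{\mathcal{R}}$ and $\frac{x}{\mathcal{D}}$ stay bounded — this is where the factor $(1+x)$ in the numerator of the bound (rather than just $x$) is doing real work, and I would make sure the Taylor expansion at $x=0$ is controlled before multiplying through. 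Parts (1) and (2), already established, will be used freely to reduce to extremal values of $y$ and $z$ wherever that simplifies the estimate.
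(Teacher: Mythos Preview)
Your treatment of $\mathcal{D}$ is essentially the paper's: both rewrite $\mathcal{D}=2\log\bigl(1+\tfrac{2\sinh(x/2)}{e^{-x/2}+e^{(y+z)/2}}\bigr)$ and then bound $\log(1+t)$ from below (you use $\log(1+t)\ge t/(1+t)$, the paper uses $\log(1+t)\ge t/2$ for $t\le 1$ together with $\mathcal{D}\ge 2\log 2$ for $t>1$). So that part is fine.

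For $\mathcal{R}$ your first move is a detour. The inequality $\log(1+t)\le t$ applied to $\mathcal{R}=x-\log\bigl(1+\tfrac{v-w}{u+w}\bigr)$ gives $\mathcal{R}\ge x-\tfrac{v-w}{u+w}$, but this lower bound is frequently negative (precisely in the regime where you need it most, namely when $e^{z/2}e^{-(x+y)/2}$ is large), so it carries no information there. You acknowledge this and promise a ``Taylor expansion of $t-\log(1+\ldots)$'' in the complementary regime, but that step is where all the content is, and as written it is vague. The paper avoids the issue entirely by first rewriting $\mathcal{R}$ multiplicatively as
\[
\mathcal{R}(x,y,z)=\log\Bigl(1+\frac{e^x-1}{\,1+e^{(x+z)/2}/\cosh(y/2)\,}\Bigr),
\]
which is already $\log(1+\text{positive})$; then a single dichotomy on whether that positive fraction exceeds $1$ handles both regimes cleanly. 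You would do well to start from this identity rather than the subtractive one.

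For the ``moreover'' bound, your monotonicity reduction is not valid as stated: decreasing $y$ increases \emph{both} $x/\mathcal{R}$ and $500+500\,x/(x+y-z)$, so you cannot pass to extremal $y$. What you actually need (and what your last sentence gestures at) is a direct lower bound $\mathcal{R}\gtrsim \min(x,\,x+y-z)$; the paper gets this by further splitting the case $\tfrac{e^x-1}{1+e^{(x+z)/2}/\cosh(y/2)}\ge 1$ according to whether $e^{(x+z)/2}/\cosh(y/2)$ is $\le 1$ or $\ge 1$, and in the latter subcase uses the elementary inequality $1+ae^s\ge e^{as}$ for $a\in(0,1)$ to extract the factor $x+y-z$.
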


\begin{proof}
Part $(1)$ is easy to check. Actually $\mathcal D$ and $\mathcal R$ given in \cite{Mirz07} are lengths of certain segments for $x,y,z>0$.\\

For Part $(2)$, a direct computation shows that
\begin{eqnarray*}
&& \frac{d}{dz}\big( \frac{\cosh(\frac{y}{2})+\cosh(\frac{x+z}{2})}{\cosh(\frac{y}{2})+\cosh(\frac{x-z}{2})} \big) \\
&=& \frac{\frac12 \sinh\frac{x+z}{2}(\cosh\frac{y}{2}+\cosh\frac{x-z}{2}) + \frac12 \sinh\frac{x-z}{2}(\cosh\frac{y}{2}+\cosh\frac{x+z}{2}) } {(\cosh(\frac{y}{2})+\cosh(\frac{x-z}{2}))^2} \\
&=& \frac{\sinh\frac{x}{2}\cosh\frac{z}{2}\cosh\frac{y}{2} + \frac12 \sinh x} {(\cosh(\frac{y}{2})+\cosh(\frac{x-z}{2}))^2} \\
&>& 0
\end{eqnarray*}
where we have used the elementary equations
\begin{eqnarray*}
\sinh(a+b)=\sinh a\cosh b + \cosh a\sinh b, \\
\sinh a+\sinh b = 2\sinh\frac{a+b}{2}\cosh\frac{a-b}{2}.
\end{eqnarray*}
So $\mathcal R(x,y,z)$ is decreasing with respect to $z$. The other parts of $(2)$ are obvious.\\

For Part $(3)$, first as for $\mathcal R(x,y,z)$ we have
\begin{eqnarray}\label{estimation R,D--R geq}
\mathcal R(x,y,z)
&=& \log\big( e^x \frac{\cosh(\frac{y}{2})+\cosh(\frac{x-z}{2})}{\cosh(\frac{y}{2})+\cosh(\frac{x+z}{2})} \big) \\
&=& \log\big( e^x \frac{e^{\frac{y}{2}}+e^{\frac{-y}{2}} + e^{\frac{x-z}{2}}+e^{\frac{z-x}{2}}}
{e^{\frac{y}{2}}+e^{\frac{-y}{2}} + e^{\frac{x+z}{2}}+e^{\frac{-x-z}{2}}} \big) \nonumber\\
&=& \log\big( e^x \frac{e^y e^{\frac{x+z}{2}}+e^{\frac{x+z}{2}} + e^x e^{\frac{y}{2}}+e^z e^{\frac{y}{2}}}
{e^y e^{\frac{x+z}{2}}+e^{\frac{x+z}{2}} + e^{x+z} e^{\frac{y}{2}}+e^{\frac{y}{2}}} \big) \nonumber\\
&=& \log\big( 1+ \frac{(e^x -1)(e^y +1)e^{\frac{x+z}{2}} + (e^{2x}-1)e^{\frac{y}{2}}}
{e^y e^{\frac{x+z}{2}}+e^{\frac{x+z}{2}} + e^{x+z} e^{\frac{y}{2}}+e^{\frac{y}{2}}} \big) \nonumber\\
&\geq& \log\big( 1+ \frac{(e^x -1)(e^y +1)e^{\frac{x+z}{2}} }
{e^y e^{\frac{x+z}{2}}+e^{\frac{x+z}{2}} + 2e^{x+z} e^{\frac{y}{2}}} \big) \nonumber\\
&=& \log\big( 1+ \frac{e^x -1}{1 + e^{\frac{x+z}{2}} \frac{2e^{\frac{y}{2}}}{e^y +1} } \big) \nonumber\\
&=& \log\big( 1+ \frac{e^x -1}{1 + e^{\frac{x+z}{2}} \frac{1}{\cosh \frac{y}{2}} } \big). \nonumber
\end{eqnarray}
Then we treat the following cases separately:

\textbf{Case 1: $\frac{e^x -1}{1 + e^{\frac{x+z}{2}} \frac{1}{\cosh \frac{y}{2}} } \geq 1$}. 
Then we have $e^x\geq 2$ and by \eqref{estimation R,D--R geq}
\begin{equation}\label{estimation R,D--x/R case 1}
\frac{x}{\mathcal R(x,y,z)} \leq \frac{x}{\log 2} \leq 2x.
\end{equation}

\textbf{Case 2: $\frac{e^x -1}{1 + e^{\frac{x+z}{2}} \frac{1}{\cosh \frac{y}{2}} }< 1$.} Recall that $\log(1+t)\geq \frac{t}{2}$ for $0<t\leq 1$. Then by \eqref{estimation R,D--R geq} we have
\begin{eqnarray}\label{estimation R,D--x/R case 2}
\frac{x}{\mathcal R(x,y,z)}
&\leq& \frac{x}{\frac{1}{2} \frac{e^x -1}{1 + e^{\frac{x+z}{2}} \frac{1}{\cosh \frac{y}{2}}} } \\
&=& \frac{2x}{e^x-1}+ e^{\frac{z}{2}}\frac{x}{\sinh\frac{x}{2}}\frac{1}{\cosh \frac{y}{2}} \nonumber \\
&\leq& 2 + e^{\frac{z}{2}}\frac{x}{\sinh\frac{x}{2}}\frac{1}{\cosh \frac{y}{2}} \nonumber\\
&\leq& 2 + 100(1+x) e^{\frac{z}{2}}e^{-\frac{x+y}{2}}.\nonumber
\end{eqnarray}
So combining \eqref{estimation R,D--x/R case 1} and \eqref{estimation R,D--x/R case 2} we have
\begin{equation*}
\frac{x}{\mathcal R(x,y,z)} \leq 100(1+x)(1+e^{\frac{z}{2}}e^{-\frac{x+y}{2}}).
\end{equation*}

Now assume $x+y>z$ and consider the following subcases of Case 1. 

\textbf{Case 1a:} $\frac{e^x -1}{1 + e^{\frac{x+z}{2}} \frac{1}{\cosh \frac{y}{2}} } \geq 1$ (which implies $e^x\geq 2$) and $1\geq e^{\frac{x+z}{2}} \frac{1}{\cosh \frac{y}{2}}$. Then by \eqref{estimation R,D--R geq} we have
\begin{eqnarray}\label{estimation R,D--x/R moreover case 1.1}
\frac{x}{\mathcal R(x,y,z)}
&\leq& \frac{x}{\log\big( \frac{e^x+1}{2} \big) } \\
&\leq& 100.\nonumber
\end{eqnarray}

\textbf{Case 1b:} $\frac{e^x -1}{1 + e^{\frac{x+z}{2}} \frac{1}{\cosh \frac{y}{2}} } \geq 1$ (which implies $e^x\geq 2$) and $e^{\frac{x+z}{2}} \frac{1}{\cosh \frac{y}{2}}\geq1$. Then by \eqref{estimation R,D--R geq} we have
\begin{eqnarray}\label{estimation R,D--x/R moreover case 1.2}
\frac{x}{\mathcal R(x,y,z)}
&\leq& \frac{x}{\log\big(1+ \frac{e^x-1}{2e^{\frac{x+z}{2}} \frac{1}{\cosh \frac{y}{2}}} \big) }\\
&\leq& \frac{x}{\log\big(1+ \frac{e^x-1}{4e^x} e^{\frac{x+y-z}{2}} \big) } \nonumber\\
&\leq& 100 \frac{x}{x+y-z} \nonumber
\end{eqnarray}
where in the last inequality we apply the elementary inequality $1+ae^{x}\geq e^{ax}$ for any $a\in (0,1)$ and $x>0$.

So combining \eqref{estimation R,D--x/R case 2}, \eqref{estimation R,D--x/R moreover case 1.1} and \eqref{estimation R,D--x/R moreover case 1.2} we have
\begin{eqnarray*}
\frac{x}{\mathcal R(x,y,z)}
&\leq& 100+100 \frac{x}{x+y-z} + 2 + 100(1+x) e^{\frac{z}{2}}e^{-\frac{x+y}{2}} \\
&\leq& 202+ 200\frac{x}{x+y-z}.
\end{eqnarray*}

As for $\mathcal D(x,y,z)$, we have
\begin{equation*}
\mathcal D(x,y,z) = 2\log\big( 1+\frac{2\sinh\frac{x}{2}} {e^{-\frac{x}{2}}+e^{\frac{y+z}{2}}} \big).
\end{equation*}

\textbf{Case 1c:} $\frac{2\sinh\frac{x}{2}} {e^{-\frac{x}{2}}+e^{\frac{y+z}{2}}}\leq 1$. Then by the fact that $\log(1+t)\geq \frac{t}{2}$ for $0<t\leq 1$, we have
\begin{eqnarray}\label{estimation R,D--x/D case 1}
\frac{x}{\mathcal D(x,y,z)}
&\leq& \frac{x}{2\cdot \frac{1}{2}\frac{2\sinh\frac{x}{2}} {e^{-\frac{x}{2}}+e^{\frac{y+z}{2}}}}\\
&=& xe^{-\frac{x}{2}}\frac{1}{2\sinh\frac{x}{2}} + xe^{\frac{y+z}{2}}\frac{1}{2\sinh\frac{x}{2}}. \nonumber
\end{eqnarray}

\textbf{Case 1d:} $\frac{2\sinh\frac{x}{2}} {e^{-\frac{x}{2}}+e^{\frac{y+z}{2}}}> 1$. Then we have
\begin{equation}\label{estimation R,D--x/D case 2}
\frac{x}{\mathcal D(x,y,z)} \leq \frac{1}{2\log2}x.
\end{equation}

So combining \eqref{estimation R,D--x/D case 1} and \eqref{estimation R,D--x/D case 2} we have
\begin{equation*}
\frac{x}{\mathcal D(x,y,z)} \leq 100(1+x)(1+e^{\frac{y+z}{2}}e^{-\frac{x}{2}})
\end{equation*}
 (one may prove this inequality for $0<x\leq 1$ and $x>1$ respectively), as required.
\end{proof}


\section{Lower bound} \label{section lower bound}

In this section, we will show the easier part of Theorems \ref{main} and \ref{cor L1}, namely the lower bound. More precisely, we show that

\begin{proposition}\label{prop lower bound}
Let $\omega(g)$ be a function satisfying \eqref{eq-omega}. Then we have
\begin{equation*}
\limg\Prob\big(X\in\M_g\,;\, \ell_{\sys}^{\rm sep}(X) \geq 2\log g - 4\log \log g - \omega(g)  \big) = 1
\end{equation*}
and
\begin{equation*}
\limg\Prob\big(X\in\M_g\,;\, \sL_1(X) \geq 2\log g - 4\log \log g - \omega(g)  \big) = 1.
\end{equation*}
\end{proposition}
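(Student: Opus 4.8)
The plan is a first-moment (Markov) argument, and it suffices to prove the second estimate: a separating simple closed geodesic is in particular a separating simple closed multi-geodesic, so $\sL_1(X)\le\lss(X)$ and the first estimate follows. Write $L=L(g):=2\log g-4\log\log g-\omega(g)$. A spanning-tree argument on the dual graph of a multi-geodesic shows that every multi-geodesic separating $X$ contains a sub-multi-geodesic of no larger length cutting $X$ into exactly two pieces, so
\[
\Prob\big(\sL_1(X)<L\big)\ \le\ \E\big[\,\#\{\alpha\ \text{a simple closed multi-geodesic}:\ \ell_\alpha(X)<L,\ X\setminus\alpha\ \text{has two components}\}\,\big],
\]
and it is enough to show the right-hand side tends to $0$. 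I would organize the count by topological type: a type is a pair $(S_{g_1,k},S_{g_2,k})$ with $g_1+g_2+k=g+1$, glued along all $k$ boundary loops, and I set $m:=|\chi(S_{g_1,k})|=2g_1-2+k\le|\chi(S_{g_2,k})|=2g-2-m$, so $1\le m\le g-1$; the number of types with a given $(g_1,g_2,k)$ is $O(1)$.

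By Mirzakhani's Integration Formula (Theorem \ref{Mirz int formula}), the expected number of multi-geodesics of a fixed type and length $<L$ is at most $\frac{1}{V_g}\int_{\ve x\in\R^k_{\ge 0},\,x_1+\cdots+x_k\le L}V_{g_1,k}(\ve x)\,V_{g_2,k}(\ve x)\,x_1\cdots x_k\,d\ve x$, and everything hinges on estimating this integral, for which the appropriate volume bound depends on $m$. For $m=1$ the only types are $S_{1,1}\cup S_{g-1,1}$ and $S_{0,3}\cup S_{g-2,3}$; here one must \emph{not} use the crude bound $V_{g_i,k}(\ve x)\le e^{(\sum_j x_j)/2}V_{g_i,k}$ on both factors. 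Instead I would keep $V_{1,1}(x)=\tfrac1{24}(x^2+4\pi^2)$ and $V_{0,3}\equiv1$ exactly on the small piece and apply the sharp bound $V_{g',k}(\ve x)\le\prod_j\tfrac{\sinh(x_j/2)}{x_j/2}V_{g',k}$ (Lemma \ref{MP vol lemma}) to the large piece; since $\prod_j\tfrac{\sinh(x_j/2)}{x_j/2}\cdot\prod_j x_j=\prod_j 2\sinh(x_j/2)\le e^{(\sum_j x_j)/2}$, the integral is $\asymp L^2e^{L/2}\,V_{g',k}$, and with $V_{g-1,1}/V_g\asymp V_{g-2,3}/V_g\asymp 1/g$ (Lemma \ref{Mirz vol lemma 1}, Theorem \ref{MZ vol thm}) each such type contributes $\asymp L^2e^{L/2}/g$. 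This is exactly where the correction $4\log\log g$ comes from: $e^{L/2}\asymp g\,e^{-\omega(g)/2}/(\log g)^2$, so $L^2e^{L/2}/g\asymp e^{-\omega(g)/2}\to0$ precisely because $\omega(g)\to\infty$.

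For $2\le m\le3$ there are finitely many types; I would bound the small-piece polynomial crudely, $V_{g_1,k}(\ve x)\le C_kV_{g_1,k}(1+\sum_j x_j)^{2(3g_1-3+k)}$ (Theorem \ref{Mirz vol lemma 0}, comparing with $V_{g_1,k}(1,\dots,1)$), keep the sharp $\sinh$-bound on the large piece, and use $V_{g_2,k}/V_g\asymp 1/g^m$; each such type then contributes $\lesssim(\log g)^{O(1)}e^{L/2}/g^m\lesssim(\log g)^{O(1)}/g\to0$. For $m\ge4$ the factor $(1+L)^{O(m)}$ from a crude polynomial bound on the small piece is too wasteful, so there I would instead use the crude bound on both pieces, obtaining $\E[N^{\mathrm{type}}]\lesssim\frac{V_{g_1,k}V_{g_2,k}}{V_g}\cdot\frac{L^{2k}e^L}{(2k)!}$; summing over $g_1$ with $m\ge4$ via $V_{g_i,k}\le c\,W_{|\chi(S_{g_i,k})|}$ (Lemma \ref{Wr-prop}(1)) and $\sum_{4\le m\le g-1}W_mW_{2g-2-m}\le c(4)\,V_g/(2g-2)^4$ (Lemma \ref{Wr-prop}(2)), and then over $k$ via $\sum_k\tfrac{L^{2k}}{(2k)!}\asymp e^L$, yields a total $\lesssim e^{2L}/g^4\asymp e^{-2\omega(g)}/(\log g)^8\to0$. (The purely single-curve case $k=1$, $g_1\ge2$ is contained in these ranges and can also be treated directly using Lemma \ref{Mirz vol lemma 2} with $r=k=1$.) Adding the $m=1$, $2\le m\le3$ and $m\ge4$ contributions gives $\Prob(\sL_1(X)<L)\to0$, which is Proposition \ref{prop lower bound}.

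I expect the main obstacle to be the large-$m$ bookkeeping: summing the expected counts over the unboundedly many topological types while controlling the high-complexity tail uniformly in $g$, which is exactly what the Weil--Petersson volume-sum estimates (Lemmas \ref{Mirz vol lemma 2}, \ref{Wr-prop}, \ref{sum vol lemma} and Proposition \ref{1 over gm}) are designed for. Choosing which volume bound to use in which range of $m$ (the crude $e^{\cdot/2}$ bound versus the sharp $\sinh$-bound), together with the fact that the number of types of fixed complexity is $O(1)$, is the delicate point; by contrast, the $m=1$ computation, although it is what fixes the constant $4$, is a short explicit integral estimate.
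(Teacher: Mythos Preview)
Your proposal is correct and follows essentially the same approach as the paper: reduce to $\sL_1$, use a first-moment (Markov) bound, organize by the Euler characteristic $m$ of the smaller side, treat $m=1$ with the exact $V_{1,1},V_{0,3}$ and the sharp $\sinh$-bound on the large piece, treat small $m\ge2$ with a polynomial bound on the small piece and the $\sinh$-bound on the large piece, and treat large $m$ with the crude $e^{|\ve x|/2}$ bound on both pieces together with a volume-product estimate. The paper cuts at $m=10/11$ (using the reduction to $k'\in\{1,2,3\}$ plus Lemma~\ref{Mirz vol lemma 2}) where you cut at $m=3/4$ (using Lemma~\ref{Wr-prop}); both choices work and yield the same final bounds $L^2e^{L/2}/g$, $L^{3m-1}e^{L/2}/g^m$, and $e^{2L}/g^{m_0}$ in the three regimes.
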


Since $\ell_{\sys}^{\rm sep}(X) \geq \sL_1(X)$, it suffices to prove the second limit. We follow the method in \cite[Section 4.3]{Mirz13} for this part.

Let $L>0$ and assume $\sL_1(X) \leq L$. Then there exists a simple closed multi-geodesic of length $\leq L$ separating $X$ into $S_{g_0,k}\cup S_{g-g_0-k+1,k}$ for some $(g_0,k)$ with $|\chi(S_{g_0,k})|\leq \frac{1}{2}|\chi(S_g)|=g-1$. That is,
\begin{equation*}
\sum_{(g_0,k)\,;\,1\leq 2g_0-2+k\leq g-1} N_{g_0,k}(X,L)\geq 1.
\end{equation*}
So we have
\begin{eqnarray}\label{prob(L1 leq L) leq}
&&\Prob\big(X\in\M_g\,;\, \sL_1(X) \leq L  \big) \\
&&\leq \Prob\left( \sum_{(g_0,k);\,  1\leq 2g_0-2+k\leq g-1} N_{g_0,k}(X,L)\geq 1  \right) \nonumber\\
&&\leq \sum_{(g_0,k);\, 1\leq 2g_0-2+k\leq g-1} \E[N_{g_0,k}(X,L)], \nonumber
\end{eqnarray}
where the last equality uses the fact that $\mathbb{P}(N\geq1)\leq \mathbb{E}(N)$ for any $\Z_{\geq0}$-valued random variable $N$.

By Mirzakhani's Integration Formula (see Theorem \ref{Mirz int formula}), we have
\begin{eqnarray}\label{E[N_g0,k]}
&& \E[N_{g_0,k}(X,L)]
=\frac{1}{V_g} \frac{2^{-M}}{|\Sym|} \int_{\R^k_{\geq 0}} \mathbf 1_{[0,L]}(x_1+\cdots+x_k)\\
&& \times V_{g_0,k}(x_1,\cdots,x_k) V_{g-g_0-k+1,k}(x_1,\cdots,x_k) x_1\cdots x_k dx_1\cdots dx_k  \nonumber
\end{eqnarray}
where $M=1$ if $(g_0,k)=(1,1)$ and $M=0$ otherwise, and $|\Sym|\geq k!$ in general and $|\Sym|=k!$ if $g>2$ and $(g_0,k)=(1,1)$ or $(0,3)$.

Then we split the proof of Proposition \ref{prop lower bound} by calculating the quantity $\E[N_{g_0,k}(X,L)]$ for three different cases.

\begin{lemma}\label{E[N]}
For $(g_0,k)=(1,1)$ or $(0,3)$, we have
\begin{equation*}
\E[N_{1,1}(X,L)] = \frac{1}{384\pi^2} L^2 e^{\frac{L}{2}} \tfrac{1}{g} \left(1+O\left(\tfrac{1}{g}\right)\right)  \left(1+O\left(\tfrac{1}{L}\right)\right) \left(1+O\left(\tfrac{L^2}{g}\right)\right)
\end{equation*}
and
\begin{equation*}
\E[N_{0,3}(X,L)] = \frac{1}{48\pi^2} L^2 e^{\frac{L}{2}} \tfrac{1}{g} \left(1+O\left(\tfrac{1}{g}\right)\right) \left(1+O(\tfrac{1}{L})\right) \left(1+O(\tfrac{L^2}{g})\right),
\end{equation*}
where the implied constants are independent of $L$ and $g$.
\end{lemma}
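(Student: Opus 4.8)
The plan is to compute both expectations directly from Mirzakhani's integration formula as packaged in \eqref{E[N_g0,k]}, feeding in the explicit low-complexity volume polynomials of Theorem \ref{Mirz vol lemma 0}, the uniform sinh-approximation of Lemma \ref{MP vol lemma} for the ``heavy'' volume factor, and the ratio asymptotics of Lemma \ref{Mirz vol lemma 1}(3) (equivalently Theorem \ref{MZ vol thm}). The whole argument is a reduction to two one- and three-variable elementary integrals plus careful error bookkeeping.

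\emph{Unwinding the formula and approximating the heavy factor.} For $(g_0,k)=(1,1)$ one has $M=1$, $|\Sym|=1$ and $V_{1,1}(x)=\frac1{24}(x^2+4\pi^2)$, so \eqref{E[N_g0,k]} reads
\[
\E[N_{1,1}(X,L)]=\frac1{48V_g}\int_0^L(x^2+4\pi^2)\,V_{g-1,1}(x)\,x\,dx,
\]
while for $(g_0,k)=(0,3)$ one has $M=0$, $|\Sym|=6$ and $V_{0,3}\equiv1$, so
\[
\E[N_{0,3}(X,L)]=\frac1{6V_g}\int_{x_i\ge0,\ x_1+x_2+x_3\le L}V_{g-2,3}(x_1,x_2,x_3)\,x_1x_2x_3\,dx_1\,dx_2\,dx_3.
\]
Since the number of boundary components ($1$, resp.\ $3$) is fixed, Lemma \ref{MP vol lemma} gives $V_{g-1,1}(x)=V_{g-1,1}\frac{\sinh(x/2)}{x/2}(1+O(x^2/g))$ and $V_{g-2,3}(x_1,x_2,x_3)=V_{g-2,3}\prod_{i=1}^3\frac{\sinh(x_i/2)}{x_i/2}(1+O(\frac1g\sum x_i^2))$ with absolute implied constants. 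Substituting, and cancelling each $x_i$ against the denominator $x_i/2$, reduces the main terms to $J_1(L):=\int_0^L(x^2+4\pi^2)\sinh(x/2)\,dx$ and $J_2(L):=\int_{x_i\ge0,\ \sum x_i\le L}\prod_{i=1}^3\sinh(x_i/2)\,dx$, plus their weighted versions carrying an extra $\frac1g\sum x_i^2$.

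\emph{Evaluating the integrals and assembling.} Repeated integration by parts gives $\int_0^L s^je^{s/2}\,ds=2L^je^{L/2}(1+O(1/L))$, hence $J_1(L)=L^2e^{L/2}(1+O(1/L))$. For $J_2$, write $\sinh(x_i/2)=\frac12e^{x_i/2}(1-e^{-x_i})$ and expand the product: the top term $\frac18e^{(x_1+x_2+x_3)/2}$ contributes $\frac18\int_0^L e^{s/2}\frac{s^2}{2}\,ds=\frac18L^2e^{L/2}(1+O(1/L))$ (using $\int_{x_i\ge0,\ \sum x_i\le L}f(\sum x_i)\,dx=\int_0^L f(s)\frac{s^2}{2}\,ds$), while every other term contains a factor $e^{-x_j}$ for some $j$; integrating that variable out over $[0,\infty)$ (which only enlarges the domain) costs an $O(1)$ factor and removes $x_j$, leaving an integral in at most two variables that is $O(Le^{L/2})$. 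Thus $J_2(L)=\frac18L^2e^{L/2}(1+O(1/L))$. Since $\sum x_i^2\le L^2$ on both domains, the weighted integrals are $O(L^2/g)$ times the main ones, contributing a factor $1+O(L^2/g)$. Finally, Lemma \ref{Mirz vol lemma 1}(3) yields $\frac{V_g}{V_{g-1,2}}=1+O(1/g)$ and $\frac{V_{g-1,2}}{2(g-1)V_{g-1,1}}=4\pi^2+O(1/g)$, so $\frac{V_{g-1,1}}{V_g}=\frac1{8\pi^2g}(1+O(1/g))$; chaining $\frac{V_g}{V_{g-1,2}},\frac{V_{g-1,2}}{V_{g-2,4}},\frac{V_{g-2,4}}{2(g-2)V_{g-2,3}}$ gives $\frac{V_{g-2,3}}{V_g}=\frac1{8\pi^2g}(1+O(1/g))$. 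Since $\int(x^2+4\pi^2)\frac{\sinh(x/2)}{x/2}x\,dx=2J_1(L)$ and $\int\prod\frac{\sinh(x_i/2)}{x_i/2}x_1x_2x_3\,dx=8J_2(L)$, we get
\[
\E[N_{1,1}(X,L)]=\frac{V_{g-1,1}}{24V_g}J_1(L)\bigl(1+O(L^2/g)\bigr)=\frac{L^2e^{L/2}}{192\pi^2g}\bigl(1+O(1/g)\bigr)\bigl(1+O(1/L)\bigr)\bigl(1+O(L^2/g)\bigr),
\]
and the same computation with $\frac{V_{g-2,3}}{6V_g}\cdot8J_2(L)$ gives the $N_{0,3}$ formula with $192$ replaced by $48$.

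\emph{Main obstacle.} The computation is routine; the only genuine care is the error bookkeeping — in particular, verifying that the subleading terms of the three-fold $\sinh$-integral really are a full power of $L$ smaller than the leading term (this is exactly why integrating out the variable carrying a negative coefficient in the exponent costs only $O(1)$), and checking that the $\frac1g\sum x_i^2$ corrections from Lemma \ref{MP vol lemma} collapse into a clean $1+O(L^2/g)$ — together with correctly tracking the numerical constant $\frac1{192\pi^2}$ (resp.\ $\frac1{48\pi^2}$) through the chain of volume ratios.
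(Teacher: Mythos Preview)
Your proof is correct and follows essentially the same route as the paper's own proof: both start from \eqref{E[N_g0,k]}, insert the explicit polynomials $V_{1,1}(x)=\frac{1}{24}(x^2+4\pi^2)$ and $V_{0,3}\equiv 1$ from Theorem \ref{Mirz vol lemma 0}, replace the heavy volume factor by $V_{g',n'}\prod_i\frac{\sinh(x_i/2)}{x_i/2}(1+O(L^2/g))$ via Lemma \ref{MP vol lemma}, evaluate the resulting elementary integrals, and finish with the ratio asymptotics of Lemma \ref{Mirz vol lemma 1}(3). Your version is simply more explicit about the integral evaluations (in particular the expansion of the triple $\sinh$-product for $J_2$) and the chain of volume ratios giving $\frac{V_{g-2,3}}{V_g}=\frac{1}{8\pi^2 g}(1+O(1/g))$, whereas the paper compresses these steps.
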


\begin{proof}
First we consider the case $(g_0,k)=(1,1)$. By using Theorem \ref{Mirz vol lemma 0} and Lemma \ref{Mirz vol lemma 1}, \ref{MP vol lemma} and Equation \eqref{E[N_g0,k]}, we have
\begin{eqnarray*}
\E[N_{1,1}(X,L)]&=&\frac{1}{V_g} \frac{1}{2} \int_0^L V_{1,1}(x)V_{g-1,1}(x)xdx  \\
&=& \frac{1}{2V_g}\int_0^L\frac{1}{48}(x^2+4\pi^2)x\frac{\sinh(x/2)}{x/2} dx \times V_{g-1,1} \big(1+O(\tfrac{L^2}{g})\big) \\
&=& \frac{1}{48} L^2 e^{\frac{L}{2}} \frac{V_{g-1,1}}{V_g} \left(1+O(\tfrac{1}{L})\right) \left(1+O(\tfrac{L^2}{g})\right) \\
&=& \frac{1}{384\pi^2} L^2 e^{\frac{L}{2}} \frac{1}{g} \left(1+O(\tfrac{1}{g})\right) \left(1+O(\tfrac{1}{L})\right) \left(1+O(\tfrac{L^2}{g})\right).
\end{eqnarray*}

Similarly for $(g_0,k)=(0,3)$, we have
\begin{eqnarray*}
\E[N_{0,3}(X,L)]
&=& \frac{1}{V_g} \frac{1}{3!} \int_{0\leq x+y+z \leq L}  \frac{\sinh(x/2)}{x/2}\frac{\sinh(y/2)}{y/2}\frac{\sinh(z/2)}{z/2} \\
& & xyz dxdydz \times V_{g-2,3} \big(1+O(\tfrac{L^2}{g})\big) \\
&=& \frac{1}{6} L^2 e^{\frac{L}{2}} \frac{V_{g-2,3}}{V_g} \left(1+O(\tfrac{1}{L})\right) \left(1+O(\tfrac{L^2}{g})\right) \\
&=& \frac{1}{48\pi^2} L^2 e^{\frac{L}{2}} \frac{1}{g} \left(1+O(\tfrac{1}{g})\right) \left(1+O(\tfrac{1}{L})\right) \left(1+O(\tfrac{L^2}{g})\right),
\end{eqnarray*}
as required.
\end{proof}

\begin{rem*}\label{remark:comes from}
	The dominating term $L^2e^\frac{L}{2}\frac{1}{g}$ in both expressions in Lemma \ref{E[N]} is where the upper and lower bounds $2\log g-4\log \log g\pm\omega(g)$ in Theorems \ref{main} and \ref{cor L1} come from. In fact, a function $L(g)$ in the variable $g\in\{2,3,\cdots\}$ has the form $2\log g-4\log\log g+\omega(g)$, with $\omega(g)$ satisfying the assumption of Theorems \ref{main}, if and only if 
	$$
	\lim_{g\to\infty}L(g)^2e^\frac{L(g)}{2}\frac{1}{g}\to+\infty,\quad L(g)^2e^\frac{L(g)}{2}\frac{1}{g}=O\big((\log g)^\epsilon\big)
	$$ 
	for any $\epsilon>0$. Similarly, $L(g)$ has the form $2\log g-4\log\log g-\omega(g)$ if and only if $\lim_{g\to\infty}L(g)^2e^\frac{L(g)}{2}\frac{1}{g}\to0$ and $L(g)^2e^\frac{L(g)}{2}\frac{1}{g}\geq C(\log g)^{-\epsilon}$ when $g$ is large enough for any $C,\epsilon>0$.
\end{rem*}

\begin{lemma}\label{sum chi=m E[N]}
For any given positive integer $m$, there exists a constant $c(m)>0$ independent of $L$ and $g$ such that
\begin{equation*}
\sum_{|\chi(S_{g_0,k})| = m} \E[N_{g_0,k}(X,L)] \leq c(m) (1+L^{3m-1})e^{\frac{L}{2}}\frac{1}{g^m}
\end{equation*}
where the summation is over all pairs $(g_0,k)$ satisfying $g_0\geq 0$, $k\geq 1$, and $$|\chi(S_{g_0,k})| = m.$$
\end{lemma}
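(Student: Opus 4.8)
The plan is to bound each summand $\E[N_{g_0,k}(X,L)]$ separately. The constraint $|\chi(S_{g_0,k})|=m$ means $2g_0-2+k=m$ with $g_0\ge0$, $k\ge1$, which forces $g_0\le\frac{m+1}{2}$; so there are at most $c(m)$ pairs $(g_0,k)$ in the sum, and it suffices to prove $\E[N_{g_0,k}(X,L)]\le c(m)(1+L^{3m-1})e^{L/2}g^{-m}$ for each of them, with $c(m)$ independent of $L$ and $g$. We may also assume $g$ is large relative to $m$, since for $g$ bounded in terms of $m$ only finitely many cases occur and the inequality holds after enlarging $c(m)$.

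Fix such a pair $(g_0,k)$ and start from the Mirzakhani Integration Formula expression \eqref{E[N_g0,k]}. I would estimate the two volume factors in the integrand by different means. For the ``large genus'' factor $V_{g-g_0-k+1,k}(x_1,\cdots,x_k)$, the key move is to apply the upper bound of Lemma \ref{MP vol lemma}, $V_{g-g_0-k+1,k}(\boldsymbol{x})\le\prod_i\frac{\sinh(x_i/2)}{x_i/2}\,V_{g-g_0-k+1,k}$: after multiplying by the factor $x_1\cdots x_k$ present in \eqref{E[N_g0,k]} and using $\sinh(t/2)\le\frac12 e^{t/2}$, this becomes
\[
V_{g-g_0-k+1,k}(\boldsymbol{x})\cdot x_1\cdots x_k\;\le\;e^{(x_1+\cdots+x_k)/2}\,V_{g-g_0-k+1,k}.
\]
It is essential that the factor $x_1\cdots x_k$ gets absorbed here rather than contributing extra powers of $L$, which is why the cruder bound $V_{g,n}(\boldsymbol{x})\le e^{(x_1+\cdots+x_n)/2}V_{g,n}$ of Lemma \ref{Mirz vol lemma 1} would not suffice. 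For the remaining factor $V_{g_0,k}(\boldsymbol{x})$, since $(g_0,k)$ is bounded in terms of $m$, part (1) of Theorem \ref{Mirz vol lemma 0} tells us it is a polynomial in $x_1^2,\cdots,x_k^2$ of degree $3g_0-3+k$ with coefficients bounded by $c(m)$, so on the simplex $\{x_1+\cdots+x_k\le L,\ x_i\ge0\}$ it is at most $c(m)(1+L^{2(3g_0-3+k)})$.

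Combining these, the integral in \eqref{E[N_g0,k]} is bounded by $c(m)(1+L^{2(3g_0-3+k)})\int_{x_1+\cdots+x_k\le L}e^{(x_1+\cdots+x_k)/2}\,dx_1\cdots dx_k$, and I would evaluate the remaining integral using the identity $\int_{\{x_1+\cdots+x_k\le L,\ x_i\ge0\}}f(x_1+\cdots+x_k)\,dx_1\cdots dx_k=\int_0^L f(s)\frac{s^{k-1}}{(k-1)!}\,ds$, which gives the bound $\frac{2L^{k-1}}{(k-1)!}e^{L/2}$ --- the exponential weight costs only $L^{k-1}$, not $L^{k}$. The exponents then match: $2(3g_0-3+k)+(k-1)=3(2g_0-2+k)-1=3m-1$. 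Finally, since $2(g-g_0-k+1)-2+k=2g-2-m$, Proposition \ref{1 over gm} applied with $q=1$, $n_1=k$ gives $V_{g-g_0-k+1,k}/V_g\le c(m)g^{-m}$; assembling everything (with $2^{-M}/k!\le1$) and summing over the at most $c(m)$ pairs $(g_0,k)$ yields the claim.

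The one delicate point, and the step most likely to go wrong, is the bookkeeping of powers of $L$: one must simultaneously use the $\sinh$-form of the volume bound so that $x_1\cdots x_k$ is absorbed, and observe that integrating $e^{(x_1+\cdots+x_k)/2}$ over the $k$-simplex of side $L$ produces $L^{k-1}$ rather than $L^{k}$. Missing either refinement overshoots the target exponent $3m-1$; everything else is routine estimation.
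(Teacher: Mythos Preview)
Your proposal is correct and follows essentially the same approach as the paper: bound $V_{g_0,k}(\boldsymbol{x})$ by a polynomial of degree $6g_0-6+2k$ using Theorem \ref{Mirz vol lemma 0}, bound $V_{g-g_0-k+1,k}(\boldsymbol{x})$ via the $\sinh$-bound of Lemma \ref{MP vol lemma} so that the factor $x_1\cdots x_k$ is absorbed, integrate $e^{(x_1+\cdots+x_k)/2}$ over the simplex to pick up $L^{k-1}e^{L/2}$, and finish with the volume ratio $V_{g-g_0-k+1,k}/V_g\le c(m)g^{-m}$. The paper invokes Lemma \ref{Mirz vol lemma 1} for this last step rather than Proposition \ref{1 over gm}, but the content is the same, and your bookkeeping of the exponent $6g_0-6+2k+(k-1)=3m-1$ matches the paper exactly.
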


\begin{proof}
Assume $2g_0-2+k=|\chi(S_{g_0,k})| = m$. Then both $g_0$ and $k$ are bounded from above by $m+2$. By Theorem \ref{Mirz vol lemma 0} of Mirzakhani we know that $V_{g_0,k}(x_1,\cdots,x_k)$ is a polynomial of degree $6g_0-6+2k$ with coefficients bounded by some constant only depending on $m$. So when $0\leq x_1+\cdots+x_k\leq L$ we have
\begin{equation*}
V_{g_0,k}(x_1,\cdots,x_k) \leq c'(m) (1+L^{6g_0-6+2k})
\end{equation*}
for some constant $c'(m)>0$ only depending on $m$. Then by Lemma \ref{Mirz vol lemma 1}, \ref{MP vol lemma} and Equation \eqref{E[N_g0,k]} we have
\begin{eqnarray*}
\E[N_{g_0,k}(X,L)]
&\leq& \frac{1}{V_g} \int_{0\leq x_1+\cdots+x_k \leq L} c'(m) (1+L^{6g_0-6+2k}) \\
& & \frac{\sinh(x_1/2)}{x_1/2}\cdots\frac{\sinh(x_k/2)}{x_k/2} x_1\cdots x_k dx_1\cdots dx_k V_{g-g_0-k+1,k}  \\
&\leq& c'(m) (1+L^{6g_0-7+3k}) e^{\frac{L}{2}} \frac{V_{g-g_0-k+1,k}}{V_g}   \\
&\leq& c'(m) (1+L^{3m-1}) e^{\frac{L}{2}} \frac{1}{g^m} .
\end{eqnarray*}
Since $g_0,k$ are bounded above by $m+2$, there exists a constant $c(m)>0$ only depending on $m$ such that
\begin{equation*}
\sum_{|\chi(S_{g_0,k})| = m} \E[N_{g_0,k}(X,L)] \leq c(m) (1+L^{3m-1})e^{\frac{L}{2}}\frac{1}{g^m}
\end{equation*}
as desired.
\end{proof}

\begin{lemma}\label{sum chi geq m E[N]}
For any given positive integer $m$, there exists a constant $c(m)>0$ independent of $L$ and $g$ such that
\begin{equation*}
\sum_{m\leq|\chi(S_{g_0,k})| \leq g-1} \E[N_{g_0,k}(X,L)] \leq c(m) e^{2L}\frac{1}{g^m}
\end{equation*}
where the summation is over all pairs $(g_0,k)$ satisfying $g_0\geq 0$, $k\geq 1$, and $$m\leq |\chi(S_{g_0,k})| \leq g-1.$$
\end{lemma}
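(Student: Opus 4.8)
The plan is to bound each summand $\E[N_{g_0,k}(X,L)]$ crudely, using only the exponential volume estimate of Lemma \ref{Mirz vol lemma 1}(1), and then to organize the double sum according to the Euler characteristic of the smaller piece. Starting from \eqref{E[N_g0,k]}, on the support of the integrand one has $x_1+\cdots+x_k\le L$, hence
\[
V_{g_0,k}(x_1,\dots,x_k)\,V_{g-g_0-k+1,k}(x_1,\dots,x_k)\le e^{x_1+\cdots+x_k}\,V_{g_0,k}\,V_{g-g_0-k+1,k}\le e^{L}\,V_{g_0,k}\,V_{g-g_0-k+1,k}.
\]
Since moreover $2^{-M}\le1$ and $|\Sym|^{-1}=(k!)^{-1}\le1$, and since a direct computation (the Dirichlet integral over the simplex) gives $\int_{x_1+\cdots+x_k\le L,\,x_i\ge0}x_1\cdots x_k\,dx_1\cdots dx_k=\frac{L^{2k}}{(2k)!}$, we obtain
\[
\E[N_{g_0,k}(X,L)]\le \frac{e^{L}}{V_g}\,\frac{L^{2k}}{(2k)!}\,V_{g_0,k}\,V_{g-g_0-k+1,k}.
\]

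Next I would group the terms by the value $r:=|\chi(S_{g_0,k})|=2g_0-2+k$, which by hypothesis satisfies $m\le r\le g-1$; the complementary surface then has $|\chi(S_{g-g_0-k+1,k})|=2g-2-r$. For a fixed such $r$, the admissible boundary numbers $k$ run over $1\le k\le r+2$ with $k\equiv r\pmod2$, each choice of $k$ determining $g_0$ uniquely. Lemma \ref{Wr-prop}(1) gives $V_{g_0,k}\le cW_r$ and $V_{g-g_0-k+1,k}\le cW_{2g-2-r}$ with $c$ a universal constant not depending on $k$; summing over the admissible $k$ and using $\sum_{k\ge0}\frac{L^{2k}}{(2k)!}=\cosh L\le e^{L}$ then yields
\[
\sum_{|\chi(S_{g_0,k})|=r}\E[N_{g_0,k}(X,L)]\le c^2\,\frac{e^{2L}}{V_g}\,W_r\,W_{2g-2-r}.
\]
Finally, summing over $m\le r\le g-1$ and applying Lemma \ref{Wr-prop}(2), with its two parameters taken to be $2g-2$ and $m_0=m$ (legitimate since $m\le g-1$ whenever the sum is nonempty), gives $\sum_{r=m}^{g-1}W_rW_{2g-2-r}\le c(m)\,(2g-2)^{-m}\,W_{2g-2}=c(m)\,(2g-2)^{-m}\,V_g$. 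The factor $V_g$ cancels, and since $2g-2\ge g$ in this range, we arrive at the claimed bound $c(m)e^{2L}g^{-m}$.

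The only point requiring care is that $k$ is unbounded---it may grow linearly in $g$---so, unlike in Lemma \ref{sum chi=m E[N]}, the $(g_0,k)$-sum cannot be reduced to boundedly many terms, and the polynomial form of $V_{g_0,k}(x_1,\dots,x_k)$ is of no use here since its degree and coefficients are not controlled uniformly in $g$. The resolution is precisely that the simplex integral produces the factor $L^{2k}/(2k)!$, which decays rapidly enough in $k$ that, for each fixed topological type of the smaller side (i.e.\ for each fixed $r$), the sum over all admissible $k$ is absorbed into the single factor $\cosh L\le e^{L}$; after that, the estimate is a routine application of the volume lemmas of Section \ref{section wp volume}.
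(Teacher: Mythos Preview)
Your proof is correct and follows essentially the same approach as the paper's: both start from \eqref{E[N_g0,k]}, apply the exponential volume bound of Lemma~\ref{Mirz vol lemma 1}(1) to each factor, evaluate the simplex integral as $L^{2k}/(2k)!$, and then absorb the unbounded sum over $k$ into a second factor of $e^{L}$. The only difference is bookkeeping: the paper fixes $k$, reduces to $k'\in\{1,2,3\}$ via Lemma~\ref{Mirz vol lemma 1}(2), and sums over $g_0$ with Lemma~\ref{Mirz vol lemma 2}, whereas you group by $r=|\chi(S_{g_0,k})|$ and invoke Lemma~\ref{Wr-prop}(1)--(2) directly. Since Lemma~\ref{Wr-prop} is itself proved from Lemmas~\ref{Mirz vol lemma 1} and~\ref{Mirz vol lemma 2}, the two arguments are really the same; your packaging via $W_r$ is a bit more streamlined.
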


\begin{proof}
First by Part $(1)$ of Lemma \ref{Mirz vol lemma 1} we know that
\begin{equation*}
V_{g_0,k}(x_1,\cdots,x_k)\leq e^{\frac{x_1+\cdots+x_k}{2}}V_{g_0,k}
\end{equation*}
and
\begin{equation*}
V_{g-g_0-k+1,k}(x_1,\cdots,x_k)\leq e^{\frac{x_1+\cdots+x_k}{2}}V_{g-g_0-k+1,k}.
\end{equation*}

\noindent Then by \eqref{E[N_g0,k]} we have
\begin{eqnarray*}
\E[N_{g_0,k}(X,L)]
&\leq& \frac{1}{V_g} \frac{1}{k!} \int_{0\leq \sum x_i \leq L} e^{x_1+\cdots+x_k} x_1\cdots x_k dx_1\cdots dx_k V_{g_0,k} V_{g-g_0-k+1,k} \\
&\leq& \frac{1}{k!} \frac{V_{g_0,k} V_{g-g_0-k+1,k}}{V_g} e^L \int_{0\leq \sum x_i \leq L} x_1\cdots x_k dx_1\cdots dx_k \\
&=& \frac{1}{k!} \frac{L^{2k}}{(2k)!} e^L \frac{V_{g_0,k} V_{g-g_0-k+1,k}}{V_g}.
\end{eqnarray*}

\noindent Recall that Part (2) of Lemma \ref{Mirz vol lemma 1} says that for any $g,n\geq 0$
\[V_{g-1,n+4}\leq V_{g,n+2}.\]
So we have
$$V_{g_0,k} \leq V_{g_0+\frac{k-k'}{2},k'} \quad \emph{and} \quad V_{g-g_0-k+1,k} \leq V_{g-g_0-k+1+\frac{k-k'}{2},k'}$$ where $k' \in \{1,2,3\}$ with even $k-k'\geq0$. For any fixed integer $k>0$, we consider the summation over $g_0$ with $m\leq|\chi(S_{g_0,k})| \leq g-1$. By Lemma \ref{Mirz vol lemma 2} we have
\begin{equation*}
\sum_{g_0;\,m\leq |\chi(S_{g_0,k})| \leq g-1} \E[N_{g_0,k}(X,L)] \leq c(m) \frac{1}{k!} \frac{L^{2k}}{(2k)!} e^L \frac{1}{g^{m}}
\end{equation*}
for some constant $c(m)>0$ only depending on $m$. Then the total summation satisfies that
\begin{eqnarray*}
\sum_{m\leq |\chi(S_{g_0,k})| \leq g-1} \E[N_{g_0,k}(X,L)]
&\leq& \sum_{k\geq 1} c(m) \frac{1}{k!} \frac{L^{2k}}{(2k)!} e^L \frac{1}{g^{m}} \\
&\leq& c(m) e^{2L} \frac{1}{g^{m}}
\end{eqnarray*}
because $\sum_{k\geq 1} \frac{1}{k!} \frac{L^{2k}}{(2k)!} \leq e^L$. This finishes the proof. 
\end{proof}

Now we are ready to prove Proposition \ref{prop lower bound}.
\bp [Proof of Proposition \ref{prop lower bound}]
First by Equation \eqref{prob(L1 leq L) leq}, Lemma \ref{E[N]}, \ref{sum chi=m E[N]} and \ref{sum chi geq m E[N]},  we have that for large $g>0$,
\begin{eqnarray*}
&&\Prob\big(X\in\M_g\,;\, \sL_1(X) \leq L  \big)
\leq \sum_{(g_0,k);\,1\leq |\chi(S_{g_0,k})|\leq g-1} \E[N_{g_0,k}(X,L)] \\
&&=\left(\E[N_{1,1}(X,L)]+\E[N_{0,3}(X,L)]\right)\\
&&+ \sum_{m=2}^{10} \sum_{|\chi(S_{g_0,k})| = m}  \E[N_{g_0,k}(X,L)]+\sum_{11\leq |\chi(S_{g_0,k})|\leq g-1}  \E[N_{g_0,k}(X,L)]\\
&&\leq c L^2 e^{\frac{L}{2}}\frac{1}{g} + \sum_{m=2}^{10} cL^{3m-1}e^{\frac{L}{2}}\frac{1}{g^m} + ce^{2L}\frac{1}{g^{11}}
\end{eqnarray*}
for some uniform constant $c>0$. Now for $$L= 2\log g - 4\log \log g - \omega(g),$$ we have
\[L^2 e^{\frac{L}{2}}\frac{1}{g}=O(e^{-\frac{\omega(g)}{2}}),\]

\[\sum_{m=2}^{10} L^{3m-1}e^{\frac{L}{2}}\frac{1}{g^m}=O(\frac{(\log g)^{29}}{g}),\]
and
\[ \frac{e^{2L}}{g^{11}}=O(\frac{1}{g^7}).\]
Recall that $\omega(g)\to \infty$ as $g\to \infty$. Hence we get
\begin{eqnarray*}
\lim \limits_{g\to \infty}\Prob\big(X\in\M_g\,;\, \sL_1(X) \leq 2\log g - 4\log \log g - \omega(g)  \big)=0,
\end{eqnarray*}
which implies that
\[\lim \limits_{g\to \infty}\Prob\big(X\in\M_g\,;\, \sL_1(X) \geq 2\log g - 4\log \log g - \omega(g)  \big)=1,\]
as desired.
\ep

Actually the argument above also leads to Proposition \ref{lower bound for chi geq 2}, which will be applied later. First we recall the following definition generalizing $\mathcal{L}_1$ in the Introduction. For any integer $m\in [1,g-1]$ and $X\in \sM_g$, 
\[\mathcal{L}_{1,m}(X):=\min_{\Gamma} \ell_{\Gamma}(X)\]
where the minimum runs over all simple closed multi-geodesics $\Gamma$ separating $X$ into $S_{g_1,k}\cup S_{g_2,k}$ with
\[|\chi(S_{g_1,k})|\geq |\chi(S_{g_2,k})|\geq m.\]

Now we are ready to prove Proposition \ref{lower bound for chi geq 2}.

\begin{proposition}[=Proposition \ref{lower bound for chi geq 2}] \label{lower bound for chi geq 2-1}
Let $\omega(g)$ be a function satisfying \eqref{eq-omega}. Then we have that for any fixed $m\geq 1$ independent of $g$,
\begin{equation*}
\limg\Prob\left(X\in\M_g\,;\, \mathcal{L}_{1,m}(X) \geq  2m\log g - (6m-2)\log\log g -\omega(g)\right) = 1.
\end{equation*}
\end{proposition}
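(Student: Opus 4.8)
The plan is to imitate the proof of Proposition~\ref{prop lower bound} almost verbatim, the only change being that all counting functions are now summed over topological types whose Euler characteristic is at least $m$.

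First I would record the elementary reduction: if $\mathcal{L}_{1,m}(X)\leq L$, then a minimizing multi-geodesic $\Gamma$ separates $X$ into $S_{g_0,k}\cup S_{g-g_0-k+1,k}$ with $m\leq|\chi(S_{g_0,k})|\leq|\chi(S_{g-g_0-k+1,k})|$; since these two Euler characteristics sum to $2g-2$, the smaller one satisfies $m\leq 2g_0-2+k\leq g-1$, and $\ell_\Gamma(X)\leq L$ forces $N_{g_0,k}(X,L)\geq1$. Hence, exactly as in~\eqref{prob(L1 leq L) leq}, Markov's inequality yields
\begin{equation*}
\Prob\big(X\in\M_g;\ \mathcal{L}_{1,m}(X)\leq L\big)\leq\sum_{(g_0,k);\ m\leq 2g_0-2+k\leq g-1}\E[N_{g_0,k}(X,L)].
\end{equation*}

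Next I would break this sum according to the value of $i:=|\chi(S_{g_0,k})|$: the finitely many values $m\leq i\leq M$, where $M$ is any fixed integer with $M>4m$ (say $M=4m+1$), treated by Lemma~\ref{sum chi=m E[N]}, and the tail $M+1\leq i\leq g-1$, treated by Lemma~\ref{sum chi geq m E[N]}. For $g$ large this gives, for suitable constants $c(i)>0$,
\begin{equation*}
\Prob\big(\mathcal{L}_{1,m}(X)\leq L\big)\leq\sum_{i=m}^{M}c(i)\,(1+L^{3i-1})\,e^{L/2}\,g^{-i}+c(M+1)\,e^{2L}\,g^{-(M+1)}.
\end{equation*}
Substituting $L=2m\log g-(6m-2)\log\log g-\omega(g)$, one has $e^{L/2}=g^{m}(\log g)^{-(3m-1)}e^{-\omega(g)/2}$ and $L\sim 2m\log g$; hence the $i=m$ term is $O\big(e^{-\omega(g)/2}\big)$, each term with $i=m+j$ for $1\leq j\leq M-m$ is $O\big((\log g)^{3j}g^{-j}\big)=o(1)$, and the tail term is $O\big(g^{4m-M-1}(\log g)^{-(12m-4)}e^{-2\omega(g)}\big)=o(1)$ since $M+1>4m$. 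As $\omega(g)\to\infty$ every summand tends to $0$, so $\Prob(\mathcal{L}_{1,m}(X)\leq L)\to0$, and therefore $\Prob(\mathcal{L}_{1,m}(X)\geq L)\to1$.

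The only step requiring attention is this final bookkeeping: one must check that the $\log\log g$ correction in $L$ is calibrated so that the dominant ($i=m$) term is of order exactly $e^{-\omega(g)/2}$ rather than divergent, and that the cutoff $M>4m$ is large enough to absorb the crude factor $e^{2L}$ produced by Lemma~\ref{sum chi geq m E[N]}. Apart from this routine verification there is no genuine obstacle, since the whole scheme reduces to the $m=1$ case already carried out in Proposition~\ref{prop lower bound}.
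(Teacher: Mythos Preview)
Your proof is correct and essentially identical to the paper's: the same Markov/first-moment bound via $\sum_{m\leq|\chi|\leq g-1}\E[N_{g_0,k}(X,L)]$, followed by the same split into a finite range handled by Lemma~\ref{sum chi=m E[N]} and a tail handled by Lemma~\ref{sum chi geq m E[N]}. The only cosmetic difference is the cutoff---you take $M=4m+1$ while the paper takes $10m$---and both choices suffice for the tail exponent $4m-(M+1)$ to be negative.
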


\begin{proof}
The proof is almost the same as the proof of Proposition \ref{prop lower bound}. First we have that for large $g>0$,
\begin{eqnarray*}
&&\Prob\big(X\in\M_g\,;\, \sL_{1,m}(X) \leq L  \big)
\leq \sum_{(g_0,k);\,m\leq |\chi(S_{g_0,k})|\leq g-1} \E[N_{g_0,k}(X,L)] \\
&&=\sum_{|\chi(S_{g_0,k})| = m}  \E[N_{g_0,k}(X,L)]+  \sum_{m+1\leq |\chi(S_{g_0,k})|\leq 10m}  \E[N_{g_0,k}(X,L)]\\
&&+\sum_{10m+1\leq |\chi(S_{g_0,k})|\leq g-1}  \E[N_{g_0,k}(X,L)].
\end{eqnarray*}

\noindent Now for $$L= 2m\log g - (6m-2)\log \log g - \omega(g),$$  by Lemma \ref{E[N]}, \ref{sum chi=m E[N]} and \ref{sum chi geq m E[N]} we have
\[\sum_{|\chi(S_{g_0,k})| = m}  \E[N_{g_0,k}(X,L)]=O(L^{3m-1} e^{\frac{L}{2}}\frac{1}{g^m})=O(e^{-\frac{\omega(g)}{2}}),\]

\[\sum_{m+1\leq |\chi(S_{g_0,k})|\leq 10m}  \E[N_{g_0,k}(X,L)]=O(\sum_{j=m+1}^{10m} L^{3j-1}e^{\frac{L}{2}}\frac{1}{g^j})=O(\frac{(\log g)^{30m-1}}{g}),\]
and
\[\sum_{10m+1\leq |\chi(S_{g_0,k})|\leq g-1}  \E[N_{g_0,k}(X,L)]= O(\frac{e^{2L}}{g^{10m+1}})=O(\frac{1}{g^{6m+1}}).\]
Recall that $\omega(g)\to \infty$ as $g\to \infty$. Hence we get
\begin{eqnarray*}
\lim \limits_{g\to \infty}\Prob\big(X\in\M_g\,;\, \sL_{1,m}(X) \leq 2m\log g - (6m-2)\log \log g - \omega(g) \big)=0
\end{eqnarray*}
which implies that
\[\lim \limits_{g\to \infty}\Prob\big(X\in\M_g\,;\, \sL_{1,m}(X) \geq 2m\log g - (6m-2)\log \log g - \omega(g)  \big)=1\]
as desired.
\end{proof}

\begin{rem*}
The $m=1$ case of Proposition \ref{lower bound for chi geq 2} is exactly Proposition \ref{prop lower bound}.
\end{rem*}

\section{Upper bound}\label{section upper bound}

In this section, we will show the upper bound in Theorem \ref{main} and \ref{cor L1}. We begin with the following definition.

\begin{definition}
Assume $\omega(g)$ is a function satisfying \eqref{eq-omega}. For any $X\in \sM_g$, we say $X\in \mathcal A(\omega(g))$ if there exists a simple closed geodesic $\gamma$ on $X$ such that
\begin{enumerate}
\item $\gamma$ separates $X$ into $S_{1,1}\cup S_{g-1,1}$;

\item the length $\ell_{\gamma}(X) \leq 2\log g- 4\log \log g +\omega(g)$.
\end{enumerate}
\end{definition}

Now we are ready to state the upper bound of Theorem \ref{main} which is also the essential part of this paper.
\begin{theorem}\label{prop upper bound} Let $\omega(g)$ be a function satisfying \eqref{eq-omega}. Then we have
\begin{equation*}
\limg\Prob\big(X\in \sM_g\,;\, X\in\mathcal A(\omega(g)) \big)= 1.
\end{equation*}
\end{theorem}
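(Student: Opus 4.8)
The plan is to run the second moment method for the counting function $N_{1,1}(\cdot,L)$ with $L=L(g):=2\log g-4\log\log g+\omega(g)$. Since by the definition of $\mathcal A(\omega(g))$ one has $X\in\mathcal A(\omega(g))$ if and only if $N_{1,1}(X,L)\geq1$, Chebyshev's inequality reduces everything to proving
\[\E[N_{1,1}(\cdot,L)]\longrightarrow\infty\qquad\text{and}\qquad \E[N_{1,1}(\cdot,L)^2]=(1+o(1))\,\E[N_{1,1}(\cdot,L)]^2.\]
The first limit is immediate from Lemma \ref{E[N]}: with this $L$ we have $e^{L/2}=g(\log g)^{-2}e^{\omega(g)/2}$ and $L\sim2\log g$, so $\E[N_{1,1}(\cdot,L)]=\tfrac1{48\pi^2}e^{\omega(g)/2}(1+o(1))\to\infty$ by \eqref{eq-omega}. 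Since $\E[N_{1,1}(\cdot,L)^2]=\E[N_{1,1}(N_{1,1}-1)]+\E[N_{1,1}]$ and $\E[N_{1,1}]=o(\E[N_{1,1}]^2)$, it remains to bound the expected number of ordered pairs $(\alpha,\beta)$ of distinct simple closed geodesics, each bounding a one-handle, with $\ell_\alpha(X),\ell_\beta(X)\leq L$, by $(1+o(1))\E[N_{1,1}]^2$. I would split the sum over mapping class group orbits of such pairs according to whether $\alpha\cap\beta=\emptyset$ or not.

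\textbf{Disjoint pairs.} If $\alpha\cap\beta=\emptyset$, then (a geodesic bounding a one-handle and contained in another one-handle would be non-separating) the one-handles $X_\alpha,X_\beta$ are disjoint and $X\setminus(\alpha\cup\beta)$ is connected of type $S_{g-2,2}$; there is a single orbit, whose contribution Mirzakhani's Integration Formula (Theorem \ref{Mirz int formula}) writes as a constant multiple of
\[\frac1{V_g}\int_0^L\!\!\int_0^L V_{1,1}(x_1)\,V_{1,1}(x_2)\,V_{g-2,2}(x_1,x_2)\,x_1x_2\,dx_1\,dx_2.\]
Applying Lemma \ref{MP vol lemma} to $V_{g-2,2}(x_1,x_2)$ (the error $O(L^2/g)$ being negligible), Theorem \ref{Mirz vol lemma 0} for $V_{1,1}$, and the comparison $V_{g-2,2}\,V_g=(1+o(1))V_{g-1,1}^2$ (from Lemma \ref{Mirz vol lemma 1} and Theorem \ref{MZ vol thm}), the very computation of Lemma \ref{E[N]} shows this contribution equals $(1+o(1))\E[N_{1,1}(\cdot,L)]^2$.

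\textbf{Intersecting pairs.} This is the heart of the matter, and where Section \ref{section union} and the McShane identity enter; I expect it to be the main obstacle. Now $X_{\alpha\beta}$ is defined, has genus $\geq1$, and $2\leq|\chi(X_{\alpha\beta})|\leq\frac1\pi L+2$ by Lemma \ref{area U small}; the goal is to show the total contribution is $o(\E[N_{1,1}]^2)=o(L^4e^Lg^{-2})$. The naive estimate — counting embedded subsurfaces of each type with boundary length $\leq\ell(\partial X_{\alpha\beta})\leq\ell_\alpha+\ell_\beta\leq2L$ — only gives $e^{2L}g^{-|\chi(X_{\alpha\beta})|}$, which is far too weak, so one argues by subcases. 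If $|\chi(X_{\alpha\beta})|=2$ (so $X_{\alpha\beta}\cong S_{1,2}$) and $\ell(\partial X_{\alpha\beta})<\tfrac53L$: bounding the number of embedded $S_{1,2}$'s with boundary length $<\tfrac53L$ via Theorem \ref{Mirz int formula}, Lemma \ref{MP vol lemma} and the volume asymptotics — and using that a fixed such $S_{1,2}$ carries only boundedly many pairs producing it, by Lemma \ref{lemma:4holded} and Remark \ref{remark:topology} (exactly four intersection points) — the exponential weight is only $e^{\leq5L/6}$, giving $O(e^{5L/6}\mathrm{poly}(L)g^{-2})=o(L^4e^Lg^{-2})$ since $\tfrac56<1$. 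If $|\chi(X_{\alpha\beta})|=2$ and $\ell(\partial X_{\alpha\beta})\geq\tfrac53L$: Lemma \ref{lemma:4holded} gives a unique $\delta\subset\mathring{X_\alpha}\cap\mathring{X_\beta}$ and, by the Remark after it, $\alpha\cup\beta$ is homotopic to $\partial X_{\alpha\beta}\cup2\delta$, whence $\ell(\partial X_{\alpha\beta})+2\ell(\delta)\leq\ell_\alpha+\ell_\beta\leq2L$ and so $\ell(\delta)\leq\tfrac16L$; one then applies Mirzakhani's generalized McShane identity (Theorem \ref{McShane id}) to $X\setminus\delta$, whose two boundary circles are copies of $\delta$, together with the lower bounds for $\mathcal R$ in Lemma \ref{estimation R,D}, to control the number of length-$\leq L$ geodesics bounding a pair of pants with those two circles — i.e.\ the candidates for $\alpha$ (and for $\beta$) once $\delta$ is fixed — and combines this with Theorem \ref{Mirz int formula} applied to $\delta$ to reach $o(L^4e^Lg^{-2})$. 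Finally, if $|\chi(X_{\alpha\beta})|=m\geq3$: the complement has total Euler characteristic $2g-2-m$, and summing over the $O(m)$ types with $|\chi(X_{\alpha\beta})|=m$ and over $3\leq m\leq\lceil L/\pi\rceil+2$, using Lemmas \ref{sum vol lemma}, \ref{Wr-prop} and Proposition \ref{1 over gm} (and, as above, a McShane-type or intersection-number bound — $\alpha\cup\beta$ must cross at least $m$ times in $X_{\alpha\beta}$ — for the number of pairs per subsurface), the extra Euler characteristic provides the needed room and one again gets $o(L^4e^Lg^{-2})$.

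\textbf{Conclusion and main difficulty.} Adding the three cases gives $\E[N_{1,1}(N_{1,1}-1)]=(1+o(1))\E[N_{1,1}]^2$, which with $\E[N_{1,1}]\to\infty$ yields $\Prob(N_{1,1}(X,L)=0)\to0$, i.e.\ the theorem. The genuinely hard step is the intersecting case, and especially the subcase $|\chi(X_{\alpha\beta})|=2$ with $\ell(\partial X_{\alpha\beta})\geq\tfrac53L$: a purely volume-theoretic count overcounts by powers of $\log g$, and one must exploit the rigid geometry it forces — a nearly maximal shared boundary $\partial X_{\alpha\beta}$ of length $\approx2L$ together with a short auxiliary geodesic $\delta$, in the spirit of Proposition \ref{lower bound for chi geq 2} — and feed it through the McShane identity via the estimates of Lemma \ref{estimation R,D}. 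Carefully tracking the interplay of the constraints $\ell_\alpha,\ell_\beta\leq L$, $\ell(\partial X_{\alpha\beta})$ and $\ell(\delta)$ so as to recover the polylogarithmic factors is where the real work lies.
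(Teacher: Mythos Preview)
Your overall strategy (the second moment / Chebyshev method for $N_{1,1}(\cdot,L)$, split into disjoint versus intersecting pairs, with the intersecting pairs organised by $X_{\alpha\beta}$) is exactly the paper's. The disjoint case and the case $|\chi(X_{\alpha\beta})|=2$ with $\ell(\partial X_{\alpha\beta})\geq\tfrac53L$ are handled essentially as in the paper. But there is a genuine gap in your first subcase.

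\textbf{The gap.} Your claim that ``a fixed such $S_{1,2}$ carries only boundedly many pairs producing it'' when $\ell(\partial X_{\alpha\beta})<\tfrac53L$ is false, and the references you give do not support it: Lemma \ref{lemma:4holded} applies only when $\ell(\partial X_{\alpha\beta})\geq\tfrac53L$, and Remark \ref{remark:topology} asserts only the existence of $\delta$ in general, not that $\#(\alpha\cap\beta)=4$. The first example in Figure~\ref{figure_examples} (Dehn twists) shows that in a fixed $S_{1,2}$ the number of admissible $\beta$'s with $\ell_\beta\le L$ is \emph{not} a universal constant; it depends on the geometry of the $S_{1,2}$ and can be as large as a constant times $e^{L/2}e^{-(\ell(\gamma_1)+\ell(\gamma_2))/2}$, which is exactly what your exponential bookkeeping is missing. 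This is why the paper passes to the map $(\alpha,\beta)\mapsto(\alpha,\partial X_{\alpha\beta})$ and controls the multiplicity by the McShane identity on $S_{1,2}$ (Lemma \ref{multi in S12 S04}), getting the bound $\ell(\gamma_1)/\mathcal R(\ell(\gamma_1),\ell(\gamma_2),L)$; after integrating this against the volume weights one arrives at $O(L^6e^{0.95L}g^{-2})$ (Lemma \ref{E[Z1*]}), which is what you need. If you replace ``boundedly many'' by this McShane bound, your argument goes through; as written, it does not.

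\textbf{A structural difference.} The paper does not run the second moment directly on $N_{1,1}$ but on the truncated count $N^*_{1,1}$, where one keeps only those $\alpha$ for which every intersecting competitor produces $X_{\alpha\gamma}\cong S_{1,2}$. This has two effects. First, in the intersecting-pair term $Z^*$ one is automatically in the $|\chi|=2$ situation, so your third subcase ($m\ge3$) disappears. Second, the cost of the truncation is pushed into the \emph{first-moment} comparisons $\E[N_{1,1}]-\E[N^*_{1,1}]\to0$ and $\E[Y]-\E[Y^*]\to0$ (Propositions \ref{N-N*} and \ref{E[Y]-E[Y*]}), which only require mapping a single bad $\gamma$ to $(\gamma,\partial X_{\gamma\alpha})$ and are handled by Lemmas \ref{sum E[N Gamma] for chi=m}, \ref{sum E[N Gamma] for chi geq m}. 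Your direct approach for $m\ge3$ instead needs a \emph{second-moment} bound on pairs with $|\chi(X_{\alpha\beta})|\ge3$, and the sketch you give (``$\alpha\cup\beta$ must cross at least $m$ times'', a ``McShane-type or intersection-number bound'') is not an argument: neither the intersection lower bound nor the multiplicity control is established. The paper explicitly remarks that one \emph{can} work with $N_{1,1}$ directly ``by adding more detailed discussions'', but the passage to $N^*_{1,1}$ is precisely what lets one avoid those details.
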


\subsection{Proofs of Theorem \ref{main} and \ref{cor L1}}
We postpone the proof of Theorem \ref{prop upper bound} to the next subsections and give here the proof of Theorem \ref{main} and \ref{cor L1} assuming Theorem \ref{prop upper bound}. 
\begin{theorem}[=Theorem \ref{main}]\label{main-1}
Let $\omega(g)$ be a function satisfying \eqref{eq-omega}. Consider the following two conditions defined for all $X\in\M_g$:
\begin{itemize}
\item[(a).] $|\ell_{\sys}^{\rm sep}(X)-(2\log g - 4\log \log g)| \leq \omega(g)$;

\item[(b).] $\ell_{\sys}^{\rm sep}(X)$ is achieved by a simple closed geodesic separating $X$ into $S_{1,1}\cup S_{g-1,1}$.
\end{itemize}
Then we have
$$
\lim \limits_{g\to \infty} \Prob\left(X\in \M_g \,;\, \textit{$X$ satisfies $(a)$ and $(b)$} \right)=1.
$$ 
\end{theorem}

\bp 
Let $m=2$ in Proposition \ref{lower bound for chi geq 2} we get
\begin{equation*}
\limg\Prob\big(X\in\M_g \,;\, \mathcal{L}_{1,2}(X)>  3.9\log g)=1.
\end{equation*}
 Set
\[\mathcal A'(\omega(g)):=\{X\in \sM_g \,;\, \ell_{\sys}^{\rm sep}(X) \geq 2\log g - 4\log \log g - \omega(g) \}\]
and
\[\mathcal A''(g):=\{X\in \sM_g \,;\, \mathcal{L}_{1,2}(X)>  3.9\log g \}.\]
Then for any $X\in \mathcal A(\omega(g))\cap \mathcal A''(g)$ and large enough $g>0$, the quantity $\ell_{\sys}^{\rm sep}(X)$ is realized by a simple closed geodesic separating $X$ into $S_{1,1}\cup S_{g-1,1}$. For any $X\in \mathcal A(\omega(g))\cap \mathcal A'(\omega(g))$, we have
\[|\ell_{\sys}^{\rm sep}(X)-(2\log g -4\log \log g)|\leq \omega(g).\]
Thus, it follows by Proposition \ref{prop lower bound}, Proposition \ref{lower bound for chi geq 2} for $m=2$ and Theorem \ref{prop upper bound} that as $g\to \infty$, $\Prob\left(A(\omega(g))\right),\Prob\left(A'(\omega(g))\right)$ and $\Prob\left(A''(\omega(g))\right)$ all tend to $1$. Therefore, we have
\[\limg\Prob\big(X\in\M_g\,;\, X\in \mathcal A(\omega(g))\cap \mathcal A'(\omega(g))\cap \mathcal A''(g)  \big)=1,\]
as required.
\ep

\begin{theorem}[=Theorem \ref{cor L1}]\label{cor L1-1}
Let $\omega(g)$ be a function satisfying \eqref{eq-omega}. Consider the following two conditions defined for all $X\in\M_g$:
\begin{itemize}
\item[(e).]  $|\sL_1(X)-(2\log g - 4\log \log g)| \leq \omega(g)$;

\item[(f).]  $\sL_1(X)$ is achieved by either a simple closed geodesic separating $X$ into $S_{1,1}\cup S_{g-1,1}$ or three simple closed geodesics separating $X$ into $S_{0,3}\cup S_{g-2,3}$.
\end{itemize}
Then we have
$$
\lim \limits_{g\to \infty} \Prob\left(X\in \M_g\,;\, \textit{$X$ satisfies $(e)$ and $(f)$} \right)=1.
$$ 
\end{theorem}

\bp 
The proof is similar as the proof of Theorem \ref{main}. Let $m=2$ in Proposition \ref{lower bound for chi geq 2} we get
\begin{equation*}
\limg\Prob\big(X\in\M_g\,;\, \mathcal{L}_{1,2}(X)>  3.9\log g)=1.
\end{equation*}
 Set
\[\mathcal A'(\omega(g)):=\{X\in \sM_g \,;\, \mathcal{L}_1(X) \geq 2\log g - 4\log \log g - \omega(g) \}\]
and
\[\mathcal A''(g):=\{X\in \sM_g \,;\, \mathcal{L}_{1,2}(X)>  3.9\log g \}.\]
Then for any $X\in \mathcal A(\omega(g))\cap \mathcal A''(g)$ and large enough $g>0$, the quantity $\mathcal L_1(X)$ is realized by either a simple closed geodesic separating $X$ into $S_{1,1}\cup S_{g-1,1}$ or three simple closed geodesic separating $X$ into $S_{0,3}\cup S_{g-2,3}$. For any $X\in \mathcal A(\omega(g))\cap \mathcal A'(\omega(g))$, we have
\[|\mathcal L_1(X)-(2\log g -4\log \log g)|\leq \omega(g).\]
Thus, it follows by Proposition \ref{prop lower bound}, Proposition \ref{lower bound for chi geq 2} for $m=2$ and Theorem \ref{prop upper bound} that
\[\limg\Prob\big(X\in\M_g\,;\,  X\in \mathcal A(\omega(g))\cap \mathcal A'(\omega(g))\cap \mathcal A''(g)  \big)=1,\]
as required.
\ep

\begin{rem*}
It is interesting to study whether $\mathcal{L}_1(X)$ is realized just by a simple closed geodesic separating $X$ into $S_{1,1}\cup S_{g-1,1}$ on a generic point $X\in \sM_g$. Or does the following limit hold:
\[\limg\Prob\big(X\in\M_g\,;\,   \mathcal{L}_1(X)=\ell_{\sys}^{\rm sep}(X)  \big)=1?\]
\end{rem*}

Set
\begin{equation}
L=L(g)=2\log g -4\log \log g +\omega(g)
\end{equation}
where $\omega(g)$ is given as above in \eqref{eq-omega}. In the following arguments we always assume that $g$ is large enough. So $L$ is also large enough.\\

In order to prove Theorem \ref{prop upper bound}, it suffices to show that
\begin{equation} \label{N(1,1)=0}
\limg\Prob\big(X\in\M_g\,;\, N_{1,1}(X,L)=0 \big)= 0.
\end{equation}

For each $X\in\M_g$, we denote $\sN_{1,1}(X,L)$ to be the set of simple closed geodesics on $X$ which separate $X$ into $S_{1,1}\cup S_{g-1,1}$ and has length $\leq L$. Then 
$$N_{1,1}(X,L) = \# \sN_{1,1}(X,L).$$
Instead of $\sN_{1,1}(X,L)$, we consider the subset $\sN^*_{1,1}(X,L)$ which is defined as follows.

\begin{definition}\label{N*1,1}
We denote
\begin{equation*}
\sN^*_{1,1}(X,L):= \left\{ \alpha\in\sN_{1,1}(X,L)\,;\
\parbox[l]{3.6cm}{$\forall\alpha\neq\gamma\in\sN_{1,1}(X,L)$,\\either $\alpha \cap \gamma =\emptyset$ or\\  $X_{\alpha\gamma}$ is of type $S_{1,2}$}\right\}
\end{equation*}
and
\begin{equation*}
N^*_{1,1}(X,L) := \#\sN^*_{1,1}(X,L),
\end{equation*}
where $X_{\alpha\gamma}$ is defined in section \ref{section union}.
\end{definition}

\noindent Since $N^*_{1,1}(X,L)\leq N_{1,1}(X,L)$, we clearly have that
\begin{equation*}
\Prob\big(X\in \M_g\,;\, N_{1,1}(X,L)=0 \big) \leq \Prob\big(X\in \M_g;\ N^*_{1,1}(X,L)=0 \big).
\end{equation*}
We will show the following limit which implies \eqref{N(1,1)=0}.
\begin{equation}\label{N*1,1=0}
\limg\Prob\big(X\in \M_g\,;\, N^*_{1,1}(X,L)=0 \big)= 0.
\end{equation}

\begin{rem*}
The purpose to study $N^*_{1,1}(X,L)$ instead of $N_{1,1}(X,L)$ is to simplify certain estimations. Actually the following method also works for $N_{1,1}(X,L)$ by adding more detailed discussions.
\end{rem*}

\subsection{Bounding probability by expectation}
For any nonnegative integer-valued random variable $N$, by Cauchy-Schwarz inequality we have
$$
\mathbb E[N]^2=\mathbb E\big[N\cdot\mathbf{1}_{\{N>0\}}\big]^2\leq \mathbb E[N^2]\cdot\mathbb E \big[\mathbf{1}_{\{N>0\}}^2\big]=\mathbb E[N^2] \cdot\mathbb{P}(N>0).
$$
So we have
$$\mathbb P(N>0)\geq \frac{\mathbb E[N]^2}{\mathbb E[N^2]}.$$ Then since the variance $\mathop{\rm Var} [N] = \mathbb E[N^2] - \mathbb E[N]^2$ is nonnegative, we have
$$
\mathbb{P}(N=0)\leq \frac{\mathbb E[N^2]-\mathbb E[N]^2}{\mathbb E[N^2]} \leq \frac{\mathbb E[N^2] -\mathbb E[N]^2}{\mathbb E[N]^2}.
$$
Applying this to $N^*_{1,1}(X,L)$, we get 
\begin{equation}\label{prob(N*=0) leq}
\Prob\big(N^*_{1,1}(X,L)=0 \big)
\leq \frac{\E[(N^*_{1,1}(X,L))^2] - \E[N^*_{1,1}(X,L)]^2}{\E[N^*_{1,1}(X,L)]^2}.
\end{equation}

In order to control the RHS above, the most essential part is to study $(N^*_{1,1}(X,L))^2$. We decompose it into three different parts as follows. We define

\begin{definition}
\begin{equation*}
\mathcal Y^*(X,L):= \left\{ (\alpha,\beta)\in\sN^*_{1,1}(X,L)\times\sN^*_{1,1}(X,L) \,;\,  \alpha\neq \beta, \alpha\cap\beta=\emptyset \right\},
\end{equation*}
\begin{equation*}
\mathcal Z^*(X,L):= \left\{ (\alpha,\beta)\in\sN^*_{1,1}(X,L)\times\sN^*_{1,1}(X,L) \,;\,  \alpha\neq \beta, \alpha\cap\beta\neq\emptyset \right\}.
\end{equation*}
Denote
\begin{equation*}
Y^*(X,L) := \#\mathcal Y^*(X,L),
\end{equation*}
\begin{equation*}
Z^*(X,L) := \#\mathcal Z^*(X,L).
\end{equation*}
Then we have $$N^*_{1,1}(X,L)^2=N^*_{1,1}(X,L)+ Y^*(X,L) +Z^*(X,L).$$
\end{definition}

\noindent Inserting this decomposition into the \rm{RHS} of \eqref{prob(N*=0) leq} we get 
\begin{eqnarray}\label{prob(N*=0) leq 3 parts}
&&\Prob\big(N^*_{1,1}(X,L)=0 \big)\leq \frac{1}{\E[N^*_{1,1}(X,L)]}\\
&& + \frac{\E[Y^*(X,L)] - \E[N^*_{1,1}(X,L)]^2}{\E[N^*_{1,1}(X,L)]^2}+ \frac{\E[Z^*(X,L)]}{\E[N^*_{1,1}(X,L)]^2}.\nonumber
\end{eqnarray}

In the following subsections we will show that each of the three terms on the RHS of \eqref{prob(N*=0) leq 3 parts} goes to $0$ as $g\to \infty$ for $L=L(g)=2\log g-4\log\log g+\omega(g)$, which in particular implies Theorem \ref{prop upper bound}. More precisely,
\begin{equation}\label{tend to 0 (1)}\tag{A}
\limg\frac{1}{\E[N^*_{1,1}(X,L)]} = 0,
\end{equation}
\begin{equation}\label{tend to 0 (2)}\tag{B}
\limg\frac{\E[Y^*(X,L)] - \E[N^*_{1,1}(X,L)]^2}{\E[N^*_{1,1}(X,L)]^2} = 0
\end{equation}
and
\begin{equation}\label{tend to 0 (3)}\tag{C}
\limg\frac{\E[Z^*(X,L)]}{\E[N^*_{1,1}(X,L)]^2} = 0.
\end{equation}

\begin{rem*}
The proofs of \eqref{tend to 0 (1)} and \eqref{tend to 0 (2)} are similar: we use $\E[N_{1,1}]$ and $\E[Y]$ to approximate $\E[N^*_{1,1}]$ and $\E[Y^*]$ respectively (we will define $Y(X,L)$ in Subsection \ref{subsec:proof of (B)}). For the proof of \eqref{tend to 0 (3)}, we will control the number of certain types of simple closed geodesics by using Mirzakhani's generalized McShane identity.
\end{rem*}

We will prove \eqref{tend to 0 (1)}, \eqref{tend to 0 (2)} and \eqref{tend to 0 (3)} in the following subsections.


\subsection{Proof of \eqref{tend to 0 (1)}}

Recall that $L=L(g)=2\log g -4\log \log g +\omega(g)$ goes to $\infty$ as $g\to \infty$. By Lemma \ref{E[N]} we have
$$\lim \limits_{g\to \infty}\E[N_{1,1}(X,L)]=\infty.$$ We will show that $\E[N^*_{1,1}(X,L)]$ is close to $\E[N_{1,1}(X,L)]$ for large $g>0$. More precisely,

\begin{proposition}\label{N-N*}
With the notations as above, we have
\begin{equation*}
\limg \big(\E[N_{1,1}(X,L)] - \E[N^*_{1,1}(X,L)] \big)=0.
\end{equation*}
In particular, Equation \eqref{tend to 0 (1)} holds.
\end{proposition}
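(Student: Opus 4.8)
Since $N^*_{1,1}(X,L)\le N_{1,1}(X,L)$ pointwise on $\M_g$, the assertion is equivalent to $\E\big[N_{1,1}(X,L)-N^*_{1,1}(X,L)\big]\to 0$, and then \eqref{tend to 0 (1)} follows since $\E[N_{1,1}(X,L)]\to\infty$ by Lemma \ref{E[N]}. By the definition of $\sN^*_{1,1}(X,L)$, a geodesic $\alpha\in\sN_{1,1}(X,L)\setminus\sN^*_{1,1}(X,L)$ admits some $\gamma\in\sN_{1,1}(X,L)$ with $\gamma\neq\alpha$, $\alpha\cap\gamma\neq\emptyset$ and $X_{\alpha\gamma}$ not of type $S_{1,2}$, so by Lemma \ref{area U small} (for $g\ge 3$) the subsurface $X_{\alpha\gamma}$ has genus $\ge1$ and $3\le|\chi(X_{\alpha\gamma})|\le\tfrac L\pi+2$. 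As each such $\alpha$ is the first coordinate of at least one such pair,
\[
N_{1,1}(X,L)-N^*_{1,1}(X,L)\ \le\ \#\big\{(\alpha,\gamma)\in\sN_{1,1}(X,L)^2:\ \alpha\neq\gamma,\ \alpha\cap\gamma\neq\emptyset,\ |\chi(X_{\alpha\gamma})|\ge3\big\},
\]
and it suffices to prove that the expectation of the right-hand side tends to $0$.

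The plan is to estimate this expectation using Mirzakhani's Integration Formula (Theorem \ref{Mirz int formula}). For a pair as above put $Y=X_{\alpha\gamma}$; by Lemmas \ref{alpha sbs U} and \ref{lemma chiU12}, $Y$ is a compact subsurface with geodesic boundary satisfying $\ell(\partial Y)\le\ell(\alpha)+\ell(\gamma)\le 2L$ and $X_\alpha\cup X_\gamma\subset Y$, $m:=|\chi(Y)|$ lies in $[3,\tfrac L\pi+2]$, and $\partial Y$ separates $X$ into $Y$ (complexity $m$, genus $\ge1$) and a complement of complexity $2g-2-m$. I would organize the pairs by $m$ and by the topological type of the configuration $(\alpha,\gamma)\hookrightarrow X$; for a fixed type, Mirzakhani's formula writes the expected count as $\tfrac1{V_g}$ times an integral over the vector $\boldsymbol x$ of lengths of $\partial Y$ of
\[
\Big(\textstyle\int_{\M(Y;\,\boldsymbol x)}\mathbf 1\{\ell_\alpha\le L\}\,\mathbf 1\{\ell_\gamma\le L\}\,d\mathrm{vol}\Big)\cdot\Big(\textstyle\prod_j V_{(X\setminus Y)_j}(\boldsymbol x_j)\Big)\cdot\boldsymbol x\cdot d\boldsymbol x .
\]
Bounding the inner integral and the complement volumes by $V_{g,n}(\boldsymbol x)\le e^{(\sum x_i)/2}V_{g,n}$ (Lemma \ref{Mirz vol lemma 1}), using $\sum x_i\le 2L$, and summing the complement volumes over all possible topologies of $X\setminus Y$ via Lemmas \ref{sum vol lemma} and \ref{Wr-prop} and Proposition \ref{1 over gm}, one gets a contribution of the shape $c(m)\,\mathrm{poly}(L)\,e^{cL}\,g^{-m}$ for each $m$. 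For $m\ge5$ the factor $g^{-m}$ already absorbs $e^{cL}\asymp g^{4}$ and those terms sum to $o(1)$; the remaining small values $m\in\{3,4\}$ require doing better than this crude bound.

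The hard part will be precisely this reduction of the exponential rate for small $|\chi(Y)|$: bounding the inner integral by the full volume $V_{g_Y,n_Y}(\boldsymbol x)$ is too wasteful, and one must exploit that $\alpha,\gamma$ each bound a one-handle in $Y$ and that $\alpha\cup\gamma$ \emph{fills} $Y$. In the spirit of Lemma \ref{lemma:4holded} (which for $Y\cong S_{1,2}$ produces the geodesic $\delta$ and reduces $Y$ to a four-holed sphere), the goal is to split off a bounded-complexity ``core'' of $Y$ that carries $\alpha\cup\gamma$, replacing the factor $e^{2L}$ by an exponential rate $c<\tfrac32$, after which $e^{cL}g^{-m}\le g^{2c-m}(\log g)^{O(1)}e^{O(\omega(g))}\to0$ for all $m\ge3$. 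A secondary point: for fixed topology of $Y$ there are configurations $(\alpha,\gamma)$ of arbitrarily large geometric intersection number, so one controls the number of relevant types using that $\ell(\alpha)+\ell(\gamma)\le 2L$ bounds $i(\alpha,\gamma)$ through the Collar Lemma, and checks that the combinatorial and symmetry factors together with the constants from Lemmas \ref{sum vol lemma}, \ref{Wr-prop} and Proposition \ref{1 over gm} are dominated by the $g^{-m}$ decay. Once these uniform estimates are in place, the summation over types and over $3\le m\le\tfrac L\pi+2$ is routine; the analogous bookkeeping is what makes the proof of \eqref{tend to 0 (2)} similar, with $\E[Y(X,L)]$ there playing the role of $\E[N_{1,1}(X,L)]$ here.
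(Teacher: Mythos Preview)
Your plan has a genuine gap. Mirzakhani's Integration Formula (Theorem \ref{Mirz int formula}) applies only to tuples of \emph{disjoint} simple closed curves, so it cannot be applied to the intersecting pair $(\alpha,\gamma)$. Your workaround is to cut along $\partial Y$ and write an ``inner integral'' $\int_{\M(Y;\boldsymbol x)}\mathbf 1\{\ell_\alpha\le L\}\mathbf 1\{\ell_\gamma\le L\}\,d\mathrm{vol}$, but this expression is not well-defined on moduli space: $\ell_\alpha,\ell_\gamma$ are only functions on Teichm\"uller space, and after passing to $\M(Y;\boldsymbol x)$ one must sum over the full $\mathrm{Mod}(Y)$-orbit of the pair. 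The resulting counting function is \emph{not} bounded by $1$, so bounding the inner integral by $V_{g_Y,n_Y}(\boldsymbol x)$ is illegitimate. If instead you fix one $\mathrm{Mod}(Y)$-orbit at a time, you must sum over all such orbits; your proposal to control their number by the Collar Lemma is ineffective here, since for $\ell(\alpha),\ell(\gamma)\sim L$ the collar width is $\asymp e^{-L/2}$, yielding only $i(\alpha,\gamma)\lesssim Le^{L/2}$, hence a number of combinatorial types that grows at least like a power of $e^{L/2}$. This extra factor destroys the estimate precisely in the critical cases $m=3,4$ that you single out as ``hard''. The reference to Lemma \ref{lemma:4holded} does not help either: that lemma concerns $|\chi(X_{\alpha\gamma})|=2$, whereas in Proposition \ref{N-N*} only $|\chi|\ge 3$ matters.

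The paper avoids all of this by a much simpler device. One does not need to count pairs at all: for each $\alpha\in\sN_{1,1}(X,L)\setminus\sN^*_{1,1}(X,L)$ one merely \emph{chooses} one witness $\gamma$ and records $(\alpha,\partial X_{\alpha\gamma})$. Since $X_\alpha\subset X_{\alpha\gamma}$ (Lemma \ref{alpha sbs U}), the curve $\alpha$ is \emph{disjoint} from $\partial X_{\alpha\gamma}$, and the map $\alpha\mapsto(\alpha,\partial X_{\alpha\gamma})$ is trivially injective. Thus $N_{1,1}-N^*_{1,1}$ is bounded by the counting function $\hat N_{g_0,k}^{(g_1,n_1),\ldots,(g_q,n_q)}(X,L,2L)$ for configurations $(\gamma_1,\gamma_2)$ with $\gamma_1$ a one-handle boundary inside the piece cut off by the multi-curve $\gamma_2$; this is a disjoint configuration, Theorem \ref{Mirz int formula} applies directly, and for small $m=|\chi|$ the inner volumes $V_{1,1}(y)V_{g_0-1,k+1}(\cdot)$ are polynomials, so the only exponential comes from the complement pieces and produces $e^L$ (not $e^{2L}$). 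With $L=2\log g-4\log\log g+\omega(g)$ one gets contributions $\lesssim (\log g)^{3m-1}e^L g^{-m}$, which already tends to $0$ for every $m\ge 3$ (Lemmas \ref{sum E[N Gamma] for chi=m} and \ref{sum E[N Gamma] for chi geq m}); no ``reduction of the exponential rate'' is needed.
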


We split the proof into several parts. We always assume that $g>0$ is large enough.

By definition, $N_{1,1}(X,L) - N^*_{1,1}(X,L) \geq0$. Assume that $$\gamma\in\sN_{1,1}(X,L) \setminus \sN^*_{1,1}(X,L).$$
By definition of $N^*_{1,1}(X,L)$ and Lemma \ref{area U small} we know that there exists a simple closed geodesic $\alpha\in \sN_{1,1}(X,L)$ with $\alpha\neq\gamma$ such that $$\gamma\cap\alpha \neq \emptyset \quad \text{and} \quad |\chi(X_{\gamma\alpha})| \geq 3.$$ 
Assume that $X_{\gamma\alpha}$ is of type $S_{g_0,k}$. Then $\partial X_{\gamma\alpha}$ is a simple closed multi-geodesic that split off an $S_{g_0,k}$ from $X$. By Lemma \ref{area U small} we know that
$$g_0\geq 1 \quad \text{and} \quad 3\leq 2g_0-2+k\leq g-1.$$ And we have
\begin{equation*}
\ell(\partial X_{\gamma\alpha}) \leq \ell(\alpha) +\ell(\gamma) \leq 2L.
\end{equation*}

Note that by Lemma \ref{alpha sbs U} we have $$X_\gamma\sbs X_{\gamma\alpha}.$$ Now we define a counting function as follows: 

\begin{definition}
	Define the counting function  $\hat{N}_{g_0,n_0}^{(g_1,n_1),\cdots,(g_q,n_q)}(X,L_1,L_2)$ to be the number of pairs $(\gamma_1,\gamma_2)$ satisfying 
	\begin{itemize}
		\item $\gamma_2$ is a simple closed multi-geodesics in $X$ consisting of $n_0$ geodesics that split off an $S_{g_0,n_0}$ from $X$, and its complement $X\setminus S_{g_0,n_0}$ consists of $q$ components $S_{g_1,n_1},\cdots,S_{g_q,n_q}$ for some $q\geq 1$;
		\item $\gamma_1$ is a simple closed geodesic in that $S_{g_0,n_0}$ and splits off a one-holed torus from that $S_{g_0,n_0}$;
		\item $\ell(\gamma_1)\leq L_1$ and $\ell(\gamma_2)\leq L_2$.
	\end{itemize}
    (see Figure \ref{figure:def hat N}.) 
\end{definition}

\begin{figure}[h]
	\centering	
	\includegraphics[width=8.2cm]{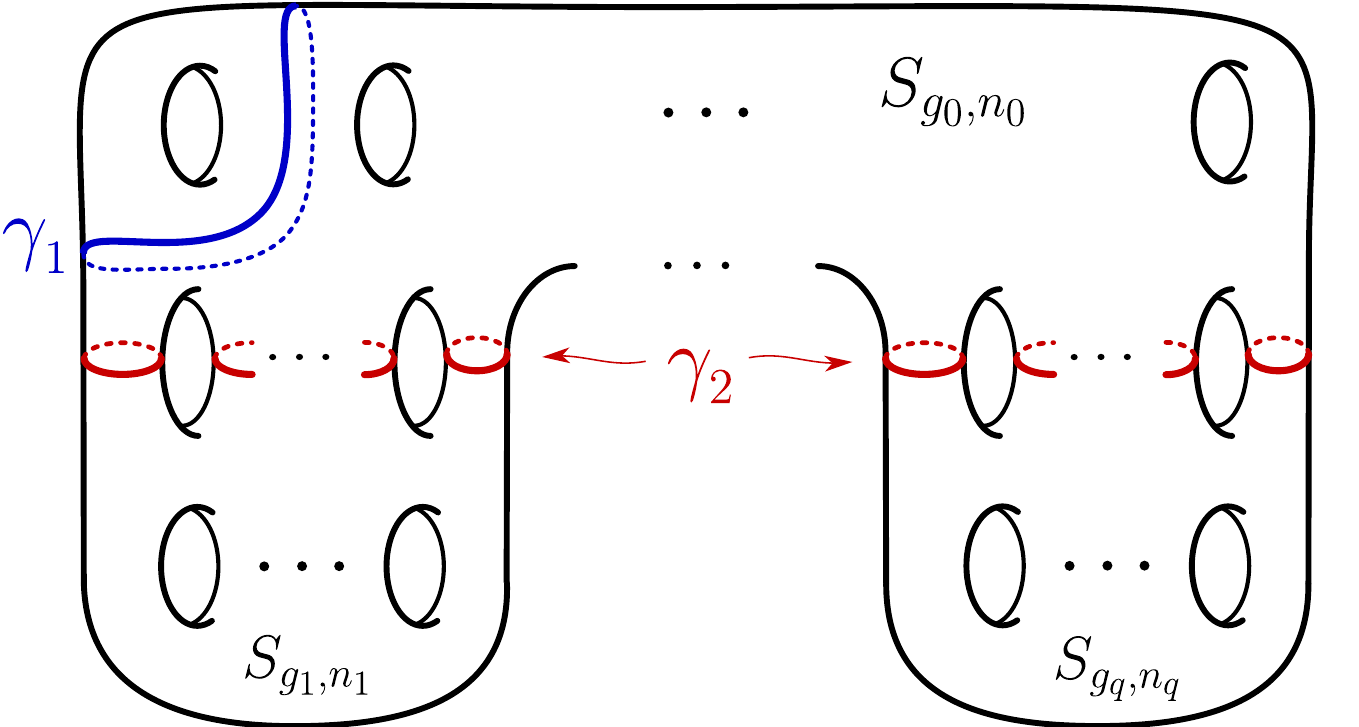}
	\caption{}
	\label{figure:def hat N}
\end{figure}

Note that the map
\begin{equation*}
\gamma \mapsto (\gamma,\partial X_{\gamma\alpha})
\end{equation*}
is injective and $\gamma\cap \partial X_{\gamma\alpha} = \emptyset$, then we have
\begin{equation}\label{N-N* leq sum N Gamma}
N_{1,1}(X,L) - N^*_{1,1}(X,L) \leq \sum \hat{N}_{g_0,k}^{(g_1,n_1),\cdots,(g_q,n_q)}(X,L,2L)
\end{equation}
where the summation takes over all possible $(g_0,k)$, $q\geq 1$, and $(g_1,n_1),\cdots,(g_q,n_q)$ such that 
\begin{itemize}
	\item $g_0\geq 1$, $3\leq 2g_0-2+k \leq g-1$;
	\item $n_i\geq 1$, $2g_i-2+n_i \geq 1$, $\forall 1\leq i\leq q$;
	\item $n_1+\cdots+n_q = k$, $g_0+g_1+\cdots+g_q + k-q =g$.
\end{itemize}

For such a counting function, by Mirzakhani's Integration Formula (see Theorem \ref{Mirz int formula}),
we have
\begin{eqnarray*}
& & \int_{\M_g} \hat{N}_{g_0,k}^{(g_1,n_1),\cdots,(g_q,n_q)}(X,L,2L) dX \\
= & & \frac{C_\Gamma}{|\Sym|} \int_{\R_{\geq0}^{k+1}} \mathbf 1_{[0,L]}(y)\mathbf 1_{[0,2L]}\big(\sum_{i=1}^q(x_{i,1} + \cdots +x_{i,n_i})\big) \\
& & V_{1,1}(y) V_{g_0-1,k+1}(y,x_{1,1},\cdots,x_{q,n_q}) \\
& & V_{g_1,n_1}(x_{1,1},\cdots,x_{1,n_1})\cdots V_{g_q,n_q}(x_{q,1},\cdots,x_{q,n_q}) \\
& & y x_{1,1}\cdots x_{q,n_q} dy dx_{1,1}\cdots dx_{q,n_q}.
\end{eqnarray*}
From Theorem \ref{Mirz vol lemma 0} of Mirzakhani we know that
\begin{equation*}
V_{1,1}(y) = \frac{1}{48}(y^2 + 4\pi^2).
\end{equation*}
Recall that $C_\Gamma \leq 1$ and it is clear that the symmetry satisfies
\begin{equation*}
|\Sym| \geq n_1!\cdots n_q!.
\end{equation*}
By Lemma \ref{MP vol lemma}, we have
\begin{equation*}
V_{g,n}(x_1,\cdots,x_n) \leq \prod_{i=1}^n \frac{\sinh (x_i/2)}{x_i/2} V_{g,n},
\end{equation*}
and we also have that for $x>0$,
\begin{equation*}
\frac{\sinh (x/2)}{x/2} \leq \frac{e^{x/2}}{x}.
\end{equation*}
Set the condition
\begin{equation*}
\mathrm{Cond}:=\left\{ 0\leq y\leq L, \ 0\leq x_{i,j}, \  \sum_{i=1}^q\sum_{j=1}^{n_i} x_{i,j} \leq 2L  \right\}.
\end{equation*}

\noindent Put all these equations together we get
\begin{eqnarray}\label{E[N Gamma] leq}
&&\int_{\M_g} \hat{N}_{g_0,k}^{(g_1,n_1),\cdots,(g_q,n_q)}(X,L,2L) dX
 \leq  \frac{1}{n_1!\cdots n_q!} V_{g_1,n_1}\cdots V_{g_q,n_q} \\
& & \times\int_\mathrm{Cond} \big( \frac{1}{48}(y^2 + 4\pi^2)y e^{(x_{1,1}+\cdots+x_{q,n_q})/2} V_{g_0-1,k+1}(y,x_{1,1},\cdots,x_{q,n_q})\big) \nonumber\\
& & dy dx_{1,1}\cdots dx_{q,n_q} . \nonumber
\end{eqnarray}

Next we control the summation $\sum \E[\hat{N}_{g_0,k}^{(g_1,n_1),\cdots,(g_q,n_q)}(X,L,2L)]$ for two different cases, and then combine them to obtain Proposition \ref{N-N*}.

\begin{lemma}\label{sum E[N Gamma] for chi=m}
Given an integer $m\geq 2$ independent of $g$, then there exists a constant $c(m)>0$ only depending on $m$ such that
\begin{equation*}
\sum_{|\chi(S_{g_0,k})|=m} \E[\hat{N}_{g_0,k}^{(g_1,n_1),\cdots,(g_q,n_q)}(X,L,2L)] \leq c(m) (1+L^{3m-1}) e^L  \frac{1}{g^m}
\end{equation*}
where the summation takes over all possible $(g_0,k)$, $q\geq 1$ and $(g_1,n_1),\cdots,(g_q,n_q)$ such that $g_0\geq 1$ and $2g_0-2+k=m$.

\end{lemma}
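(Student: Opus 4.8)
The plan is to feed the explicit bound \eqref{E[N Gamma] leq} into a power-counting argument, exploiting that $g_0$ and $k$ are forced to be small. Since $2g_0-2+k=|\chi(S_{g_0,k})|=m$ with $g_0\geq1$ and $k\geq1$, we have $g_0\leq\frac{m+2}{2}$ and $1\leq k\leq m$, so there are only finitely many admissible pairs $(g_0,k)$, their number bounded in terms of $m$; moreover $2(g_0-1)-2+(k+1)=m-1\geq1$, so $V_{g_0-1,k+1}$ is well defined. Fix such a pair. By Theorem \ref{Mirz vol lemma 0}, $V_{g_0-1,k+1}(y,x_{1,1},\cdots,x_{q,n_q})$ is a polynomial in $y^2,x_{1,1}^2,\cdots$ of total degree $D:=6g_0+2k-10$ in its $k+1$ variables, with nonnegative coefficients bounded by a constant depending only on $m$. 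Hence on the region $Cond$, where $0\leq y\leq L$ and $\sum_{i,j}x_{i,j}\leq 2L$, one has $V_{g_0-1,k+1}(y,x_{1,1},\cdots,x_{q,n_q})\leq c(m)(1+L^{D})$.

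Next I would estimate the integral on the right-hand side of \eqref{E[N Gamma] leq}. Inserting the bound above and writing $(y^2+4\pi^2)y=y^3+4\pi^2 y$, that integral is at most $c(m)(1+L^{D})$ times $\big(\int_0^L(y^3+4\pi^2 y)\,dy\big)\big(\int_{\{x_{i,j}\geq0,\ \sum_{i,j}x_{i,j}\leq 2L\}}e^{(x_{1,1}+\cdots+x_{q,n_q})/2}\,dx_{1,1}\cdots dx_{q,n_q}\big)$. The first factor is $O(L^4)$. For the second, set $s=\sum_{i,j}x_{i,j}$ (there are $k=n_1+\cdots+n_q$ variables); this integral equals $\int_0^{2L}e^{s/2}\frac{s^{k-1}}{(k-1)!}\,ds$, which the elementary inequality $\int_0^T e^{s/2}s^n\,ds\leq 2e^{T/2}T^n$ bounds by $\frac{2}{(k-1)!}e^{L}(2L)^{k-1}$. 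Multiplying, the integral in \eqref{E[N Gamma] leq} is at most $c(m)(1+L^{D+k+3})e^{L}$, and since
\[
D+k+3=6g_0+3k-7=3(2g_0-2+k)+6-7=3m-1,
\]
it is at most $c(m)(1+L^{3m-1})e^{L}$. Together with \eqref{E[N Gamma] leq} this yields
\[
\E[\hat{N}_{g_0,k}^{(g_1,n_1),\cdots,(g_q,n_q)}(X,L,2L)]\leq\frac{c(m)(1+L^{3m-1})e^{L}}{V_g}\cdot\frac{V_{g_1,n_1}\cdots V_{g_q,n_q}}{n_1!\cdots n_q!}.
\]

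Finally I would sum over configurations. For fixed $(g_0,k)$, $q$ and $(n_1,\cdots,n_q)$, the admissible tuples $(g_1,\cdots,g_q)$ are exactly those with $2g_i-2+n_i\geq1$ for all $i$ and $\sum_{i=1}^q(2g_i-2+n_i)=2g-2-m$, so Proposition \ref{1 over gm} (with the same $m$) gives $\sum_{\{g_i\}}V_{g_1,n_1}\cdots V_{g_q,n_q}\leq c(m)\frac{1}{g^m}V_g$ once $g\geq m+1$. Summing over the finitely many values $q\leq k\leq m$ and the finitely many $(n_1,\cdots,n_q)$ with $n_1+\cdots+n_q=k$ (using $\frac{1}{n_1!\cdots n_q!}\leq1$), and then over the finitely many pairs $(g_0,k)$ with $2g_0-2+k=m$, absorbs only constants depending on $m$ and produces the claimed bound $\sum_{|\chi(S_{g_0,k})|=m}\E[\hat{N}_{g_0,k}^{(g_1,n_1),\cdots,(g_q,n_q)}(X,L,2L)]\leq c(m)(1+L^{3m-1})e^{L}\frac{1}{g^m}$ for all large $g$; for the remaining bounded range of $g$ the inequality holds trivially after enlarging $c(m)$.

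I expect the middle step to be the only genuine obstacle: one must count powers of $L$ precisely enough to land at exactly $L^{3m-1}$ and, crucially, to keep the exponential factor at $e^{L}$ rather than $e^{2L}$. This is possible because $V_{g_0-1,k+1}$ is an honest polynomial (so the bounded-genus factor contributes no exponential growth) and because the sharp estimate $\int_0^{2L}e^{s/2}s^{k-1}\,ds=O(e^{L}L^{k-1})$ is used in place of the lossy bound $e^{L}\int_0^{2L}s^{k-1}\,ds$.
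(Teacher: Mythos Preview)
Your proof is correct and follows essentially the same approach as the paper: bound $V_{g_0-1,k+1}$ by a polynomial of degree $6g_0+2k-10$ using Theorem~\ref{Mirz vol lemma 0}, integrate to obtain the factor $(1+L^{3m-1})e^L$, and then apply Proposition~\ref{1 over gm} to handle the sum over $(g_1,\cdots,g_q)$, absorbing the finitely many remaining configurations into $c(m)$. Your explicit simplex computation $\int_{\sum x_{i,j}\leq 2L}e^{\sum x_{i,j}/2}\,dx=\int_0^{2L}e^{s/2}\frac{s^{k-1}}{(k-1)!}\,ds$ and the power-count $D+k+3=3m-1$ make the exponent matching slightly more transparent than in the paper, but the argument is the same.
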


\begin{proof}
Since $|\chi(S_{g_0,k})|=m$, we have that all the nonnegative integers $g_0,k,q,n_1,...,n_q$ are all bounded from above by a constant only depending on $m$. By Theorem \ref{Mirz vol lemma 0} of Mirzakhani, we know that $V_{g_0-1,k+1}(y,x_{1,1},\cdots,x_{q,n_q})$ is a polynomial of degree $6g_0-10+2k$. Thus there exists a constant $c_1(m)>0$ only depending on $m$ such that
\be \label{V-upper-1-1}
V_{g_0-1,k+1}(y,x_{1,1},\cdots,x_{q,n_q})\leq c_1(m)(1+L^{6g_0-10+2k}).
\ene
For the integral in the RHS of \eqref{E[N Gamma] leq}, there exists a uniform constant $c>0$, and two constants $c'(m), c''(m)>0$ only depending on $m$ such that
\begin{eqnarray*}
& & \int_\mathrm{Cond}  \frac{1}{48}(y^2 + 4\pi^2)y e^{(x_{1,1}+\cdots+x_{q,n_q})/2} dydx_{1,1}\cdots dx_{q,n_q} \\
&\leq& c \cdot (1+L^4) \int_\mathrm{Cond} e^{(x_{1,1}+\cdots +x_{q,n_q})/2} dx_{1,1}\cdots dx_{q,n_q} \\
&\leq& c'(m) (1+L^4) (1+L^{\sum_{i=1}^q n_i-1}) e^L  \\
&\leq& c''(m) (1+L^{k+3}) e^L.
\end{eqnarray*}
Which together with \eqref{E[N Gamma] leq} and \eqref{V-upper-1-1} imply that there exists a constant $c'''(m)>0$ only depending on $m$ such that
\begin{eqnarray*}
\int_{\M_g} \hat{N}_{g_0,k}^{(g_1,n_1),\cdots,(g_q,n_q)}(X,L,2L)
&\leq& c'''(m) (1+L^{6g_0-7+3k}) e^L  V_{g_1,n_1}\cdots V_{g_q,n_q} \\
&=& c'''(m) (1+L^{3m-1}) e^L  V_{g_1,n_1}\cdots V_{g_q,n_q}.
\end{eqnarray*}

\noindent By Proposition \ref{1 over gm} we know that there exists a constant $c_2(m)>0$ only depending on $m$ such that
\begin{eqnarray*}
\sum_{g_1,\cdots,g_q} V_{g_1,n_1}\cdots V_{g_q,n_q}\leq c_2(m) \frac{1}{g^{m}} V_g.
\end{eqnarray*} 

\noindent So we have that there exists a constant $c_3(m)>0$ only depending on $m$ such that
\begin{equation*}
\sum_{g_1,\cdots,g_q} \int_{\M_g} \hat{N}_{g_0,k}^{(g_1,n_1),\cdots,(g_q,n_q)}(X,L,2L)
\leq c_3(m) (1+L^{3m-1}) e^L \frac{1}{g^{m}} V_g.
\end{equation*}
Recall that the nonnegative integers $g_0,k,q,n_1,\cdots,n_q$ are all bounded from above by a constant only depending on $m$. Therefore there exists a constant $c(m)>0$ only depending on $m$ such that
\begin{equation*}
\sum_{|\chi(S_{g_0,k})| =m}  \E[\hat{N}_{g_0,k}^{(g_1,n_1),\cdots,(g_q,n_q)}(X,L,2L)])
\leq c(m) (1+L^{3m-1}) e^L \frac{1}{g^m},
\end{equation*}
as required.
\end{proof}

\begin{lemma}\label{sum E[N Gamma] for chi geq m}
Given an integer $m\geq 2$ independent of $g$, then there exists a constant $c(m)>0$ only depending on $m$ such that
\begin{equation*}
\sum_{m+1\leq|\chi(S_{g_0,k})|\leq g-1} \E[\hat{N}_{g_0,k}^{(g_1,n_1),\cdots,(g_q,n_q)}(X,L,2L)] \leq c(m) (1+L^3) e^{\frac{9}{2}L}  \frac{1}{g^m}
\end{equation*}
where the summation takes over all possible $(g_0,k)$, $q\geq 1$ and $(g_1,n_1),\cdots,(g_q,n_q)$ such that $g_0\geq 1$ and $m+1\leq 2g_0-2+k\leq g-1$.

\end{lemma}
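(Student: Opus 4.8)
The plan is to start from the integral estimate \eqref{E[N Gamma] leq}, bound the volume factors crudely, and then carefully sum over all admissible topological types. The essential new feature compared with Lemma~\ref{sum E[N Gamma] for chi=m} is that now $|\chi(S_{g_0,k})|=2g_0-2+k$ may be as large as $g-1$, so one cannot bound $V_{g_0-1,k+1}(y,x_{1,1},\cdots,x_{q,n_q})$ by a polynomial in $L$; instead I would use Part~$(1)$ of Lemma~\ref{Mirz vol lemma 1} to replace it by $e^{(y+\sum x_{i,j})/2}\,V_{g_0-1,k+1}$. Substituting this into \eqref{E[N Gamma] leq}, the $y$-integral becomes $\int_0^L\frac{1}{24}(y^2+4\pi^2)y\,e^{y/2}\,dy\le C(1+L^3)e^{L/2}$, the integral over the $x_{i,j}$ becomes $\int_{\{x_{i,j}\ge0,\ \sum x_{i,j}\le 2L\}}e^{\sum x_{i,j}}\,dx\le e^{2L}\frac{(2L)^k}{k!}$, and we obtain
\[
\int_{\M_g}\hat{N}_{g_0,k}^{(g_1,n_1),\cdots,(g_q,n_q)}(X,L,2L)\,dX\le C(1+L^3)\,e^{5L/2}\,\frac{(2L)^k}{k!\,n_1!\cdots n_q!}\,V_{g_1,n_1}\cdots V_{g_q,n_q}\,V_{g_0-1,k+1}.
\]

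Next I would sum over all admissible $(g_0,k,q,(g_i,n_i))$. Set $r:=2g_0-2+k=|\chi(S_{g_0,k})|\in\{m+1,\cdots,g-1\}$ and $R:=2g-2-r=\sum_i(2g_i-2+n_i)\ge g-1$. Since $|\chi(S_{g_0-1,k+1})|=r-1$, Part~$(1)$ of Lemma~\ref{Wr-prop} gives $V_{g_0-1,k+1}\le c\,W_{r-1}$ uniformly, and for fixed $r,q,(n_i)$, Lemma~\ref{sum vol lemma} gives $\sum_{\{g_i\}}V_{g_1,n_1}\cdots V_{g_q,n_q}\le c\,(D/R)^{q-1}W_R$. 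What remains is the combinatorial sum $\sum_{q\ge1}(D/R)^{q-1}\sum_{k}\frac{(2L)^k}{k!}\sum_{n_1+\cdots+n_q=k,\,n_i\ge1}\frac{1}{n_1!\cdots n_q!}$. Using $(n_1+\cdots+n_q)!\ge n_1!\cdots n_q!$, for fixed $q$ the sum over $k$ and $(n_i)$ is at most $\big(\sum_{n\ge1}\frac{(2L)^n}{(n!)^2}\big)^q=:A^q$; and since $\binom{2n}{n}\le4^n$ one gets $A\le\sum_{\ell\ge1}\frac{(\sqrt{8L})^{\ell}}{\ell!}<e^{2\sqrt{2L}}$, which for $g$ large is far smaller than $R$, so the geometric series $\sum_{q\ge1}(D/R)^{q-1}A^q$ is at most $2A<2e^{2\sqrt{2L}}$. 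Combining these estimates, for a universal constant $C$,
\[
\sum\E\big[\hat{N}_{g_0,k}^{(g_1,n_1),\cdots,(g_q,n_q)}(X,L,2L)\big]\le \frac{C\,(1+L^3)\,e^{5L/2+2\sqrt{2L}}}{V_g}\sum_{r=m+1}^{g-1}W_{r-1}\,W_{2g-2-r}.
\]

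Finally I would invoke Part~$(2)$ of Lemma~\ref{Wr-prop}: since $(r-1)+(2g-2-r)=2g-3$, the last sum equals $\sum_{s=m}^{g-2}W_s\,W_{2g-3-s}\le c(m)\,(2g-3)^{-m}\,W_{2g-3}=c(m)\,(2g-3)^{-m}\,V_{g-1,1}$, and together with $V_{g-1,1}/V_g\asymp 1/g$ (Part~$(3)$ of Lemma~\ref{Mirz vol lemma 1}, or Theorem~\ref{MZ vol thm}) this yields the bound $c(m)(1+L^3)e^{5L/2+2\sqrt{2L}}g^{-m}$; since $2\sqrt{2L}\le 2L$ for $L$ (hence $g$) large, the exponent is $\le 9L/2$, which is the claim (small $g$ being absorbed into $c(m)$). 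The only genuine obstacle, absent in Lemma~\ref{sum E[N Gamma] for chi=m}, is that the number $k$ of boundary curves of $S_{g_0,k}$ is unbounded, so one is summing a genuinely infinite family of counting functions $\hat N$; the point, captured in the combinatorial step above, is that the simplex-volume factor $(2L)^k/k!$ together with the symmetry factor $(n_1!\cdots n_q!)^{-1}$ makes that sum converge with only a sub-exponential $e^{O(\sqrt{L})}$ loss, comfortably absorbed into $e^{9L/2}$.
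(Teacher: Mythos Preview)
Your proof is correct and follows the same architecture as the paper's: replace $V_{g_0-1,k+1}(\cdot)$ by $e^{(y+\sum x_{i,j})/2}V_{g_0-1,k+1}$ via Lemma~\ref{Mirz vol lemma 1}, evaluate the resulting integral, collapse the $(g_i)$-sum with Lemma~\ref{sum vol lemma}, and finish with Lemma~\ref{Wr-prop}. The only substantive difference is the combinatorial bookkeeping over $(k,q,(n_i))$. The paper uses the multinomial identity $\sum_{n_1+\cdots+n_q=k}\tfrac{k!}{n_1!\cdots n_q!}=q^k$ and then $\tfrac{(2L)^k q^k}{(k!)^2}\le e^{2L}e^q$; this costs a full factor $e^{2L}$ (whence the exponent $\tfrac{9}{2}L$), but the geometric series in $q$ converges under the $L$-free condition $De/g<1$. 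Your decoupling via $k!\ge \prod n_i!$ gives instead a Bessel-type bound $A^q$ with $A<e^{2\sqrt{2L}}$, which is sharper in $L$ (your true exponent is $\tfrac{5}{2}L+O(\sqrt L)$, and the final step actually yields $g^{-m-1}$ rather than $g^{-m}$). The price is that your geometric series needs $DA/R<1$, i.e.\ roughly $e^{2\sqrt{2L}}\ll g$, hence $L=o((\log g)^2)$. This is amply satisfied for the $L=L(g)\sim 2\log g$ relevant here, so your argument is fine for the purposes of the paper, but be aware that the paper's cruder estimate gives the lemma uniformly in $L$.
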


\begin{proof}
First by Lemma \ref{Mirz vol lemma 1} we know that
\begin{eqnarray*}
V_{g_0-1,k+1}(y,x_{1,1},\cdots,x_{q,n_q}) &\leq& \left(e^{y/2}\cdot \prod_{i=1}^{q} \prod_{j=1}^{n_i} e^{x_{i,j}/2}\right)\cdot V_{g_0-1,k+1}\\
&=&\left(e^{y/2}\cdot  e^{\sum x_{i,j}/2}\right)\cdot V_{g_0-1,k+1}.
\end{eqnarray*}
Then by \eqref{E[N Gamma] leq} we have
\begin{eqnarray*}
\int_{\M_g} \hat{N}_{g_0,k}^{(g_1,n_1),\cdots,(g_q,n_q)}(X,L,2L)
& \leq & \frac{1}{n_1!\cdots n_q!} V_{g_0-1,k+1} V_{g_1,n_1}\cdots V_{g_q,n_q} \\
& & \int_\mathrm{Cond}  \frac{y}{48}(y^2 + 4\pi^2)e^{y/2} e^{\sum x_{i,j}} \\
& & dy dx_{1,1}\cdots dx_{q,n_q} .
\end{eqnarray*}
For the integral in the {\rm RHS} above, there exists a universal constant $c>0$ such that for large enough $g$ and $L$,
\begin{eqnarray*}
& &\int_\mathrm{Cond}  \frac{y}{48}(y^2 + 4\pi^2)e^{y/2} e^{\sum x_{i,j}} dy dx_{1,1}\cdots dx_{q,n_q} \\
& = & \int_0^L \frac{y}{48}(y^2 + 4\pi^2)e^{y/2}dy \\
& & \int_{\sum x_{i,j}\leq 2L, x_{i,j}\geq0}
e^{\sum x_{i,j}} dx_{1,1}\cdots dx_{q,n_q} \\
& \leq & c (1+L^3) e^{L/2} e^{2L}
\int_{\sum x_{i,j}\leq 2L, x_{i,j}\geq0} dx_{1,1}\cdots dx_{q,n_q} \\
& = & c (1+L^3) e^{\frac{5}{2}L} \frac{(2L)^{k}}{k!} .
\end{eqnarray*}
So we have
\begin{equation*}
\int_{\M_g} \hat{N}_{g_0,k}^{(g_1,n_1),\cdots,(g_q,n_q)}(X,L,2L) \leq c (1+L^3) e^{\frac{5}{2}L} \frac{(2L)^{k}}{k!n_1!\cdots n_q!} V_{g_0-1,k+1} V_{g_1,n_1}\cdots V_{g_q,n_q}.
\end{equation*}

\noindent Similar as in proof of Lemma \ref{sum E[N Gamma] for chi=m}, it follows by Lemma \ref{sum vol lemma} that
\begin{equation*}
\sum_{g_1,\cdots,g_q} V_{g_1,n_1}\cdots V_{g_q,n_q}
\leq c \big(\frac{D}{2g-2g_0-k}\big)^{q-1} W_{2g-2g_0-k}
\end{equation*}

Recall that for fixed $k$, we always have
\begin{equation*}
\sum_{n_1+..+n_q=k,\ n_i\geq0} \frac{k!}{n_1!...n_q!} = q^{k}.
\end{equation*}
So we have that for large enough $g>0$,
\begin{eqnarray*}
& & \sum_{(g_0,k)} \sum_q\sum_{n_1,\cdots,n_q}\sum_{g_1,\cdots,g_q} \int_{\M_g} \hat{N}_{g_0,k}^{(g_1,n_1),\cdots,(g_q,n_q)}(X,L,2L) \\
&\leq & \sum_{(g_0,k)} \sum_q c(1+L^3) e^{\frac{5}{2}L} (\frac{D}{g})^{q-1} \frac{(2L)^{k}}{k!}\frac{q^{k}}{k!} V_{g_0-1,k+1} W_{2g-2g_0-k} \\
& \leq & \sum_{(g_0,k)} \sum_q c(1+L^3) e^{\frac{5}{2}L} (\frac{D}{g})^{q-1} e^{2L} e^q V_{g_0-1,k+1} W_{2g-2g_0-k} \\
& \leq & \sum_{(g_0,k)}  c(1+L^3) e^{\frac{9}{2}L} V_{g_0-1,k+1} W_{2g-2g_0-k} \\
\end{eqnarray*}

\noindent Recall that Part $(1)$ of Lemma \ref{Wr-prop} tells that $V_{g,n}\leq c W_{2g-2+n}$ for a universal constant $c>0$. Then it follows by Part $(2)$ of Lemma \ref{Wr-prop} that there exist two constants $c'(m),c(m)>0$ only depending on $m$ such that
\begin{eqnarray*}
&&\sum_{m+1\leq |\chi(S_{g_0,k})|\leq g-1} \E[\hat{N}_{g_0,k}^{(g_1,n_1),\cdots,(g_q,n_q)}(X,L,2L)]\cdot V_g\\
&&\leq \sum_{k} \sum_{g_0:\ m+1\leq 2g_0-2+k\leq g-1} c (1+L^3) e^{\frac{9}{2}L} V_{g_0-1,k+1} W_{2g-2g_0-k}\\
&&\leq \sum_{k} \sum_{g_0:\ m+1\leq 2g_0-2+k\leq g-1} c (1+L^3) e^{\frac{9}{2}L} W_{2g_0-3+k} W_{2g-2g_0-k}\\
&&= \sum_{k} \sum_{g_0:\ m\leq 2g_0-3+k\leq g-2} c (1+L^3) e^{\frac{9}{2}L} W_{2g_0-3+k} W_{2g-2g_0-k}\\
&&\leq \sum_{k} c'(m) (1+L^3) e^{\frac{9}{2}L} \frac{1}{g^{m}} W_{2g-3}  \\
&&= \sum_{k} c'(m) (1+L^3) e^{\frac{9}{2}L} \frac{V_{g-1,1}}{g^{m}}   \\
&&\leq c(m) (1+L^3) e^{\frac{9}{2}L} \frac{1}{g^{m}} V_g
\end{eqnarray*}
where in the last inequality we apply the facts that $k\leq g-1$ and $V_{g}\asymp gV_{g-1,1}$ (see Part $(2)$ and $(3)$ of Lemma \ref{Mirz vol lemma 1}). That is,
\[\sum_{m+1\leq |\chi(S_{g_0,k})|\leq g-1} \E[\hat{N}_{g_0,k}^{(g_1,n_1),\cdots,(g_q,n_q)}(X,L,2L)] \leq c(m) (1+L^3) e^{\frac{9}{2}L} \frac{1}{g^{m}},\]
as required.
\end{proof}

Now we are ready to prove Proposition \ref{N-N*}.

\bp[Proof of Proposition \ref{N-N*}]
First since $\sN^*_{1,1}(X,L) \sbs \sN_{1,1}(X,L)$,
\begin{equation}\label{1 upper bound}
\E[N_{1,1}(X,L)] - \E[N^*_{1,1}(X,L)]\geq 0.
\end{equation} It suffices to show the other side. By Equation \eqref{N-N* leq sum N Gamma}, Lemma
 \ref{sum E[N Gamma] for chi=m} and \ref{sum E[N Gamma] for chi geq m} we have
\begin{eqnarray*}
&& \E[N_{1,1}(X,L)]-\E[N^*_{1,1}(X,L)] \\
&\leq& \sum \E[\hat{N}_{g_0,k}^{(g_1,n_1),\cdots,(g_q,n_q)}(X,L,2L)] \\
&=& \sum_{3\leq |\chi(S_{g_0,k})|\leq 100} \E[\hat{N}_{g_0,k}^{(g_1,n_1),\cdots,(g_q,n_q)}(X,L,2L)]\\
&+&\sum_{|\chi(S_{g_0,k})|>100} \E[\hat{N}_{g_0,k}^{(g_1,n_1),\cdots,(g_q,n_q)}(X,L,2L)]\\
&\leq& \sum_{m=3}^{100} c(m) (1+L^{3m-1}) e^L  \frac{1}{g^m} + c(100) (1+L^3) e^{\frac{9}{2}L}  \frac{1}{g^{100}}.
\end{eqnarray*}

\noindent Recall that $L=L(g)=2\log g -4\log \log g +\omega(g)$. As $g\to \infty$ we have that
$$\E[N_{1,1}(X,L)]-\E[N^*_{1,1}(X,L)] = O\left(\frac{(\log g)^4}{g} e^{\omega(g)}\right) \to 0 \ \text{as}\ g\to\infty.$$
Which together with \eqref{1 upper bound} imply that
$$\limg \left(\E[N_{1,1}(X,L)] - \E[N^*_{1,1}(X,L)]\right) =0.$$
By Lemma \ref{E[N]}, we know that $\limg \E[N_{1,1}(X,L)] = \infty$. So
$$\limg \frac{\E[N^*_{1,1}(X,L)]}{\E[N_{1,1}(X,L)]}=1.$$

For \eqref{tend to 0 (1)}, as shown above and by Lemma \ref{E[N]} we have
\begin{equation*}
\frac{1}{\E[N^*_{1,1}(X,L)]} \sim \frac{1}{\E[N_{1,1}(X,L)]} \sim \frac{1}{\frac{1}{384\pi^2} L^2 e^{\frac{L}{2}} \frac{1}{g} } =O(e^{-\frac{\omega(g)}{2}}) \rightarrow 0
\end{equation*}
as $g\rightarrow\infty$, which proves \eqref{tend to 0 (1)}.
\ep

\subsection{Proof of \eqref{tend to 0 (2)}}\label{subsec:proof of (B)}
In this subsection we show \eqref{tend to 0 (2)}, whose proof is similar to the one of \eqref{tend to 0 (1)}. First we define

\begin{definition}
\begin{equation*}
\mathcal Y(X,L):= \left\{ (\alpha,\beta)\in\sN_{1,1}(X,L)\times\sN_{1,1}(X,L) \,;\, \alpha\neq \beta, \alpha\cap\beta=\emptyset \right\}
\end{equation*}
and
\begin{equation*}
Y(X,L) := \#\mathcal Y(X,L) = \sum_{\alpha\neq \beta,\alpha\cap\beta=\emptyset} \mathbf 1_{\sN_{1,1}(X,L)}(\alpha) \mathbf 1_{\sN_{1,1}(X,L)}(\beta).
\end{equation*}
\end{definition}

\begin{lemma}\label{E[Y]}
As $g\to \infty$, we have
\begin{eqnarray*}
\E[Y(X,L)]
&=& \frac{1}{(384\pi^2)^2} L^4 e^L \tfrac{1}{g^2} \left(1+O(\tfrac{1}{L})\right) \left(1+O(\tfrac{L^2}{g})\right) \left(1+O\left(\tfrac{1}{g}\right)\right) \\
&=&\E[N_{1,1}(X,L)]^2 \left(1+O(\tfrac{1}{L})\right) \left(1+O(\tfrac{L^2}{g})\right) \left(1+O\left(\tfrac{1}{g}\right)\right),
\end{eqnarray*}
where the implied constants are independent of $L$ and $g$. As a consequence, for $L(g):=2\log g-4\log\log g+\omega(g)$, we have
$$
\lim_{g\to0}\big(\E[Y(X,L(g))]-\E[N_{1,1}(X,L(g))]^2\big)=0.
$$
\end{lemma}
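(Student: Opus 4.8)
The plan is to identify $Y(X,L)$ with the counting function $F^\Gamma$ of a single mapping-class-group orbit and run Mirzakhani's Integration Formula just as in the proof of Lemma~\ref{E[N]}. First I would settle the topology of the pairs being counted: if $\alpha\neq\beta$ are disjoint simple closed geodesics on $X\in\M_g$ (with $g\geq3$) each bounding a one-handle $X_\alpha,X_\beta$, then neither one-handle contains the other. Indeed, since $\alpha\cap\beta=\emptyset$, the curve $\beta$ lies either in $\mathring{X_\alpha}$ or in its complement; if $\beta\subset\mathring{X_\alpha}$ then $\beta$ is a non-peripheral simple closed curve in an $S_{1,1}$, hence non-separating there, so $X_\alpha\setminus\beta$ is connected and $\beta$ does not separate $X$, contradicting that $\beta$ bounds a one-handle. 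Hence $\beta$ lies outside $X_\alpha$, and the same argument with the roles of $\alpha,\beta$ reversed forces $X_\alpha$ to lie outside $X_\beta$; thus $X\setminus(\alpha\cup\beta)=X_\alpha\sqcup X_\beta\sqcup R$ with $X_\alpha,X_\beta$ of type $S_{1,1}$ and $R$ of type $S_{g-2,2}$. By the change-of-coordinates principle, all ordered pairs $(\alpha,\beta)$ counted by $Y(X,L)$ form a single orbit $\mathcal O_\Gamma$, $\Gamma=(\gamma_1,\gamma_2)$, so that $Y(X,L)=F^\Gamma(X)$ for $F(x_1,x_2)=\mathbf{1}_{[0,L]}(x_1)\mathbf{1}_{[0,L]}(x_2)$, with $M(\Gamma)=2$ and $V_g(\Gamma,(x,y))=V_{1,1}(x)\,V_{1,1}(y)\,V_{g-2,2}(x,y)$.

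Then Theorem~\ref{Mirz int formula} gives
\[
\int_{\M_g}Y(X,L)\,dX=\frac14\int_0^L\!\!\int_0^L V_{1,1}(x)\,V_{1,1}(y)\,V_{g-2,2}(x,y)\,xy\,dx\,dy.
\]
Using $V_{1,1}(x)=\frac1{24}(x^2+4\pi^2)$ from Theorem~\ref{Mirz vol lemma 0}, together with $V_{g-2,2}(x,y)=V_{g-2,2}\,\frac{\sinh(x/2)}{x/2}\,\frac{\sinh(y/2)}{y/2}\,(1+O(L^2/g))$ for $0\leq x,y\leq L$ from Lemma~\ref{MP vol lemma}, the double integral factorizes (up to the displayed error) into the square of the one-variable integral $\int_0^L\frac1{12}(x^2+4\pi^2)\sinh(x/2)\,dx=\frac1{12}L^2e^{L/2}(1+O(1/L))$, which is precisely the integral already evaluated in the proof of Lemma~\ref{E[N]}. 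Combining this with the volume ratio $\frac{V_{g-2,2}}{V_g}=\big(\frac{V_{g-1,1}}{V_g}\big)^2(1+O(1/g))=\frac1{64\pi^4 g^2}(1+O(1/g))$, which follows by iterating Part~$(3)$ of Lemma~\ref{Mirz vol lemma 1} (or Theorem~\ref{MZ vol thm}), yields the first displayed asymptotic for $\E[Y(X,L)]$; comparing it with the formula for $\E[N_{1,1}(X,L)]$ in Lemma~\ref{E[N]} gives the second.

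For the consequence, substitute $L=L(g)=2\log g-4\log\log g+\omega(g)$ and write $\E[Y(X,L)]-\E[N_{1,1}(X,L)]^2=\E[N_{1,1}(X,L)]^2\big(O(1/L)+O(L^2/g)+O(1/g)\big)$. By the remark following Lemma~\ref{E[N]} one has $\E[N_{1,1}(X,L)]^2=O\big((\log g)^\epsilon\big)$ for every $\epsilon>0$ (indeed it is $\asymp e^{\omega(g)}$), while $1/L\sim 1/(2\log g)$ and $L^2/g=O((\log g)^2/g)$; hence the product tends to $0$ as $g\to\infty$. The only genuinely delicate point in this scheme is the first paragraph: one must check that the pairs counted by $Y$ exhaust exactly one topological type and that no stray symmetry factor enters the identification $Y(X,L)=F^\Gamma(X)$ --- here the ordered nature of $Y$ makes $|\Sym|$ invisible and $2^{-M(\Gamma)}=\tfrac14$ is the only combinatorial constant --- after which everything is the same bookkeeping as in Lemma~\ref{E[N]}.
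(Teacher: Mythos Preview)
Your proposal is correct and follows essentially the same route as the paper: identify the pairs as a single $\Mod_g$-orbit (with $M(\Gamma)=2$, no symmetry factor since the pair is ordered), apply Theorem~\ref{Mirz int formula}, plug in $V_{1,1}(x)=\tfrac{1}{24}(x^2+4\pi^2)$ and the two-variable estimate of Lemma~\ref{MP vol lemma} for $V_{g-2,2}(x,y)$, and use Part~(3) of Lemma~\ref{Mirz vol lemma 1} for $V_{g-2,2}/V_g$. If anything, you are more careful than the paper in two places: you spell out why two disjoint one-handle-bounding curves necessarily cut off \emph{disjoint} one-handles (so the complement is a single $S_{g-2,2}$), and you actually verify the ``consequence'' about the difference tending to $0$, which the paper's own proof block does not address explicitly.
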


\begin{proof}
By Mirzakhani's Integration Formula (see Theorem \ref{Mirz int formula}),
\begin{eqnarray*}
&&\int_{\M_g}Y(X,L) dX \\
&=& 2^{-M}\int_{\R_{\geq 0}^2} \mathbf 1_{[0,L]}(x) \mathbf 1_{[0,L]}(y) V_{1,1}(x)V_{1,1}(y)V_{g-2,2}(x,y)xydxdy \\
&=& \frac{1}{4} \int_{[0,L]^2} \frac{1}{48}x(x^2 + 4\pi^2) \frac{1}{48}y(y^2 + 4\pi^2) V_{g-2,2}(x,y)dxdy.
\end{eqnarray*}

\noindent By Lemma \ref{Mirz vol lemma 1} we know that
\[\frac{V_{g-2,2}}{V_g}=\frac{1}{(8\pi^2 g)^2}\left(1+O\left(\frac{1}{g}\right)\right).\]
Thus, it follows by Lemma \ref{MP vol lemma} that
\begin{eqnarray*}
V_{g-2,2}(x,y)
&=& \frac{\sinh(x/2)}{x/2} \frac{\sinh(y/2)}{y/2} V_{g-2,2} \big(1+O(\frac{L^2}{g})\big) \\
&=& \frac{\sinh(x/2)}{x/2} \frac{\sinh(y/2)}{y/2} \frac{1}{64\pi^4 g^2} V_{g} \big(1+O(\frac{L^2}{g})\big) \big(1+O\left(\frac{1}{g}\right)\big).
\end{eqnarray*}
So we have
\begin{equation*}
\E[Y(X,L)]
= \frac{1}{(384\pi^2)^2} L^4 e^L \frac{1}{g^2} \big(1+O(\frac{1}{L})\big) \big(1+O(\frac{L^2}{g})\big) \big(1+O\left(\frac{1}{g}\right)\big).
\end{equation*}
By Lemma \ref{E[N]} we have
\begin{equation*}
\E[Y(X,L)]= \E[N_{1,1}(X,L)]^2 \big(1+O(\frac{1}{L})\big) \big(1+O(\frac{L^2}{g})\big) \big(1+O\left(\frac{1}{g}\right)\big),
\end{equation*}
as required.
\end{proof}

Recall that $L=L(g)=2\log g -4\log \log g +\omega(g)$. Lemma \ref{E[Y]} implies that as $g\rightarrow\infty$,
\begin{equation}\label{Y-X/X}
\frac{\E[Y(X,L)] - \E[N_{1,1}(X,L)]^2}{\E[N_{1,1}(X,L)]^2} = O\big(\frac{1}{L}+\frac{L^2}{g}+\frac{1}{g} \big) \rightarrow 0.
\end{equation}

We will show that $\E[Y^*]$ is an approximation of $\E[Y]$. More precisely,
\begin{proposition}\label{E[Y]-E[Y*]}
With the notations as above, we have
\begin{equation*}
\limg (\E[Y(X,L)] - \E[Y^*(X,L)])=0
\end{equation*}
Moreover, Equation \eqref{tend to 0 (2)} holds.
\end{proposition}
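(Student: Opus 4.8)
The plan is to deduce \eqref{tend to 0 (2)} from the single estimate $\limg\big(\E[Y(X,L)]-\E[Y^*(X,L)]\big)=0$, and to prove that estimate by the method of Proposition~\ref{N-N*}. For the reduction, recall from Proposition~\ref{N-N*} that $\E[N^*_{1,1}(X,L)]=\E[N_{1,1}(X,L)]-o(1)$, and from Lemma~\ref{E[N]} that this tends to $+\infty$; recall also from Lemma~\ref{E[Y]} that $\E[Y(X,L)]/\E[N_{1,1}(X,L)]^2\to1$. Hence, once $\E[Y]-\E[Y^*]=o(1)$ is known,
\[
\frac{\E[Y^*(X,L)]}{\E[N^*_{1,1}(X,L)]^2}=\frac{\E[Y(X,L)]-o(1)}{\big(\E[N_{1,1}(X,L)]-o(1)\big)^2}\longrightarrow 1,
\]
using $\E[N_{1,1}(X,L)]^2\to\infty$; subtracting $1$ gives \eqref{tend to 0 (2)}. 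So it remains to bound $\E[Y-Y^*]\ge0$ from above.

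Since $\mathcal Y^*(X,L)\subset\mathcal Y(X,L)$ and $\mathcal Y(X,L)$ is invariant under $(\alpha,\beta)\mapsto(\beta,\alpha)$, any pair in $\mathcal Y\setminus\mathcal Y^*$ has an entry outside $\sN^*_{1,1}(X,L)$, so
\[
Y(X,L)-Y^*(X,L)\le 2\,\#\big\{(\alpha,\beta)\in\mathcal Y(X,L)\ ;\ \alpha\notin\sN^*_{1,1}(X,L)\big\}.
\]
For each such $\alpha$, Lemma~\ref{area U small} provides $\gamma\in\sN_{1,1}(X,L)$ with $\gamma\neq\alpha$, $\gamma\cap\alpha\neq\emptyset$ and $W:=X_{\alpha\gamma}$ of type $S_{g_0,k}$ with $g_0\ge1$ and $3\le 2g_0-2+k\le\tfrac1\pi L+2$; moreover $\ell(\partial W)\le\ell(\alpha)+\ell(\gamma)\le 2L$ and, by Lemma~\ref{alpha sbs U}, $\alpha$ lies in the interior of $W$ and cuts a one-handle off it. We split according to the position of $X_\beta$:
\begin{itemize}
\item[(i)] $X_\beta\cap W=\emptyset$: then $\beta$ cuts a one-handle off a component of $X\setminus W$, and $\partial W\cup\alpha\cup\beta$ is a simple closed multi-curve;
\item[(ii)] $X_\beta\subset W$: then $\beta$ cuts a one-handle off $W$ disjoint from $X_\alpha$, which forces $g_0\ge2$, and again $\partial W\cup\alpha\cup\beta$ is a multi-curve;
\item[(iii)] $X_\beta$ and $W$ overlap with neither containing the other (the case $W\subset X_\beta$ being impossible since $|\chi(W)|\ge3>1$): here we re-apply the construction of Section~\ref{section union} with $X_1=W$, $X_2=X_\beta$, obtaining a subsurface $U$ with geodesic boundary satisfying $|\chi(U)|\le|\chi(W)|+1+\tfrac{\ell(\partial W)+\ell(\beta)}{2\pi}=O(\log g)$, $\ell(\partial U)\le 3L$, and $W\cup X_\beta\subset U$; then $\alpha$ still cuts a one-handle off $U$, $\beta$ is non-peripheral in $U$, and $\partial U\cup\alpha\cup\beta$ is a simple closed multi-curve, possibly after subdividing into finitely many topological types according to the decomposition of $U\setminus(X_\alpha\cup X_\beta)$.
\end{itemize}

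In every case $\#\{(\alpha,\beta)\in\mathcal Y:\alpha\notin\sN^*_{1,1}\}$ has been bounded by a finite sum, over topological types within each range of $|\chi|$, of counting functions of simple closed multi-curves of the shape $\partial(\text{subsurface})\cup\alpha\cup\beta$, with all length parameters $\le3L$ and with the distinguished subsurface having $|\chi|\ge3$. Integrating each by Mirzakhani's formula (Theorem~\ref{Mirz int formula}) and estimating as in Lemmas~\ref{sum E[N Gamma] for chi=m} and~\ref{sum E[N Gamma] for chi geq m} --- using $V_{1,1}(x)=\tfrac1{24}(x^2+4\pi^2)$, the volume bounds of Lemmas~\ref{MP vol lemma} and~\ref{Mirz vol lemma 1}, and the summation estimates Proposition~\ref{1 over gm}, Lemma~\ref{sum vol lemma} and Lemma~\ref{Wr-prop} for the complementary pieces --- one sees that, relative to the bound obtained in the proof of Proposition~\ref{N-N*}, the extra curve $\beta$ costs at most an additional factor $O(L^2e^{L/2}/g)=O\big((\log g)^{o(1)}\big)$ when it lies in a complementary piece and at most $O(L^{O(1)}/g)$ when it lies inside the distinguished subsurface, while the constraint $|\chi|\ge3$ always yields a $g^{-3}$ volume saving. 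Thus the whole sum is $O\big(L^{O(1)}e^{O(\omega(g))}/g\big)\to0$, so $\E[Y-Y^*]\to0$ and Proposition~\ref{E[Y]-E[Y*]} follows.

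I expect case~(iii) to be the main obstacle: the configurations in which $\beta$ genuinely crosses $\partial X_{\alpha\gamma}$ must be absorbed into a single subsurface by re-applying the union construction of Section~\ref{section union}, and one has to check that the resulting subsurface keeps Euler characteristic and boundary length of order $\log g$, so that the Weil--Petersson volume estimates of Section~\ref{section wp volume} still apply with a genuine saving.
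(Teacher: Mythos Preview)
Your proposal is correct and follows essentially the same route as the paper: the same reduction of \eqref{tend to 0 (2)} to $\E[Y]-\E[Y^*]\to0$, the same three-case split according to whether $X_\beta$ is contained in, disjoint from, or overlaps $X_{\alpha\alpha'}$ (your (ii), (i), (iii) are the paper's Cases~1, 2, 3), and the same integration-formula bounds modeled on Lemmas~\ref{sum E[N Gamma] for chi=m} and~\ref{sum E[N Gamma] for chi geq m}. The paper is only slightly more explicit: it names the two counting functions $\dot N$ and $\ddot N$, observes in Cases~1 and~3 that both $\alpha$ and $\beta$ cut off disjoint one-handles from the ambient subsurface (forcing genus $\ge2$, and in Case~3 giving $|\chi|\ge4$ rather than $\ge3$), and records the exact bounds $\E[Q_i]=O\big((\log g)^{O(1)}e^{O(\omega(g))}/g\big)$; your heuristic ``extra factor $O(L^2e^{L/2}/g)$'' captures this correctly.
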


\begin{proof}
First by definition of $Y$ and $Y^*$ we know that
\begin{equation*}
Y^*(X,L) \leq Y(X,L).
\end{equation*}
So we have
\begin{equation}\label{1-Y upper bound}
\E[Y(X,L)] - \E[Y^*(X,L)] \geq 0.
\end{equation}
It suffices to show the other side. The proof is similar to the proof of Proposition \ref{N-N*}.

For any ordered pair $(\alpha,\beta) \in \mathcal Y(X,L)\setminus \mathcal Y^*(X,L)$, we have $\alpha\in\sN_{1,1}(X,L) \setminus \sN^*_{1,1}(X,L)$ or $\beta\in\sN_{1,1}(X,L) \setminus \sN^*_{1,1}(X,L)$. Without loss of generality we assume $\alpha\in\sN_{1,1}(X,L) \setminus \sN^*_{1,1}(X,L)$. Then by definition of $\sN_{1,1}(X,L)$ and $\sN^*_{1,1}(X,L)$, it follows by Lemma \ref{area U small} that there exists a simple closed geodesic $\alpha'\in \sN_{1,1}(X,L)$ with $\alpha'\neq\alpha$ such that 
$$\alpha\cap\alpha' \neq \emptyset \quad \text{and} \quad |\chi(X_{\alpha\alpha'})| \geq 3.$$ 
The relation between $X_\beta$ and $X_{\alpha\alpha'}$ can be divided into the following three cases. (see Figure \ref{figure:cases}.)
\begin{figure}[h]
	\includegraphics[width=12cm]{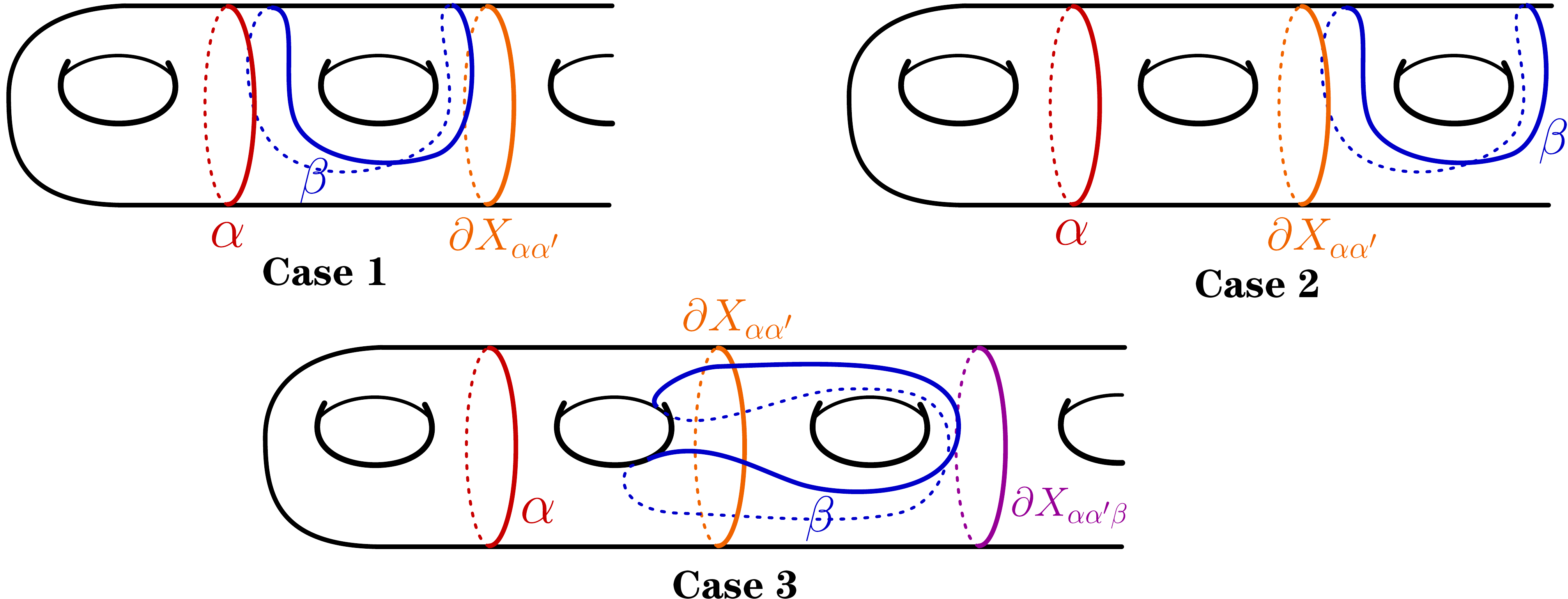}
	\caption{Relation between $X_\beta$ and $X_{\alpha\alpha'}$ in the three cases.}
	\label{figure:cases}
\end{figure}

\textbf{Case 1. $X_\beta \sbs X_{\alpha\alpha'}$.} \\
For this case we have $$\beta \cap \partial X_{\alpha\alpha'} = \emptyset.$$ ($\beta$ won't be part of $\partial X_{\alpha\alpha'}$ since $X_\beta$ is a one-holed torus and $X_{\alpha\alpha'}$ is not.) So $\alpha,\beta$ and $\partial X_{\alpha\alpha'}$ are pairwisely disjoint. Assume $X_{\alpha\alpha'}$ is of type $S_{g_0,k}$. Note that $X_\alpha,X_\beta$ are two disjoint one-holed torus in $X_{\alpha\alpha'}$, so $g_0\geq 2$. By Lemma \ref{area U small}, we have
\begin{equation*}
3 \leq |\chi(X_{\alpha\alpha'})| \leq g-1,
\end{equation*}
and 
\begin{equation*}
\ell(\alpha)\leq L, \ \ell(\beta)\leq L,\ \ell(\partial X_{\alpha\alpha'}) \leq 2L.
\end{equation*}
Similar to what we have done in the proof of Proposition \ref{N-N*}, we define a counting function as follows:
\begin{definition}
	Define the counting function  $\dot{N}_{g_0,n_0}^{(g_1,n_1),\cdots,(g_q,n_q)}(X,L_1,L_2,L_3)$ to be the number of pairs $(\gamma_1,\gamma_2,\gamma_3)$ satisfying 
	\begin{itemize}
		\item $\gamma_3$ is a simple closed multi-geodesics in $X$ consisting of $n_0$ geodesics that split off an $S_{g_0,n_0}$ from $X$ and the complement $X\setminus S_{g_0,n_0}$ consist of $q$ components $S_{g_1,n_1},\cdots,S_{g_q,n_q}$ for some $q\geq 1$;
		\item $\gamma_1$ and $\gamma_2$ are two disjoint simple closed geodesics in that $S_{g_0,n_0}$, and split off two disjoint one-holed torus in that $S_{g_0,n_0}$;
		\item $\ell(\gamma_1)\leq L_1$, $\ell(\gamma_2)\leq L_2$, $\ell(\gamma_3)\leq L_3$.
	\end{itemize}
\end{definition}

Since the map
\begin{equation*}
(\alpha,\beta) \mapsto (\alpha,\beta,\partial X_{\alpha\alpha'})
\end{equation*}
is injective, the number of pairs $(\alpha,\beta) \in \mathcal Y(X,L)\setminus \mathcal Y^*(X,L)$ satisfying Case $1$ is bounded from above by
\begin{equation}\label{Y-Y* case 1 leq}
Q_1 := \sum \dot{N}_{g_0,k}^{(g_1,n_1),\cdots,(g_q,n_q)}(X,L,L,2L)
\end{equation}
where the summation takes over all possible $(g_0,k)$, $q\geq 1$ and $(g_1,n_1),\cdots,(g_q,n_q)$ such that 
\begin{itemize}
	\item $g_0\geq 2$, $3\leq 2g_0-2+k \leq g-1$;
	\item $n_i\geq 1$, $2g_i-2+n_i \geq 1$, $\forall 1\leq i\leq q$;
	\item $n_1+\cdots+n_q = k$, $g_0+g_1+\cdots+g_q + k-q =g$.
\end{itemize}

\textbf{Case 2: $X_\beta \cap X_{\alpha\alpha'} = \emptyset$.}\\
For this case we have that $\alpha,\beta$ and $\partial X_{\alpha\alpha'}$ are pairwisely disjoint. Assume $X_{\alpha\alpha'}$ is of type $S_{g_0,k}$. By Lemma \ref{area U small}, we have
\begin{equation*}
g_0\geq 1,\ \ 3 \leq |\chi(X_{\alpha\alpha'})| \leq g-1,
\end{equation*}
and 
\begin{equation*}
\ell(\alpha)\leq L, \ \ell(\beta)\leq L,\ \ell(\partial X_{\alpha\alpha'}) \leq 2L.
\end{equation*}
Similar to what we have done in the proof of Proposition \ref{N-N*}, we define a counting function as follows:
\begin{definition}
	Define the counting function  $\ddot{N}_{g_0,n_0}^{(g_1,n_1),\cdots,(g_q,n_q)}(X,L_1,L_2,L_3)$ to be the number of pairs $(\gamma_1,\gamma_2,\gamma_3)$ satisfying 
	\begin{itemize}
		\item $\gamma_3$ is a simple closed multi-geodesics in $X$ consisting of $n_0$ geodesics that split off an $S_{g_0,n_0}$ from $X$ and the complement $X\setminus S_{g_0,n_0}$ consist of $q$ components $S_{g_1,n_1},\cdots,S_{g_q,n_q}$ for some $q\geq 1$;
		\item $\gamma_1$ is a simple closed geodesic in that $S_{g_0,n_0}$, and splits off a one-holed torus in that $S_{g_0,n_0}$;
		\item $\gamma_2$ is a simple closed geodesic in that $S_{g_1,n_1}$, and splits off a one-holed torus in that $S_{g_1,n_1}$;
		\item $\ell(\gamma_1)\leq L_1$, $\ell(\gamma_2)\leq L_2$, $\ell(\gamma_3)\leq L_3$.
	\end{itemize}
\end{definition}

Since the map
\begin{equation*}
(\alpha,\beta) \mapsto (\alpha,\beta,\partial X_{\alpha\alpha'})
\end{equation*}
is injective, the number of pairs $(\alpha,\beta) \in \mathcal Y(X,L)\setminus \mathcal Y^*(X,L)$ satisfying Case $2$ is bounded from above by
\begin{equation}\label{Y-Y* case 2 leq}
Q_2 := \sum \ddot{N}_{g_0,k}^{(g_1,n_1),\cdots,(g_q,n_q)}(X,L,L,2L)
\end{equation}
where the summation takes over all possible $(g_0,k)$, $q\geq 1$ and $(g_1,n_1),\cdots,(g_q,n_q)$ such that 
\begin{itemize}
	\item $g_0\geq 1$, $3\leq 2g_0-2+k \leq g-1$;
	\item $n_i\geq 1$, $2g_i-2+n_i \geq 1$, $\forall 1\leq i\leq q$;
	\item $n_1+\cdots+n_q = k$, $g_0+g_1+\cdots+g_q + k-q =g$;
	\item $g_1\geq 1$.
\end{itemize}

\textbf{Case 3: $X_\beta \cap X_{\alpha\alpha'} \neq \emptyset$ and $X_\beta \nsubseteq X_{\alpha\alpha'}$.}\\
For this case we have that $\beta$ and $\partial X_{\alpha\alpha'}$ are not disjoint. We consider the subsurface with geodesic boundary $X_{\alpha\alpha'\beta}\subset X$ constructed from $X_{\alpha\alpha'}$ and $X_\beta$ in the way described in Section \ref{section union} (\ie $X_1=X_{\alpha\alpha'}$, $X_2=X_\beta$ and $X_3=X_{\alpha\alpha'\beta}$ in the notation of Section \ref{section union}). Then $\alpha,\beta$ and $\partial X_{\alpha\alpha'\beta}$ are pairwisely disjoint. Assume $X_{\alpha\alpha'\beta}$ is of type $S_{g_0,k}$. Note that $X_\alpha,X_\beta$ are two disjoint one-holed torus in $X_{\alpha\alpha'\beta}$, so $g_0\geq 2$. Recall that $L=L(g)=2\log g -4\log \log g +\omega(g)$. Thus, by Lemma \ref{area U small} we have that for large enough $g>0$,
\begin{equation*}
3 \leq |\chi(X_{\alpha\alpha'})| \leq \tfrac{1}{2}g,
\end{equation*}
and 
\begin{equation*}
\ell(\partial X_{\alpha\alpha'}) \leq 2L.
\end{equation*}
Then again by Lemma \ref{lemma chiU12}, we have for large enough $g>0$,
\begin{equation*}
4 \leq |\chi(X_{\alpha\alpha'\beta})| \leq g-1.
\end{equation*}
and 
\begin{equation*}
\ell(\alpha)\leq L, \ \ell(\beta)\leq L,\ \ell(\partial X_{\alpha\alpha'\beta}) \leq 3L.
\end{equation*}

Since the map
\begin{equation*}
(\alpha,\beta) \mapsto (\alpha,\beta,\partial X_{\alpha\alpha'})
\end{equation*}
is injective, the number of $(\alpha,\beta) \in \mathcal Y(X,L)\setminus \mathcal Y^*(X,L)$ satisfying Case $3$ is bounded from above by
\begin{equation}\label{Y-Y* case 3 leq}
Q_3 := \sum \dot{N}_{g_0,k}^{(g_1,n_1),\cdots,(g_q,n_q)}(X,L,L,3L)
\end{equation}
where the summation takes over all possible $(g_0,k)$, $q\geq 1$ and $(g_1,n_1),\cdots,(g_q,n_q)$ such that 
\begin{itemize}
	\item $g_0\geq 2$, $4\leq 2g_0-2+k \leq g-1$;
	\item $n_i\geq 1$, $2g_i-2+n_i \geq 1$, $\forall 1\leq i\leq q$;
	\item $n_1+\cdots+n_q = k$, $g_0+g_1+\cdots+g_q + k-q =g$.
\end{itemize}

Then by the discussion above, we have 
\begin{equation}\label{Y-Y* leq}
Y(X,L)-Y^*(X,L) \leq 2(Q_1+Q_2+Q_3)
\end{equation}
where the coefficient $2$ comes from the reason that we have assumed $\alpha\in \sN_{1,1}(X,L)\setminus\sN^*_{1,1}(X,L)$; indeed if $\beta\in \sN_{1,1}(X,L)\setminus\sN^*_{1,1}(X,L)$ and $\alpha \in \sN^*_{1,1}(X,L)$ one may have the same upper bound $(Q_1+Q_2+Q_3)$. \\

Then we have the following estimations of $\E[Q_1], \E[Q_2]$ and $\E[Q_3]$, whose proofs are exactly the same as the proofs of Lemma \ref{sum E[N Gamma] for chi=m} and \ref{sum E[N Gamma] for chi geq m}, and we omit the details. For $L=L(g)=2\log g -4\log \log g +\omega(g)$, we have that as $g\to \infty$, 
\begin{equation}\label{E[Q1]}
\E[Q_1] \leq c L^{8} e^L \frac{1}{g^3} = O(\frac{(\log g)^4}{g}e^{\omega(g)}) \to 0,
\end{equation}
\begin{equation}\label{E[Q2]}
\E[Q_2] \leq c L^{10} e^\frac{3L}{2} \frac{1}{g^4} = O(\frac{(\log g)^4}{g}e^{\frac{3}{2}\omega(g)}) \to 0,
\end{equation}
and
\begin{equation}\label{E[Q3]}
\E[Q_3] \leq c L^{11} e^\frac{3L}{2} \frac{1}{g^4} = O(\frac{(\log g)^5}{g}e^{\frac{3}{2}\omega(g)}) \to 0.
\end{equation}

Therefore we have that as $g\to \infty$, 
\begin{equation}\label{Y-Y*-0}
0\leq \E[Y(X,L)]-\E[Y^*(X,L)] = O(\frac{(\log g)^5}{g}e^{\frac{3}{2}\omega(g)}) \to 0.
\end{equation}

For \eqref{tend to 0 (2)}, first we rewrite
\begin{eqnarray*}
&& \frac{\E[Y^*(X,L)] - \E[N^*_{1,1}(X,L)]^2}{\E[N^*_{1,1}(X,L)]^2} \\
&&= \frac{\E[N_{1,1}(X,L)]^2}{\E[N^*_{1,1}(X,L)]^2} \times \frac{\E[Y(X,L)]}{\E[N_{1,1}(X,L)]^2}\\
&&\times \big( 1-\frac{\E[Y(X,L)]-\E[Y^*(X,L)]}{\E[Y(X,L)]} \big) -1.
\end{eqnarray*}

\noindent As $g\to \infty$, it follows by Lemma \ref{E[N]}, \ref{N-N*} and \ref{E[Y]} that
\[\lim \limits_{g\to\infty}\frac{\E[N_{1,1}(X,L)]^2}{\E[N^*_{1,1}(X,L)]^2}=1 \ \text{and} \ \lim \limits_{g\to\infty} \frac{\E[Y(X,L)]}{\E[N_{1,1}(X,L)]^2}=1.\]
Which together with \eqref{Y-Y*-0} imply that
\begin{eqnarray*}
\lim \limits_{g\to \infty} \frac{\E[Y^*(X,L)] - \E[N^*_{1,1}(X,L)]^2}{\E[N^*_{1,1}(X,L)]^2}=0.
\end{eqnarray*}

The proof of \eqref{tend to 0 (2)} is complete.
\end{proof}

\subsection{Proof of \eqref{tend to 0 (3)}} \label{proof of third tend to 0}
In this subsection we show \eqref{tend to 0 (3)}, where we will apply Mirzakhani's generalized McShane identity for certain counting problem for $S_{0,4}$ and $S_{1,2}$.
Our main result in the subsection is as follows.
\begin{proposition}\label{E[Z]/E[N]}
With the notations as above, there exists a universal constant $c>0$ such that
\begin{equation*}
\E[Z^*(X,L)]\leq c L^3 e^L \frac{1}{g^2}.
\end{equation*}
Moreover, Equation \eqref{tend to 0 (3)} holds.
\end{proposition}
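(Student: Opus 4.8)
The plan is to reduce, via the geometric constructions of Section~\ref{section union} and Mirzakhani's generalized McShane identity, the estimate of $\E[Z^*(X,L)]$ to an application of Mirzakhani's Integration Formula for a three-component multicurve. Fix $(\alpha,\beta)\in\mathcal Z^*(X,L)$. By the definition of $\sN^*_{1,1}(X,L)$ the subsurface $Y:=X_{\alpha\beta}$ is of type $S_{1,2}$, and by Lemma~\ref{alpha sbs U} both one-handles $X_\alpha,X_\beta$ lie inside $Y$; by Lemma~\ref{lemma:4holded} together with Remark~\ref{remark:topology} there is a unique simple closed geodesic $\delta\subset\mathring{X_\alpha}\cap\mathring{X_\beta}$ cutting $Y$ into a four-holed sphere. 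Writing $\gamma_1,\gamma_2$ for the two boundary geodesics of $Y$, the fact that $\alpha\cup\beta$ is homotopic to $\partial X_{\alpha\beta}\cup2\delta$ can be read, arc by arc, as the length inequality
\[
\ell(\gamma_1)+\ell(\gamma_2)+2\ell(\delta)\le\ell(\alpha)+\ell(\beta)\le 2L .
\]
So $(\alpha,\beta)$ determines a simple closed multicurve $\{\gamma_1,\gamma_2,\delta\}$ of $X$ separating off an $S_{0,4}$ — whose complement is an $S_{g-2,2}$, except for a lower-order case in which $X\setminus(\gamma_1\cup\gamma_2)$ is disconnected — lying in a fixed $\Mod_g$-orbit and satisfying $\ell(\gamma_1)+\ell(\gamma_2)+2\ell(\delta)\le 2L$.

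The second and more delicate point is that the assignment $(\alpha,\beta)\mapsto\{\gamma_1,\gamma_2,\delta\}$ is boundedly many to one on $\mathcal Z^*(X,L)$. Indeed, for a fixed $Y$ the one-handle-bounding geodesics of $Y$ that contain $\delta$ form a sequence obtained from one another by powers of a Dehn twist, and $|\chi(X_{\alpha'\alpha''})|$ grows with the number of twists separating two of them (Lemmas~\ref{lemma chiU12} and~\ref{area U small}); once this number exceeds an absolute constant the Euler characteristic is $\ge3$, so the two geodesics cannot both belong to $\sN^*_{1,1}(X,L)$ by the very definition of that set (they would be intersecting members of $\sN_{1,1}(X,L)$ with $X_{\alpha'\alpha''}$ not of type $S_{1,2}$). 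Hence only boundedly many of them survive, and each pair in $\mathcal Z^*(X,L)$ with the prescribed configuration uses two of them; the same count also follows from the $\mathcal R$-sum in Theorem~\ref{McShane id}, applied either to $Y\cong S_{1,2}$ or to $Y\setminus\delta\cong S_{0,4}$, together with the monotonicity and the $x/\mathcal R$-bounds of Lemma~\ref{estimation R,D}. Consequently
\[
Z^*(X,L)\le C\cdot\#\big\{\{\gamma_1,\gamma_2,\delta\}\ ;\ \text{separates off }S_{0,4},\ \ell(\gamma_1)+\ell(\gamma_2)+2\ell(\delta)\le 2L\big\}.
\]

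Now I would apply Mirzakhani's Integration Formula (Theorem~\ref{Mirz int formula}) to the $\Mod_g$-orbit of $\{\gamma_1,\gamma_2,\delta\}$. Since cutting along it produces an $S_{0,4}$ with boundary lengths $(x_1,x_2,x_3,x_3)$ and an $S_{g-2,2}$ with boundary lengths $(x_1,x_2)$, the integrand is $V_{0,4}(x_1,x_2,x_3,x_3)\,V_{g-2,2}(x_1,x_2)\,x_1x_2x_3$. By Theorem~\ref{Mirz vol lemma 0}, $V_{0,4}$ is an explicit degree-two polynomial, so $O(1+x_1^2+x_2^2+x_3^2)$; by Lemma~\ref{Mirz vol lemma 1} (or the sharper Lemma~\ref{MP vol lemma}), $V_{g-2,2}(x_1,x_2)\,x_1x_2\le e^{(x_1+x_2)/2}V_{g-2,2}$; and $V_{g-2,2}/V_g\asymp g^{-2}$ by Lemma~\ref{Mirz vol lemma 1}. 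Thus $\E[Z^*(X,L)]$ is bounded by a constant times $g^{-2}$ times the elementary integral of a degree-two polynomial against $e^{(x_1+x_2)/2}$ over the simplex $\{x_1,x_2,x_3\ge0,\ x_1+x_2+2x_3\le 2L\}$; integrating first in $x_3$ and then in $x_1+x_2$ shows this is $O(L^3e^{L})$, so $\E[Z^*(X,L)]\le cL^3e^{L}/g^2$. The disconnected-complement case contributes a strictly smaller-order term by the same volume estimates (Lemma~\ref{Mirz vol lemma 2}). Finally, Lemma~\ref{E[N]} and Proposition~\ref{N-N*} give $\E[N^*_{1,1}(X,L)]\sim\frac1{192\pi^2}L^2e^{L/2}g^{-1}$, hence
\[
\frac{\E[Z^*(X,L)]}{\E[N^*_{1,1}(X,L)]^2}\ \lesssim\ \frac{1}{L}\ \longrightarrow\ 0\qquad(g\to\infty),
\]
since $L=L(g)\to\infty$, which is precisely \eqref{tend to 0 (3)}.

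I expect the main obstacle to be the bounded-multiplicity statement together with keeping the power of $L$ at $3$: a naive bound of $Z^*(X,L)$ by the square of the number of one-handle geodesics inside each $S_{1,2}$-piece, or a careless use of the volume estimates and of the range of $\ell(\delta)$, only yields $O(L^{k}e^{L}g^{-2})$ with $k$ well above $4$, which does not beat $\E[N^*_{1,1}(X,L)]^2\asymp L^4e^{L}g^{-2}$; the gain comes from genuinely using both the length constraint $\ell(\partial X_{\alpha\beta})+2\ell(\delta)\le 2L$ and the fact that the defining ``almost disjointness'' of $\sN^*_{1,1}(X,L)$ limits, to boundedly many, the pairs producing a given configuration.
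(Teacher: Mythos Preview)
Your reduction to a four-holed sphere configuration is in the right spirit, but the argument has a genuine gap at the bounded-multiplicity step. You claim that for fixed $\{\gamma_1,\gamma_2,\delta\}$ only boundedly many $\alpha\in\sN^*_{1,1}(X,L)$ can bound a one-handle containing $\delta$, arguing that two such curves $\alpha',\alpha''$ differing by many Dehn twists satisfy $|\chi(X_{\alpha'\alpha''})|\ge3$. This is false: the remark following Definition~\ref{def U} (and the first example in Figure~\ref{figure_examples}) says explicitly that when $\alpha',\alpha''$ are related by Dehn twists along a curve in the common one-handle, $X_{\alpha'\alpha''}$ is \emph{always} of type $S_{1,2}$, regardless of the number of twists. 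Hence arbitrarily many such $\alpha'$ can coexist in $\sN^*_{1,1}(X,L)$; the definition of $\sN^*_{1,1}$ does not exclude them. Your fallback to the McShane $\mathcal R$-bound gives only $\#_{0,4}(L)\le\ell(\gamma_1)/\mathcal R(\ell(\gamma_1),\ell(\gamma_2),L)$, which by Lemma~\ref{estimation R,D}(3) is of order $(1+\ell(\gamma_1))e^{(L-\ell(\gamma_1)-\ell(\gamma_2))/2}$ when $\ell(\gamma_1)+\ell(\gamma_2)$ is small --- certainly not a universal constant. (You also invoke Remark~\ref{remark:topology} to get $\delta$ without the hypothesis $\ell(\partial X_{\alpha\beta})\ge\tfrac{5}{3}L$; the paper states but does not prove that version.)

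The paper circumvents exactly this difficulty by splitting $\mathcal Z^*(X,L)$ according to whether $\ell(\partial X_{\alpha\beta})\le 1.9L$ or $>1.9L$. In the long-boundary regime $Z_2^*$ one has $\ell(\gamma_1)+\ell(\gamma_2)>L$, so the second estimate in Lemma~\ref{estimation R,D}(3) gives $\ell(\gamma_1)/\mathcal R\le 2000$, a genuine uniform bound; combined with Lemma~\ref{lemma:4holded} (now legitimately applicable) and the constraint $\ell(\gamma_1)+\ell(\gamma_2)+2\ell(\delta)\le 2L$, Mirzakhani's integration formula yields $\E[Z_2^*]\le cL^3e^L/g^2$. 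In the short-boundary regime $Z_1^*$ the multiplicity is \emph{not} bounded, but one integrates the $\mathcal R$-weight directly (without introducing $\delta$), and the constraint $\ell(\gamma_1)+\ell(\gamma_2)\le 1.9L$ produces only $e^{0.95L}$, giving $\E[Z_1^*]\le cL^6e^{0.95L}/g^2$, which is lower order. The dichotomy is not cosmetic: it is precisely what trades off the unbounded multiplicity against a smaller exponential.
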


Consider an ordered pair $(\alpha,\beta)\in \mathcal Z^*(X,L)$, that is, $\alpha,\beta\in \sN^*_{1,1}(X,L)$ with $\alpha\neq \beta$ and $\alpha\cap\beta\neq\emptyset$. By definition of $\sN^*_{1,1}$, we know that $X_{\alpha\beta}$ is of type $S_{1,2}$. Unfortunately, one can not apply Mirzakhani's Integration Formula (see Theorem \ref{Mirz int formula}) to the pair $(\alpha,\beta,\partial X_{\alpha\beta})$ because $\alpha\cap\beta\neq\emptyset$. We consider the following map
\begin{equation*}
(\alpha,\beta) \mapsto (\alpha,\partial X_{\alpha\beta}).
\end{equation*}
This map may not be injective. However, one can control the multiplicity of this map. To do this, it is sufficient to count the number of such $\beta's$ with lengths $\leq L$ in a given $S_{1,2}$ with geodesic boundary. We also need a similar counting result in a given $S_{0,4}$ for some technical reason. More precisely, we have the following lemma.

\begin{lemma}\label{multi in S12 S04}
Let $Y$ be a hyperbolic surface with geodesic boundary belonging to either of the following cases:
\begin{enumerate}
  \item[(a)] $Y$ is of type $S_{1,2}$, with boundary components $\gamma_1,\gamma_2$;
  \item[(b)] $Y$ is of type $S_{0,4}$, with boundary components $\gamma_1, \gamma_2, \gamma_3, \gamma_4$.
\end{enumerate}
For any $L>0$, let $N(Y,\gamma_1,\gamma_2,L)$ be the number of simple closed geodesics on $Y$ of length $\leq L$ that form a pair of pants together with $\gamma_1$ and $\gamma_2$. Then
$$
N(Y,\gamma_1,\gamma_2,L)\leq \frac{\ell(\gamma_1)}{\mathcal R(\ell(\gamma_1),\ell(\gamma_2),L)},
$$
where $\mathcal R(x,y,z)$ is the function given in Mirzakhani's generalized McShane identity (see Theorem \ref{McShane id}).
\end{lemma}

\begin{proof}
We only treat Case (a) here. The proof for (b) is similar.

By Lemma \ref{estimation R,D} we know that $\mathcal D>0$ and $\mathcal R>0$. So by Mirzakhani's generalized McShane identity (see Theorem \ref{McShane id}) we have
\begin{eqnarray*}
\ell(\gamma_1)
&=& \sum_{\{\alpha_1,\alpha_2\}} \mathcal D(\ell(\gamma_1), \ell(\alpha_1), \ell(\alpha_2)) +
\sum_{i=2}^n \sum_\alpha \mathcal R(\ell(\gamma_1),\ell(\gamma_i),\ell(\alpha)) \\
&\geq& \sum_{\alpha'} \mathcal R(\ell(\gamma_1),\ell(\gamma_2),\ell(\alpha'))
\end{eqnarray*}
where $\alpha'$ runs over all simple closed geodesics of lengths $\leq L$ and bounding a pair of pants together with the union $\gamma_1\cup\gamma_2$. By Lemma \ref{estimation R,D} we know that $\mathcal R(x,y,z)$ is decreasing with respect to $z$. Thus,
\begin{equation*}
\ell(\gamma_1) \geq N(Y,\gamma_1,\gamma_2,L) \cdot \mathcal R(\ell(\gamma_1),\ell(\gamma_2),L),
\end{equation*}
as required.
\end{proof}

\begin{rem*}
If one considers the number of closed geodesics that are not necessarily simple, it follows from \cite[Lemma 6.6.4]{Buser10} that
\begin{equation*}
N(Y,\gamma_1,\gamma_2,L)\leq ce^L
\end{equation*}
for a universal constant  $c$. In view of Lemma \ref{estimation R,D}, the lemma above implies the better estimate
\begin{equation*}
N(Y,\gamma_1,\gamma_2,L) \leq c(\ell(\gamma_1),\ell(\gamma_2)) e^{\frac{L}{2}}.
\end{equation*}
In contrast, by \cite{Mirz08}, one may expect that
$$N(Y,\gamma_1,\gamma_2,L) \leq c_1(Y) L^4 \quad \text{if } Y\cong S_{1,2},$$
$$N(Y,\gamma_1,\gamma_2,L) \leq c_2(Y) L^2 \quad \text{if } Y\cong S_{0,4}$$
for $L$ large enough, where the constants depend on the hyperbolic surface $Y$. It is not that easy to give explicit expression for $c_1$ and $c_2$ which only depend on the lengths of the boundary geodesics.
\end{rem*}

Now we return to the proof of Proposition \ref{E[Z]/E[N]}. Note that the complement of $S_{1,2}$ in $X$ may have several possibilities: $$X\setminus S_{1,2} = S_{g-2,2} \ \text{or}\ S_{k,1}\cup S_{g-k-1,1}$$
for some $1\leq k\leq \frac{1}{2}(g-1)$. We divide $\mathcal Z^*(X,L)$ into several parts as follows.

\begin{definition}
	\begin{equation*}
	\mathcal Z^{*0}(X,L):= \left\{ (\alpha,\beta)\in\mathcal Z^*(X,L) \,;\,  X\setminus X_{\alpha\beta} \ \text{is of type}\ S_{g-2,2} \right\}
	\end{equation*}
	and for any $1\leq k\leq \frac{1}{2}(g-1)$, 
	\begin{equation*}
	\mathcal Z^{*k}(X,L):= \left\{ (\alpha,\beta)\in\mathcal Z^*(X,L) \,;\, X\setminus X_{\alpha\beta} \ \text{is of type}\ S_{k,1}\cup S_{g-k-1,1} \right\}.
	\end{equation*}
\end{definition}

On the other hand, recall that for an ordered pair $(\alpha,\beta)\in \mathcal Z^*(X,L)$, since $\ell(\alpha)\leq L$ and $\ell(\beta)\leq L$, we have $$\ell(\partial X_{\alpha\beta}) \leq 2L.$$ We divide $Z^*(X,L)$ into two parts
\begin{equation*}
Z^*(X,L) = Z_1^*(X,L) +Z_2^*(X,L)
\end{equation*}
where $Z_1^*(X,L)$ and $Z_2^*(X,L)$ are defined as follow.

\begin{definition}
\begin{equation*}
\mathcal Z_1^*(X,L):= \left\{ (\alpha,\beta)\in\mathcal Z^*(X,L) \,;\,  \ell(\partial X_{\alpha\beta}) \leq 1.9L \right\},
\end{equation*}
\begin{equation*}
Z_1^*(X,L) := \#\mathcal Z_1^*(X,L).
\end{equation*}

\begin{equation*}
\mathcal Z_2^*(X,L):= \left\{ (\alpha,\beta)\in\mathcal Z^*(X,L) \,;\,  \ell(\partial X_{\alpha\beta}) > 1.9L \right\},
\end{equation*}
\begin{equation*}
Z_2^*(X,L) := \#\mathcal Z_2^*(X,L).
\end{equation*}

For $i=1,2$ and $1\leq k\leq \frac{1}{2}(g-1)$, we also define 
\begin{equation*}
\mathcal Z_i^{*0}(X,L):= \mathcal Z^{*0}(X,L)\cap \mathcal Z_i^*(X,L),
\end{equation*}
\begin{equation*}
Z_i^{*0}(X,L) := \#\mathcal Z_i^{*0}(X,L),
\end{equation*}
and
\begin{equation*}
\mathcal Z_i^{*k}(X,L):= \mathcal Z^{*k}(X,L)\cap \mathcal Z_i^*(X,L),
\end{equation*}
\begin{equation*}
Z_i^{*k}(X,L) := \#\mathcal Z_i^{*k}(X,L).
\end{equation*}
\end{definition}

\begin{rem*}
	The value 1.9 is not crucial and can be replaced by any number in the interval $(\frac{5}{3},2)$, where $\frac{5}{3}$ comes from Lemma \ref{lemma:4holded}.
\end{rem*}

We divide the proof of Proposition \ref{E[Z]/E[N]} into the following two lemmas.
\begin{lemma}\label{E[Z1*]}
Let $L=L(g)=2\log g -4\log \log g +\omega(g)$ as before. Then we have as $g\to \infty$,
\begin{equation*}
\E[Z_1^*(X,L)] \leq  c L^6 e^{0.95L}  \frac{1}{g^2}
\end{equation*}
for a universal constant $c>0$.
\end{lemma}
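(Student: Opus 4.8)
The plan is to reduce the count of intersecting pairs $(\alpha,\beta)$ to a count of \emph{disjoint} curve systems, to which Mirzakhani's Integration Formula applies. Given $(\alpha,\beta)\in\mathcal Z_1^*(X,L)$, the definition of $\sN^*_{1,1}$ forces $X_{\alpha\beta}$ to be of type $S_{1,2}$, and by Lemma~\ref{alpha sbs U} the one-handles $X_\alpha$ and $X_\beta$ both lie inside $X_{\alpha\beta}$; hence $\alpha$ (and likewise $\beta$) bounds a pair of pants inside $X_{\alpha\beta}$ together with the two boundary geodesics $\gamma_1,\gamma_2$ of $X_{\alpha\beta}$. I would then consider the map $(\alpha,\beta)\mapsto(\alpha,\partial X_{\alpha\beta})$. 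Although not injective, for a fixed image $(\alpha,\{\gamma_1,\gamma_2\})$ the surface $X_{\alpha\beta}$ is recovered as the $S_{1,2}$-component of $X\setminus(\gamma_1\cup\gamma_2)$ containing $\alpha$, so the number of admissible $\beta$ with $\ell(\beta)\le L$ is at most $\#_{1,2}(L)$ for that $S_{1,2}$, which Lemma~\ref{multi in S12 S04}(1) bounds by $\ell(\gamma_1)/\mathcal R(\ell(\gamma_1),\ell(\gamma_2),L)$.

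This gives
\[
Z_1^*(X,L)\le\sum_{(\alpha,\gamma_1,\gamma_2)}\frac{\ell(\gamma_1)}{\mathcal R(\ell(\gamma_1),\ell(\gamma_2),L)}\,\mathbf 1\big[\ell(\alpha)\le L,\ \ell(\gamma_1)+\ell(\gamma_2)\le 1.9L\big],
\]
the sum running over triples of disjoint simple closed geodesics with $\alpha$ separating off a one-handle and $\{\alpha,\gamma_1,\gamma_2\}$ bounding an embedded pair of pants. Next I would split the sum according to the topology of $X\setminus X_{\alpha\beta}$: it is either $S_{g-2,2}$ (with $\gamma_1,\gamma_2$ its two boundary curves) or $S_{k,1}\cup S_{g-k-1,1}$ for some $1\le k\le\tfrac12(g-1)$. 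In each case the multicurve $(\alpha,\gamma_1,\gamma_2)$ cuts $X$ into pieces of type $S_{1,1}$, $S_{0,3}$, and the rest, so Theorem~\ref{Mirz int formula} turns $\E[Z_1^*]$ into a finite sum of integrals of the form
\[
\frac{2^{-M}}{|\Sym|\,V_g}\int_{\substack{0\le x\le L\\ 0\le y+z\le 1.9L}}\frac{y}{\mathcal R(y,z,L)}\,V_{1,1}(x)\,V_\bullet(y,z)\;xyz\,dx\,dy\,dz,
\]
where $V_\bullet(y,z)$ equals $V_{g-2,2}(y,z)$ or $V_{k,1}(y)V_{g-k-1,1}(z)$ and $2^{-M}/|\Sym|\le 1$.

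For the estimation I would first integrate out $x$ using $V_{1,1}(x)=\tfrac1{24}(x^2+4\pi^2)$, which is elementary and contributes a polynomial-in-$L$ factor. Then I would bound $V_\bullet(y,z)\le e^{(y+z)/2}V_\bullet$ by Lemma~\ref{Mirz vol lemma 1}(1) and $y/\mathcal R(y,z,L)\le 100(1+y)\big(1+e^{(L-y-z)/2}\big)$ by Lemma~\ref{estimation R,D}(3), so that the product of the two exponential factors becomes $e^{(y+z)/2}+e^{L/2}$; integrating the $e^{(y+z)/2}$-term over the simplex $\{y+z\le 1.9L\}$ is exactly where the crucial factor $e^{0.95L}$ arises, while the $e^{L/2}$-term is of strictly lower order. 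Finally I would sum the volume prefactors, using $V_{g-2,2}/V_g\asymp g^{-2}$ (Lemma~\ref{Mirz vol lemma 1}(3)) for the first topological type and $\sum_{k}V_{k,1}V_{g-k-1,1}\le c\,g^{-1}V_{g-1}\asymp g^{-3}V_g$ (Lemma~\ref{Wr-prop}(2), or Lemma~\ref{Mirz vol lemma 2}) for the second, so that the $S_{g-2,2}$-type dominates; collecting the polynomial factors yields the asserted bound $c\,L^6 e^{0.95L}/g^2$.

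The step I expect to be the main obstacle is the weight-matching in the estimation: one must keep the $e^{(y+z)/2}$ growth of the volume polynomial \emph{inside} the integral and pair it with the decay of $1/\mathcal R$ before integrating, since replacing $V_\bullet(y,z)$ by the crude uniform bound $e^{0.95L}V_\bullet$ and then using the $e^{L/2}$-term of $1/\mathcal R$ would produce a useless $e^{1.45L}$. This is also precisely the purpose of the cutoff $1.9L$ in the definition of $\mathcal Z_1^*$: it forces $y+z\le 1.9L$, hence $e^{(y+z)/2}\le e^{0.95L}$, a genuine gain over the trivial $\ell(\partial X_{\alpha\beta})\le 2L$; the complementary range $\ell(\partial X_{\alpha\beta})>1.9L$ (handled in the companion estimate for $Z_2^*$) instead requires the structural input of Lemma~\ref{lemma:4holded}. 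Beyond this, one should verify that the topological case split is exhaustive and that the map $(\alpha,\beta)\mapsto(\alpha,\partial X_{\alpha\beta})$ is unambiguous for $g$ large (the only possible overlap, two $S_{1,2}$-components of $X\setminus\partial X_{\alpha\beta}$, forces $g=3$); the remainder is routine bookkeeping with the volume asymptotics of Section~\ref{section wp volume} and the elementary inequalities of Lemma~\ref{estimation R,D}.
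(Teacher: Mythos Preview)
Your proposal is correct and follows essentially the same route as the paper: the multiplicity bound via Lemma~\ref{multi in S12 S04}, the topological split into the $S_{g-2,2}$ and $S_{k,1}\cup S_{g-k-1,1}$ cases, Mirzakhani's Integration Formula, the estimate on $y/\mathcal R(y,z,L)$ from Lemma~\ref{estimation R,D}(3), and the volume asymptotics are all exactly what the paper does. One small point: you bound $V_\bullet(y,z)$ by $e^{(y+z)/2}V_\bullet$ via Lemma~\ref{Mirz vol lemma 1}(1), whereas the paper uses the sharper $\prod\frac{\sinh(\cdot/2)}{\cdot/2}$ from Lemma~\ref{MP vol lemma}, which cancels the $yz$ from the measure; with your version the bookkeeping gives $L^8e^{0.95L}$ rather than $L^6e^{0.95L}$, but this is harmless for the application to \eqref{tend to 0 (3)}.
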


\begin{proof}
By Lemma \ref{multi in S12 S04} we have
\begin{eqnarray*}
	Z_1^{*0}(X,L)
	&=& \sum_{\mbox{\tiny
			$\begin{array}{c}
			\alpha\neq \beta, \alpha\cap\beta \neq \emptyset, \\
			\ell(\partial X_{\alpha\beta})\leq 1.9L, \\
			X\setminus X_{\alpha\beta} = S_{g-2,2} \end{array}$}
	} \mathbf 1_{\sN^*_{1,1}(X,L)}(\alpha) \mathbf 1_{\sN^*_{1,1}(X,L)}(\beta) \\
	&\leq& \sum_{(\alpha,\gamma_1,\gamma_2)} \mathbf 1_{[0,L]}(\ell(\alpha)) \mathbf 1_{[0,1.9L]}(\ell(\gamma_1)+\ell(\gamma_2))  N(Y,\gamma_1,\gamma_2,L) \\
	&\leq& \sum_{(\alpha,\gamma_1,\gamma_2)} \mathbf 1_{[0,L]}(\ell(\alpha)) \mathbf 1_{[0,1.9L]}(\ell(\gamma_1)+\ell(\gamma_2))  \frac{\ell(\gamma_1)}{\mathcal R (\ell(\gamma_1),\ell(\gamma_2),L)}
\end{eqnarray*}
where $(\alpha,\gamma_1,\gamma_2)$ runs over all ordered triples of simple closed geodesics such that 
$\gamma_1\cup\gamma_2$ splits off a subsurface $Y$ of type $S_{1,2}$ with complement $S_{g-2,2}$, while $\alpha$ splits off a one-holed torus in that $S_{1,2}$ (see the first picture in Figure \ref{figure:multicurves}). 

Similarly, for all $1\leq k\leq \frac{1}{2}(g-1)$ we have
\begin{equation*}
Z_1^{*k}(X,L) 
\leq \sum_{(\alpha,\gamma_1,\gamma_2)} \mathbf 1_{[0,L]}(\ell(\alpha)) \mathbf 1_{[0,1.9L]}(\ell(\gamma_1)+\ell(\gamma_2))  \frac{\ell(\gamma_1)}{\mathcal R (\ell(\gamma_1),\ell(\gamma_2),L)}
\end{equation*}
where $(\alpha,\gamma_1,\gamma_2)$ runs over all ordered triples of simple closed geodesics such that $\gamma_1\cup\gamma_2$ splits off an $S_{1,2}$ with complement $S_{k,1}\cup S_{g-k-1,1}$, and $\alpha$ splits off a one-holed torus in that $S_{1,2}$ (see the second picture in Figure \ref{figure:multicurves}).

Then one may apply Mirzakhani's Integration Formula (see Theorem \ref{Mirz int formula}) to get
\begin{eqnarray*}
\int_{\M_g} Z_1^{*0}(X,L) dX
&\leq& \int_{0\leq z\leq L} \int_{0\leq x+y\leq 1.9L;\, x,y\geq 0} \frac{x}{\mathcal R (x,y,L)} \\
& & V_{1,1}(z) V_{0,3}(x,y,z) V_{g-2,2}(x,y) xyz dxdydz 
\end{eqnarray*}
and
\begin{eqnarray*}
	\int_{\M_g} Z_1^{*k}(X,L) dX
	&\leq& \int_{0\leq z\leq L} \int_{0\leq x+y\leq 1.9L;\, x,y\geq 0} \frac{x}{\mathcal R (x,y,L)} \\
	& & V_{1,1}(z) V_{0,3}(x,y,z) V_{k,1}(x) V_{g-k-1,1}(y) xyz dxdydz 
\end{eqnarray*}
for all $1\leq k\leq \frac{1}{2}(g-1)$.

\noindent By Theorem \ref{Mirz vol lemma 0} we know that
\begin{equation*}
V_{1,1}(z) = \frac{1}{48}(z^2+4\pi^2)\quad \text{and} \quad V_{0,3}(x,y,z)=1.
\end{equation*}
\noindent By Lemma \ref{MP vol lemma} we know that
\begin{equation*}
V_{g-2,2}(x,y) \leq \frac{\sinh(\frac{x}{2})\sinh(\frac{y}{2})}{\frac{x}{2}\frac{y}{2}} V_{g-2,2} \leq \frac{e^{\frac{x+y}{2}}}{xy} V_{g-2,2},
\end{equation*}
\begin{equation*}
V_{k,1}(x) \leq \frac{\sinh(\frac{x}{2})}{\frac{x}{2}} V_{k,1} \leq \frac{e^{\frac{x}{2}}}{x} V_{k,1},
\end{equation*}
\begin{equation*}
V_{g-k-1,1}(y) \leq \frac{\sinh(\frac{y}{2})}{\frac{y}{2}} V_{g-k-1,1} \leq \frac{e^{\frac{y}{2}}}{y} V_{g-k-1,1}.
\end{equation*}
\noindent By Lemma \ref{estimation R,D} we know that
\begin{equation*}
\frac{x}{\mathcal R (x,y,L)} \leq 100(1+x)(1+e^{-\frac{x+y}{2}}e^{\frac{L}{2}}).
\end{equation*}
Put all the inequalities above together we have
\begin{eqnarray*}
\int_{\M_g} Z_1^*(X,L) dX
&=& \int_{\M_g} \big( Z_1^{*0}(X,L) + \sum_{1\leq k\leq\frac{1}{2}(g-1)} Z_1^{*k}(X,L) \big) dX\\
&\leq& \frac{100}{48} \big( V_{g-2,2} + \sum_{1\leq k\leq\frac{1}{2}(g-1)} V_{k,1}V_{g-k-1,1} \big) \\
& & \int_{0\leq z\leq L} \int_{0\leq x+y\leq 1.9L;\,x,y\geq 0} \\
& & z(z^2+4\pi^2) e^{\frac{x+y}{2}} (1+x)(1+e^{-\frac{x+y}{2}}e^{\frac{L}{2}}) dxdydz \\
&\leq& c\cdot \big((1+L^6) e^{0.95L} + (1+L^7) e^{\frac{L}{2}}\big) \\
& & \big( V_{g-2,2} + \sum_{1\leq k\leq\frac{1}{2}(g-1)} V_{k,1}V_{g-k-1,1} \big)
\end{eqnarray*}
for some universal constant $c>0$. And by Lemma \ref{Mirz vol lemma 1} and \ref{Mirz vol lemma 2} we know that
\begin{equation*}
V_{g-2,2}=\frac{1}{(8\pi^2 g)^2}V_g (1+O\left(\frac{1}{g}\right))
\end{equation*}
and
\begin{equation*}
\sum_{1\leq k\leq\frac{1}{2}(g-1)} V_{k,1}V_{g-k-1,1} =O\left( \frac{V_g}{g^3}\right).
\end{equation*}
So as $g\to \infty$, we have
\begin{equation*}
\E[Z_1^*(X,L)] \leq c L^6 e^{0.95L}  \frac{1}{g^2}
\end{equation*}
for some universal constant $c>0$, as required.
\end{proof}

\begin{figure}[h]
	\centering
	\includegraphics[width=12.5cm]{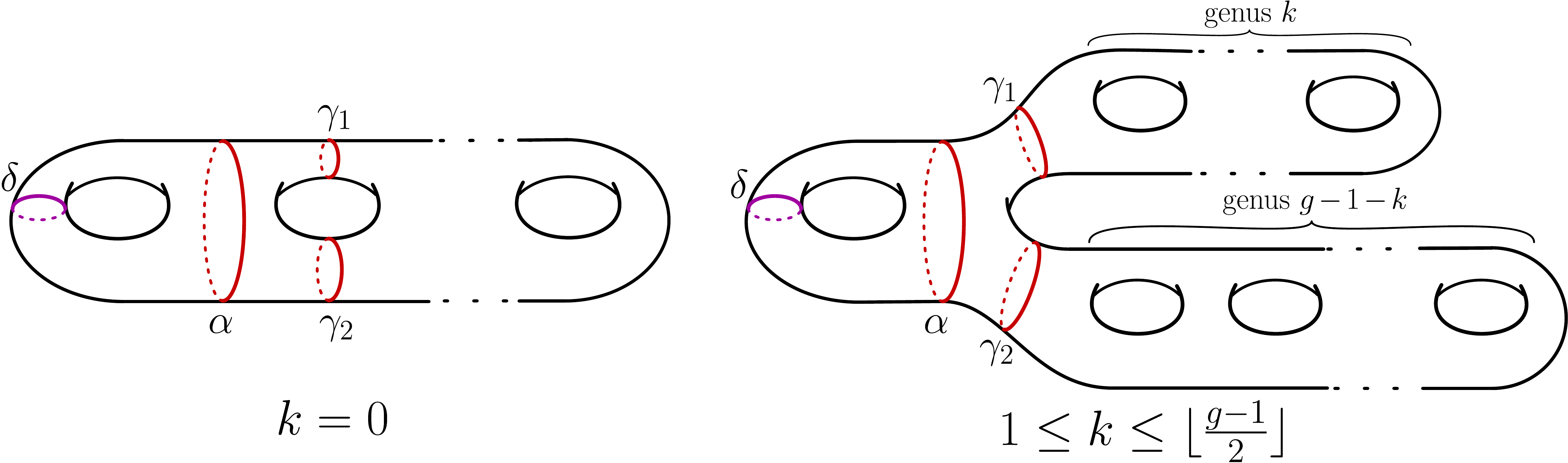}
	\caption{}
	\label{figure:multicurves}
\end{figure}

Now we estimate the expectation of $Z_2^*(X,L)$.
\begin{lemma}\label{E[Z2*]}
Let $L=L(g)=2\log g -4\log \log g +\omega(g)$ as before. Then we have as $g\to \infty$,
\begin{equation*}
\E[Z_2^*(X,L)] \leq c L^3 e^L \frac{1}{g^2}
\end{equation*}
for a universal constant $c>0$.
\end{lemma}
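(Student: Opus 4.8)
The plan is to adapt the proof of Lemma~\ref{E[Z1*]}, replacing its treatment of $\ell(\partial X_{\alpha\beta})$ by an application of Lemma~\ref{lemma:4holded}. Fix an ordered pair $(\alpha,\beta)\in\mathcal Z_2^*(X,L)$. By definition of $\sN^*_{1,1}$ the subsurface $X_{\alpha\beta}$ is of type $S_{1,2}$, and since $\ell(\alpha),\ell(\beta)\leq L$ while $\ell(\partial X_{\alpha\beta})>1.9L>\tfrac53L$, Lemma~\ref{lemma:4holded} applies and yields a simple closed geodesic $\delta\subset\mathring{X_\alpha}\cap\mathring{X_\beta}$, disjoint from $\alpha$, $\beta$ and $\partial X_{\alpha\beta}$, such that $X_{\alpha\beta}$ cut along $\delta$ is a four-holed sphere $\Sigma$ with $\partial\Sigma=\partial X_{\alpha\beta}\cup2\delta=\gamma_1\cup\gamma_2\cup\delta^+\cup\delta^-$, in which $\alpha$ and $\beta$ are two distinct intersecting simple closed geodesics of length $\leq L$, each bounding a pair of pants with $\gamma_1$ and $\gamma_2$. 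By the remark following Lemma~\ref{lemma:4holded}, $\alpha\cup\beta$ is freely homotopic to $\gamma_1\cup\gamma_2\cup2\delta$, so $\ell(\gamma_1)+\ell(\gamma_2)+2\ell(\delta)\leq\ell(\alpha)+\ell(\beta)\leq2L$; combined with $\ell(\gamma_1)+\ell(\gamma_2)>1.9L$ this forces $\ell(\delta)<0.05L$.

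I would then reduce to a counting problem for the simple closed multi-geodesic $\Gamma:=\delta\cup\gamma_1\cup\gamma_2$, which cuts $X$ into $\Sigma\cong S_{0,4}$ and the complement $X\setminus X_{\alpha\beta}$, the latter being of type $S_{g-2,2}$ (with $\gamma_1,\gamma_2$ as boundary) or $S_{k,1}\cup S_{g-k-1,1}$ for some $1\leq k\leq\tfrac12(g-1)$; accordingly split $Z_2^*=Z_2^{*0}+\sum_kZ_2^{*k}$ as in Lemma~\ref{E[Z1*]}. The assignment $(\alpha,\beta)\mapsto\Gamma$ has controlled multiplicity: for fixed $\Gamma$ every preimage consists of two distinct intersecting simple closed geodesics of length $\leq L$ in $\Sigma$ each bounding a pair of pants with $\gamma_1$ and $\gamma_2$, so by part~$(2)$ of Lemma~\ref{multi in S12 S04} there are at most $\bigl(\ell(\gamma_1)/\mathcal R(\ell(\gamma_1),\ell(\gamma_2),L)\bigr)^2$ of them. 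Since $\ell(\gamma_1)+\ell(\gamma_2)>1.9L>L$, the ``moreover'' estimate in part~$(3)$ of Lemma~\ref{estimation R,D} gives $\ell(\gamma_1)/\mathcal R(\ell(\gamma_1),\ell(\gamma_2),L)\leq500+500\cdot\tfrac{\ell(\gamma_1)}{\ell(\gamma_1)+\ell(\gamma_2)-L}\leq c_0$ for a universal constant $c_0$, using $\ell(\gamma_1)\leq2L$ and $\ell(\gamma_1)+\ell(\gamma_2)-L>0.9L$. This \emph{constant} multiplicity bound, in place of the exponential-type bound that occurred for $Z_1^*$, is the decisive point.

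With this in hand, Mirzakhani's Integration Formula (Theorem~\ref{Mirz int formula}) applied to $\Gamma$ gives
\[
\int_{\M_g}Z_2^{*0}(X,L)\,dX\;\leq\;C\!\!\int\limits_{\substack{1.9L<x+y\leq2L\\0<w,\ x+y+2w\leq2L}}\!\!V_{0,4}(x,y,w,w)\,V_{g-2,2}(x,y)\,x\,y\,w\;dx\,dy\,dw,
\]
and likewise with $V_{g-2,2}(x,y)$ replaced by $V_{k,1}(x)V_{g-k-1,1}(y)$ and summed over $k$. I would bound $V_{0,4}(x,y,w,w)=\tfrac12(4\pi^2+x^2+y^2+2w^2)\leq c\bigl(1+(x+y)^2\bigr)$, use $V_{g-2,2}(x,y)\leq\frac{e^{(x+y)/2}}{xy}V_{g-2,2}$ from Lemma~\ref{MP vol lemma}, and exploit the constraint $x+y+2w\leq2L$ to perform the $w$-integral, $\int_0^{(2L-x-y)/2}w\,dw=\tfrac18(2L-x-y)^2$. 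Writing $s=x+y$ and substituting $u=2L-s\in[0,0.1L]$, the remaining integral becomes $e^L\int_0^{0.1L}(2L-u)\bigl((2L-u)^2+c\bigr)u^2e^{-u/2}\,du\leq cL^3e^L\int_0^\infty u^2e^{-u/2}\,du\leq cL^3e^L$. Hence $\int_{\M_g}Z_2^{*0}\,dX\leq cL^3e^LV_{g-2,2}$, and $\E[Z_2^{*0}]\leq cL^3e^L\,V_{g-2,2}/V_g\asymp cL^3e^L/g^2$ by Lemma~\ref{Mirz vol lemma 1}; the same computation with $\sum_kV_{k,1}V_{g-k-1,1}=O(V_g/g^3)$ (Lemma~\ref{Mirz vol lemma 2}) gives $\E[\sum_kZ_2^{*k}]\leq cL^3e^L/g^3$, and adding the two proves the lemma.

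The hard part is Step~1: correctly setting up, via Lemma~\ref{lemma:4holded} and the remark after it, the geometry of $\alpha$, $\beta$, $\delta$ inside the four-holed sphere, and then extracting the sharp exponent $L^3$ rather than $L^5$. The latter hinges on using the single ``length budget'' inequality $\ell(\gamma_1)+\ell(\gamma_2)+2\ell(\delta)\leq2L$ twice at once---once to make $\ell(\delta)$ small, once to make the $(\gamma_1,\gamma_2)$-range thin near $\ell(\gamma_1)+\ell(\gamma_2)=2L$---so that the $w$-integral and the $e^{(x+y)/2}$ factor are simultaneously squeezed; anything worse than $L^4e^L/g^2$ would be insufficient for \eqref{tend to 0 (3)}, since $\E[N^*_{1,1}(X,L)]^2\asymp L^4e^L/g^2$. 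Together with Lemma~\ref{E[Z1*]} this yields $\E[Z^*(X,L)]\leq cL^3e^L/g^2$, proving Proposition~\ref{E[Z]/E[N]}, and hence \eqref{tend to 0 (3)} and Theorem~\ref{prop upper bound}.
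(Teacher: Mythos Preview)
Your proof is correct and follows essentially the same strategy as the paper: invoke Lemma~\ref{lemma:4holded} to produce the geodesic $\delta$, exploit the homotopy $\alpha\cup\beta\sim\partial X_{\alpha\beta}\cup2\delta$ to get $\ell(\gamma_1)+\ell(\gamma_2)+2\ell(\delta)\leq2L$, and use the ``moreover'' clause of Lemma~\ref{estimation R,D} to obtain a \emph{constant} McShane multiplicity bound on the region $\ell(\gamma_1)+\ell(\gamma_2)>1.9L$. The only technical difference is in the choice of multi-curve fed into Mirzakhani's integration formula: the paper keeps $\alpha$ and works with the four-curve system $(\alpha,\gamma_1,\gamma_2,\delta)$, so the complement is $S_{0,3}\cup S_{0,3}\cup S_{g-2,2}$, the multiplicity bound is the \emph{first} power of $\ell(\gamma_1)/\mathcal R$ (only $\beta$ is forgotten), and an extra $z$-integral over $\ell(\alpha)\in[0,L]$ contributes $L^2$; you instead drop $\alpha$, work with the three-curve system $(\gamma_1,\gamma_2,\delta)$ cutting off an $S_{0,4}$, square the multiplicity bound (still $O(1)$), and pick up the $L^2$ from the polynomial $V_{0,4}(x,y,w,w)$. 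Either packaging gives the same $cL^3e^L/g^2$.
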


\begin{proof}
Assume that $(\alpha,\beta)\in \mathcal Z_2^*(X,L)$. Denote the boundary of $X_{\alpha\beta}$ to be the two simple closed geodesics $\gamma_1$ and $\gamma_2$. Then we have
\begin{equation*}
1.9L<\ell(\gamma_1)+\ell(\gamma_2)\leq 2L.
\end{equation*}

\noindent By Lemma \ref{lemma:4holded} we know that $\alpha$ and $\beta$ have exactly 4 intersection points, and the intersection $X_\alpha \cap X_\beta$ contains a simple closed geodesic $\delta$ which is disjoint with $\alpha,\beta,\gamma_1,\gamma_2$. (see Figure \ref{a new geodesic in S12}.) Since $\alpha\cup\beta$ is homotopic to $\gamma_1\cup\gamma_2\cup2\delta$ (see the remark after Lemma \ref{lemma:4holded}), we have
\begin{equation*}
\ell(\gamma_1)+\ell(\gamma_2)+2\ell(\delta) \leq \ell(\alpha)+\ell(\beta) \leq 2L.
\end{equation*}

\noindent Now similar to how we deal with $Z_1^*(X,L)$, by Lemma \ref{multi in S12 S04} we have
\begin{eqnarray*}
Z_2^{*0}(X,L)
&=& \sum_{\mbox{\tiny
		$\begin{array}{c}
		\alpha\neq \beta, \alpha\cap\beta \neq \emptyset, \\ 
		1.9L<\ell(\partial X_{\alpha\beta})\leq 2L, \\
		X\setminus X_{\alpha\beta} = S_{g-2,2} \end{array}$}
} \mathbf 1_{\sN^*_{1,1}(X,L)}(\alpha) \mathbf 1_{\sN^*_{1,1}(X,L)}(\beta) \\
&\leq& \sum_{(\alpha,\gamma_1,\gamma_2,\delta)} \mathbf 1_{[1.9L,2L]}(\ell(\gamma_1)+\ell(\gamma_2)) \mathbf 1_{[0,L]}(\ell(\alpha)) \\
& & \mathbf 1_{[0,2L]}(\ell(\gamma_1)+\ell(\gamma_2)+2\ell(\delta)) \cdot N(Y_{\gamma_1\gamma_2\delta},\gamma_1,\gamma_2,L) \\
&\leq& \sum_{(\alpha,\gamma_1,\gamma_2,\delta)} \mathbf 1_{[1.9L,2L]}(\ell(\gamma_1)+\ell(\gamma_2)) \mathbf 1_{[0,L]}(\ell(\alpha)) \\
& & \mathbf 1_{[0,2L]}(\ell(\gamma_1)+\ell(\gamma_2)+2\ell(\delta)) \frac{\ell(\gamma_1)}{\mathcal R (\ell(\gamma_1),\ell(\gamma_2),L)} 
\end{eqnarray*}
where $(\alpha,\gamma_1,\gamma_2,\delta)$ runs over all ordered quadruples of simple closed geodesics such that $\gamma_1\cup\gamma_2$ splits off an $S_{1,2}$ with complement $S_{g-2,2}$, $\alpha$ splits off a one-holed torus from that $S_{1,2}$, and $\delta$ is in that one-holed torus (see the first picture in Figure \ref{figure:multicurves}). We let $Y_{\gamma_1\gamma_2\delta}$ denote the hyperbolic surface of type $S_{0,4}$ split off by $\gamma_1$, $\gamma_2$ and $\delta$.

Similarly, for all $1\leq k\leq \frac{1}{2}(g-1)$ we have
\begin{eqnarray*}
	Z_2^{*k}(X,L)
	&\leq& \sum_{(\alpha,\gamma_1,\gamma_2,\delta)} \mathbf 1_{[1.9L,2L]}(\ell(\gamma_1)+\ell(\gamma_2)) \mathbf 1_{[0,L]}(\ell(\alpha)) \\
	& & \mathbf 1_{[0,2L]}(\ell(\gamma_1)+\ell(\gamma_2)+2\ell(\delta)) \frac{\ell(\gamma_1)}{\mathcal R (\ell(\gamma_1),\ell(\gamma_2),L)} 
\end{eqnarray*}
where $(\alpha,\gamma_1,\gamma_2,\delta)$ runs over all ordered quadruples of simple closed geodesics such that $\gamma_1\cup\gamma_2$ splits off an $S_{1,2}$ with complement $S_{k,1}\cup S_{g-k-1,1}$, $\alpha$ splits off a one-holed torus from that $S_{1,2}$, and $\delta$ is in that one-holed torus (see the second picture in Figure \ref{figure:multicurves}).

When $$L<1.9L<\ell(\gamma_1)+\ell(\gamma_2)\leq 2L,$$ it follows by Lemma \ref{estimation R,D} that
\begin{equation*}
\frac{\ell(\gamma_1)}{\mathcal R (\ell(\gamma_1),\ell(\gamma_2),L)} \leq 500+500\frac{\ell(\gamma_1)}{0.9L} < 2000.
\end{equation*}

So we have
\begin{equation*}
Z_2^{*0}(X,L) \leq 2000\sum_{(\alpha,\gamma_1,\gamma_2,\delta)} \mathbf 1_{[0,L]}(\ell(\alpha)) \mathbf 1_{[1.9L,2L]}(\ell(\gamma_1)+\ell(\gamma_2)+2\ell(\delta))
\end{equation*}
and
\begin{equation*}
Z_2^{*k}(X,L) \leq 2000\sum_{(\alpha,\gamma_1,\gamma_2,\delta)} \mathbf 1_{[0,L]}(\ell(\alpha)) \mathbf 1_{[1.9L,2L]}(\ell(\gamma_1)+\ell(\gamma_2)+2\ell(\delta)).
\end{equation*}

\noindent By Theorem \ref{Mirz vol lemma 0} we know that
\begin{equation*}
\quad V_{0,3}(x,y,z)=1.
\end{equation*}
\noindent By Lemma \ref{MP vol lemma} we know that
\begin{equation*}
V_{g-2,2}(x,y) \leq \frac{\sinh(\frac{x}{2})\sinh(\frac{y}{2})}{\frac{x}{2}\frac{y}{2}} V_{g-2,2} \leq \frac{e^{\frac{x+y}{2}}}{xy} V_{g-2,2}.
\end{equation*}

Then one may apply Mirzakhani's Integration Formula (see Theorem \ref{Mirz int formula}) to get
\begin{eqnarray*}
&& \int_{\M_g} Z_2^{*0}(X,L) dX \\
&\leq&  \int_{0\leq z\leq L} \int_{1.9L\leq x+y+2w\leq 2L;\,x,y,w\geq 0} 2000 \\
& & V_{0,3}(z,w,w) V_{0,3}(x,y,z) V_{g-2,2}(x,y) xyzw dxdydzdw \\
&\leq& 2000 V_{g-2,2} \int_{0\leq z\leq L} \int_{1.9L\leq x+y+2w\leq 2L;\,x,y,w\geq 0} e^{\frac{x+y}{2}}zwdxdydzdw \\
&\leq& c V_{g-2,2} L^3 e^{L}
\end{eqnarray*}
for some universal constant $c>0$.

Similarly, for all $1\leq k \leq \frac{1}{2}(g-1)$ we have
\begin{eqnarray*}
	&& \int_{\M_g} Z_2^{*k}(X,L) dX \\
	&\leq&  \int_{0\leq z\leq L} \int_{1.9L\leq x+y+2w\leq 2L;\,x,y,w\geq 0} 2000 \\
	& & V_{0,3}(z,w,w) V_{0,3}(x,y,z) V_{k,1}(x) V_{g-k-1,1}(y) xyzw dxdydzdw \\
	&\leq& 2000 V_{k,1}V_{g-k-1,1} \int_{0\leq z\leq L} \int_{1.9L\leq x+y+2w\leq 2L;\,x,y,w\geq 0} e^{\frac{x+y}{2}}zwdxdydzdw \\
	&\leq& c V_{k,1}V_{g-k-1,1} L^3 e^{L}
\end{eqnarray*}
for some universal constant $c>0$. 

And by Lemma \ref{Mirz vol lemma 1} and \ref{Mirz vol lemma 2} we know that
\begin{equation*}
V_{g-2,2}=\frac{1}{(8\pi^2 g)^2}V_g (1+O\left(\frac{1}{g}\right))
\end{equation*}
and
\begin{equation*}
\sum_{1\leq k\leq\frac{1}{2}(g-1)} V_{k,1}V_{g-k-1,1} =O\left( \frac{V_g}{g^3}\right).
\end{equation*}

Therefore we have
\begin{eqnarray*}
\E[Z_2^*(X,L)] 
&=& \E[Z_2^{*0}(X,L)] + \sum_{1\leq k\leq\frac{1}{2}(g-1)} \E[Z_2^{*k}(X,L)] \\ 
&\leq& c L^3 e^L \cdot \frac{V_{g-2,2} + \sum \limits_{1\leq k\leq\frac{1}{2}(g-1)} V_{k,1}V_{g-k-1,1}}{V_g} \\ 
&\leq& c L^3 e^L \frac{1}{g^2}
\end{eqnarray*}
for some universal constant $c>0$, as required.
\end{proof}

Now are are ready to prove Proposition \ref{E[Z]/E[N]}.

\bp [Proof of Proposition \ref{E[Z]/E[N]}]
Recall that $L=L(g)=2\log g -4\log \log g +\omega(g)$. Thus, it follows by Lemma \ref{E[Z1*]} and \ref{E[Z2*]} that there exists a universal constant $c>0$ such that
\begin{eqnarray*}
\E[Z^*(X,L)]
&=& \E[Z_1^*(X,L)]+\E[Z_2^*(X,L)] \\
&\leq& c \left(L^6e^{0.95L} + L^3 e^L\right) \frac{1}{g^2} \\
&\leq& c L^3 e^L \frac{1}{g^2}.
\end{eqnarray*}

For \eqref{tend to 0 (3)}, by Lemma \ref{E[N]} we know that as $g\to \infty$,
\[\E[N_{1,1}(X,L)] \sim \frac{1}{384\pi^2} L^2 e^{\frac{L}{2}} \frac{1}{g}.\]
Thus, we have
\begin{equation*}
\frac{\E[Z^*(X,L)]}{\E[N^*_{1,1}(X,L)]^2}=O(\frac{1}{L}) \rightarrow 0
\end{equation*}
as $g\rightarrow\infty$, which proves \eqref{tend to 0 (3)}.
\ep

Now we finish the proof of Theorem \ref{prop upper bound}.
\bp [Proof of Theorem \ref{prop upper bound}]
By the definition of $N_{1,1}(X,L)$ we have
\begin{eqnarray*}
&&\limg\Prob\big(X\in \sM_g\,;\, X\in\mathcal A(\omega(g)) \big)=\limg\Prob\big(N_{1,1}(X,L)\geq 1 \big)\\
&&=1-\limg\Prob\big(N_{1,1}(X,L)=0 \big).
\end{eqnarray*}

\noindent By Proposition \ref{N-N*}, \ref{E[Y]-E[Y*]}, \ref{E[Z]/E[N]} and Equation \eqref{prob(N*=0) leq 3 parts} we have
\begin{equation*}
\limg\Prob\big(N^*_{1,1}(X,L)=0 \big)=0.
\end{equation*}
Since $N^*_{1,1}(X,L)\leq N_{1,1}(X,L)$, we have
\begin{equation*}
\limg\Prob\big(N_{1,1}(X,L)=0 \big)=0,
\end{equation*}
as required.
\ep

\section{Half-collars and separating extremal length systole}\label{sec:half collar}

In this section, we prove Theorem \ref{thm:half collar} and \ref{cor:extremal}.
\subsection{Half-collars}\label{subsection half collar}
While a \emph{collar} of a simple closed geodesic $\gamma$ on a complete hyperbolic surface means an equidistant neighborhood $U$ of $\gamma$ homeomorphic to a cylinder, we will mainly consider a half of $U$ cut out by $\gamma$, which we call a \emph{half-collar}. More precisely:
\begin{definition}
	Given $l,w>0$, let $C_{l,w}$ denote the hyperbolic cylinder such that one of the two boundary components is a simple closed geodesic of length $l$, while every point on the other component has distance $w$ from the geodesic component
		\begin{figure}[h]
		\centering	
		\includegraphics[width=3cm]{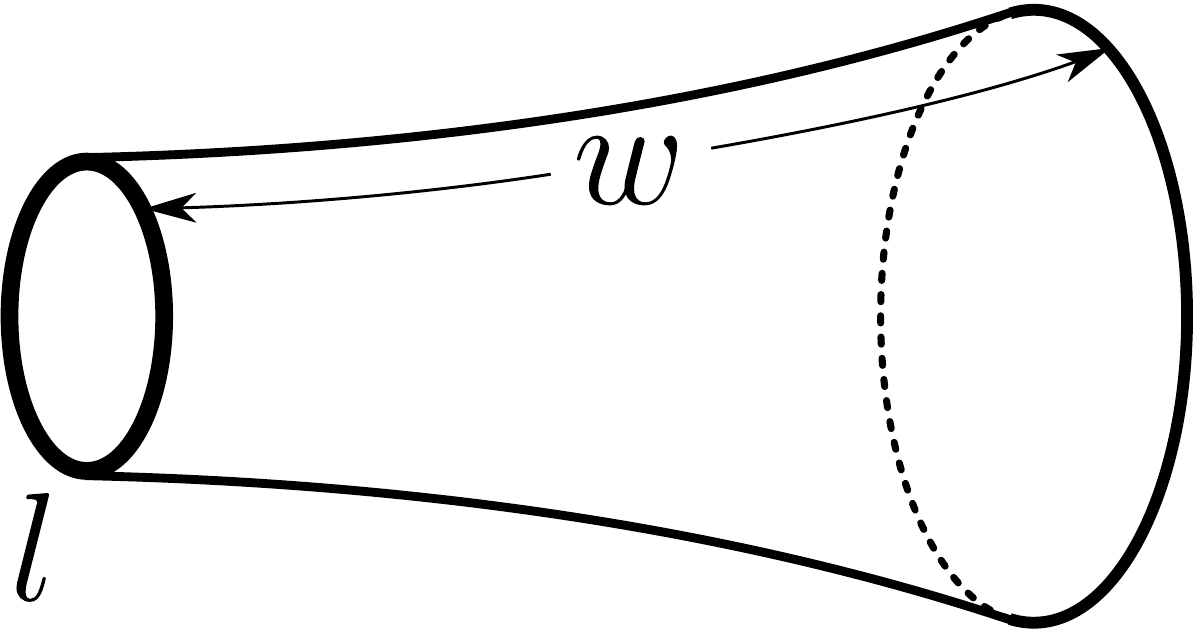}
		\caption{The cylinder $C_{l,w}$.}
		\label{figure:collar}
	\end{figure}
	 (see Figure \ref{figure:collar}).
	Given a hyperbolic surface $X$ and a simple closed geodesic $\gamma\subset X$, a \emph{half-collar} of width $w$ around  $\gamma$ is a subsurface $C\subset X$ isometric to $C_{\ell_\gamma(X),w}$ such that $\gamma$ is the geodesic boundary component of $C$.
\end{definition}

The following result is standard:
\begin{lemma}\label{prop:collar}
	Let $X$ be a compact hyperbolic surface of type $S_{g,1}$ with geodesic boundary $\gamma:=\partial X\cong\mathbb{S}^1$. Given $w>0$, the following conditions are equivalent:
	\begin{enumerate}[label=(\roman*)]
		\item\label{item:collar1} there is no half-collar of width $w$ around $\gamma$;
		\item\label{item:collar2} there is a simple geodesic arc $a\subset X$ of length $\leq2w$ with endpoints in $\gamma$. 
	\end{enumerate}
\end{lemma}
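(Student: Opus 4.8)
\textbf{Proof proposal for Lemma~\ref{prop:collar}.}

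The plan is to prove the equivalence by establishing the contrapositive in each direction, namely that a half-collar of width $w$ exists if and only if every simple geodesic arc with endpoints on $\gamma$ has length greater than $2w$. The natural tool is the exponential map of the normal bundle of $\gamma$, i.e. the map $E\colon \gamma\times[0,\infty)\to X$ sending $(p,t)$ to the point at distance $t$ from $p$ along the geodesic ray leaving $\gamma$ perpendicularly on the interior side. A half-collar of width $w$ around $\gamma$ is precisely the image of $\gamma\times[0,w]$ under $E$ when this restriction is an embedding, since the induced metric on such an embedded collar is the standard one on $C_{\ell_\gamma(X),w}$ by the uniqueness of the hyperbolic metric in Fermi coordinates around a geodesic.

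For the direction \ref{item:collar1}$\Rightarrow$\ref{item:collar2}: suppose there is no half-collar of width $w$, so $E$ fails to be injective on $\gamma\times[0,w]$. Let $w_0\le w$ be the infimum of the widths at which injectivity first fails; by continuity and compactness of $\gamma$ there exist two distinct points $(p_1,t_1),(p_2,t_2)$ with $t_1,t_2\le w_0$, $E(p_1,t_1)=E(p_2,t_2)=:q$, and moreover one can arrange $t_1=t_2=w_0$ (if the collision happened at a smaller parameter, injectivity would already fail below $w_0$; the standard argument is that the first failure occurs when two perpendicular rays meet, necessarily at equal length, meeting at equal angles). The concatenation of the two rays from $p_1$ and $p_2$ to $q$ is then a geodesic arc of length $2w_0\le 2w$ with endpoints on $\gamma$; after a small perturbation, or by taking $p_1\ne p_2$ and noting the two rays are distinct, this arc is simple (it has no self-intersections since each ray is an embedded segment and they meet only at $q$). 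This produces the required arc $a$. One should treat separately the degenerate possibility that the arc is not simple because a ray returns to hit $\gamma$ before length $w_0$, but in that case one gets an even shorter arc with endpoints on $\gamma$, so the conclusion still holds.

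For the direction \ref{item:collar2}$\Rightarrow$\ref{item:collar1}: suppose $a$ is a simple geodesic arc of length $\ell(a)\le 2w$ with both endpoints on $\gamma$. If a half-collar $C$ of width $w$ existed, consider the arc $a$; its endpoints lie on $\partial X=\gamma$, which is the geodesic boundary of $C$. A length-comparison argument in the cylinder $C_{\ell_\gamma(X),w}$ shows that any arc starting and ending on the geodesic boundary and staying inside $C$ must either have length $>2w$ or be homotopic (rel endpoints) into $\gamma$, hence not an essential geodesic arc; and an arc that leaves $C$ through the far boundary component must have length at least $2w$ with equality only in the degenerate case, contradicting simplicity or essentiality. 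More carefully: lift to the universal cover, where $\gamma$ lifts to a geodesic line $\tilde\gamma$ and the half-collar lifts to the region within distance $w$ of $\tilde\gamma$ on one side; a lift of $a$ is a geodesic segment between two lifts of $\gamma$, and since $a\subset X$ the whole segment (except possibly endpoints) lies in the union of translates of the half-collar, forcing its two endpoints onto two \emph{distinct} lifts of $\tilde\gamma$ at distance $\le 2w$ apart — but distinct lifts of a simple closed geodesic are at distance $>2w$ precisely when the half-collar of width $w$ is embedded, a contradiction. I expect the main obstacle to be making the ``first failure of injectivity occurs at equal parameters and produces a simple arc'' step fully rigorous: one must rule out the possibility that the two colliding rays are tangent (which forces them to coincide, contradicting $p_1\ne p_2$) and handle the case of a ray hitting $\gamma$ transversally before length $w_0$; both are handled by the classical Collar Lemma machinery (see \cite{Buser10}), so I would cite that rather than reprove it, and only spell out the short length-bookkeeping that yields $\ell(a)\le 2w$.
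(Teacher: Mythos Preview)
Your approach for \ref{item:collar1}$\Rightarrow$\ref{item:collar2} via the normal exponential map is essentially the paper's argument, which phrases it as growing the $\epsilon$-neighborhood $U_\epsilon$ of $\gamma$ up to the critical value $\epsilon_0\le w$ where $\overline{U}_{\epsilon_0}$ first touches itself, and then joining the two length-$\epsilon_0$ minimizing segments from the touching point to $\gamma$ into the desired arc.

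For \ref{item:collar2}$\Rightarrow$\ref{item:collar1}, however, the paper's argument is much shorter than your universal-cover route: if a half-collar $C$ of width $w$ exists and $a$ is any geodesic arc with endpoints on $\gamma$, then $a$ cannot lie entirely in $C$ (a geodesic arc in a hyperbolic cylinder with both endpoints on the geodesic boundary would create a geodesic bigon), so $a$ contains two disjoint sub-arcs $a_1,a_2$, each running from $\gamma$ to the far boundary of $C$; hence $\ell(a)>\ell(a_1)+\ell(a_2)\ge 2w$. Your universal-cover argument is salvageable but the sentence ``the whole segment \dots\ lies in the union of translates of the half-collar'' is false as written (the arc need not stay in the collar) and is not what you actually use; the correct reasoning is that a geodesic segment in $\tilde X\subset\mathbb H^2$ with both endpoints on the \emph{same} boundary geodesic $\tilde\gamma$ would have to be a sub-segment of $\tilde\gamma$, so the endpoints of the lift of $a$ lie on two distinct lifts of $\gamma$, whose $w$-neighborhoods in $\tilde X$ are disjoint precisely when the half-collar is embedded, forcing their distance to exceed $2w$. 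This works, but the paper's bigon-and-two-crossings argument avoids the cover entirely.
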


\begin{proof}
	We first prove the implication ``\ref{item:collar2}$\Rightarrow$\ref{item:collar1}'' by showing that if \ref{item:collar1} fails then \ref{item:collar2} fails as well.
	So suppose there is a half-collar $C$ of width $w$ around $\gamma$ and let $a$ be any geodesic arc with endpoints in $\gamma$. Since $a$ cannot be entirely contained in $C$ (otherwise it would give rise to a geodesic bigon, which is impossible), there are disjoint sub-arcs $a_1,a_2\subset a$, each of which joins a point of $\gamma$ with a point in the non-geodesic boundary component of $C$. It follows that 
	$\ell(a)>\ell(a_1)+\ell(a_2)\geq 2w$, hence \ref{item:collar2} fails.
	We have thus shown ``\ref{item:collar2}$\Rightarrow$\ref{item:collar1}''.
	
	As for the implication ``\ref{item:collar1}$\Rightarrow$\ref{item:collar2}'', consider the $\epsilon$-neighborhood $U_\epsilon:=\{x\in X\,;\,d(x,\gamma)<\epsilon\}$ of $\gamma$ in $X$. When $\epsilon$ is small enough, the closure $\overline{U}_\epsilon$ is homeomorphic to a cylinder with boundary, hence is a half-collar of width $\epsilon$. As $\epsilon$ grows larger, there is a critical value $\epsilon_0$ such that $\overline{U}_{\epsilon_0}$ stops to be a cylinder for the first time, which is characterized by the existence of a point $x_0\in X$ with $d(x,\gamma)=\epsilon_0$ such that $\overline{U}_{\epsilon_0}$ touches itself at $x_0$. One can then draw two geodesic segments of length $\epsilon_0$ from $x_0$ to $\gamma$ which fit together to form a simple geodesic arc of length $2\epsilon_0$.
	
	Now, Condition \ref{item:collar1} just means $\epsilon_0\leq w$. In this case, the arc that we just constructed implies that \ref{item:collar2} holds. This shows ``\ref{item:collar1}$\Rightarrow$\ref{item:collar2}''.
\end{proof}

We can now prove Theorem \ref{thm:half collar} by using Theorem \ref{main} and Proposition \ref{lower bound for chi geq 2}.
\begin{theorem}[=Theorem \ref{thm:half collar}]\label{thm:half collar-1}
Given $\epsilon>0$,  consider the following conditions defined for all $X\in \M_g$:
\begin{itemize}
\item[(c).] There is a half-collar around $\gamma$ in the $S_{g-1,1}$-part of $X$ with width $\frac{1}{2}\log g-\left(\frac{3}{2}+\epsilon\right)\log\log g$;
\item[(d).] $\ell_{\sys}^{\rm sep}(X)$ is achieved by a simple closed geodesic $\gamma$ separating $X$ into $S_{1,1}\cup S_{g-1,1}$;
\end{itemize}
Then we have
$$
\lim \limits_{g\to \infty} \Prob\left(X\in \M_g\,;\, \textit{$X$ satisfies $(c)$ and $(d)$} \right)=1.
$$ 
\end{theorem}

\begin{proof}
	Fix the function $\omega(g)$ as in Theorem \ref{main} and let $\mathcal{A}_g$ 
	and $\mathcal{B}_g$ denote the subsets of $\M_g$ as follow. 
	$$
	\mathcal{A}_g:=\left\{X\in\M_g\,;\ \parbox[l]{7cm}{$|\ell_{\sys}^{\rm sep}(X)-(2\log g - 4\log \log g)| \leq \omega(g)$ \\
	and	$\ell_{\sys}^{\rm sep}(X)$ is achieved only by simple\\ closed geodesics bounding one-holed torus}\right\}
	$$
	$$
	\mathcal{B}_g:=\{X\in\M_g\,;\, \sL_{1,2}(X)> 4\log g-10\log\log g-\omega(g)\}.
	$$
	
	Fix $\epsilon>0$ and suppose $g\geq3$ satisfies
	\begin{equation}\label{eqn:proof half collar}
	\omega(g)< 2\epsilon\log\log g.
	\end{equation}
	We claim that every $X\in\mathcal{A}_g\cap\mathcal{B}_g$ satisfies the condition stated in Theorem \ref{thm:half collar}. That is, for any simple closed geodesic $\gamma$ achieving $\lss(X)$, which separates $X$ into $S_{1,1}\cup S_{g-1,1}$ because $X\in\mathcal{A}_g$, there is a half-collar around $\gamma$ in the $S_{g-1,1}$-part of $X$ with width $\frac{1}{2}\log g-\left(\frac{3}{2}+\epsilon\right)\log\log g$.
	
	Suppose by contradiction that the claim is false. Then by Lemma \ref{prop:collar}, there exists an $X\in\mathcal{A}_g\cap\mathcal{B}_g$, a simple closed geodesic $\gamma\subset X$ achieving $\lss(X)$, and a simple geodesic arc $a$ in the $S_{g-1,1}$-part of $X$ with endpoints on $\gamma$, such that 
	$$
	\ell(a)\leq\log g-\left(3+2\epsilon\right)\log\log g.
	$$
	In this situation, there are simple closed geodesics $\gamma_1$ and $\gamma_2$ homotopic to the two closed piecewise geodesics formed by $a$ and the two arcs of $\gamma$ split out by $a$, respectively (see Figure \ref{figure:arc}), 
	\begin{figure}[h]
		\includegraphics[width=4.3cm]{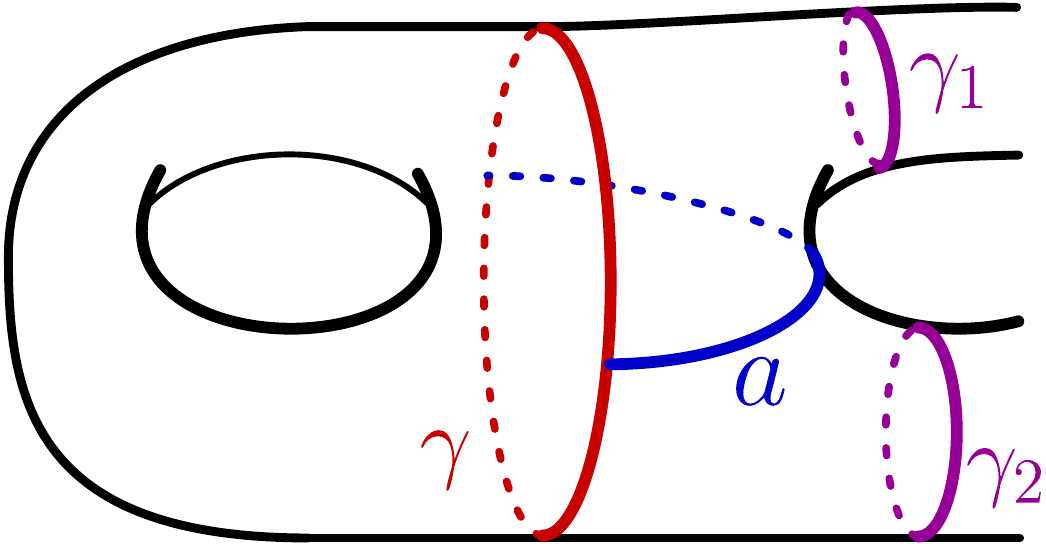}
		\caption{From an arc to a pair-of-pants}
		\label{figure:arc}
	\end{figure} 
	such that $\gamma_1$, $\gamma_2$ and $\gamma$ together bound a pair of pants outside of the one-holed torus $X_\gamma$.
	
	Since each of $\gamma_1$ and $\gamma_2$ is shorter than the corresponding closed piecewise geodesic, we have
	\begin{align}
	&\ell(\gamma_1)+\ell(\gamma_2)\leq \ell(\gamma)+2\ell(a)=\lss(X)+2\ell(a)\label{eqn:proof half collar 2}\\
	&\leq 2\log g-4\log\log g+\omega(g)+2\big(\log g-(3+2\epsilon)\log\log g\big)\nonumber\\
	&=4\log g-(10+4\epsilon)\log\log g+\omega(g).\nonumber
	\end{align}
	But on the other hand, by definition of $\sL_{1,2}(X)$ (see the definition in the Introduction) and the assumption $X\in\mathcal{B}_g$, we have
	$$
	\ell(\gamma_1)+\ell(\gamma_2)\geq\sL_{1,2}(X)> 4\log g-10\log\log g-\omega(g).
	$$
	This leads to a contradiction because  by \eqref{eqn:proof half collar}, the lower bound of $\ell(\gamma_1)+\ell(\gamma_2)$ here is greater than the upper bound in \eqref{eqn:proof half collar 2}. We have thus shown the claim.
	
	As $g\to\infty$, since $\Prob(\mathcal{A}_g)$ and $\Prob(\mathcal{B}_g)$ both tend to $1$ by Theorem \ref{main} and Proposition \ref{lower bound for chi geq 2}, we have
	$\Prob(\mathcal{A}_g\cap\mathcal{B}_g)\to 1$ as well. 
	In view of the above claim, this implies the required statement.
\end{proof}

\subsection{Extremal length}\label{subsec-el}
Given a Riemann surface $U$ and a set $\Gamma$ of rectifiable curves on $U$, the \emph{extremal length} $\exl_\Gamma(U)$ of $\Gamma$ is defined as (\eg see \cite[Chapter 4]{Ahlfors-ci} and \cite[Section 3]{Kerck80})
$$
\exl_\Gamma(U):=\sup_{\sigma}\frac{\inf_{\alpha\in\Gamma}\ell_\sigma(\alpha)^2}{A_\sigma(U)},
$$
where the supremum is over all Borel-measurable conformal metrics $\sigma$ on $U$, and $\ell_\sigma(\alpha)$ and $A_\sigma(U)$ denote the length of $\alpha$ and the area of $U$ under $\sigma$, respectively.
In particular, given a closed hyperbolic surface $X\in\M_g$ and a simple closed geodesic $\gamma\subset X$, we denote 
$$
\exl_\gamma(X):=\exl_{\Gamma_\gamma}(X)
$$
for the set $\Gamma_\gamma$ of all rectifiable closed curves on $X$ homotopic to $\gamma$. We then define the \emph{separating extremal length systole} $\exlss(X)$ of $X$ as 
$$
\exlss(X):=\inf_\gamma\exl_\gamma(X),
$$
where the infimum is over all separating simple closed geodesics on $X$.

Maskit \cite{Maskit} established some basic relations between the extremal length $\exl_\gamma(X)$ and the hyperbolic length $\ell_\gamma(X)$. The following lemma is a reformulation of \cite[Prop.\@ 1]{Maskit}:
\begin{lemma}\label{lemma:extremal}
	Let $X\in\M_g$. For any simple closed geodesic $\gamma\subset X$, we have
	$$
	\ell_\gamma(X)\leq \pi \exl_\gamma(X).
	$$
	Conversely, if there exists a half-collar around $\gamma$ with width $w$, then
	$$
	\ell_\gamma(X)\geq2\big(\arctan(e^w)-\tfrac{\pi}{4}\big)\exl_\gamma(X).
	$$
\end{lemma}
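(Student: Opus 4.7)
The plan is to derive both inequalities from the standard correspondence between the extremal length $\exl_\gamma(X)$ and the conformal moduli of embedded annuli in $X$ with core freely homotopic to $\gamma$: if $A\subset X$ is any such embedded annulus, then
\[
\exl_\gamma(X) \;\leq\; \tfrac{1}{\mathrm{mod}(A)},
\]
and by Strebel's theorem on extremal annular trajectories of quadratic differentials on a closed Riemann surface, equality holds when $A$ is the extremal Jenkins--Strebel annulus. The elementary direction follows by restriction: any conformal metric on $X$ restricts to a test metric on $A$ for $\exl_c(A)=1/\mathrm{mod}(A)$, with no larger area and no larger infimum of length over the relevant homotopy class. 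I would invoke the equality as a standard black box.

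For the second inequality I would compute $\mathrm{mod}(C)$ for the half-collar $C=C_{\ell_\gamma(X),w}$ directly. Its hyperbolic metric $ds^2+\cosh^2(s)\,d\theta^2$ on $[0,w]\times(\mathbb{R}/\ell_\gamma(X)\mathbb{Z})$ becomes a conformal multiple of the flat metric $d\sigma^2+d\theta^2$ after the Gudermannian change of variable $\sigma=2\arctan(e^s)-\tfrac{\pi}{2}$, because $d\sigma=ds/\cosh s$. The new coordinate $\sigma$ ranges over $[0,\,2\arctan(e^w)-\tfrac{\pi}{2}]$, so $C$ is conformal to a flat cylinder of circumference $\ell_\gamma(X)$ and height $2\arctan(e^w)-\tfrac{\pi}{2}=2(\arctan(e^w)-\tfrac{\pi}{4})$. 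Therefore $\mathrm{mod}(C)=2(\arctan(e^w)-\tfrac{\pi}{4})/\ell_\gamma(X)$, and the inequality $\exl_\gamma(X)\leq 1/\mathrm{mod}(C)$ rearranges to the claimed bound.

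For the first inequality I would apply the same Gudermannian computation to the holonomy cyclic cover $\widetilde{X}_\gamma=\mathbb{H}^2/\langle T_\gamma\rangle$, where $T_\gamma$ is the hyperbolic translation of length $\ell_\gamma(X)$ representing $\gamma$. This is itself a hyperbolic annulus, and now letting $s$ range over all of $\mathbb{R}$ makes $\sigma$ range over $(-\tfrac{\pi}{2},\tfrac{\pi}{2})$, so $\widetilde{X}_\gamma$ is conformally a round flat annulus of modulus $\pi/\ell_\gamma(X)$. Any embedded annulus $A\subset X$ with core homotopic to $\gamma$ lifts conformally to an embedded subannulus of $\widetilde{X}_\gamma$ of the same modulus; since a conformally embedded subannulus has modulus no larger than its ambient annulus, $\mathrm{mod}(A)\leq \pi/\ell_\gamma(X)$. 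Combined with the extremal-annulus equality $\exl_\gamma(X)=1/\sup_A\mathrm{mod}(A)$, this yields $\exl_\gamma(X)\geq \ell_\gamma(X)/\pi$.

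The main subtlety is the nontrivial direction $\exl_\gamma(X)\geq 1/\sup_A\mathrm{mod}(A)$, which rests on the existence of the extremal Jenkins--Strebel annulus on the closed surface $X$; I would cite this rather than reprove it. Once that is granted, the whole argument reduces to the single Gudermannian substitution, applied once on a half-strip and once on the full strip.
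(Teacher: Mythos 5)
Your argument is correct, but it takes a genuinely different route from the paper's. The paper's entire proof is a citation to Maskit: inequality (2) of \cite[Prop.~1]{Maskit} is $\ell_\gamma(X)\leq\pi\exl_\gamma(X)$ verbatim, and inequality (1) there gives $\ell_\gamma(X)\geq\theta\exl_\gamma(X)$ when the half-collar lifts to a sector of angle $\theta$ in $\mathbb{H}^2$; the only work the paper does is the conversion $\cosh w=1/\cos\theta\iff\theta=2(\arctan(e^w)-\tfrac{\pi}{4})$, which is exactly your Gudermannian change of variable. You instead rederive both bounds from the annulus--modulus description of extremal length. For the half-collar inequality your argument is fully self-contained and uses only the elementary direction $\exl_\gamma(X)\leq 1/\mathrm{mod}(C)$ together with the (correct) computation $\mathrm{mod}(C_{\ell,w})=2(\arctan(e^w)-\tfrac{\pi}{4})/\ell$ --- arguably cleaner than quoting Maskit. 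For $\ell_\gamma(X)\leq\pi\exl_\gamma(X)$ you invoke the full Jenkins--Strebel equality $\exl_\gamma(X)=1/\sup_A\mathrm{mod}(A)$; this is a legitimate citation but a much heavier input than needed (Maskit obtains the same bound directly from the cyclic cover without the existence of an extremal annulus). Two small repairs: in the restriction argument, the infimum of $\sigma$-length over core curves of $A$ is \emph{no smaller} (not ``no larger'') than the infimum over all of $\Gamma_\gamma$, since the former is a subfamily of the latter --- that is what makes the length-squared-over-area ratio for $A$ at least that for $X$ and hence gives $\exl_\gamma(X)\leq \exl_c(A)$; and in the lifting step you should note that the relevant component of the preimage of $A$ in $\mathbb{H}^2/\langle T_\gamma\rangle$ maps down homeomorphically (because $\pi_1(A)$ injects onto a conjugate of $\langle[\gamma]\rangle$), so its modulus equals $\mathrm{mod}(A)$ and monotonicity of modulus for essential subannuli applies.
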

\begin{proof}
	The first inequality is exactly Inequality (2) in \cite[Prop.\@ 1]{Maskit}. On the other hand, Inequality (1) in \cite[Prop.\@ 1]{Maskit} implies that if we identify the universal cover of $X$ with the upper half-plane $\mathbb{H}^2$ in such a way that $\gamma$ lifts to $\boldsymbol{i}\R_+$, and assume that $\gamma$ has a half-collar $C$  which lifts to 
	$$
	\left\{z\,;\,\tfrac{\pi}{2}-\theta\leq \arg(z)\leq \tfrac{\pi}{2}\right\}
	$$
	for some $\theta\in(0,\frac{\pi}{2})$, then $\ell_\gamma(X)\geq \theta\exl_\gamma(X)$. By an elementary hyperbolic-geometric calculation, the width $w$ of $C$ is related to $\theta$ by 
	$\cosh(w)=\frac{1}{\cos\theta}$, which is equivalent to
	$2\big(\arctan(e^w)-\tfrac{\pi}{4}\big)=\theta$ (this can be seen by using the trigonometric identity $\tan(\phi)+\cot(\phi)=2\csc(2\phi)$). The second required inequality follows.
\end{proof}

We can now deduce Theorem \ref{cor:extremal} from Theorem \ref{main}.
\begin{theorem}[=Theorem \ref{cor:extremal}]\label{cor:extremal-1}
Given any $\epsilon>0$, then we have
$$
\lim \limits_{g\to \infty} \Prob\left(X\in \M_g\,;\, \frac{\exlss(X)}{\lss(X)}< \frac{2+\epsilon}{\pi} \right)=1.
$$
As a consequence of Theorem \ref{main}, we have
$$
\lim \limits_{g\to \infty} \Prob\left(X\in \M_g\,;\, \frac{(2-\epsilon)}{\pi}\log g< \exlss(X)< \frac{(4+\epsilon)}{\pi}\log g \right)=1.
$$
\end{theorem}

\begin{proof}
	Let $\mathcal{A}_g$ denote the subset of $\M_g$ consisting of those $X\in\M_g$ satisfying the conditions in Theorem \ref{main} and \ref{thm:half collar}. The sequence $(\mathcal{A}_g)$ has the property that given any $w>0$, every $X\in\mathcal{A}_g$ with $g$ large enough contains a half-collar of width $w$ around some separating simple closed geodesic $\gamma$ with $\ell_\gamma(X)=\lss(X)$. 
	
	Now fix $\epsilon>0$ and let $w_\epsilon>0$ be large enough such that
	$$
	\frac{1}{2(\arctan(e^{w_\epsilon})-\frac{\pi}{4})}\leq\frac{2+\epsilon}{\pi}.
	$$
	For every $X\in\mathcal{A}_g$ with $g$ large enough, letting $\gamma\subset X$ be the separating simple closed geodesic described in the property above, which achieves $\lss(X)$ and has a half-collar of width $w_\epsilon$. By Lemma \ref{lemma:extremal}, we have
	\begin{eqnarray*}
	\frac{\exlss(X)}{\lss(X)}&=&\frac{\exlss(X)}{\ell_\gamma(X)}\\
&\leq&\frac{\exl_\gamma(X)}{\ell_\gamma(X)}\\
&\leq& \frac{1}{2(\arctan(e^{w_\epsilon})-\frac{\pi}{4})}\\
&\leq&\frac{2+\epsilon}{\pi}.
	\end{eqnarray*}
	Therefore, the first statement of the theorem follows from Theorem \ref{main}. The second statement is then a consequence of the first statement, the fact that
	$$
	\lss(X)\leq\pi\exlss(X),
	$$
	which follows from Lemma \ref{lemma:extremal}, and the fact that 
	$$
	\lim_{g\to\infty}\Prob\big(X\in\M_g\,;\,(2-\epsilon)\log g< \lss(X)< (2+\epsilon)\log g\big)=0,
	$$
	which follows from Theorem \ref{main}. The proof is now complete.
\end{proof}

\section{Non-simple systole and expected value of $\sL_1$}\label{section exp-L1}
In this section consider the non-simple systole and the expected value of $\mathcal{L}_1$ over $\sM_g$ for large genus. First we provide the following elementary property needed in the proofs of Theorem \ref{cor:ns systole} and \ref{cor E[L1]}. 
\begin{proposition}\label{ns-sub}
	Let $X\in \sM_g$ and $\gamma\subset X$ be a self-intersecting closed geodesic. Then there exists a connected subsurface $X(\gamma)$ of $X$ such that 
	\begin{itemize}
		\item[(1)]  $\gamma\subset X(\gamma)$;  
		
		\item[(2)]  the boundary $\partial X(\gamma)$ is a simple closed multi-geodesic with
		\[\ell\left(\partial X(\gamma)\right) \leq 2\ell(\gamma);\]
		
		\item[(3)] $\area(X(\gamma)) \leq 3\ell(\gamma)$.
	\end{itemize}
\end{proposition}
\bp
The construction for $X(\gamma)$ is the same as the one for $X_3$ at the beginning of Section \ref{section union}, only with the role of $X_1\cup X_2$ replaced by $\gamma$. Namely, we get $X(\gamma)$ by deforming the boundary components of $\gamma$ (or more precisely, boundary components of an $\varepsilon$-neighborhood of $\gamma$ for small enough $\varepsilon$) into simple closed geodesics in the way described there. Property (2) is clear from the construction.

By construction $\gamma$ (or its small $\varepsilon$-neighborhood) is freely homotopic to a subset of $X(\gamma)$. Since $X(\gamma)$ is a surface with geodesic boundary, the unique closed geodesic representing this free homotopy class should be contained in $X(\gamma)$. So property (1) holds.

%

For (3), we write 
$$
X(\gamma)\setminus \gamma= (\sqcup D_i) \sqcup (\sqcup C_j)
$$ 
where the $D_i$'s and the $C_j$'s are topological discs and cylinders, respectively, all disjoint from each other. By the classical Isoperimetric Inequality (\eg see \cite{Buser10, WX18}), we know that
\[\area(D_i)\leq \ell(\partial D_i) \quad \text{and} \quad \area(C_j)\leq \ell(\partial C_j).\]
Thus, we have
\begin{eqnarray*}
	\area(X(\gamma))&=&\area (X(\gamma)\setminus \gamma)=\textstyle\sum_i\area(D_i)+\textstyle\sum_j\area(C_j)\\
	&\leq & \textstyle\sum_i\ell(\partial D_i)+\textstyle\sum_j\ell(\partial C_j)\\
	&\leq & \ell(\partial X(\gamma))+\ell(\gamma) \\
	&\leq & 3\ell_{\gamma}(X),
\end{eqnarray*}
as required.
\ep

\begin{rem*}
The multi-geodesic $\partial X(\gamma)$ is empty if $\gamma$ is filling in $X$.
\end{rem*}

Now we are ready to prove Theorem \ref{cor:ns systole}.
\bt[=Theorem \ref{cor:ns systole}]
Given any $\epsilon>0$, then we have
$$
\lim \limits_{g\to \infty} \Prob\left(X\in \M_g\,;\, (1-\epsilon)\log {\textit{g}}< \lns(X)< 2\log {\textit{g}} \right)=1.
$$
\et
\bp
	Fix the function $\omega(g)$ as in Theorem \ref{main} and let $\mathcal{C}_g$ 
	and $\mathcal{D}_g$ denote the subsets of $\M_g$ as follow. 
	$$
	\mathcal{C}_g:=\left\{X\in\M_g\,;\ \parbox[l]{7cm}{$|\ell_{\sys}^{\rm sep}(X)-(2\log g - 4\log \log g)| \leq \omega(g)$\\ 
	and $\ell_{\sys}^{\rm sep}(X)$ is achieved only by simple closed geodesics bounding one-holed torus}\right\}
	$$
	$$
	\mathcal{D}_g:=\{X\in\M_g\,;\,\sL_{1}(X) \geq 2\log g-4\log\log g-\omega(g)\}.
	$$
By Theorem \ref{main} and \ref{cor L1} we know that
$$
\lim \limits_{g\to \infty} \Prob\left(X \in \mathcal{C}_g\cap \mathcal{D}_g\right)=1.
$$
So it suffices to show that given any $\epsilon>0$, as $g\to \infty$, for any $X\in \mathcal{C}_g\cap \mathcal{D}_g$ we have
\[(1-\epsilon)\log {\textit{g}}< \lns(X)< 2\log {\textit{g}}.\]

We first show the upper bound: $\lns(X)< 2\log {\textit{g}}$. Since $X\in \mathcal{C}_g$, one may let $\gamma\subset X$ be a shortest simple separating closed geodesic bounding a one-holed torus $X_{1,1}$. Let $w(\gamma)>0$ be the maximal value such that the half collar $C(w(\gamma))$ is embedded in $X_{1,1}$ where \[C(w(\gamma)):=\{z\in X_{1,1}\,;\, \dist(z, \gamma)<w(\gamma)\}.\]
It is known that the hyperbolic metric $ds^2$ on $C(w(\gamma))$ satisfies that
\[ds^2=d\rho^2+ \left(\ell_{\sys}^{\rm sep}(X)\cosh \rho\right)^2 dt^2\]
where $\rho \in [0,w(\gamma))$ is the distance to $\gamma$ and $t \in [0,1]$. Since $\area(X_{1,1})=2\pi$ and $C(w(\gamma))$ is embedded in $X_{1,1}$, we have $$\area(C(w(\gamma)))=\ell_{\sys}^{\rm sep}(X)\sinh \left(w(\gamma)\right)<2\pi.$$
By the choice of $w(\gamma)$ we know that one component $\alpha$ of the boundary of the half collar $C(w(\gamma))$ is a non-simple closed curve. By continuity we know that the length $\ell(\alpha)$ satisfies that
\beqar
\ell(\alpha)&=&\ell_{\sys}^{\rm sep}(X)\cosh \left(w(\gamma)\right)\\
&=&\sqrt{\ell_{\sys}^{\rm sep}(X)^2+\ell_{\sys}^{\rm sep}(X)^2\sinh^2 \left(w(\gamma)\right)}\\&<&\sqrt{\ell_{\sys}^{\rm sep}(X)^2+(2\pi)^2}\\
&<& 2\log g
\eeqar
for large $g$ because $X\in \mathcal{C}_g$. Then the unique closed geodesic in $X_{1,1}$ representing $\alpha$ gives the upper bound.

Now we show the lower bound: $(1-\epsilon)\log {\textit{g}}< \lns(X)$. First by above one may assume that $\gamma' \subset X$ is a shortest non-simple closed geodesic with $\ell_{\gamma'}(X)=\lns(X)<2 \log g$ for large $g$. Then by Proposition \ref{ns-sub} there exists a subsurface $X(\gamma')$ of $X$ such that 
\[\ell(\partial (X(\gamma'))) \leq 2 \lns(X)<4\log g \ \text{and} \ \area(X(\gamma'))\leq 3\lns(X)<6\log g.\]
Recall that by Gauss-Bonnet we know that $\area(X)=4\pi(g-1)$. So for large $g$, we have that $X(\gamma')$ is a proper subsurface of $X$. Clearly the boundary $\partial X(\gamma')$ consists of multi simple closed geodesics which separate $X$. Hence, we have
$$\mathcal{L}_1(X)\leq \ell(\partial (X(\gamma')))\leq 2 \lns(X).$$ 
Then the lower bound follows because $X\in \mathcal{D}_g$.
\ep

Before proving Theorem \ref{cor E[L1]}, unlike the unboundness of $\ell_{\sys}^{\rm sep}$ we first show that 
\begin{proposition}\label{L1-upp}
There exists a universal constant $C>0$ independent of $g$ such that
$$\sup_{X\in \sM_g}\mathcal{L}_1(X)\leq C \log g.$$
\end{proposition}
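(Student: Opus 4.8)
The goal is a purely geometric upper bound $\sL_1(X)\leq C\log g$ valid for \emph{every} $X\in\M_g$, so probabilistic tools are irrelevant here; instead the plan is to exhibit a single separating simple closed multi-geodesic of controlled length on an arbitrary hyperbolic surface of genus $g$. First I would recall the standard thick-thin decomposition: there is a universal Margulis constant $\eps_0>0$ such that the $\eps_0$-thin part of $X$ is a disjoint union of standard collars around short geodesics and cusp neighborhoods (there are no cusps here). The key classical input is Bers' theorem: every $X\in\M_g$ admits a pants decomposition all of whose curves have length at most the Bers constant $B_g$, and in fact one can take $B_g\leq C_1\sqrt{g}$ (Buser's bound); but even $B_g\leq C_1 g$ would be far too weak to give $\log g$, so Bers alone is not enough and the real work is a counting/pigeonhole argument.

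The actual approach I would take is a volume-versus-injectivity-radius packing argument. Since $\area(X)=2\pi(2g-2)$, one shows that $X$ cannot have large injectivity radius everywhere: a hyperbolic ball of radius $r$ has area growing like $e^{r}$, so an embedded ball of radius $r$ forces $e^{r}\lesssim g$, i.e. the injectivity radius is at most roughly $\log g$ at \emph{every} point. Now pick any point $p\in X$ and take a shortest non-contractible loop $\delta$ through $p$; then $\ell(\delta)\leq 2\cdot\inj_p(X)\leq 2\log g + O(1)$. The loop $\delta$ may be non-separating. To produce a \emph{separating} multi-geodesic one iterates: if $\delta_1$ is non-separating, cut along it, take a shortest loop crossing it (again bounded by the injectivity-radius estimate applied on a subsurface whose area is still $\leq 2\pi(2g-2)$), and continue. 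After at most $g$ steps one obtains a collection of curves whose union separates; but naively this only gives length $O(g\log g)$. The correct way to get $O(\log g)$ is instead to use the Cheeger-type / isoperimetric structure directly: take the curve $\delta$ of length $\leq 2\log g+O(1)$ through $p$, consider the subsurface $Y$ it cuts off on one side (or the full complement $X\setminus\delta$), and repeat the short-loop construction inside the piece of \emph{larger} complexity; carefully one shows that either $\delta$ already separates, or together with one more short curve bounding a subsurface of complexity $\geq 1$ it does, using Lemma \ref{lemma chiU12}-type control: the union of two subsurfaces of bounded boundary length has boundary length bounded by the sum, and in genus $g$ one only needs finitely many such curves whose total length telescopes because each successive curve lives in a piece whose complexity drops by at least one, while its length stays $\leq 2\log(\text{current complexity})+O(1)\leq 2\log g+O(1)$. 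Summing the logarithms of a decreasing complexity sequence still gives $O(g\log g)$, so to reach $O(\log g)$ one should instead invoke the sharper statement that a \emph{single} short curve suffices: by the injectivity radius bound there is a point $p$ with $\inj_p(X)\leq \log(4g)+O(1)$, and a shortest loop at $p$ that is \emph{separating} can be found by a homology argument—among the $2g$ independent shortest loops through points of a maximal packing, at least one independent combination of a bounded number of them is separating, with total length still $O(\log g)$.

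Concretely the cleanest route, which I would write up, is: (1) establish $\min_{p\in X}\inj_p(X)\leq \log g + O(1)$ from the area bound; (2) at a realizing point $p$, the ball of radius $R:=\frac12\log g+O(1)$ about $p$ is \emph{not} embedded, hence contains two distinct geodesic arcs from $p$ to some $q$, producing an essential loop $\delta$ of length $\leq 2R+O(1)=\log g+O(1)$; (3) if $\delta$ separates, done with $C=1+o(1)$; if not, then $\delta$ together with its image under a homology argument, or simply $\delta\cup\delta'$ where $\delta'$ is a second such loop chosen so that $[\delta]$ and $[\delta']$ span a rank-2 isotropic subspace making $\delta+\delta'$ null-homologous—bounding a subsurface—gives a separating multi-curve of total length $\leq 2\log g+O(1)$. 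The main obstacle, and the step requiring genuine care, is step (3): guaranteeing that one can upgrade a short \emph{non-separating} loop to a short \emph{separating} multi-curve without the length blowing up—this is exactly where one must use that the short loops coming from small injectivity radius are abundant (a whole packing's worth, at mutually bounded distance $O(\log g)$) so that $\mathbb{Z}/2$-homology linear algebra produces a null-homologous combination of boundedly many of them. I expect the write-up of step (3) to be the bulk of the argument; steps (1)–(2) are standard hyperbolic geometry.
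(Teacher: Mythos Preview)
Your step (3) is a genuine gap, and the homology/packing argument you sketch cannot close it. The difficulty is that ``boundedly many'' short loops need not admit any $\mathbb{Z}/2$-homology relation: $H_1(X;\mathbb{Z}/2)\cong(\mathbb{Z}/2)^{2g}$, so forcing a null-homologous combination by linear algebra requires at least $2g+1$ loops, and then the total length is $\gtrsim g\log g$, not $\log g$. Conversely, if you insist on a bounded number of loops, there is no mechanism to make them homologically dependent. Concretely, take $X$ with a very short non-separating systole $\alpha$ and a huge collar around it; the short loops your injectivity-radius argument produces near $\alpha$ are all homotopic to $\alpha$, while those produced elsewhere are essentially arbitrary non-separating classes with no reason to sum to zero. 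Your iterated-cutting variant has the same problem: cutting along non-separating curves one at a time builds a homologically independent family, which only separates after $g$ steps.

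The paper's proof bypasses this obstruction entirely by importing a non-trivial external result. By a theorem of Sabourau, every $X\in\M_g$ carries a \emph{separating} (but possibly non-simple) closed geodesic $\gamma'$ with $\ell_{\gamma'}(X)\leq C'\log g$. One then regularizes: take a thin tubular neighborhood of $\gamma'$, straighten its boundary components to simple closed geodesics (filling in any that are trivial) to obtain a subsurface $X(\gamma')$ with geodesic boundary, so that $\ell(\partial X(\gamma'))\leq 2\ell_{\gamma'}(X)\leq 2C'\log g$. The isoperimetric inequality for discs and annuli (as in Section~\ref{section union}) gives $\area(X(\gamma'))\leq 3C'\log g$, which for large $g$ is strictly less than $\area(X)=4\pi(g-1)$, so $X(\gamma')$ is a proper subsurface and $\partial X(\gamma')$ is the desired separating simple multi-geodesic. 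The real content---producing a short \emph{separating} cycle---is thus hidden in Sabourau's theorem, whose proof is itself a non-trivial min-max/sweepout argument and is not recoverable from the injectivity-radius considerations you outline.
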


\begin{proof}
Since each $X_g\in \sM_g$ admits a pants decomposition such that each geodesic has length no more than the Bers' constant depending on $g$ (see \eg \cite[Theorem 5.1.2]{Buser10}), $\sup_{X\in \sM_g} \mathcal{L}_1(X) <\infty$ has a upper bound only depending on $g$. So it suffices to consider the case that $g$ is large. For any $X\in \sM_g$, by \cite[Theorem 1.3]{Sabo08} of Sabourau we know that there exists a separating closed geodesic $\gamma \subset X$ which may not be simple such that
\[\ell_{\gamma}(X)\leq C' \log g\]
for some universal constant $C'>0$. If $\gamma$ is simple, we are done. Now we assume that $\gamma$ is non-simple, by Proposition \ref{ns-sub} there exists a subsurface $X(\gamma)$ of $X$ such that
\[\ell(\partial X(\gamma))\leq 2C' \log g \quad \text{and} \quad  \area(X(\gamma))\leq 3C'\log g.\]
Recall that by Gauss-Bonnet we know that $\area(X)=4\pi(g-1)$. So for large $g$, we have that $X(\gamma)$ is a proper subsurface of $X$. Clearly the boundary $\partial X(\gamma)$ consists of multi simple closed geodesics which separate $X$. Hence, we have
\[\mathcal{L}_1(X)\leq \ell(\partial X(\gamma))\leq 2C'\log g\]
which completes the proof by setting $C=2C'>0$. 
\end{proof}

\begin{rem*}
It is known by work of Buser-Sarnak \cite{BS94} that for all $g\geq 2$, there exists a hyperbolic surface $X_g\in \M_g$ such that $\ell_{sys}(X_g)\geq K\log g$ for some uniform constant $K>0$ independent of $g$ (one may also see \cite{BMP14} for more details). This together with the proposition above imply that
$$\sup_{X\in \sM_g}\mathcal{L}_1(X)\asymp \log g.$$
\end{rem*}

Now we are ready to prove Theorem \ref{cor E[L1]}.
\begin{theorem}[=Theorem \ref{cor E[L1]}]\label{cor E[L1]-1}
The expected value $\E[\sL_1]$ of $\sL_1\bracedcdot$ on $\M_g$ satisfies
\begin{equation*}
\limg\frac{\E[\sL_1]}{\log g} = 2.
\end{equation*}
\end{theorem}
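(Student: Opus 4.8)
The plan is to establish the two matching bounds
$$
\liminf_{g\to\infty}\frac{\E[\sL_1]}{\log g}\geq 2\qquad\text{and}\qquad\limsup_{g\to\infty}\frac{\E[\sL_1]}{\log g}\leq 2,
$$
and then conclude. The lower bound will come from the high-probability lower bound on $\sL_1$ already established, and the upper bound from combining the high-probability upper bound with the deterministic bound $\sL_1\leq C\log g$ of Proposition \ref{L1-upp} to control the contribution of the (small-probability) bad set.

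For the lower bound: fix $\epsilon>0$ and choose any admissible $\omega(g)$ (say $\omega(g)=\log\log\log g$, which satisfies \eqref{eq-omega}). By Proposition \ref{prop lower bound}, the event $\mathcal{G}_g:=\{X\in\M_g;\ \sL_1(X)\geq 2\log g-4\log\log g-\omega(g)\}$ has $\Prob(\mathcal{G}_g)\to1$. Since $\sL_1\geq0$ everywhere, restricting the expectation integral to $\mathcal{G}_g$ gives
$$
\E[\sL_1]\geq \big(2\log g-4\log\log g-\omega(g)\big)\Prob(\mathcal{G}_g),
$$
so $\liminf_{g\to\infty}\E[\sL_1]/\log g\geq 2$. (Here I use that $4\log\log g+\omega(g)=o(\log g)$.)

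For the upper bound: again fix the same $\omega(g)$ and let $\mathcal{B}_g:=\{X\in\M_g;\ \sL_1(X)> 2\log g-4\log\log g+\omega(g)\}$. By Theorem \ref{cor L1} (or directly by Theorem \ref{prop upper bound}, since on $\mathcal A(\omega(g))$ there is a separating simple closed geodesic of length $\leq 2\log g-4\log\log g+\omega(g)$, which bounds $\sL_1$ from above), we have $\Prob(\mathcal{B}_g)\to0$. Split
$$
\E[\sL_1]=\frac{1}{V_g}\int_{\M_g\setminus\mathcal{B}_g}\sL_1(X)\,dX+\frac{1}{V_g}\int_{\mathcal{B}_g}\sL_1(X)\,dX.
$$
On $\M_g\setminus\mathcal{B}_g$ we have $\sL_1(X)\leq 2\log g-4\log\log g+\omega(g)$, so the first term is at most $2\log g-4\log\log g+\omega(g)$. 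On $\mathcal{B}_g$ we use the deterministic bound $\sL_1(X)\leq C\log g$ from Proposition \ref{L1-upp}, so the second term is at most $C\log g\cdot\Prob(\mathcal{B}_g)=o(\log g)$ since $\Prob(\mathcal{B}_g)\to0$. Therefore $\limsup_{g\to\infty}\E[\sL_1]/\log g\leq 2$, and combined with the lower bound this yields $\lim_{g\to\infty}\E[\sL_1]/\log g=2$.

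The only genuinely non-routine ingredient is Proposition \ref{L1-upp}, which is needed precisely to bound the bad-set contribution — without a deterministic (or at least subpolynomial-in-expectation) upper bound on $\sL_1$, a probability-$o(1)$ event could still carry unbounded expectation. Since Proposition \ref{L1-upp} is already proved (via Sabourau's bound on the length of a shortest separating, possibly non-simple, closed geodesic together with an isoperimetric surgery argument), the remaining steps are the elementary two-sided estimate above, so I expect no serious obstacle; care is only needed to verify that $4\log\log g+\omega(g)=o(\log g)$ and $C\log g\cdot\Prob(\mathcal{B}_g)=o(\log g)$, both of which are immediate from \eqref{eq-omega} and Theorem \ref{cor L1}.
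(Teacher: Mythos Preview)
Your proposal is correct and follows essentially the same approach as the paper: both arguments split the expectation into a high-probability ``good'' set where $\sL_1$ is pinned near $2\log g$ and a small-probability ``bad'' set where the deterministic bound of Proposition \ref{L1-upp} controls the contribution. The only cosmetic difference is that the paper uses a single two-sided event $\mathcal{B}(\omega(g))=\{|\sL_1(X)-(2\log g-4\log\log g)|\leq\omega(g)\}$ for both bounds, whereas you use separate one-sided events $\mathcal{G}_g$ and $\mathcal{B}_g$; the logic is otherwise identical.
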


\begin{proof}
First we set  
\[\mathcal{B}(\omega(g))=\{X\in \sM_g\,;\, |\mathcal{L}_1(X)-(2\log g-4 \log \log g)|\leq \omega(g)\}.\]
By Theorem \ref{cor L1} we know that
\[\limg \frac{\Vol(\mathcal{B}(\omega(g)))}{V_g}=1.\]

For the lower bound, we have
\begin{eqnarray*}
\frac{\E[\mathcal{L}_1]}{\log g} &\geq & \frac{1}{V_g}\int_{\mathcal{B}(\omega(g))}\frac{\mathcal{L}_1(X) }{\log g}dX \\
&\geq & \frac{2\log g-4 \log \log g-\omega(g)}{\log g}\cdot \frac{\Vol(\mathcal{B}(\omega(g)))}{V_g}
\end{eqnarray*}
which implies that
\[\liminf \limits_{g\to \infty}\frac{\E[\mathcal{L}_1]}{\log g}\geq 2.\]

For the upper bound, it follows by Proposition \ref{L1-upp} that
\begin{eqnarray*}
&& \frac{\E[\mathcal{L}_1]}{\log g} =  \frac{1}{V_g}\int_{\mathcal{B}(\omega(g))}\frac{\mathcal{L}_1(X) }{\log g}dX+ \frac{1}{V_g}\int_{\sM_g\setminus \mathcal{B}(\omega(g))}\frac{\mathcal{L}_1(X) }{\log g}dX \\
&&\leq  \frac{2\log g-4 \log \log g+\omega(g)}{\log g}\cdot \frac{\Vol(\mathcal{B}(\omega(g)))}{V_g}+ C \cdot \frac{\Vol(\sM_g\setminus \mathcal{B}(\omega(g)))}{V_g}. 
\end{eqnarray*}
Letting $g\to \infty$, we get
\[\limsup \limits_{g\to \infty}\frac{\E[\mathcal{L}_1]}{\log g}\leq 2,\]
as required.
\end{proof}

\section{Further questions}\label{section questions}
In this last section we propose several questions related to the results in this article.

\subsection{Shortest separating simple closed multi-geodesics} By Theorem \ref{main} we know that on a generic $X\in \M_g$, a separating systolic closed geodesic of $X$ separates $X$ into $S_{1,1}\cup S_{g-1,1}$. However, by Theorem \ref{cor L1} we only know that on a generic $X\in \M_g$, the shortest separating simple closed multi-geodesics of $X$ separates $X$ into either $S_{1,1}\cup S_{g-1,1}$ or $S_{0,3}\cup S_{g-2,3}$. A natural question is to determine the weights of these two cases. Or more precisely, 
\begin{question}
On a generic $X\in \M_g$, is $\sL_1(X)$ achieved by a separating systole as $g\to \infty$?
\end{question}

\subsection{Expectation of $\lss$}
Theorem \ref{cor E[L1]} tells that as $g\to \infty$, the expected value $\E[\sL_1]$ behaves like $2\log g$. The two ingredients in the proof are Theorem \ref{cor L1} and Proposition \ref{L1-upp}, the latter one says that $\sup_{X\in\mathcal{M}_g}\mathcal{L}_1(X)\leq C \log g$ for some universal constant $C>0$. For $\lss$, although we still have the first ingredient, namely Theorem 1, the second is missing because it is known that $\sup_{X\in \mathcal{M}_g}\lss(X)=\infty$. So we raise the following question:
\begin{question}\label{q-exp-ss}
Does the following limit hold: $$\lim \limits_{g\to \infty}\frac{\E[\lss]}{\log g} = 2?$$
\end{question}

\begin{rem*}
Very recently joint with H. Parlier, the second and third named authors of this article in \cite{PWX20} give an affirmative answer to Question \ref{q-exp-ss} above.
\end{rem*}

\subsection{Geometric Cheeger constants} Recall as in the Introduction, for all $1\leq m\leq g-1$ the \emph{$m$-th geometric Cheeger constant} $H_m(X)$ of $X$ is defined as
\[H_m(X):= \inf \limits_{\gamma}\frac{\ell_{\gamma}(X)}{2\pi m}\]
where $\gamma$ is a simple closed multi-geodesics on $X$ with $X\setminus \gamma=X_1\cup X_2$, and $X_1$ and $X_2$ are connected subsurfaces of $X$ such that $|\chi(X_1)|=m\leq |\chi(X_2)|$. As a direct consequence of Theorem \ref{cor L1}, the first geometric Cheeger constant $H_1\bracedcdot$ on $\M_g$ asymptotically behaves as
\[\lim \limits_{g\to \infty}\Prob \left(X\in \M_g\,;\, (1-\epsilon)\cdot \frac{\log g}{\pi}< H_1(X)< \frac{\log g}{\pi}\right)=1\]
for any $\epsilon>0$. A natural question is to study general $H_m$.
\begin{question}\label{ques-hm}
For $m\in [1,g-1]$, what is the asymptotic behavior of $H_m\bracedcdot$ on $\M_g$ as $g\to \infty$?
\end{question} 
\noindent This question is related to \cite[Problem 10.5]{Wright-tour} of Wright on the asymptotic behavior of the classical Cheeger constant  $h(X)$ of $X$ (which is recently solved by Budzinski-Curien-Petri \cite{BCP22}), because $H(X):=\min_{1\leq m \leq g-1}H_m(X)$ serves as a natural upper bound for $h(X)$. For fixed $m>0$ independent of $g$, the question above may be reduced to the following explicit one:
\begin{question}\label{ques-L1m}
Let $\omega(g)$ be a function as \eqref{eq-omega} and $m>0$ be fixed. Then does the following limit hold: as $g\to \infty$,
\begin{equation*}
\Prob\left(X\in\M_g\,;\, |\sL_{1,m}(X) - (2m\log g - (6m-2)\log\log g)| \leq \omega(g)\right)\to 1?
\end{equation*}
\end{question}

\noindent Theorem \ref{cor L1} answers Question \ref{ques-hm} and \ref{ques-L1m} for $m=1$, By Proposition \ref{lower bound for chi geq 2} it suffices to study the upper bound.

\bibliographystyle{amsalpha}
\bibliography{ref}

\end{document}